\documentclass[11pt, a4paper]{amsart}
\usepackage{cgeometry}
\usepackage{multirow}
\usepackage{manfnt}

\makeatletter
\def\hang{\hangindent\parindent}
\def\d@nger{\medbreak\begingroup\clubpenalty=10000
  \def\par{\endgraf\endgroup\medbreak} \noindent
  \hang\hangafter=-2
  \hbox to0pt{\hskip-\hangindent\dbend\hfill}}
\outer\def\danger{\d@nger}
\makeatother

\bibliography{ComplexGeometry}
\title{Transcendental b-divisors I --- Correspondence with currents}
\author{Mingchen Xia}

\begin{document}

\begin{abstract}
    We study transcendental b-divisors over compact Kähler manifolds. We establish the correspondence between closed positive currents and nef b-divisors.
    As an application, we establish the intersection theory of nef b-divisors, answering a question of Dang--Favre.
\end{abstract}

\maketitle

\tableofcontents

\section{Introduction}
In this paper, we study the transcendental b-divisors. In particular, we extend the intersection theory of algebraic b-divisors developed in \cite{DF20} to the transcendental setting. 

Let $X$ be a connected compact Kähler manifold of dimension $n$. Recall that an algebraic b-divisor (class) is an assignment $(\pi\colon Y\rightarrow X)\mapsto \mathbb{D}_Y\in \mathrm{NS}^1(Y)_{\mathbb{R}}$ (the real vector space spanned by the Néron--Severi group of $Y$), where $\pi$ runs over all modifications of $X$. These data are assumed to be compatible under push-forwards. An example is a so-called Cartier b-divisor, where we start with a modification $\pi\colon Y\rightarrow X$ and a class $\alpha$ on $Y$, the value of $\mathbb{D}_Z$ on any modification $Z\rightarrow X$ dominating $\pi$ is the pull-back of $\alpha$ to $Z$. The Cartier b-divisor is called nef if $\alpha$ can be taken as nef. In general, an algebraic b-divisor is nef if it can be approximated by nef Cartier b-divisors.

B-divisors generalize divisors while incorporating bimeromorphic twists. It is of interest to understand their intersection theory. When $X$ is projective, Dang–Favre \cite{DF20} established an intersection theory for nef b-divisors, which has been applied in dynamical systems \cite{DF20a} and K-stability \cite{XiaPPT}. Roughly speaking, they proved that in this case, a nef b-divisor can always be approximated by a \emph{decreasing} sequence of nef Cartier b-divisors. This result reduces the general intersection theory to that of Cartier b-divisors, which is essentially the same as the classical intersection theory as in \cite{Ful}. 

In the same paper, Dang--Favre asked the question of whether one can develop a similar theory for transcendental b-divisors, namely, when $X$ is not necessarily projective and when the $\mathbb{D}_Y$'s are just classes in $\mathrm{H}^{1,1}(Y,\mathbb{R})$. We give an affirmative answer in this paper. Our theory is new even when $X$ is projective.

The idea of the proof is already contained in the author's previous papers \cite{XiaPPT, Xia24}. Let us content ourselves to the algebraic setting for the moment. In this case, the two papers give an analytic approach to the Dang--Favre intersection theory. 
Suppose that $L_1,\ldots,L_n$ are big line bundles on $X$. Then for any singular Hermitian metrics $h_1,\ldots,h_n$ on $L_1,\ldots,L_n$, we can construct natural algebraic nef b-divisors $\mathbb{D}_1,\ldots,\mathbb{D}_n$ on $X$ using Siu's decomposition. A key result in these papers show that the Dang--Favre intersection number of $\mathbb{D}_1,\ldots,\mathbb{D}_n$ coincides with the mixed volume of $h_1,\ldots,h_n$.


Conversely, up to some technical details, any nef b-divisor essentially arises from Siu's decomposition of currents. In the algebraic setting, this was already known since due to \cite[Theorem~6.40]{BJ22} and \cite[Theorem~1.1]{DXZ23}, nef b-divisors and the $\mathcal{I}$-equivalence classes of (non-divisorial) currents are both in bijection with the same class of homogeneous non-Archimedean metrics. 
We prove the transcendental version:
\begin{theorem}
    There is a natural map $\mathbb{D}$ (via Siu's decomposition) sending each closed positive $(1,1)$-current $T$ on $X$ to a nef b-divisor $\mathbb{D}(T)$ over $X$.

    Let $\alpha$ be a modified nef cohomology class on $X$ and $\mathbb{D}$ be a nef and big b-divisor over $X$ with $\mathbb{D}_X=\alpha$, then there is a non-divisorial (namely, the generic Lelong number along any prime divisor on $X$ vanishes) closed positive $(1,1)$-current $T$ in $\alpha$ with $\mathbb{D}(T)=\mathbb{D}$. 
\end{theorem}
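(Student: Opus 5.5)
For the first part, I will define $\mathbb{D}(T)$ modification by modification. For each modification $\pi\colon Y\to X$, let $\pi^*T$ be the pull-back current. Siu's decomposition gives
\[
\pi^*T=\sum_E \nu(T,E)[E]+R_Y,
\]
where $E$ runs over the prime divisors of $Y$ and $R_Y$ has vanishing generic Lelong numbers along every prime divisor. I set $\mathbb{D}(T)_Y:=\{R_Y\}\in \mathrm{H}^{1,1}(Y,\mathbb{R})$.

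Compatibility under push-forward is formal. If $Z\to Y$ dominates $Y$, the push-forward of the divisorial part on $Z$ is the divisorial part on $Y$, because the exceptional divisors are killed. Hence $\{R_Z\}$ pushes forward to $\{R_Y\}$.

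Nefness is the substantive point. I will approximate $T$ by Demailly's analytic singularity approximations $T_j$, which decrease in singularity type, have Lelong numbers $\nu(T_j,E)\to\nu(T,E)$ for every divisorial valuation $E$, and satisfy $T_j\ge -\epsilon_j\omega$.
\begin{itemize}
\item Since $T_j$ has analytic singularities, it is resolved on some modification $Y_j$: there $\pi_j^*T_j=[D_j]+\theta_j$ with $\theta_j$ smooth and $\ge -\epsilon_j\pi_j^*\omega$.
\item Consequently $\mathbb{D}(T_j)$ is the Cartier b-divisor determined by $\{\theta_j\}$ on $Y_j$, which is nef up to $\epsilon_j$.
\item On each fixed $Y$, $\{R^j_Y\}=\pi^*\{T\}+(\text{small } \omega\text{-correction})-\sum_E\nu(T_j,E)\{E\}$, and this converges to $\{R_Y\}$. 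This uses that only finitely many $E$ on $Y$ have positive Lelong number, by Siu.
\end{itemize}
So $\mathbb{D}(T)$ is a limit of nef Cartier b-divisors, after absorbing the $\epsilon_j$ by adding $\epsilon_j\omega$ and letting $\epsilon_j\to0$.

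For the converse I will follow the Dang--Favre / non-Archimedean strategy, replacing line bundles by the class $\alpha$. Let $\mathbb{D}$ be nef and big with $\mathbb{D}_X=\alpha$. By definition, $\mathbb{D}$ is a limit of nef Cartier b-divisors. On each model $Y$, $\pi^*\alpha-\mathbb{D}_Y$ is represented by a $\pi$-exceptional $\mathbb{R}$-divisor $F_Y$; this uses the negativity lemma together with $\pi_*\mathbb{D}_Y=\alpha$, and $F_Y\ge0$ because $\mathbb{D}_Y$ is $\pi$-nef on the exceptional part. The coefficients $\mathrm{ord}_E(F_Y)$ define a function $\phi(E)$ on divisorial valuations, and it is stable under passing to higher models. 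I then define the candidate current envelope
\[
V:=\sup\{\varphi\in\mathrm{PSH}(X,\theta):\ \nu(\varphi,E)\ge \phi(E)\ \text{for all divisorial } E\},
\]
where $\theta\in\alpha$ is smooth, and take $T=\theta+dd^c V^*$.

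The key claims about this envelope are the following.
\begin{itemize}
\item The constraint set is nonempty. I expect this to follow from bigness of $\mathbb{D}$ together with modified nefness of $\alpha$: approximate $\mathbb{D}_Y$ by Kähler-ish classes plus effective divisors on $Y$, pick currents with divisorial part exactly $\sum \phi(E)E$, and push them forward.
\item $T$ is non-divisorial on $X$, since $\phi$ vanishes on non-exceptional divisors and the envelope is maximal.
\item $\nu(T,E)=\phi(E)$ exactly, not merely $\ge$.
\end{itemize}
Granting the last claim, $\mathbb{D}(T)_Y=\pi^*\alpha-\sum\phi(E)E=\mathbb{D}_Y$.

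The main obstacle is the equality $\nu(V^*,E)=\phi(E)$, i.e.\ that the envelope does not acquire extra singularity. This is the transcendental analogue of the result that homogeneous non-Archimedean metrics come from currents (\cite{BJ22}, \cite{DXZ23}). My first attempt will go through Cartier approximations. For a nef Cartier b-divisor determined by a nef class $\beta$ on $Y$ with $\pi_*\beta=\alpha$, I will build explicit $\theta$-psh functions. Nefness of $\beta$ gives smooth forms in $\beta+\epsilon\omega_Y$, and adding $\sum\phi_\beta(E)[E]$ yields currents on $Y$ that push forward with Lelong numbers exactly $\phi_\beta(E)$ up to $O(\epsilon)$. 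The remaining steps are:
\begin{itemize}
\item Use a decreasing (or increasing) approximation of $\mathbb{D}$ by such $\beta$'s, arranged via bigness so that the correct $\phi_\beta\le\phi$ inequalities hold.
\item Take the upper-semicontinuous regularization of the increasing limit.
\item Control Lelong numbers under increasing limits by the semicontinuity $\nu(\lim\uparrow\varphi_j)\le\liminf\nu(\varphi_j)$.
\end{itemize}
The delicate part is arranging the approximation from the correct side, and handling the loss of $\epsilon$ uniformly. This is where bigness of $\mathbb{D}$ is used, to absorb the $\epsilon\omega$ errors into a fixed Kähler current.
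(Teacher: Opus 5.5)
Your construction of $\mathbb{D}(T)$ and the nefness argument are the paper's Lemma~\ref{lma:DT}, with two small slips. First, $\pi^*T$ may have countably many divisorial components on $Y$, not finitely many. So the convergence of $\sum_E\nu(T_j,E)\{E\}$ has to come from $0\le\nu(T_j,E)\le\nu(T,E)$ together with dominated convergence of the masses. Second, $F_Y\ge 0$ follows from the negativity lemma applied to Cartier approximants, not from $\pi$-nefness of $\mathbb{D}_Y$, which fails in general.

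The converse has a genuine gap. Normalize your envelope by $\varphi\le 0$, which you omitted. If a realizing current $\theta+\ddc\varphi_0$ exists, your $V$ is exactly $P_\theta[\varphi_0]_{\mathcal{I}}$. So your three key claims together are equivalent to the statement, not a reduction of it, and everything rests on the Cartier sketch.

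That sketch cannot work. Members of your constraint set must satisfy $\nu\ge\phi$ at \emph{every} divisorial valuation, so you would need nef Cartier $\mathbb{D}'$ with $\phi_{\mathbb{D}'}\ge\phi$. These need not exist. Take $\alpha$ Kähler and $T\in\alpha$ a Kähler current whose potential is a convergent series of small multiples of quasi-psh functions with logarithmic poles at a dense countable set $\{a_k\}$. Then $T$ is non-divisorial and $\mathbb{D}=\mathbb{D}(T)$ is nef and big. Moreover $\phi(E_k)=\nu(T,a_k)>0$ for the exceptional divisor $E_k$ of the blow-up at $a_k$. By contrast, for any Cartier $\mathbb{D}'$ realized on $\pi'\colon Y'\to X$, $\phi_{\mathbb{D}'}$ vanishes on every $E$ whose centre avoids the proper analytic set $\pi'(\mathrm{Exc}\,\pi')$.

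In this example your constraint set contains no potential with analytic singularities at all. Pushing forward a current from one model $Y$ only enforces the finitely many constraints coming from prime divisors on $Y$, so even nonemptiness is not established. Approximants from the other side ($\phi_j\le\phi$, the Dang--Favre side) do exist. But then the potentials \emph{decrease}, and Lelong numbers can jump in the limit, as for $\max(\log|z|,-j)\searrow\log|z|$. The semicontinuity $\nu(\lim\uparrow\varphi_j)\le\liminf_j\nu(\varphi_j)$ you invoke gives nothing there. Your stated ``$\phi_\beta\le\phi$'' combined with an increasing limit is also internally inconsistent.

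What is missing is a mechanism that prevents this jump. This is where bigness really enters, not in absorbing $\epsilon\omega$ errors. The paper's proof (Theorem~\ref{thm:bigandnefreal}) goes as follows.
\begin{enumerate}
\item It uses nefness only to know that every $\mathbb{D}_Y$ is big and modified nef. It takes a current $T_Y$ with minimal singularities in $\mathbb{D}_Y$.
\item By Lemma~\ref{lma:Siudec_pullpush}, $\mathbb{D}(\pi_*T_Y)$ agrees with $\mathbb{D}$ on every model dominated by $Y$.
\item By Corollary~\ref{cor:Igoodpush} and Lemma~\ref{lma:pushIpo}, the net $(\pi_*T_Y)_Y$ in $\alpha$ is decreasing in $P$-singularity type.
\item The uniform bound $\vol T_Y\ge\vol\mathbb{D}>0$, together with the completeness of $d_S$ (Darvas--Di Nezza--Lu), yields a $d_S$-limit $T\in\alpha$.
\item The $d_S$-continuity of $T\mapsto\mathbb{D}(T)$ (Proposition~\ref{prop:dscontbdiv}) is exactly the statement that Lelong numbers do not jump in this limit, giving $\mathbb{D}(T)=\mathbb{D}$.
\end{enumerate}
Without an input of this kind, your envelope argument does not close.
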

See \cref{def:volb} and \cref{def:nefb} for the precise definitions of nef and big b-divisors. The notion of modified nef cohomology classes is recalled  in \cref{subsec:modif}. The precise definition of $\mathbb{D}$ is recalled in \cref{subsec:b-div_curr}.

It is not clear to the author if the same holds when $\mathbb{D}$ is just nef but not big. 

The current $T$ is far from being unique. The degree of non-uniqueness is given by the next result. First recall that two closed positive  $(1,1)$-currents $T$ and $T'$ on $X$ are said to be $\mathcal{I}$-equivalent if they have the same Lelong numbers everywhere and the same remains true after pulling-back to any bimeromorphic modification of $X$. See \cref{def:Iequiv}.
\begin{theorem}\label{thm:corre_b_div_curr}
Let $\alpha$ be a modified nef cohomology class on $X$.
Two non-divisorial closed positive $(1,1)$-currents with positive volumes  in $\alpha$ correspond to the same nef b-divisor via $\mathbb{D}$ if and only if they are $\mathcal{I}$-equivalent.

In other words, there is a natural bijection $\mathbb{D}$ between the following sets:
    \begin{enumerate}
        \item The set of $\mathcal{I}$-equivalence classes of non-divisorial closed positive $(1,1)$-currents in $\alpha$ with positive volumes;
        \item the set of nef and big b-divisors $\mathbb{D}$ over $X$ with $\mathbb{D}_X=\alpha$.
    \end{enumerate}
\end{theorem}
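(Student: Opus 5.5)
Here is the plan. Since the previous theorem already gives surjectivity onto nef and big b-divisors with $\mathbb{D}_X=\alpha$, what remains is three things: well-definedness of the map on $\mathcal{I}$-equivalence classes, the fact that $\mathbb{D}(T)$ is big when $\operatorname{vol}T>0$, and injectivity.

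Recall that $\mathbb{D}(T)_Y$ is the class of $\pi^*T$ minus its divisorial part in Siu's decomposition, i.e. $\{\pi^*T\}-\sum_E \nu(\pi^*T,E)\{E\}$ over prime divisors $E\subset Y$.

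\textbf{Step 1 (well-definedness).} If $T\sim_{\mathcal{I}}T'$, then $\nu(\pi^*T,E)=\nu(\pi^*T',E)$ for every prime divisor $E$ on every modification, since generic Lelong numbers are among the Lelong numbers being compared. As $\{T\}=\{T'\}=\alpha$, the classes $\mathbb{D}(T)_Y$ and $\mathbb{D}(T')_Y$ agree for all $Y$.

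\textbf{Step 2 (bigness).} I would show that $\operatorname{vol}\mathbb{D}(T)$, defined in \cref{def:volb} as the limit or infimum of $\operatorname{vol}(\mathbb{D}(T)_Y)$, dominates the non-pluripolar mass $\int_X\langle T^n\rangle$. The reason is that $\pi^*T-[\text{divisorial part}]$ is a closed positive current in $\mathbb{D}(T)_Y$ with the same non-pluripolar product as $\pi^*T$. So positive volume of $T$ forces $\mathbb{D}(T)$ to be big.

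\textbf{Step 3 (injectivity).} This is the main obstacle. Suppose $\mathbb{D}(T)=\mathbb{D}(T')$. I would use the characterization of $\mathcal{I}$-equivalence through divisorial valuations: $T\sim_{\mathcal{I}}T'$ if and only if $\nu(T,E)=\nu(T',E)$ for every prime divisor $E$ over $X$. This is a standard consequence of Boucksom–Favre–Jonsson type valuative descriptions of multiplier ideals, and I assume it is available or recalled near \cref{def:Iequiv}.

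To extract these generic Lelong numbers from the b-divisor, fix $Y$ and a prime divisor $E\subset Y$. On $Y$, the classes $\mathbb{D}(T)_Y$ and $\mathbb{D}(T')_Y$ coincide, so
\[
\sum_F \bigl(\nu(\pi^*T,F)-\nu(\pi^*T',F)\bigr)\{F\}=0 .
\]
Here the sum runs over prime divisors $F$ of $Y$, including the exceptional ones, and the coefficients on non-exceptional $F$ vanish because $T$ and $T'$ are non-divisorial. The difference is therefore a $\pi$-exceptional divisor that is numerically trivial. By the negativity lemma (classes of exceptional divisors are linearly independent, since $\pi_*$ kills them and the Hodge index/negativity argument applies in the Kähler setting), all its coefficients vanish. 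Hence $\nu(T,E)=\nu(T',E)$ for every $E$ over $X$, and so $T\sim_{\mathcal{I}}T'$.

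The delicate points are the linear independence of exceptional classes for transcendental modifications, which I would reduce to blow-ups via Hironaka's Chow lemma, and the valuative criterion for $\mathcal{I}$-equivalence. For the latter, I would first try Demailly approximation combined with the fact that multiplier ideals are determined by divisorial valuations.
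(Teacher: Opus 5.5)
Your Steps~1 and~3 follow the paper's argument. For injectivity the paper also cancels $\pi^*\alpha$ and reads $\nu(T,E)$ off the exceptional coefficients of $\{\Reg\pi^*T\}$. It obtains the needed linear independence by iterating \cref{prop:blowup_cone} along a tower of blow-ups with connected smooth centres ending in $E$. Your negativity-lemma justification works as well. No Chow lemma is needed, since modifications here are by definition compositions of smooth blow-ups.

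The gap is in Step~2. In this paper $\vol T$ is not the non-pluripolar mass. It is $\int_X(\theta+\ddc P_{\theta}[\varphi]_{\mathcal{I}})^n$, which is $\geq\int_X\langle T^n\rangle$ and can be strictly larger even when the latter is $0$. Already on a curve there is a counterexample. A non-atomic probability measure carried by a compact polar set is a non-divisorial current in a Kähler class with all Lelong numbers zero. Hence $P_{\omega}[\varphi]_{\mathcal{I}}=0$ and $\vol T=1$, while $\langle T\rangle=0$. Such currents lie in the domain of the bijection. For them your bound $\vol\mathbb{D}(T)\geq\int_X\langle T^n\rangle$ says nothing, so bigness of $\mathbb{D}(T)$ is not established.

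The repair is short. By your own Step~1, $\mathbb{D}(T)=\mathbb{D}(T')$ for $T'=\theta+\ddc P_{\theta}[\varphi]_{\mathcal{I}}\in\alpha$. This $T'$ is $\mathcal{I}$-equivalent to $T$ and has $\int_X\langle T'^n\rangle=\vol T>0$, so your argument applies to it. Alternatively, cite \cref{lma:DT}, which gives $\vol\mathbb{D}(T)=\vol T$ directly; this is what the paper does.

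That lemma also supplies the nefness of $\mathbb{D}(T)$. Your list of what remains omits nefness, but it is needed for $\mathbb{D}$ to land in the set of nef and big b-divisors.
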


The above two theorems correspond to \cref{thm:bdiv_current_corr}. Here modified nefness of $\alpha$ is not a strong constraint, since a general pseudoeffective class can always be decomposed into a modified nef part and a fixed part via the divisorial Zariski decomposition, see \cite{Bou02}.

Although \cref{thm:corre_b_div_curr} bears some resemblance with \cite[Proposition~3.1]{Tru24}, the contents of these theorems are radically different. In fact, Trusiani worked with b-divisors with possibly infinitely many components without passing to the numerical classes. 
It is a key observation in this paper that the numerical classes suffice to fully determine the singularities of a current modulo $\mathcal{I}$-equivalence, explaining the neatness of our theorem. At the same time, these theorems show that the author's notion of augmented b-divisors in \cite{Xia24} is unnecessary.

As a simple consequence of \cref{thm:corre_b_div_curr} and Demailly's approximation theorem, we deduce an analogue of \cite[Theorem~A]{DF20}.
\begin{corollary}[{\cref{cor:nef_Cartierseq}}]\label{cor:Demapp}
    Let $\mathbb{D}$ be a nef b-divisor over $X$. Then there is a  \emph{decreasing sequence} of nef and big Cartier b-divisors over $X$ with limit $\mathbb{D}$.
\end{corollary}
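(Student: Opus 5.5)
We first reduce to the nef and big case. Given a nef b-divisor $\mathbb{D}$ over $X$, we fix a Kähler class $\omega$ on $X$ and let $\mathbb{C}_\omega$ be the Cartier b-divisor it determines. For $\epsilon>0$ the b-divisor $\mathbb{D}+\epsilon\mathbb{C}_\omega$ is nef and big, since its volume dominates $\epsilon^n\int_X\omega^n>0$. If each $\mathbb{D}+\epsilon\mathbb{C}_\omega$ is the decreasing limit of nef and big Cartier b-divisors $\mathbb{D}^\epsilon_j$, a diagonal choice gives the required sequence: take $\epsilon_j\searrow 0$ and set $\mathbb{E}_j=\mathbb{D}^{\epsilon_j}_{k_j}+(\text{correction})$. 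Because $\epsilon\mapsto\mathbb{D}+\epsilon\mathbb{C}_\omega$ is itself decreasing as $\epsilon\searrow 0$, we can arrange monotonicity by replacing $\mathbb{E}_j$ with $\mathbb{E}_j+2^{-j}\mathbb{C}_\omega$ and choosing $k_j$ large. We also reduce to $\mathbb{D}_X$ modified nef. Here $\mathbb{D}_X$ is a limit of classes $\pi_*\beta$ with $\beta$ nef, so it is already modified nef; we will verify this from the definition of nef b-divisors.

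In the nef and big case we apply \cref{thm:corre_b_div_curr}, or more precisely the surjectivity statement of the first theorem. It gives a non-divisorial closed positive current $T\in\alpha=\mathbb{D}_X$ with positive volume and $\mathbb{D}(T)=\mathbb{D}$. We then apply Demailly's approximation: there are currents $T_j\in\alpha+\epsilon_j\{\omega\}$ with analytic singularities. These are given by quasi-psh potentials $\varphi_j$ with $\varphi_j\searrow\varphi$ (after the standard normalization using equisingular or Demailly–Peternell–Schneider approximation), and with Lelong numbers $\nu(T_j,x)\nearrow\nu(T,x)$ at every point, also after pulling back to any modification. Since $T_j$ has analytic singularities, taking a log resolution $\pi_j\colon Y_j\to X$ of its ideal gives $\pi_j^*T_j=[E_j]+\theta_j$, where $\theta_j$ is a smooth semipositive form, $E_j$ is an effective $\mathbb{R}$-divisor, and $\mathbb{D}(T_j)$ is the Cartier b-divisor determined by the nef class $\{\theta_j\}$ on $Y_j$. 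It is nef, and big for $j$ large because the volumes converge.

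It remains to show that $\mathbb{D}(T_j)$ decreases to $\mathbb{D}(T)$. On each model $Z$, the class $\mathbb{D}(T)_Z$ is $\pi^*\alpha$ minus the divisorial part of Siu's decomposition, $\sum_E\nu(T,E)[E]$. The generic Lelong numbers $\nu(T_j,E)$ increase to $\nu(T,E)$ along every prime divisor $E$ over $X$, while the cohomology classes decrease by $\epsilon_j\{\omega\}$. Hence $\mathbb{D}(T_j)_Z$ decreases and converges to $\mathbb{D}(T)_Z$ in $\mathrm{H}^{1,1}(Z,\mathbb{R})$ for each $Z$. Finitely many divisors on $Z$ make the convergence termwise. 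The main obstacle is monotonicity. Demailly's sequence is only quasi-decreasing, and its Lelong numbers are only bounded by $\nu(T,E)-n/m$ from below. I would fix this by passing to the subsequence $m=2^k$, for which the multiplier-ideal subadditivity $\mathcal{I}(2^{k+1}\varphi)\subset\mathcal{I}(2^k\varphi)^2$ gives genuine monotonicity of the divisorial parts. I would then absorb the remaining discrepancy into the $\epsilon_j\mathbb{C}_\omega$ term, chosen with $\epsilon_j-\epsilon_{j+1}$ large compared to the error.
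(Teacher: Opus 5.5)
The nef and big case of your argument is sound, and it uses the same mechanism as the paper. You realize $\mathbb{D}=\mathbb{D}(T)$ via \cref{thm:bigandnefreal}, take a quasi-equisingular approximation $T_j\in\alpha+\epsilon_j\{\omega\}$, and read off monotonicity and convergence on each model from $\nu(T_j,E)\nearrow\nu(T,E)$ and \eqref{eq:RegpistarT}. Your ``main obstacle'' paragraph is unnecessary: a quasi-equisingular approximation is decreasing by definition, so $\nu(T_j,E)$ is automatically nondecreasing for every $E$ over $X$.

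\textbf{The gap is in the reduction from nef to nef and big.}

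Convergence of b-divisors is only model-by-model, and $\mathbb{D}^{\epsilon}_k\to\mathbb{D}+\epsilon\mathbb{D}(\omega)$ is not uniform in the model. Already for Bergman-kernel approximations the available bound is $\nu(T_k,E)\geq\nu(T,E)-A_X(E)/k$, and the log discrepancy $A_X(E)$ is unbounded. Choosing $k_j$ large therefore controls $\mathbb{E}_j$ on finitely many models per step, hence on countably many in total. But $\Modif(X)$ has no countable cofinal subfamily: a fixed modification dominates $\Bl_p X$ only for $p$ in the proper analytic set over which it is not an isomorphism, and countably many such sets do not cover $X$. So your diagonal sequence, if decreasing, converges to some nef $\mathbb{L}\geq\mathbb{D}$ with $\mathbb{L}_X=\mathbb{D}_X$, but nothing in the proposal forces $\mathbb{L}=\mathbb{D}$.

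The monotonicity $\mathbb{E}_{j+1}\leq\mathbb{E}_j$ is also only asserted, although it must hold on every model. It can be justified by testing it only on a realization model $Y_j$ of $\mathbb{E}_j$, using $\mathbb{B}_Z\leq g^*\mathbb{B}_{Y_j}$ for nef $\mathbb{B}$ and $g\colon Z\to Y_j$ (a consequence of \cref{lma:Siudec_pullpush}). You do not supply this.

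\textbf{How the paper avoids the diagonal problem.} It makes the approximations at different perturbation levels coherent.
\begin{enumerate}
\item It realizes $\mathbb{D}+i^{-1}\mathbb{D}(\omega)$ by K\"ahler currents $T_i\in\mathbb{D}_X+i^{-1}\{\omega\}$.
\item By \cref{cor:Dorder_Iorder}, applied to $T_i$ and $T_{i+1}+(i^{-1}-(i+1)^{-1})\omega$, all the $T_i$ are $\mathcal{I}$-equivalent, so they have the same multiplier ideals.
\item Hence the Bergman-kernel quasi-equisingular approximations $T_i^j$ can be taken with singularity type independent of $i$, which gives $\mathbb{D}(T_i^j)=\mathbb{D}(T_1^j)-(1-i^{-1})\mathbb{D}(\omega)$.
\item For increasing $j_i$, the sequence $\mathbb{D}(T_i^{j_i})$ is therefore decreasing. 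It converges to $\mathbb{D}(T_1)-\mathbb{D}(\omega)=\mathbb{D}$ on all models at once, because everything is governed by the single convergence $\mathbb{D}(T_1^j)\to\mathbb{D}(T_1)$.
\end{enumerate}

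If you want to keep your diagonal scheme, you need an extra ingredient of this kind. One option:
\begin{enumerate}
\item Choose $k_j$ so that also $\vol(\mathbb{E}_j+\mathbb{D}(\omega))\to\vol(\mathbb{D}+\mathbb{D}(\omega))$; this is possible by the decreasing case of \cref{lma:volbdivnet}.
\item Use the rigidity fact that $\mathcal{I}$-good currents $S\preceq_{\mathcal{I}}S'$ in the same class with $\vol S=\vol S'>0$ are $P$-equivalent.
\end{enumerate}
Together with \cref{cor:Dorder_Iorder}, this yields $\mathbb{L}+\mathbb{D}(\omega)=\mathbb{D}+\mathbb{D}(\omega)$, hence $\mathbb{L}=\mathbb{D}$.
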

Based on \cref{cor:Demapp}, one could essentially repeat the arguments of \cite{DF20} to establish an intersection theory of nef b-divisors --- Reducing the general intersection theory of nef b-divisors to that of Cartier b-divisors, which is essentially known.

But we choose to follow a slightly different approach in \cref{sec:int_b_div}: We define the intersection number of nef b-divisors as the mixed analytic volume of the corresponding currents. Since the latter theory is already well-developed nowadays, this seems to be the most efficient way to establish the properties of the intersection product.

As a consequence, \cref{cor:Demapp} does not play a significant role in this paper, as Demailly's approximation is already a built-in feature of the analytic intersection theory of currents.

There is a technical subtlety here: There are at least two candidates for the analytic theory --- The $\mathcal{I}$-volumes developed in \cite{DX21, DX22} and the mixed volumes in the sense of Cao \cite{Cao14}. We will show in \cref{sec:mv} that these theories agree and satisfy the desired properties.

The idea of extending Dang--Favre’s theory using analytic methods is known to the author when he wrote \cite{XiaPPT} in 2020. What hinders the appearance of this paper is the slow development of the analytic theory\footnote{and France’s notoriously sluggish government, which made the author homeless for the most part of the year 2024}. The necessary tools were developed over years in \cite{DDNLmetric, DX22, DX21, Xia24, Xia21} and systematically summarized and extended in the author's book \cite{Xiabook}.

\cref{thm:corre_b_div_curr} sheds light on the algebraic theory as well. The powerful analytic machinery can be translated back to the algebraic theory, giving new insights even in the purely algebraic theory.
As an example, in \cref{sec:smpull} and \cref{sec:resbdiv},
we will study the two functorial operations of nef b-divisors: Along a smooth morphism, nef b-divisors can be pulled back; given a subvariety, a nef b-divisor can be restricted under a mild assumption. 
Both operations are essentially known in the algebraic setting via Boucksom--Jonsson's theory of non-Archimedean metrics. Our approach gives more straightforward definitions in terms of b-divisors themselves. This is not just for aesthetic reasons, it seems to the author that this is an essential step for attacking Collins--Tosatti's conjecture \cite{CT22}. Furthermore, the trace operator plays a key role in the monotonicity theorem \cite{Xiabdiv2}. 

Finally, all results in this paper hold for manifolds in Fujiki's class $\mathcal{C}$ as well. But for simplicity, we always restrict our discussion to Kähler manifolds.

\subsection*{Further directions}

For further developments of the intersection theory and applications to pluripotential theory, we refer to the sequel paper \cite{Xiabdiv2}.

Our intersection theory has direct applications in dynamic systems, as the algebraic theory of Dang--Favre did in \cite{DF20a}. 

In \cite{DF20}, Dang--Favre suggested that defining the intersection theory may rely on the transcendental Morse inequality conjectured in \cite{BDPP13}. We expect that conversely, our theory should be helpful for understanding this conjecture. Less ambitiously, our theory should be helpful when trying to understand a related result regarding the non-Kähler locus, as conjectured in \cite{CT22}. 

Finally, there is a notion of Okounkov bodies of transcendental nef b-divisors, exactly as the algebraic case studied in \cite[Section~11.3]{Xiabook}. Over $\mathbb{C}$, the theory is well-understood using the analytic theory.
However, it is not clear how to work out similar results based on the theory of Dang--Favre or Boucksom--Jonsson over general base fields.

\subsection*{Editorial note}
An earlier version of this paper has been available on the author's webpage since early 2025. 
The submitted version relies on the first version of the author's monograph \cite{Xiabook}. 
The monograph underwent a thorough revision in September 2025 and part of the results proved in this paper were reproduced in the book.
The current version of the paper is almost identical to the submitted version, apart from minor modifications reflecting the revision of the book.

\subsection*{Acknowledgments} The author would like to thank Nicholas McCleerey, Charles Favre and Antonio Trusiani for their comments on the draft. 
Part of the work was carried out during the author's visit to Yunnan Normal University in 2024, the author would like to thank Prof.~Zhipeng Yang for his hospitality and the support of Yunnan Key Laboratory of Modern Analytical Mathematics and Applications (No. 202302AN360007).
The author was initially supported by the Knut och Alice Wallenbergs Stiftelse KAW 2024.0273, then by the National Key R\&D Program of China 2025YFA1018200.

\section{Preliminaries}

Let $X$ be a connected compact Kähler manifold of dimension $n$. 

\subsection{Modifications and cones}\label{subsec:modif}
In this paper, we use the word \emph{modification} in a very non-standard sense.
\begin{definition}
    A \emph{modification} of $X$ is a bimeromorphic morphism $\pi\colon Y\rightarrow X$, which is a finite composition of blow-ups with smooth centers.

        We say a modification $\pi'\colon Z\rightarrow X$ \emph{dominates} another $\pi\colon Y\rightarrow X$ if there is a morphism $g\colon Z\rightarrow Y$ making the following diagram commutative:
    \begin{equation}\label{eq:domi}
    \begin{tikzcd}
Z \arrow[rr,"g"] \arrow[rd, "\pi'"'] &   & Y \arrow[ld, "\pi"] \\
                                 & X. &                    
\end{tikzcd}
    \end{equation}
\end{definition}
Note that $\pi$ is necessarily projective and $Y$ is always a Kähler manifold. 

The modifications of $X$ together with the domination relation form a directed set $\Modif(X)$. 

Given classes $\alpha,\beta\in \mathrm{H}^{1,1}(X,\mathbb{R})$, we say $\alpha\leq \beta$ if $\beta-\alpha$ is pseudoeffective.

\begin{proposition}\label{prop:blowup_cone}
    Let $\pi\colon Y\rightarrow X$ be a blow-up with \emph{connected} smooth center of codimension at least $2$ with exceptional divisor $E$. Then there is a natural identification 
    \begin{equation}\label{eq:H11blowup}
    \mathrm{H}^{1,1}(Y,\mathbb{R})=\mathrm{H}^{1,1}(X,\mathbb{R})\oplus \mathbb{R}\{E\}.
    \end{equation}
\end{proposition}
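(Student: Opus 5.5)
The plan is to deduce \eqref{eq:H11blowup} from the classical blow-up formula for de Rham (or singular) cohomology, and then cut it down to the $(1,1)$-part using Hodge theory. Let $Z\subset X$ be the center, which is connected, smooth, of codimension $r\geq 2$. Then $E=\mathbb{P}(N_{Z/X})$ is a $\mathbb{P}^{r-1}$-bundle over $Z$, with projection $p\colon E\to Z$ and inclusion $j\colon E\hookrightarrow Y$. The blow-up formula gives an isomorphism of real Hodge structures
\[
\mathrm{H}^2(Y,\mathbb{R})\cong \pi^*\mathrm{H}^2(X,\mathbb{R})\oplus j_*p^*\mathrm{H}^0(Z,\mathbb{R}).
\]
It comes from the Leray--Hirsch decomposition of $\mathrm{H}^*(E)$ as a free $\mathrm{H}^*(Z)$-module on $1,h,\dots,h^{r-1}$, where $h=-c_1(\mathcal{O}_E(E))=c_1(\mathcal{O}_{\mathbb{P}(N)}(1))$. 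In degree $2$, the only exceptional contribution is $j_*p^*$ applied to $\mathrm{H}^0(Z,\mathbb{R})$, twisted by $h^0$. Since $Z$ is connected, $\mathrm{H}^0(Z,\mathbb{R})=\mathbb{R}$, and its image is $\mathbb{R}\{E\}$. This is exactly where connectedness is used.

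I will derive the degree-two statement directly rather than cite the full formula. There are three steps.

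\textbf{Injectivity of $\pi^*$.} We have $\pi_*\pi^*=\mathrm{id}$ on cohomology, since $\pi$ has degree one; for instance, use the projection formula with currents $\pi_*\pi^*\theta=\theta$.

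\textbf{Independence of $\{E\}$.} We need $\{E\}\notin\pi^*\mathrm{H}^2(X)$. Restrict to a fiber $F\cong\mathbb{P}^{r-1}$ of $p$ and take a line $\ell\subset F$. Every pulled-back class has zero degree on $\ell$, because $\pi(\ell)$ is a point. On the other hand, $E\cdot\ell=-1$, since $\mathcal{O}(E)|_F=\mathcal{O}(-1)$.

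\textbf{Spanning.} This is the main point, and I would argue it by dimension. Apply the long exact sequences of the pairs $(Y,E)$ and $(X,Z)$. Excision and the homeomorphism $Y\setminus E\cong X\setminus Z$ identify the relative cohomologies, giving $\mathrm{H}^k(Y,E)\cong\mathrm{H}^k(X,Z)$. A five-lemma-type diagram chase then yields the exact sequence
\[
0\to\mathrm{H}^2(X)\to\mathrm{H}^2(Y)\oplus\mathrm{H}^2(Z)\to\mathrm{H}^2(E).
\]
Next feed in Leray--Hirsch, $\mathrm{H}^2(E)=p^*\mathrm{H}^2(Z)\oplus \mathbb{R}h$, which makes $\dim\mathrm{H}^2(Y)=\dim\mathrm{H}^2(X)+1$. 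Alternatively, one can quote Voisin's theorem on the cohomology of blow-ups, which I would likely do to save effort.

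All the maps involved, namely $\pi^*$ and the class of the divisor $E$, are morphisms of Hodge structures, and $\{E\}$ is of type $(1,1)$. Taking $(1,1)$-parts, which commute with the direct sum, gives
\[
\mathrm{H}^{1,1}(Y,\mathbb{R})=\pi^*\mathrm{H}^{1,1}(X,\mathbb{R})\oplus\mathbb{R}\{E\}.
\]
This is \eqref{eq:H11blowup}, after identifying $\mathrm{H}^{1,1}(X,\mathbb{R})$ with its image under the injective map $\pi^*$. The identification is natural in the sense that the projection onto the first factor is $\pi_*$. Indeed, $\pi_*\{E\}=0$ because $E$ is contracted, as $r\geq 2$.

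I expect the only real obstacle to be the spanning and dimension count. The plan is to handle it by citing the standard blow-up formula for cohomology.
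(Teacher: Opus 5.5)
Your proof is correct, but it takes a different route from the paper. The paper gives no argument of its own and defers to Rao--Yang--Yang \cite{RYY19}. Their blow-up formula
\[
\mathrm{H}^{p,q}_{\bar\partial}(Y)\cong \mathrm{H}^{p,q}_{\bar\partial}(X)\oplus\bigoplus_{i=1}^{r-1}\mathrm{H}^{p-i,q-i}_{\bar\partial}(Z)
\]
is proved directly for Dolbeault cohomology, without Hodge theory. For $(p,q)=(1,1)$ the only extra term is $\mathrm{H}^{0,0}_{\bar\partial}(Z)$, which is one-dimensional because $Z$ is compact and connected.

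You instead compute $\mathrm{H}^2$ topologically, in three pieces:
\begin{enumerate}
\item injectivity of $\pi^*$ from $\pi_*\pi^*=\mathrm{id}$;
\item independence of $\{E\}$ from $E\cdot\ell=-1$ on a line $\ell$ in a fibre of $E\to Z$;
\item the bound $\dim\mathrm{H}^2(Y)\le\dim\mathrm{H}^2(X)+1$ from the relative sequences and Leray--Hirsch.
\end{enumerate}
On its own the exact sequence gives only this upper bound; equality comes from combining it with the first two steps. You then pass to $(1,1)$-parts via the Hodge decomposition on $Y$, using that $\pi^*$ preserves bidegree and that $\{E\}$ has type $(1,1)$.

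The cost of your route is that you need Hodge theory on $Y$. This is available here since $Y$ is Kähler, and would also hold in Fujiki's class $\mathcal{C}$. The gain is a self-contained argument that makes the ``natural identification'' explicit: the first projection is $\pi_*$, and the $\{E\}$-coefficient of a class is minus its degree on $\ell$. That explicit form is what the paper relies on later, in the proof of \cref{thm:bdiv_current_corr}, where $\nu(T,E)$ is read off from the cohomology class of $\mathrm{Reg}\,\pi^*T$.
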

See \cite{RYY19} for a much more general result. In general, the pseudoeffective cone of $Y$ does not admit any simple descriptions.

Fix a reference Kähler form $\omega$ on $X$.
Recall that a class $\alpha\in \mathrm{H}^{1,1}(X,\mathbb{R})$ is \emph{modified nef} (or \emph{movable}) if for any $\epsilon>0$, we can find a closed $(1,1)$-current $T\in \alpha$ such that
\begin{enumerate}
    \item $T+\epsilon\omega\geq 0$;
    \item $\nu(T+\epsilon\omega,D)=0$ for any prime divisor $D$ on $X$.
\end{enumerate}
This definition is independent of the choice of $\omega$.
Here $\nu(\bullet,D)$ denote the generic Lelong number along $D$.

These classes are called \emph{nef en codimension $1$} in Boucksom's thesis \cite{Bou02}, where they were introduced for the first time. Modified nef classes form a closed convex cone in $\mathrm{H}^{1,1}(X,\mathbb{R})$. Note that a modified nef class is necessarily pseudoeffective. A nef class is obviously modified nef.

Recall the multiplicity of a cohomology class as defined in \cite[Section~2.1.3]{Bou02}.
\begin{definition}
    Let $\alpha\in \mathrm{H}^{1,1}(X,\mathbb{R})$ be a pseudoeffective class and $D$ be a prime divisor on $X$. We define the Lelong number $\nu(\alpha,D)$ as follows:
    \begin{enumerate}
        \item When $\alpha$ is big, define $\nu(\alpha,D)=\nu(T,D)$ for any closed positive $(1,1)$-current $T\in \alpha$ with minimal singularities (namely, a current in $\alpha$ that is less singular than any current in $\alpha$).
        \item In general, define
        \[
        \nu(\alpha,D)\coloneqq \lim_{\epsilon\to 0+}\nu(\alpha+\epsilon \{\omega\},D).
        \]
    \end{enumerate}
\end{definition}
When $\alpha$ is big, (2) is compatible with (1) and the definition is independent of the choice of $\omega$.
By definition, a pseudoeffective class $\alpha$ is modified nef if and only if $\nu(\alpha,D)=0$ for all prime divisors $D$ on $X$.

Let us recall the behavior of several cones under modifications.
\begin{proposition}\label{prop:cones_modif}
    Let $\pi\colon Y\rightarrow X$ be a proper bimeromorphic morphism from a Kähler manifold $Y$. 
    \begin{enumerate}
        \item For any nef class $\alpha\in \mathrm{H}^{1,1}(X,\mathbb{R})$, $\pi^*\alpha$ is nef.
        \item For any modified nef class $\beta\in \mathrm{H}^{1,1}(Y,\mathbb{R})$, $\pi_*\beta$ is modified nef.
        \item For any big class $\alpha\in \mathrm{H}^{1,1}(X,\mathbb{R})$, $\pi^*\alpha$ is big. Moreover, $\vol \pi^*\alpha=\vol \alpha$.
        \item For any big class $\beta\in \mathrm{H}^{1,1}(Y,\mathbb{R})$, $\pi_*\beta$ is big. Moreover, $\vol \pi_*\beta\geq \vol \beta$.
    \end{enumerate}
\end{proposition}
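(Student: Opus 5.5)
Each of the four items reduces to the pull-back and push-forward of closed $(1,1)$-currents along $\pi$, together with the fact that $\pi$ is an isomorphism outside an analytic subset $Z\subset X$ of codimension at least $2$, with exceptional set $E=\pi^{-1}(Z)$. Recall that $\pi_*$ is defined on currents, preserves positivity and closedness, and induces $\pi_*$ on $\mathrm{H}^{1,1}$. Pull-back is defined for currents with local potentials: locally $T=\omega_0+\mathrm{dd}^{\mathrm{c}}\varphi$ and $\pi^*T=\pi^*\omega_0+\mathrm{dd}^{\mathrm{c}}(\varphi\circ\pi)$. The identity $\pi_*\pi^*=\mathrm{id}$ holds on classes and on currents, since $\pi$ is bimeromorphic and both sides agree outside the pluripolar set $Z$, which codimension-$\ge 2$ sets cannot charge.

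For (1), take a Kähler form $\omega_Y$ on $Y$ and smooth representatives $\theta_\epsilon\in\alpha$ with $\theta_\epsilon\geq-\epsilon\omega$. Then $\pi^*\theta_\epsilon\geq-\epsilon\pi^*\omega\geq -C\epsilon\omega_Y$, because $\pi^*\omega\leq C\omega_Y$ for some constant $C$ by compactness, so $\pi^*\alpha$ is nef.

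For (3), take a Kähler current $T\geq\delta\omega$ in $\alpha$. Then $\pi^*T\geq\delta\pi^*\omega$, which is only semipositive. To fix this I would use that $Y$ is Kähler and that the exceptional divisors give a class $-\sum a_iE_i$ that is $\pi$-relatively Kähler; by Demailly–Păun or the projective-morphism structure one can find a smooth form $\theta_E\in\{-\sum a_iE_i\}$ with $\pi^*\omega+\eta\theta_E\ge c\,\omega_Y$ for small $\eta$. Then $\pi^*T-\eta\delta\theta_E+\eta\delta[\sum a_iE_i]$ is a Kähler current in $\pi^*\alpha$. A simpler route avoids this entirely: bigness is equivalent to $\vol>0$ (Boucksom), so it suffices to prove the volume equality. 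For volumes, $\vol\alpha=\int_X\langle T_{\min}^n\rangle$ (non-pluripolar product of a current with minimal singularities). The map $T\mapsto\pi^*T$ is a bijection between closed positive currents in $\alpha$ and in $\pi^*\alpha$, with inverse $\pi_*$. Indeed, a positive current $S\in\pi^*\alpha$ pushes forward to a positive $\pi_*S\in\alpha$, and $\pi^*\pi_*S-S$ is a closed current supported on $E$ and cohomologous to zero. By a Siu-decomposition/negativity argument (Zariski's lemma: a nonzero effective exceptional divisor is never numerically trivial against a relatively ample class), this difference must be $0$. Since the bijection preserves the singularity ordering, minimal singularities are preserved, and non-pluripolar masses agree because they put no mass on pluripolar sets and $\pi$ is biholomorphic off $E$. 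Hence $\vol\pi^*\alpha=\vol\alpha$.

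For (4), let $S\geq\delta\omega_Y$ be a Kähler current in $\beta$. Then $\pi_*S$ is positive, and $\pi_*S\geq\delta\pi_*\omega_Y\geq\delta'\omega$, since $\pi_*\omega_Y$ is a positive current dominating a multiple of $\omega$: for a positive $(n-1,n-1)$ test form $\psi$, $\int\omega_Y\wedge\pi^*\psi\geq c\int\pi^*(\omega\wedge\psi)$, using $\omega_Y\geq c\pi^*\omega$. So $\pi_*\beta$ is big. For the volume inequality, take $S$ with minimal singularities in $\beta$. Then $\pi_*S$ is a positive current in $\pi_*\beta$, and off $Z$ the two agree biholomorphically, so $\vol\pi_*\beta\geq\int_X\langle(\pi_*S)^n\rangle=\int_Y\langle S^n\rangle=\vol\beta$. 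The middle equality holds because non-pluripolar products ignore the pluripolar sets $E$ and $Z$.

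For (2), I would use the criterion $\nu(\beta,D)=0$ for all $D$. First take $\beta$ big, with $S$ of minimal singularities in $\beta$. For a prime divisor $D\subset X$, its strict transform $D'$ satisfies $\nu(\pi_*S,D)=\nu(S,D')=0$, since generic Lelong numbers are computed at generic points, where $\pi$ is an isomorphism. As $\pi_*S\in\pi_*\beta$ and minimal singularities only lower Lelong numbers, $\nu(\pi_*\beta,D)=0$. For general modified nef $\beta$, apply this to $\beta+\epsilon\{\omega_Y\}$, whose push-forward is $\pi_*\beta+\epsilon\pi_*\{\omega_Y\}$, and let $\epsilon\to0$ using closedness of the modified nef cone.

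I expect the main obstacle to be the equality in (3), specifically showing that every positive current in $\pi^*\alpha$ is a pull-back, with no divisorial part along $E$ that is cohomologically trivial. This is the negativity lemma, and I would try it first.
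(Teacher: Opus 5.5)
Your argument for (2) is the paper's: perturb $\beta$ to a big class, push forward a current $S$ with minimal singularities (non-divisorial because $\beta$ is modified nef and big), and use $\nu(\pi_*S,D)=\nu(S,D')=0$ along the strict transform, which is exactly the paper's lemma on push-forwards of non-divisorial currents. The paper treats (1), (3), (4) as standard and does not prove them; your arguments for these are sound, with two small remarks on (3): in the first route, which you discard, the perturbation must be $\pi^*T+\eta\delta\theta_E+\eta\delta[\sum a_iE_i]$ (plus sign) to stay in $\pi^*\alpha$, and in the volume route you implicitly use $\vol=\int\langle T_{\min}^n\rangle$ for $\pi^*\alpha$ before knowing it is big, which is harmless because a pseudoeffective class carrying a current of positive non-pluripolar mass is big.
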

\begin{proof}
    Only (2) requires a proof. Fix a Kähler class $\gamma$. Replacing $\beta$ by $\beta+\epsilon\gamma$ for $\epsilon\in (0,1)$, we reduce immediately to the case where $\beta$ is big as well. Let $T$ (resp. $S$) be a current with minimal singularities in $\pi_*\beta$ (resp. in $\beta$) and $D$ be a prime divisor on $X$, it suffices to show that
    \[
    \nu(T,D)=0,
    \]
    by \cref{lma:ndvpush} below, $\nu(\pi_*S,D)=0$, so our assertion follows.
\end{proof}

Let $T$ be a closed positive $(1,1)$-current on $X$. Then we define the \emph{regular part} $\Reg T$ of $T$ as the regular part of $T$ with respect to Siu's decomposition. In other words, we write
\begin{equation}\label{eq:Siudec}
T=\Reg T+\sum_i c_i [E_i],
\end{equation}
where $E_i$ is a countable collection of prime divisors on $X$ and $c_i=\nu(T,E_i)>0$; the regular part $\Reg T$ is a closed positive $(1,1)$-current whose generic Lelong number along each prime divisor on $X$ is $0$.

\begin{definition}\label{def:nd}
    We say a closed positive $(1,1)$-current $T$ on $X$ is \emph{non-divisorial} (resp. \emph{divisorial}) if $T=\Reg T$ (resp. $\Reg T=0$).
\end{definition}
Note that the cohomology class of a non-divisorial current is always modified nef. Conversely, a current with minimal singularities in a \emph{big} and modified nef class is always non-divisorial. 

There is a closely related notion introduced in \cite{McC21}:
\begin{definition}
    We say a closed positive $(1,1)$-current $S$ on $X$ is \emph{non-pluripolar} (resp. \emph{pluri-supported}) if $S=\langle S \rangle$ (resp. $\langle S \rangle=0$).
\end{definition}
Here $\langle S \rangle$ denotes the non-pluripolar polar part of $S$, namely the non-pluripolar product of $S$ itself in the sense of \cite{BEGZ10}. 

Clearly, a divisorial current is necessarily pluri-supported, and a non-pluripolar current is necessarily non-divisorial. But these notions are not equivalent in general. However, within the class of $\mathcal{I}$-good singularities, these notions turn out to be equivalent. Since we are not in need of the latter result in this paper, we omit the proof.

\begin{lemma}\label{lma:ndvpush}
    Let $\pi\colon Y\rightarrow X$ be a proper bimeromorphic morphism from Kähler manifold $Y$. Let $T$ be a non-divisorial current on $Y$, then $\pi_*T$ is non-divisorial.
\end{lemma}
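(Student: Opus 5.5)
We have to show that for every prime divisor $D$ on $X$ the generic Lelong number $\nu(\pi_*T,D)$ vanishes. Let $Z\subset X$ be the proper analytic subset over which $\pi$ fails to be an isomorphism. Since $\pi$ is bimeromorphic, we may choose $Z$ of codimension at least $2$ in $X$; this holds because $X$ is smooth, hence normal, so by Zariski's main theorem the indeterminacy of $\pi^{-1}$ lies in codimension $\geq 2$. The plan is to compute the generic Lelong number of $\pi_*T$ along $D$ at a very general point of $D$, where everything is an isomorphism.

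Fix a prime divisor $D$ on $X$. Since $\operatorname{codim} Z\geq 2$, the set $D\setminus Z$ is a nonempty Zariski open subset of $D$. Let $D'$ be the strict transform of $D$ in $Y$, which is again a prime divisor. Over $U\coloneqq X\setminus Z$ the map $\pi$ restricts to a biholomorphism $\pi^{-1}(U)\to U$. Moreover, $\pi_*T$ restricted to $U$ is just the push-forward of $T|_{\pi^{-1}(U)}$ under this biholomorphism.

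Lelong numbers are invariant under local biholomorphisms. Hence for $x\in D\setminus Z$ we get $\nu(\pi_*T,x)=\nu(T,\pi^{-1}(x))$. Recall that the generic Lelong number $\nu(S,D)$ equals $\inf_{x\in D}\nu(S,x)$. This infimum is attained on the complement of a countable union of proper analytic subsets of $D$, by Siu's semicontinuity theorem. So it may be computed on any nonempty Zariski open subset of $D$, in particular on $D\setminus Z$. Therefore
\[
\nu(\pi_*T,D)=\inf_{x\in D\setminus Z}\nu(\pi_*T,x)=\inf_{y\in D'\setminus \pi^{-1}(Z)}\nu(T,y)=\nu(T,D')=0,
\]
where the last equality uses that $T$ is non-divisorial. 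As $D$ was arbitrary, $\pi_*T=\operatorname{Reg}(\pi_*T)$ by the Siu decomposition \eqref{eq:Siudec}, that is, $\pi_*T$ is non-divisorial.

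An alternative route is to take the Siu decomposition of $\pi_*T$ directly. Any divisorial component $c[D]$ of $\pi_*T$ would restrict over $U$ to a divisorial component $c[D'\cap\pi^{-1}(U)]$ of $T$. A closed positive current supported in $\pi^{-1}(Z)$ cannot carry divisors that map onto $D$, so this component would have to come from $T$ itself, which is impossible.

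The only point needing care is the justification that $Z$ has codimension at least $2$. This is what prevents $D$ from being swallowed entirely by the non-isomorphism locus, which would make the comparison above impossible. I expect this to be the main (though minor) obstacle. It follows from normality of $X$ via Zariski's main theorem: the fibers of $\pi$ over points of $X$ are connected, so $\pi$ is an isomorphism over the points with finite fibers, and the set of points with positive-dimensional fibers has codimension $\geq 2$, because its preimage is contained in the exceptional locus, whose image cannot be a divisor in a smooth $X$. For modifications in the paper's restricted sense (compositions of smooth blow-ups with centers of codimension $\geq 2$) this is immediate from the construction.
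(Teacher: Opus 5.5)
Your proposal is correct and follows essentially the same route as the paper: Zariski's main theorem shows that $D$ is not contained in the locus over which $\pi$ fails to be an isomorphism, and Siu's semicontinuity theorem then gives $\nu(\pi_*T,D)=\nu(T,D')=0$ for the strict transform $D'$. Your justification that this locus has codimension at least $2$ is somewhat circular as phrased, but a dimension count settles it: if every fiber over a divisor $S_0\subseteq X$ were positive-dimensional, then $\dim\pi^{-1}(S_0)\geq n$, which forces $\pi^{-1}(S_0)=Y$ and contradicts the surjectivity of $\pi$.
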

Conversely, if $S$ is a non-divisorial current on $X$, $\pi^*S$ is could have divisorial part. As a simple example, consider $S$ on $\mathbb{P}^2$, whose local potential near $0\in \mathbb{C}^2_{z,w}$ looks like $\log (|z|^2+|w|^2)$.
\begin{proof}
    Let $D$ be a prime divisor on $X$. It follows from Zariski's main theorem (\cite[Théorème~1.7]{Dem85}) that $D$ is not contained in the exceptional locus of $\pi$.
    Let $D'$ be the strict transform of $D$. Thanks to Siu's semicontinuity theorem, we have
    \[
    \nu(\pi_*T,D)=\nu(T,D')=0.
    \]
    Hence $\pi_*T$ is non-divisorial.
\end{proof}

\subsection{Quasi-plurisubharmonic functions}

We first recall the notions of $P$ and $\mathcal{I}$-equivalences. The latter is introduced in \cite{DX22} based on \cite{BFJ08}. The former was introduced in \cite{Xiabook} based on \cite{RWN14}.

\begin{definition}
    Let $\varphi,\psi$ be quasi-plurisubharmonic functions on $X$. We say $\varphi\sim_P \psi$ (resp. $\varphi\preceq_P \psi$) if there is a closed smooth real $(1,1)$-form $\theta$ on $X$ such that $\varphi,\psi\in \PSH(X,\theta)_{>0}$ and 
    \[
    P_{\theta}[\varphi]=P_{\theta}[\psi]\quad (\textup{resp.} P_{\theta}[\varphi]\leq P_{\theta}[\psi]).
    \]
\end{definition}
Here $\PSH(X,\theta)$ denotes the space of $\theta$-plurisubharmonic functions on $X$ and $\PSH(X,\theta)_{>0}$ denotes the subset consisting of $\varphi\in \PSH(X,\theta)$ with $\int_X \theta_{\varphi}^n>0$, with $\theta_{\varphi}=\theta+\ddc\varphi$. 
Here and in the sequel, the Monge--Ampère type product $\theta_{\varphi}^n$ is always understood in the non-pluripolar sense of \cite{BT87, GZ07, BEGZ10}. The envelope $P_{\theta}$ is defined as follows:
\[
P_{\theta}[\varphi]\coloneqq \sups_{\!\!\! C\in \mathbb{R}} (\varphi+C)\land 0,
\]
where $(\varphi+C)\land 0$ is the maximal element in $\PSH(X,\theta)$ dominated by both $\varphi+C$ and $0$.

Given a closed smooth real $(1,1)$-form $\theta$ on $X$ so that $\varphi,\psi\in \PSH(X,\theta)$, we also say $\theta_{\varphi}\sim_P \theta_{\psi}$ (resp. $\theta_{\varphi}\preceq_P \theta_{\psi}$) if $\varphi\sim_P \psi$ (resp. $\varphi\preceq_P \psi$). The same convention applies also to the $\mathcal{I}$-partial order introduced later.

The main interest of the $P$-partial order lies in the following monotonicity theorem.
\begin{theorem}\label{thm:mono2}
    Let $\theta_1,\ldots,\theta_n$ be closed real smooth $(1,1)$-forms on $X$.
    Let $\varphi_i,\psi_i\in \PSH(X,\theta_i)$ for $i=1,\ldots,n$. Assume that $\varphi_i\preceq_P \psi_i$ for each $i=1,\ldots,n$. Then
    \[
        \int_X \theta_{1,\varphi_1}\wedge \cdots \wedge  \theta_{n,\varphi_n}\leq \int_X \theta_{1,\psi_1}\wedge \cdots \wedge  \theta_{n,\psi_n}.
    \]
\end{theorem}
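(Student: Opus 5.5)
Since $\varphi_i\preceq_P\psi_i$ means $P_{\theta_i}[\varphi_i]\le P_{\theta_i}[\psi_i]$ for a suitable form, the plan is to reduce to a comparison of \emph{model} potentials and then invoke the known monotonicity of non-pluripolar mixed masses with respect to singularity type. Three steps follow.

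\textbf{Step 1: invariance of mixed masses under $P$.} First I note that the mixed mass $\int_X\theta_{1,\varphi_1}\wedge\cdots\wedge\theta_{n,\varphi_n}$ is unchanged if we replace the forms $\theta_i$ by cohomologous forms: the potentials shift by smooth functions, so the singularity types do not change. It is also unchanged if we replace $\theta_i$ by a form $\theta_i'$ with $\varphi_i\in\PSH(X,\theta'_i)$, as long as we compare classes correctly. So I may assume that the form $\theta$ in the definition of $\preceq_P$ is $\theta_i$ itself, possibly after adding a Kähler form and expanding multilinearly.

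Next I show that replacing $\varphi_i$ by $P_{\theta_i}[\varphi_i]$ does not change the mixed mass. Set $\varphi_i^C=(\varphi_i+C)\land 0$. Each $\varphi_i^C$ has the same singularity type as $\varphi_i$, so by the mixed version of the Witt Nyström / Darvas--Di Nezza--Lu monotonicity theorem (\cite{BEGZ10}, DDNL) the masses agree. As $C\to\infty$ the $\varphi_i^C$ increase to $P_{\theta_i}[\varphi_i]$ almost everywhere. Non-pluripolar mixed masses are lower semicontinuous along increasing sequences, and they are bounded above by the mass of the less singular potential $P_{\theta_i}[\varphi_i]$ (again by monotonicity, since $\varphi_i^C\le P_{\theta_i}[\varphi_i]$). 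Hence the mass is preserved under $P$; this is essentially the statement that $P[\varphi]$ has the same masses, as in \cite{DDNLmetric}.

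\textbf{Step 2: compare via the inequality.} After Step 1 the claim reduces to potentials $u_i=P_{\theta_i}[\varphi_i]\le v_i=P_{\theta_i}[\psi_i]$. Then $u_i$ is more singular than $v_i$, and the mixed DDNL monotonicity theorem gives $\int\theta_{1,u_1}\wedge\cdots\le\int\theta_{1,v_1}\wedge\cdots$.

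\textbf{Main obstacle.} The delicate point is Step 1's reduction when the defining $\theta$ differs from $\theta_i$. In particular we must check that $P_\theta$ and $P_{\theta_i}$ relations are compatible, and that positivity of the masses (the condition $\PSH_{>0}$) is what is needed to apply the continuity argument. To handle this, I would prove that $\varphi\preceq_P\psi$ for one $\theta$ implies the same for every $\theta'\ge\theta$, by writing $P_{\theta'}[\varphi]$ in terms of $P_\theta[\varphi]$ using the fact that singularity types coincide on the non-pluripolar level. For $\theta$ and $\theta_i$ not comparable, I would pass through $\theta+\theta_i+\omega$-type forms.
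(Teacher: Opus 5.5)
Your Steps~1 and~2 are fine. The invariance of mixed masses under $\varphi_i\mapsto P_{\theta_i}[\varphi_i]$ (via $(\varphi_i+C)\land 0$, monotonicity, and lower semicontinuity along increasing sequences) needs no positivity, and Step~2 is the classical monotonicity theorem. The gap is the reduction to $P_{\theta_i}[\varphi_i]\le P_{\theta_i}[\psi_i]$, which you never establish.

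The hypothesis only gives $P_\theta[\varphi_i]\le P_\theta[\psi_i]$ for \emph{some} form $\theta$ in which both potentials have positive mass. The statement, however, allows $\theta_{i,\varphi_i}$ to have zero mass (e.g.\ $\{\theta_i\}$ not big). Several parts of your plan do not bridge this:
\begin{itemize}
\item Your remark that the masses are unchanged when $\theta_i$ is replaced by a non-cohomologous form is false.
\item ``Adding a K\"ahler form and expanding multilinearly'' does not repair it. An inequality $\int_X\bigwedge_i(\theta_i+\omega)_{\varphi_i}\le\int_X\bigwedge_i(\theta_i+\omega)_{\psi_i}$ between two expanded sums gives no information on the top-degree terms you need.
\item Your transfer plan only goes upward ($\Theta\ge\theta$), and its justification misses the one non-formal ingredient. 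Step~1 yields equalities of \emph{masses}, never of \emph{envelopes}. To get $P_{\Theta}[\varphi]=P_{\Theta}[P_\theta[\varphi]]$ you need the rigidity theorem of Darvas--Di Nezza--Lu \cite{DDNL18mono}: if $u\le v$ in $\PSH(X,\Theta)$ and $\int_X\Theta_u^n=\int_X\Theta_v^n>0$, then $P_\Theta[u]=P_\Theta[v]$.
\end{itemize}
This rigidity, not Step~1, is where positivity of masses enters. After passing to a common larger form you have no way back down to $\theta_i$ itself, where the rigidity may be unavailable.

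One way to close the gap is to turn the inequality into an equality first.
\begin{enumerate}
\item Since $\varphi_i\le\varphi_i\lor\psi_i$, classical monotonicity at level $\theta_i$ gives $\int_X\bigwedge_i\theta_{i,\varphi_i}\le\int_X\bigwedge_i\theta_{i,\varphi_i\lor\psi_i}$. So it suffices to show that the latter equals $\int_X\bigwedge_i\theta_{i,\psi_i}$.
\item From $\varphi_i-\sup_X\varphi_i\le P_\theta[\varphi_i]\le P_\theta[\psi_i]$ and idempotence of $P_\theta$ on $\PSH(X,\theta)_{>0}$, you get $P_\theta[\varphi_i\lor\psi_i]=P_\theta[\psi_i]$.
\item For $\Theta=\theta+\eta$ with $\eta\ge 0$ smooth, expand $\int_X\Theta^n_u$ binomially in $\theta_u$ and $\eta$. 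Your Step~1, applied term by term, gives $\int_X\Theta^n_{\varphi_i\lor\psi_i}=\int_X\Theta^n_{\psi_i}>0$. Rigidity then gives $P_\Theta[\varphi_i\lor\psi_i]=P_\Theta[\psi_i]$.
\item Take $\Theta=\theta_i+t\omega$ for all $t\ge t_0$, with $t_0$ so large that $\theta_i+t_0\omega$ dominates the reference forms of all pairs. Step~1 at this level shows $\int_X\bigwedge_i(\theta_i+t\omega)_{\varphi_i\lor\psi_i}=\int_X\bigwedge_i(\theta_i+t\omega)_{\psi_i}$ for all $t\ge t_0$.
\item Both sides are polynomials in $t$ by multilinearity of non-pluripolar products, since the currents $\theta_{i,\varphi_i\lor\psi_i}$ and $\theta_{i,\psi_i}$ are positive. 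Hence they agree at $t=0$, which is the claim.
\end{enumerate}
Alternatively, you can transfer the $P$-relation \emph{downward} to $\theta_i+\epsilon\omega$ using term-wise monotonicity in the same expansion, and then let $\epsilon\to 0$.
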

This result is a consequence of the monotonicity theorem of Witt Nyström \cite{WN19, DDNL18mono}. See \cite[Proposition~6.1.4]{Xiabook} for the proof.

\begin{definition}\label{def:Iequiv}
    Let $\varphi,\psi$ be quasi-plurisubharmonic functions on $X$. We say $\varphi\sim_{\mathcal{I}} \psi$ (resp. $\varphi\preceq_{\mathcal{I}}\psi$) if $\mathcal{I}(\lambda\varphi)=\mathcal{I}(\lambda\psi)$ (resp. $\mathcal{I}(\lambda\varphi)\subseteq\mathcal{I}(\lambda\psi)$) for all real $\lambda>0$. 
\end{definition}
Here $\mathcal{I}$ denotes the multiplier ideal sheaf in the sense of Nadel. 

If $\theta$ is a closed smooth real $(1,1)$-form such that $\varphi,\psi\in \PSH(X,\theta)$, then $\varphi\preceq_{\mathcal{I}} \psi$ if and only if
\[
P_{\theta}[\varphi]_{\mathcal{I}}\leq P_{\theta}[\psi]_{\mathcal{I}},
\]
where
\[
P_{\theta}[\varphi]_{\mathcal{I}}=\sup\left\{\eta\in \PSH(X,\theta):\eta\leq 0, \mathcal{I}(\lambda\varphi)\supseteq \mathcal{I}(\lambda\eta)\textup{ for all }\lambda>0 \right\}.
\]
Equivalently, we may replace $\supseteq$ by $=$ in this equation.

Another equivalent formulation of \cref{def:Iequiv} is that for any prime divisor $E$ over $X$, we have
\[
\nu(\varphi,E)=\nu(\psi,E)\quad \textup{resp. } \nu(\varphi,E)\geq\nu(\psi,E).
\]
Here $\nu$ denotes the generic Lelong number. We refer to \cite[Section~3.2.1]{Xiabook} for the details.

Given any $\varphi\in \PSH(X,\theta)$, we have 
\[
\varphi-\sup_X \varphi\leq P_{\theta}[\varphi]\leq P_{\theta}[\varphi]_{\mathcal{I}}.
\]
See \cite[Proposition~2.18]{DX22} or \cite[Proposition~3.2.9]{Xiabook}.

For later use, let us recall the following:
\begin{lemma}\label{lma:Ipartorderinv}
Let $\pi\colon Y\rightarrow X$ be a proper bimeromorphic morphism from a Kähler manifold $Y$.
    Given two quasi-plurisubharmonic functions $\varphi,\psi$ on $X$, then the following are equivalent:
    \begin{itemize}
        \item $\varphi\preceq_{\mathcal{I}} \psi$;
        \item $\pi^*\varphi\preceq_{\mathcal{I}} \pi^*\psi$.
    \end{itemize}
\end{lemma}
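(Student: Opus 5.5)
The plan is to use the valuative characterization of the $\mathcal{I}$-partial order recalled just before the statement: $\varphi\preceq_{\mathcal{I}}\psi$ holds if and only if $\nu(\varphi,E)\geq\nu(\psi,E)$ for every prime divisor $E$ over $X$. Since this criterion only involves divisorial valuations on $X$, the lemma reduces to the claim that pull-back by $\pi$ matches up the prime divisors over $X$ with those over $Y$, and matches their generic Lelong numbers.

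\textbf{Step 1: divisors over $X$ and over $Y$ coincide.} A prime divisor over $Y$ is a prime divisor $F$ on some modification $Z\to Y$. Composing, $Z\to Y\to X$ is a proper bimeromorphic morphism, so $F$ is also a divisor over $X$. Conversely, let $E$ be a prime divisor on a modification $Z\to X$. Since $\pi$ is bimeromorphic, I take a resolution $W$ of the graph of $Z\dashrightarrow Y$; then $W$ dominates both $Z$ and $Y$. The strict transform of $E$ in $W$ is a prime divisor $E_W$ that defines the same divisorial valuation as $E$. So $E_W$ is a prime divisor over $Y$ inducing the same valuation on the common function field.

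If prime divisors over $X$ are required to live on modifications in the paper's restricted sense (finite compositions of smooth blow-ups), I will choose $W$ accordingly. By Hironaka's Chow lemma in the analytic setting, any proper bimeromorphic morphism is dominated by a finite sequence of blow-ups with smooth centers. This does not change the valuation.

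\textbf{Step 2: Lelong numbers agree.} The generic Lelong number $\nu(\varphi,E)$ is computed by pulling $\varphi$ back to a model on which $E$ lives. If $p\colon W\to X$ factors as $\pi\circ q$, then $p^*\varphi=q^*(\pi^*\varphi)$. Hence $\nu(\varphi,E_W)=\nu(\pi^*\varphi,E_W)$, and likewise for $\psi$. I also need independence of the model: by Siu's semicontinuity and birational invariance, the generic Lelong number along the strict transform equals the original one, exactly as in the proof of \cref{lma:ndvpush}. This is what justifies identifying $E$ with $E_W$.

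\textbf{Step 3: conclude.} Under the identification of Step 1, the family of inequalities $\nu(\varphi,E)\geq\nu(\psi,E)$ over all $E$ over $X$ becomes exactly the family $\nu(\pi^*\varphi,F)\geq\nu(\pi^*\psi,F)$ over all $F$ over $Y$. The valuative criterion then gives the equivalence of the two bullet points.

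An alternative route goes through multiplier ideals: $\mathcal{I}(\lambda\varphi)=\pi_*\bigl(K_{Y/X}\otimes\mathcal{I}(\lambda\pi^*\varphi)\bigr)$. This gives the implication from $Y$ to $X$ directly, but the converse is less direct, so I prefer the valuative route.

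The main obstacle is Step 1. It requires care that the class of models used to define ``divisors over $X$'' is cofinal and stable under the construction, which is where the Chow lemma and resolution of the graph are needed. Everything else is formal.
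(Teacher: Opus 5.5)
Your proof is correct. For (1)$\Rightarrow$(2) it is the paper's argument: a prime divisor over $Y$ is a prime divisor over $X$, and the generic Lelong numbers of $\pi^*\varphi$ and $\varphi$ along it agree, so the valuative criterion transfers.

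For (2)$\Rightarrow$(1) the paper takes the route you set aside. It tensors the inclusion $\mathcal{I}(\lambda\pi^*\varphi)\subseteq\mathcal{I}(\lambda\pi^*\psi)$ with $\omega_{Y/X}$, pushes forward (left exactness of $\pi_*$ preserves the inclusion), and applies $\pi_*\bigl(\omega_{Y/X}\otimes\mathcal{I}(\lambda\pi^*\varphi)\bigr)=\mathcal{I}(\lambda\varphi)$. The paper thus uses each tool only in the direction where it is immediate, and never has to realize divisors over $X$ on models over $Y$.

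Your purely valuative route needs the converse half of your Step~1. That means resolving the graph of $Z\dashrightarrow Y$, then invoking Hironaka's Chow lemma to stay within finite compositions of smooth blow-ups, then comparing generic Lelong numbers along strict transforms. This is more work. In return it is symmetric and gives slightly more: $\pi$ induces a bijection between prime divisors over $X$ and over $Y$ that preserves generic Lelong numbers.

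One correction of phrasing: in this analytic setting, do not justify the identification by valuations on a ``common function field'', since a compact Kähler manifold may have only constant meromorphic functions. The justification that works is the local one in your Step~2. A proper bimeromorphic morphism $W\to Z$ onto the manifold $Z$ is an isomorphism over the complement of an analytic subset of codimension $\geq 2$. Hence it is biholomorphic near a general point of the strict transform $E_W$, and the generic Lelong numbers coincide.
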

\begin{proof}
    (1) $\implies$ (2). Just observe that each prime divisor over $Y$ is also a prime divisor over $X$.

    (2) $\implies$ (1). This follows from the well-known formula:
    \[
    \pi_*\left( \omega_{Y/X}\otimes \mathcal{I}(\lambda \pi^*\varphi) \right)=\mathcal{I}(\lambda\varphi),\quad \lambda>0,
    \]
    where $\omega_{Y/X}$ is the relative dualizing sheaf. See \cite[Proposition~5.8]{Dem12}.
\end{proof}

The operation $P_{\theta}[\bullet]_{\mathcal{I}}$ is idempotent. We say $\varphi\in \PSH(X,\theta)$ is \emph{$\mathcal{I}$-model} if $P_{\theta}[\varphi]_{\mathcal{I}}=\varphi$. Similarly, on the subset $\PSH(X,\theta)_{>0}$, the operation $P_{\theta}[\bullet]$ is also idempotent, see \cite[Theorem~3.12]{DDNL18mono}. We say $\varphi\in \PSH(X,\theta)_{>0}$ is \emph{model} if $P_{\theta}[\varphi]=\varphi$.

Suppose that $\{\theta\}$ is big.
It is shown in \cite{DDNLmetric} that there is a pseudometric $d_S$ on $\PSH(X,\theta)$ satisfying the following inequality:
For any $\varphi,\psi\in \PSH(X,\theta)$, we have
    \begin{equation}\label{eq:ds_biineq}
        \begin{split}
            d_S(\varphi,\psi)\leq & \frac{1}{n+1}\sum_{j=0}^n \left( 2\int_X \theta_{\varphi\lor \psi}^j\wedge \theta_{V_{\theta}}^{n-j}-\int_X \theta_{\varphi}^j\wedge \theta_{V_{\theta}}^{n-j}-\int_X \theta_{\psi}^j\wedge \theta_{V_{\theta}}^{n-j} \right) \\
            \leq & C_n d_S(\varphi,\psi),
        \end{split}
\end{equation}
where $C_n=3(n+1)2^{n+2}$. Here $V_{\theta}=\max\{\varphi\in \PSH(X,\theta):\varphi\leq 0\}$.
Moreover, $d_S(\varphi,\psi)=0$ if and only if $\varphi\sim_P \psi$. See \cite[Proposition~6.2.2]{Xiabook}. In particular, the $d_S$-pseudometric descends to a pseudometric (still denoted by $d_S$) on the space of closed positive $(1,1)$-currents in $\{\theta\}$. 

Given a net of closed positive $(1,1)$-currents $T_i$ in $\{\theta\}$, and another closed positive $(1,1)$-current $T$ in $\{\theta\}$. It is shown in \cite[Section~4]{Xia21} and \cite[Corollary~6.2.8]{Xiabook} that $T_i\xrightarrow{d_S}T$ if and only if $T_i+\omega\xrightarrow{d_S}T+\omega$ for any Kähler form $\omega$ on $X$.

In general, given closed positive $(1,1)$-currents $T_i$ and $T$ on $X$, we say $T_i\xrightarrow{d_S} T$ if we can find Kähler forms $\omega_i$ and $\omega$ on $X$ such that the $T_i+\omega_i$'s and $T+\omega$ represent the same cohomology class and $T_i+\omega_i\xrightarrow{d_S} T+\omega$. This definition is independent of the choices of the $\omega_i$'s and $\omega$.

We introduce a stronger notion in this paper:
\begin{definition}\label{def:stronconve}
    Let $(T_i)_i$ be a net of closed positive $(1,1)$-current on $X$ and $T$ be a closed positive $(1,1)$-current on $X$. We say $T_i\implies T$ if 
    \begin{enumerate}
        \item $T_i\xrightarrow{d_S} T$;
        \item $\{T_i\}\to \{T\}$.
    \end{enumerate}
\end{definition}

A quasi-plurisubharmonic function $\varphi$ on $X$ is called \emph{$\mathcal{I}$-good} if there is a closed smooth real $(1,1)$-form $\theta$ on $X$ such that $\varphi\in \PSH(X,\theta)_{>0}$ and 
\[
P_{\theta}[\varphi]=P_{\theta}[\varphi]_{\mathcal{I}}.
\]
For any closed smooth real $(1,1)$-form $\theta'$ on $X$ so that $\theta'+\ddc\varphi\geq 0$, we also say the current $\theta'_{\varphi}$ is $\mathcal{I}$-good. This notion is independent of the choice of $\theta$, as proved in \cite[Lemma~1.7]{Xia24}. See also \cite[Section~7.1]{Xiabook}. As a simple example, an $\mathcal{I}$-model potential with positive non-pluripolar mass is always $\mathcal{I}$-good.

A key result proved in \cite{DX22,DX21} is the following:
\begin{theorem}\label{thm:DX}
    A closed positive $(1,1)$-current $T$ on $X$ is $\mathcal{I}$-good if and only if there is a sequence of closed positive $(1,1)$-currents $(T_j)_j$ on $X$ with analytic singularities such that $T_j\implies T$.

    In fact, $(T_j)_j$ can be taken as any quasi-equisingular approximation of $T$.
\end{theorem}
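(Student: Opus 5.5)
We prove the two directions separately. Fix a closed smooth real $(1,1)$-form $\theta$ and write $T=\theta_\varphi$. Normalize $\varphi\le 0$. Replacing $\theta$ by $\theta+\omega$ if necessary, we may assume $\{\theta\}$ is big, so that $d_S$ is defined. Throughout we may replace $T$ by $\theta_{P_\theta[\varphi]}$: the two potentials are $P$-equivalent, hence at $d_S$-distance zero, and the cohomology class does not change.

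\emph{($\Rightarrow$).} Suppose $T$ is $\mathcal{I}$-good, so $\psi\coloneqq P_\theta[\varphi]=P_\theta[\varphi]_{\mathcal{I}}$ and $\int_X\theta_\psi^n>0$. Take a quasi-equisingular approximation $\varphi_j$ of $\psi$. These are quasi-psh functions with analytic singularities, decreasing to $\psi$, with $\varphi_j\in\PSH(X,\theta+\epsilon_j\omega)$ for some $\epsilon_j\to0$, and with $\mathcal{I}(\lambda\varphi_j)$ eventually agreeing with $\mathcal{I}(\lambda\psi)$ in the usual sense. Set $T_j=\theta_{\varphi_j}+\epsilon_j\omega$. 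Then $\{T_j\}\to\{T\}$ is automatic, and it remains to prove $d_S$-convergence.
\begin{itemize}
\item By \eqref{eq:ds_biineq}, it suffices to show the mixed masses converge: $\int(\theta+\epsilon_j\omega)_{\varphi_j}^k\wedge(\theta+\epsilon_j\omega)_{V}^{n-k}\to\int\theta_\psi^k\wedge\theta_{V_\theta}^{n-k}$. The maximum term is controlled because $\varphi_j\ge\psi$.
\item For the lower bound, use monotonicity (\cref{thm:mono2}) together with $\psi\preceq_P\varphi_j$.
\item For the upper bound, the limit of the masses is at most the masses of $P_\theta[\psi]_{\mathcal I}$. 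This holds because a decreasing limit of potentials with analytic singularities has masses computed by the $\mathcal I$-envelope: a limit of $P_{\theta+\epsilon_j\omega}[\varphi_j]$ is $\mathcal{I}$-equivalent to $\psi$, hence lies below $P_\theta[\psi]_{\mathcal I}=\psi$.
\end{itemize}

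\emph{($\Leftarrow$).} Suppose $T_j\implies T$ with $T_j$ having analytic singularities. After adding $\epsilon_j\omega$, view $T_j=(\theta+\delta_j)_{\varphi_j}$ for smooth forms $\delta_j\to0$. Potentials with analytic singularities are $\mathcal{I}$-good, so $P[\varphi_j]=P[\varphi_j]_{\mathcal I}$. The key step is that $d_S$-convergence implies convergence of the $\mathcal{I}$-envelopes' masses. Concretely, one shows $\int\theta_{P[\varphi]_{\mathcal I}}^n=\lim\int(\theta+\delta_j)_{\varphi_j}^n=\int\theta_\varphi^n$, where the second equality follows from $d_S$-continuity of volumes.

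To get the first equality, use semicontinuity of the $\mathcal{I}$-volume along $d_S$-limits. Via Lelong numbers on all divisors over $X$, the functional $\varphi\mapsto\int\theta_{P[\varphi]_{\mathcal I}}^n$ is a $d_S$-continuous quantity, namely the volume of the associated b-divisor. The needed continuity of this functional under $d_S$ is the main obstacle. I would prove it by computing the $\mathcal I$-volume as $\lim_k\frac{n!}{k^n}h^0(\mathcal I(k\varphi))$-type quantities, or as the infimum over modifications of volumes $\vol(\pi^*\{\theta\}-\sum\nu(\varphi,E)E)$, and then showing that the generic Lelong numbers $\nu(\varphi_j,E)\to\nu(\varphi,E)$ under $d_S$.

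Finally, once $\psi\coloneqq P[\varphi]\le P[\varphi]_{\mathcal I}$ are shown to have equal positive mass, the domination principle for model potentials \cite{DDNL18mono} gives $P[\varphi]=P[\varphi]_{\mathcal I}$, which is exactly $\mathcal{I}$-goodness of $T$.
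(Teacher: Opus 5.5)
The paper does not prove \cref{thm:DX}; it quotes it from \cite{DX22,DX21}. Your ($\Rightarrow$) direction follows the standard argument behind \cite[Corollary~3.4]{DX21}, which the paper itself uses in Step~3 of \cref{thm:CaoequalDX}, and it is acceptable in outline. Two points need to be made explicit.
\begin{enumerate}
\item \eqref{eq:ds_biineq} is a statement inside one fixed class. You should therefore compare, e.g., $T_j+(1-\epsilon_j)\omega$ with $T+\omega$ in $\{\theta+\omega\}$, where the potentials decrease.
\item Your upper bound is not a property of analytic singularities as such. It is the continuity of (mixed) non-pluripolar masses along the decreasing sequence of model potentials $P_{\theta+\epsilon_j\omega}[\varphi_j]$. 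Their limit $w$ satisfies $\psi\le w\le P_\theta[\psi]_{\mathcal I}=\psi$; the second inequality holds because $\nu(w,E)\ge\nu(\varphi_j,E)\to\nu(\psi,E)$.
\end{enumerate}

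The ($\Leftarrow$) direction has a genuine gap. The inequality $\vol T\ge\int_X T^n=\lim_j\int_X T_j^n=\lim_j\vol T_j$ is free, by \cref{prop:strongcontnpm} and the $\mathcal{I}$-goodness of analytic singularities. So the whole content is the reverse inequality $\vol T\le\liminf_j\vol T_j$, a \emph{lower} semicontinuity of the $\mathcal{I}$-volume.

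The mechanism you propose yields only the opposite inequality. For a fixed modification $Y$ one has $\vol T_j\le\vol\mathbb{D}(T_j)_Y$. Convergence of Lelong numbers gives $\vol\mathbb{D}(T_j)_Y\to\vol\mathbb{D}(T)_Y$. Taking the infimum over $Y$ afterwards gives only $\limsup_j\vol T_j\le\vol T$. This is exactly \cref{lma:volbdivnet} combined with \cref{prop:dscontbdiv}, and the paper points out (via \cite[Example~3.3]{DF20}) that equality can fail for convergent nets of nef b-divisors.

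The obstruction is an interchange of limits. The modifications on which the analytic $T_j$ become Cartier change with $j$, while $\nu(T_j,E)\to\nu(T,E)$ holds only for each fixed $E$, with no uniformity. The $h^0(\mathcal{I}(k\varphi))$ route meets the same interchange of $k\to\infty$ and $j\to\infty$, and it is unavailable for transcendental classes anyway. So your claim that $\varphi\mapsto\int_X\theta^n_{P[\varphi]_{\mathcal I}}$ is $d_S$-continuous is unsupported in precisely the direction you need.

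What is missing is an argument that uses the $\mathcal{I}$-goodness of the approximants together with a monotone reduction of the $d_S$-convergent sequence. One route is roughly the scheme of Step~1 of the proof of \cref{thm:usc_mix_vol}:
\begin{enumerate}
\item Pass to a fast subsequence in a common class and replace $\varphi_j$ by $\psi_j=\sups_{k\ge j}\varphi_k$, which decrease and still $d_S$-converge to $\varphi$.
\item If each $\psi_j$ is $\mathcal{I}$-good, then $w=\inf_j P[\psi_j]$ is $\mathcal{I}$-model and $P[\psi_j]\xrightarrow{d_S}w$.
\item Hence $w\sim_P\varphi$, and $P[\varphi]=w=P[\varphi]_{\mathcal I}$.
\end{enumerate}
The hard point is that $\psi_j$, an increasing limit of the analytic finite maxima $\max(\varphi_j,\ldots,\varphi_m)$, is $\mathcal{I}$-good. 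This is a stability statement of the type of \cref{prop:Igoodclosed}, and it must be established without appealing to the theorem being proved. Your proposal supplies nothing that plays this role.
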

Here we say a closed positive $(1,1)$-current $T$ has analytic singularities if locally $T$ can be written as $\ddc f$, where $f$ is a plurisubharmonic function of the following form:  
\[
c\log (|f_1|^2+\cdots +|f_N|^2)+R, 
\]
where $c\in \mathbb{Q}_{\geq 0}$, $f_1,\ldots,f_N$ are holomorphic functions on $X$ and $R$ is a bounded function. A few subtleties of this notion are discussed in \cite[Remark~2.7]{DRWNXZ}.  When we write $T=\theta+\ddc\varphi$ for some smooth closed real $(1,1)$-form $\theta$ and $\varphi\in \PSH(X,\theta)$, we also say $\varphi$ has analytic singularities. 

As a particular case, if $D$ is an effective $\mathbb{Q}$-divisor on $X$, we say a closed positive $(1,1)$-current $T$ has \emph{log singularities} along $D$ if $T-[D]$ is positive, and has locally bounded potentials. It is easy to see that $T$ has analytic singularities. Conversely, if we begin with $T$ with analytic singularities, there is always a modification $\pi\colon Y\rightarrow X$ so that $\pi^*T$ has log singularities along an effective $\mathbb{Q}$-divisor on $Y$. See \cite[Page~104]{MM07}.

Let $\theta$ be a smooth closed real $(1,1)$-form on $X$ and $\eta\in \PSH(X,\theta)$. We say a sequence $(\eta^j)_j$ of quasi-plurisubharmonic functions is a \emph{quasi-equisingular approximation} of $\eta$ if the following are satisfied:
\begin{enumerate}
    \item for each $j$, $\eta^j$ has analytic singularities;
    \item $(\eta^j)_j$ is decreasing with limit $\eta$;
    \item for each $\lambda'>\lambda>0$, we can find $j_0>0$ so that for $j\geq j_0$,
    \[
    \mathcal{I}(\lambda'\eta^j)\subseteq \mathcal{I}(\lambda \eta);
    \]
    \item There is a decreasing sequence $(\epsilon_j)_j$ in $\mathbb{R}_{\geq 0}$ with limit $0$, and a Kähler form $\omega$ on $X$ so that 
    \[
        \eta^j\in \PSH\left(X,\theta+\epsilon_j\omega\right)
    \]
    for each $j>0$.
\end{enumerate}
The existence of quasi-equisingular approximations is guaranteed by \cite{DPS01}. We also say $(\theta+\ddc\eta^j)_j$ is a quasi-equisingular approximation of $\theta+\ddc \eta$.

The class $\mathcal{I}$-good singularities is closed under many natural operations.
\begin{proposition}\label{prop:Igoodclosed}
    The sum and maximum of two $\mathcal{I}$-good quasi-plurisubharmonic functions are still $\mathcal{I}$-good. If $\theta$ is a closed real smooth $(1,1)$-form on $X$ and $(\varphi_i)_i$ is a non-empty bounded from above family of $\mathcal{I}$-good $\theta$-psh functions, then $\sups_i \varphi_i$ is also $\mathcal{I}$-good.
\end{proposition}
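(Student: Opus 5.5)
The plan is to use the characterization of $\mathcal{I}$-goodness in \cref{thm:DX}: a current is $\mathcal{I}$-good if and only if it is the $\implies$-limit of currents with analytic singularities, and any quasi-equisingular approximation works. We treat the three operations separately. In each case we build an approximating sequence with analytic singularities for the output. Then we check convergence in $d_S$, which by \eqref{eq:ds_biineq} reduces to convergence of the mixed masses against $\theta_{V_\theta}$, together with convergence of the cohomology classes.

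\emph{Sum.} Let $\varphi\in\PSH(X,\theta)_{>0}$ and $\psi\in\PSH(X,\theta')_{>0}$ be $\mathcal{I}$-good, and take quasi-equisingular approximations $\varphi^j,\psi^j$. Then $\varphi^j+\psi^j$ has analytic singularities and lies in $\PSH(X,\theta+\theta'+2\epsilon_j\omega)$, and it decreases to $\varphi+\psi$. The classes converge trivially. For $d_S$-convergence, expand the mixed masses $\int(\theta+\theta'+\ldots)_{\varphi^j+\psi^j}^k\wedge\theta_{V}^{n-k}$ by multilinearity of the non-pluripolar product. Each term is a mixed mass of $\varphi^j$'s and $\psi^j$'s. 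Since $\varphi^j\implies\varphi$ and $\psi^j\implies\psi$, mixed masses converge; this is the standard consequence of $d_S$-convergence together with \cref{thm:mono2}, using monotonicity $\varphi^j\succeq_P\varphi$ to get a lower bound and the $d_S$-limit to get the upper bound. The maxima $\varphi^j\lor(\varphi+\psi)$ appearing in \eqref{eq:ds_biineq} are handled the same way, because the sequence is decreasing. Hence $\varphi^j+\psi^j\implies\varphi+\psi$, and \cref{thm:DX} gives $\mathcal{I}$-goodness.

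\emph{Maximum.} We may assume $\varphi,\psi\in\PSH(X,\theta)$ after enlarging $\theta$; $\mathcal{I}$-goodness is independent of $\theta$. Then $\max(\varphi^j,\psi^j)$ has analytic singularities, decreases to $\max(\varphi,\psi)$, and lies in $\PSH(X,\theta+\epsilon_j\omega)$. Mass convergence follows from the $d_S$-continuity of $\lor$, which is a Lipschitz-type estimate in \cite{DDNLmetric} and \cite{Xiabook}.

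\emph{Supremum.} This is the main obstacle, since it is an arbitrary family. First, by a Choquet lemma, $\sups_i\varphi_i=\sups_k\max(\varphi_{i_1},\ldots,\varphi_{i_k})$ for a countable subfamily. The finite maxima $\phi_k$ are $\mathcal{I}$-good by the previous step, and they increase to $\phi=\sups_i\varphi_i$. Increasing sequences converge in $d_S$ because the masses are monotone and converge, so $\phi_k\xrightarrow{d_S}\phi$ in the fixed class $\{\theta\}$. It remains to show that the $d_S$-limit of $\mathcal{I}$-good potentials in a fixed class is $\mathcal{I}$-good. Here I would show that $P_\theta[\phi]_{\mathcal{I}}\leq P_\theta[\phi]$. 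Since $\phi_k\leq\phi$, $P_\theta[\phi_k]_{\mathcal{I}}=P_\theta[\phi_k]\leq P_\theta[\phi]$. Lelong numbers are lower semicontinuous for increasing limits, so $\nu(\phi,E)=\lim_k\nu(\phi_k,E)$, and both model envelopes then have the same $\mathcal{I}$-singularity type. Finally, compare masses: by $d_S$-continuity of the mass on $\mathcal{I}$-good potentials (from \cite{DX22}), $\int\theta_{P[\phi]_{\mathcal{I}}}^n=\lim\int\theta_{\phi_k}^n=\int\theta_\phi^n$. Combined with $P[\phi]\le P[\phi]_{\mathcal{I}}$ and \cref{thm:mono2}, equality of masses of model potentials forces $P[\phi]=P[\phi]_{\mathcal{I}}$ by the domination principle. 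The limit-of-volumes equality for the $\mathcal{I}$-envelope is the delicate point, and I would first try to establish it via the characterization through multiplier ideals and volumes of the approximating analytic singularities.
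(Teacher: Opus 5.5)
Your treatment of sums and maxima via \cref{thm:DX} is sound, granting the $d_S$-continuity of $\lor$ that you cite. So is the reduction of the supremum, via Choquet and finite maxima, to an increasing sequence $\phi_k\nearrow\phi$ of $\mathcal{I}$-good potentials with $\phi_k\xrightarrow{d_S}\phi$.

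The gap is in the last step. The equality $\int_X\theta^n_{P_\theta[\phi]_{\mathcal{I}}}=\lim_k\int_X\theta^n_{\phi_k}$ does not follow from continuity of masses. The $d_S$-continuity of non-pluripolar masses only gives $\lim_k\int_X\theta^n_{\phi_k}=\int_X\theta^n_\phi$. Since $\int_X\theta^n_{\phi_k}=\vol(\theta_{\phi_k})$, what you actually need is $\lim_k\vol(\theta_{\phi_k})\geq\vol(\theta_\phi)$. The only information you have on volumes points the other way. Monotonicity (from $\phi_k\preceq_{\mathcal{I}}\phi$) and the upper semicontinuity of \cref{thm:usc_mix_vol} both give the reverse inequality.

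So your claimed equality is equivalent to $\int_X\theta^n_\phi=\vol(\theta_\phi)$, which is exactly the $\mathcal{I}$-goodness of $\phi$ you are trying to prove. ``Continuity of the mass on $\mathcal{I}$-good potentials'' can only be applied once the limit is already known to be $\mathcal{I}$-good. The Lelong number computation does not help either. It only yields $\mathcal{I}$-equivalence, and non-$\mathcal{I}$-good singularities are precisely $\mathcal{I}$-equivalent potentials with different masses.

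The missing ingredient is that $\mathcal{I}$-good singularity types are closed under $d_S$-limits. This is exactly the ``if'' direction of \cref{thm:DX}, the same black box you already use for sums and maxima. The argument runs as follows.

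After adding a Kähler form, you may assume $\{\theta\}$ is big. Each $\theta_{\phi_k}$ is an $\implies$-limit of currents $T_{k,j}$ with analytic singularities, and $\theta_{\phi_k}\implies\theta_\phi$. For $j$ large, add to $T_{k,j}$ a Kähler form in $\{\theta+\omega\}-\{T_{k,j}\}$, so that everything lives in the fixed class $\{\theta+\omega\}$, where $d_S$ is a pseudometric. A diagonal choice $T_{k,j_k}$ then satisfies $T_{k,j_k}\implies\theta_\phi$, and \cref{thm:DX} gives that $\phi$ is $\mathcal{I}$-good.

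With this replacement your outline is complete, and the envelope-mass comparison should be dropped. The paper itself gives no argument for this statement and refers to \cite[Section~7.2]{Xiabook}.
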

See \cite[Section~7.2]{Xiabook} for the proofs.

\section{Mixed volumes}\label{sec:mv}
Let $X$ be a connected compact Kähler manifold of dimension $n$. Let $T_1,\ldots,T_n$ be closed positive $(1,1)$-currents on $X$. Let $\theta_1,\ldots,\theta_n$ be closed real smooth $(1,1)$-forms on $X$ in the cohomology classes of $T_1,\ldots,T_n$ respectively. Consider $\varphi_i\in \PSH(X,\theta_i)$ so that $T_i=\theta_i+\ddc \varphi_i$ for each $i=1,\ldots,n$. Fix a reference K\"ahler form $\omega$ on $X$.

\subsection{The different definitions}

For each $i=1,\ldots,n$, let $(\varphi_i^j)_j$ be a quasi-equisingular approximation of $\varphi_i$.  
\begin{definition}
    The mixed volume of $T_1,\ldots,T_n$ in the sense of Cao is defined as follows:
    \[
    \langle T_1,\ldots,T_n\rangle_C\coloneqq \lim_{j\to\infty}\int_X \left(\theta_1+\epsilon_j\omega+\ddc \varphi_1^j\right)\wedge \cdots \wedge \left(\theta_n+\epsilon_j\omega+\ddc \varphi_n^j\right),
    \]
    where $(\epsilon_j)_j$ is a decreasing sequence with limit $0$ such that $\varphi_i^j\in \PSH(X,\theta_i+\epsilon_j \omega)$ for each $i=1,\ldots,n$ and $j>0$.
\end{definition}

It is shown in \cite{Cao14} Section~2 that this definition is independent of the choices of the $\theta_i$'s, the $\epsilon_j$'s, the $\varphi_i$'s, the $\varphi_i^j$'s and $\omega$.

A different definition relies on the $\mathcal{I}$-envelope technique studied in \cite{DX21, DX22}. 
Recall that the volume of a current is defined in \cite[Definition~3.2.3]{Xiabook}:
\[
\vol (\theta+\ddc\varphi)=\int_X \left(\theta+\ddc P_{\theta}[\varphi]_{\mathcal{I}}\right)^n.
\]
It depends only on the current $\theta+\ddc \varphi$, not on the choice of the choices of $\theta$ and $\varphi$. In general, as shown in \cite{DX22, DX21},
\[
\vol (\theta+\ddc\varphi)\geq \int_X (\theta+\ddc\varphi)^n.
\]
If furthermore the right-hand side is positive, then the equality holds if and only if $\varphi$ is $\mathcal{I}$-good. We refer to \cite[Section~7.1]{Xiabook} for the details.
\begin{definition}
Assume that $\vol T_i>0$ for all $i=1,\ldots,n$.
    The mixed volume of $T_1,\ldots,T_n$ in the sense of Darvas--Xia is defined as follows:
    \begin{equation}\label{eq:volmixed}
    \vol(T_1,\ldots,T_n)=\int_X \left(\theta_1+\ddc P_{\theta_1}[\varphi_1]_{\mathcal{I}}\right)\wedge \dots\wedge \left(\theta_n+\ddc P_{\theta_n}[\varphi_n]_{\mathcal{I}}\right).
    \end{equation}

In general, define
\begin{equation}\label{eq:volmixedgeneral}
\vol(T_1,\ldots,T_n)=\lim_{\epsilon\to 0+}\vol(T_1+\epsilon\omega,\ldots,T_n+\epsilon\omega).
\end{equation}

\end{definition}
This definition is again independent of the choices of $\omega$, the $\theta_i$'s and the $\varphi_i$'s, using the same proof as \cite[Proposition~3.2.7]{Xiabook}.

The mixed volume can be regarded as generalizations of the movable intersection theory. In fact, when each $T_i$ has minimal singularities, the mixed volume is exactly the movable intersection of corresponding cohomology classes.

When $\vol T_i>0$ for all $i=1,\ldots,n$, the definition \eqref{eq:volmixedgeneral} is compatible with \eqref{eq:volmixed}, as from the $\mathcal{I}$-goodness of the $P_{\theta_i}[\varphi_i]_{\mathcal{I}}$'s, we have
\[
P_{\theta_i+\epsilon\omega}\left[ P_{\theta_i}[\varphi_i]_{\mathcal{I}}\right]=P_{\theta_i+\epsilon\omega}[\varphi_i]_{\mathcal{I}}.
\]
Hence \eqref{eq:volmixedgeneral} reduces to \eqref{eq:volmixed} as a consequence of \cref{thm:mono2}.

When $T_1=\dots=T_n=T$, the above definition is compatible with pure case:
\begin{proposition}
    We always have
    \[
    \vol (T,\ldots,T)=\vol T.
    \]
\end{proposition}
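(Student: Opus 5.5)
Two cases, according to whether $\vol T>0$ or $\vol T=0$. Write $T=\theta+\ddc\varphi$ with $\varphi\in\PSH(X,\theta)$.

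\emph{Case $\vol T>0$.} Definition \eqref{eq:volmixed} applies directly with all entries equal. It gives
\[
\vol(T,\ldots,T)=\int_X \left(\theta+\ddc P_{\theta}[\varphi]_{\mathcal{I}}\right)^n,
\]
which is by definition $\vol T$. The only point to check is that the mixed definition was made with a single common choice of $\theta$ and $\varphi$ in each slot. Independence of choices, as recalled after the definition, guarantees this is legitimate, so this case is a tautology.

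\emph{Case $\vol T=0$.} By \eqref{eq:volmixedgeneral}, $\vol(T,\ldots,T)=\lim_{\epsilon\to 0+}\vol(T+\epsilon\omega)$. For each $\epsilon>0$, the current $T+\epsilon\omega$ has positive volume, since
\[
\vol(T+\epsilon\omega)\geq \int_X(\theta+\epsilon\omega+\ddc\varphi)^n\geq \epsilon^n\int_X\omega^n>0,
\]
where the last inequality comes from expanding and using positivity of the non-pluripolar terms. So by the first case each term of the limit is $\vol(T+\epsilon\omega)$. It therefore suffices to show that $\lim_{\epsilon\to 0+}\vol(T+\epsilon\omega)=\vol T$, i.e.\ continuity of the volume of a current under adding $\epsilon\omega$. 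This is the main obstacle.

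\emph{The continuity step.} Let $\psi=P_\theta[\varphi]_{\mathcal{I}}$ and $\psi_\epsilon=P_{\theta+\epsilon\omega}[\varphi]_{\mathcal{I}}$. The functions $\psi_\epsilon$ decrease as $\epsilon\downarrow 0$ and satisfy $\psi_\epsilon\geq\psi$. Their limit $\psi_0$ is $\theta$-psh and $\mathcal{I}$-equivalent to $\varphi$, because the $\psi_\epsilon$ are all $\mathcal{I}$-equivalent to $\varphi$ and Lelong numbers along divisors over $X$ are lower semicontinuous under decreasing limits, bounded by those of $\varphi$. Since $\psi_0\geq\psi$ and $\psi$ is maximal among such, $\psi_0=\psi$.

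I would then use the standard fact (\cite{DX22}, or \cite[Section~3.2]{Xiabook}) that for a decreasing family of $\mathcal{I}$-model potentials the non-pluripolar masses converge. That is,
\[
\int_X(\theta+\epsilon\omega+\ddc\psi_\epsilon)^n\to\int_X(\theta+\ddc\psi)^n.
\]
If this is not available directly, I would argue through quasi-equisingular approximations $\varphi^j$ of $\varphi$. For these, $\vol(T+\epsilon\omega)\le\int_X(\theta+(\epsilon+\epsilon_j)\omega+\ddc\varphi^j)^n$, and the right-hand side is continuous in $\epsilon$ because masses of analytic-singularity potentials are polynomial in the class after resolution. Letting $\epsilon\to 0$ and then $j\to\infty$ gives $\limsup\le\vol T$, using \cref{thm:DX} applied to the $\mathcal{I}$-good potential $\psi$ (when $\vol T=0$, one shows the masses tend to $0$ directly from Cao's argument). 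The reverse inequality $\liminf\geq\vol T$ is immediate from $\psi\in\PSH(X,\theta+\epsilon\omega)$, $\psi\preceq_P\psi_\epsilon$, and \cref{thm:mono2}.
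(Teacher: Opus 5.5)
Your main argument is correct and is essentially the paper's proof. The paper likewise shows that the decreasing limit $\inf_{\epsilon>0}P_{\theta+\epsilon\omega}[\varphi]_{\mathcal{I}}$ equals $P_{\theta}[\varphi]_{\mathcal{I}}$: the Lelong numbers of the limit dominate those of $\varphi$, and then maximality of the $\mathcal{I}$-envelope gives equality. It then concludes by continuity of non-pluripolar masses along such decreasing families in varying classes (\cite[Proposition~3.1.9]{Xiabook}), treating $\vol T>0$ and $\vol T=0$ at once, so your case split is harmless but unnecessary. You should drop the fallback via quasi-equisingular approximations, however. In the case $\vol T=0$, which is the only non-tautological one, the claim that $\lim_j\int_X(\theta+\epsilon_j\omega+\ddc\varphi^j)^n=0$ is not supplied by Cao's argument: it says that Cao's volume of $T$ equals $\vol T$, which is essentially the statement being proved.
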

\begin{proof}
    Write $T=\theta_{\varphi}$. In more concrete terms, we need to show that
    \[
    \lim_{\epsilon\to 0+}\int_X (\theta+\epsilon\omega+\ddc P_{\theta+\epsilon\omega}[\varphi]_{\mathcal{I}})^n=\int_X (\theta+\ddc P_{\theta}[\varphi]_{\mathcal{I}})^n.
    \]

     We may replace $\varphi$ by $P_{\theta}[\varphi]_{\mathcal{I}}$ and assume that $\varphi$ is $\mathcal{I}$-model in $\PSH(X,\theta)$. Then we claim that
    \[
    \varphi=\inf_{\epsilon>0}P_{\theta+\epsilon\omega}[\varphi]_{\mathcal{I}}.
    \]
    From this, our assertion follows from \cite[Proposition~3.1.9]{Xiabook}.
    
    The $\leq$ direction is clear. For the converse, it suffices to show that for each prime divisor $E$ over $X$, we have
    \[
    \nu(\varphi,E)\leq \nu\left(\inf_{\epsilon>0}P_{\theta+\epsilon\omega}[\varphi]_{\mathcal{I}},E\right).
    \]
    We simply compute
    \[
    \nu\left(\inf_{\epsilon>0}P_{\theta+\epsilon\omega}[\varphi]_{\mathcal{I}},E\right)\geq \sup_{\epsilon>0}\nu\left(P_{\theta+\epsilon\omega}[\varphi]_{\mathcal{I}},E\right)=\nu(\varphi,E).
    \]
\end{proof}

\begin{proposition}\label{prop:multilinear}
    Both volumes are symmetric. The mixed volume in the sense of Cao is multi-$\mathbb{Q}_{\geq 0}$-linear, while the mixed volume in the sense of Darvas--Xia is multi-$\mathbb{R}_{\geq 0}$-linear.
\end{proposition}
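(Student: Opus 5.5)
Symmetry of both volumes is immediate: the Cao volume is a limit of integrals of wedge products of smooth-plus-analytic currents, and the Darvas--Xia volume is a limit of non-pluripolar mixed masses. Both kinds of expressions are symmetric in their entries. The content is therefore multilinearity. By symmetry it suffices to prove additivity and positive homogeneity in the first variable, i.e.
\[
\langle T_1+T_1',T_2,\ldots,T_n\rangle=\langle T_1,T_2,\ldots\rangle+\langle T_1',T_2,\ldots\rangle,\qquad \langle \lambda T_1,T_2,\ldots\rangle=\lambda\langle T_1,T_2,\ldots\rangle.
\]

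\emph{Cao volume.} I would use the independence of the choices established in \cite{Cao14}. If $(\varphi_1^j)_j$ and $(\varphi_1'^j)_j$ are quasi-equisingular approximations of $\varphi_1$ and $\varphi_1'$, I would check that $(\varphi_1^j+\varphi_1'^j)_j$ is a quasi-equisingular approximation of $\varphi_1+\varphi_1'$ in $\PSH(X,\theta_1+\theta_1'+2\epsilon_j\omega)$.
\begin{enumerate}
    \item Analytic singularities and the decreasing property are preserved under sums. For the sum to have analytic singularities one needs rational coefficients, which is one source of the restriction to $\mathbb{Q}$.
    \item For the multiplier ideal condition (3), apply the subadditivity/Hölder-type estimate $\mathcal{I}(\lambda'(\varphi^j+\varphi'^j))\subseteq \mathcal{I}(\lambda(\varphi+\varphi'))$ for $j$ large. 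This should follow from the valuative characterization: quasi-equisingular approximations converge in Lelong numbers along every divisor over $X$ uniformly enough. Alternatively, choose the Demailly approximation of the sum directly and compare.
\end{enumerate}
Given this, the integrals $\int_X(\theta_1+\theta_1'+2\epsilon_j\omega+\ddc(\varphi_1^j+\varphi_1'^j))\wedge\cdots$ split by linearity of the wedge product of currents with analytic singularities (the non-pluripolar product is multilinear). The mismatch $2\epsilon_j$ versus $\epsilon_j$ is absorbed by independence of the choice of $\epsilon_j$. Homogeneity for $\lambda\in\mathbb{Q}_{>0}$ works the same way with $\lambda\varphi_1^j$, since rational multiples preserve analytic singularities. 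I expect step (2) to be the main technical point. If it is hard, I would instead use \cref{thm:DX} on a modification where everything has log singularities.

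\emph{Darvas--Xia volume.} First reduce via \eqref{eq:volmixedgeneral} to the case where all $\vol T_i>0$; positivity is stable under sums and positive multiples, and $(T_1+\epsilon\omega)+(T_1'+\epsilon\omega)=T_1+T_1'+2\epsilon\omega$ is handled by the limit and \cref{thm:mono2}. Homogeneity is clear, since $P_{\lambda\theta}[\lambda\varphi]_{\mathcal{I}}=\lambda P_\theta[\varphi]_{\mathcal{I}}$ by the characterization through Lelong numbers along divisors over $X$. For additivity, set $\psi=P_{\theta_1}[\varphi_1]_{\mathcal{I}}$ and $\psi'=P_{\theta_1'}[\varphi_1']_{\mathcal{I}}$. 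Then $\psi+\psi'\sim_{\mathcal{I}}\varphi_1+\varphi_1'$, because generic Lelong numbers along divisors over $X$ are additive. By \cref{prop:Igoodclosed}, $\psi+\psi'$ is $\mathcal{I}$-good, so $P[\psi+\psi']=P[\varphi_1+\varphi_1']_{\mathcal{I}}$. Hence $\psi+\psi'\sim_P P_{\theta_1+\theta_1'}[\varphi_1+\varphi_1']_{\mathcal{I}}$. Mixed non-pluripolar masses depend only on $P$-classes (\cref{thm:mono2} applied in both directions). Additivity then follows from multilinearity of the non-pluripolar product:
\[
(\theta_1+\theta_1')_{\psi+\psi'}\wedge\cdots=\theta_{1,\psi}\wedge\cdots+\theta'_{1,\psi'}\wedge\cdots .
\]
Real $\lambda$ are allowed here, since no analytic-singularity constraint enters.
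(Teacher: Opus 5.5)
Your proposal is correct and follows the paper's proof. For Cao's volume the paper likewise reduces additivity to the fact that a sum of quasi-equisingular approximations is again a quasi-equisingular approximation; it does not prove this but cites it from \cite[Theorem~6.2.2, Corollary~7.1.2]{Xiabook}, so your step (2) is exactly that imported ingredient. For the Darvas--Xia volume the paper does what you do: it uses \cref{prop:Igoodclosed} to get $P_{\theta_1}[\varphi_1]_{\mathcal{I}}+P_{\theta_1'}[\varphi_1']_{\mathcal{I}}\sim_P P_{\theta_1+\theta_1'}[\varphi_1+\varphi_1']_{\mathcal{I}}$ and then \cref{thm:mono2}, while you additionally spell out the $\mathcal{I}$-equivalence of the sum, the bookkeeping in $\epsilon$, and homogeneity.
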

    The multi-$\mathbb{Q}_{\geq 0}$-linearity means two things: 
    \begin{enumerate}
        \item For each $\lambda\in \mathbb{Q}_{\geq 0}$, we have
        \[
        \langle \lambda T_1,T_2,\ldots,T_n\rangle_C=\lambda\langle T_1,T_2,\ldots,T_n\rangle_C.
        \]
        \item  If $T_1'$ is anther closed positive $(1,1)$-current, then
        \begin{equation}\label{eq:multilinear}
        \langle T_1+T_1',T_2,\ldots,T_n\rangle_C=\langle T_1,T_2,\ldots,T_n\rangle_C+\langle T_1',T_2,\ldots,T_n\rangle_C.
        \end{equation}
    \end{enumerate}
    Multi-$\mathbb{R}_{\geq 0}$-linearity is defined similarly.
\begin{proof}
    We first handle the mixed volumes in the sense of Cao.
    Only the property \eqref{eq:multilinear} needs a proof. But this follows from the fact that the sum of two quasi-equisingular approximations is again a quasi-equisingular approximation. See \cite[Theorem~6.2.2, Corollary~7.1.2]{Xiabook}.

    Next we handle the case of mixed volumes in the sense of Darvas--Xia.
    We only need to show that
    \begin{equation}\label{eq:voladd}
    \vol (T_1+T_1',T_2,\ldots,T_n)=\vol(T_1,T_2,\ldots,T_n)+\vol(T_1',T_2,\ldots,T_n).
    \end{equation}
    Thanks to the definition \eqref{eq:volmixedgeneral}, we may assume that $\vol T_i>0$ for each $i$ and $\vol T_1'>0$. Write $T_1'=\theta_1'+\ddc \varphi_1'$.
    Then thanks to \cref{prop:Igoodclosed},
    \[
    P_{\theta_1}[\varphi_1]_{\mathcal{I}}+P_{\theta_1'}[\varphi_1']_{\mathcal{I}}\sim_P P_{\theta_1+\theta_1'}[\varphi_1+\varphi_1']_{\mathcal{I}}.
    \]
    Therefore, \eqref{eq:voladd} follows from \cref{thm:mono2}.
\end{proof}

\begin{theorem}\label{thm:CaoequalDX}
    We have 
    \begin{equation}\label{eq:CaoeqDX}
        \langle T_1,\ldots,T_n\rangle_C=\vol(T_1,\ldots,T_n).
    \end{equation}
\end{theorem}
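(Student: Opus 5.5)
We first reduce to the case where every $T_i$ has positive volume, and then prove that case by two opposite inequalities.

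\emph{Reduction.} Replacing each $T_i$ by $T_i+\epsilon\omega$ makes $\vol T_i>0$. On the Darvas--Xia side, \eqref{eq:volmixedgeneral} says exactly that the mixed volume is the limit as $\epsilon\to 0+$. On Cao's side:
\begin{itemize}
\item if $(\varphi_i^j)_j$ is a quasi-equisingular approximation of $\varphi_i$ in $\PSH(X,\theta_i)$, it is also one in $\PSH(X,\theta_i+\epsilon\omega)$, with $\epsilon_j$ unchanged;
\item so $\langle T_1+\epsilon\omega,\ldots,T_n+\epsilon\omega\rangle_C$ is obtained by expanding the $j$-th integral, which is a polynomial in $\epsilon$;
\item the coefficients are integrals of mixed products of the currents $\theta_i+\epsilon_j\omega+\ddc\varphi_i^j$ with $\omega$;
\item these coefficients are uniformly bounded, since they are cohomological bounds over a compact family of classes, and the currents have analytic singularities.
\end{itemize}
It therefore suffices to check $\langle T_1+\epsilon\omega,\ldots\rangle_C\to\langle T_1,\ldots\rangle_C$. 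This follows if each coefficient converges as $j\to\infty$. For that we use multilinearity (\cref{prop:multilinear}) extended to $\omega$ as one of the entries: Cao's limit exists for any tuple of currents, including the smooth current $\omega$. Granting this, the polynomial identity passes to the limit and gives continuity at $\epsilon=0$. So from now on assume $\vol T_i>0$ for all $i$.

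\emph{Lower bound $\langle T_1,\ldots,T_n\rangle_C\geq \vol(T_1,\ldots,T_n)$.}
\begin{itemize}
\item Since $\varphi_i^j\geq\varphi_i$, monotonicity (\cref{thm:mono2}) gives that the $j$-th Cao integral is at least $\int_X\prod_i(\theta_i+\epsilon_j\omega+\ddc\varphi_i)$ in the non-pluripolar sense.
\item The $j$-th integral is not a lower bound for the $\mathcal{I}$-envelope directly. Instead, because $\varphi_i^j$ has analytic singularities, it is $\mathcal{I}$-good (\cref{thm:DX}, or its basic case), so $\varphi_i^j\sim_P P_{\theta_i+\epsilon_j\omega}[\varphi_i^j]_{\mathcal{I}}$.
\item Since $\varphi_i^j\geq\varphi_i$, we have $P_{\theta_i+\epsilon_j\omega}[\varphi_i^j]_{\mathcal{I}}\geq P_{\theta_i+\epsilon_j\omega}[\varphi_i]_{\mathcal{I}}\geq P_{\theta_i}[\varphi_i]_{\mathcal{I}}$.
\item \cref{thm:mono2} then gives that the $j$-th integral is at least $\vol(T_1+\epsilon_j\omega,\ldots)\geq\vol(T_1,\ldots,T_n)$, which yields the inequality in the limit.
\end{itemize}

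\emph{Upper bound.} This is the main obstacle, and it is where the convergence in \cref{thm:DX} is used.
\begin{itemize}
\item Let $\psi_i=P_{\theta_i}[\varphi_i]_{\mathcal{I}}$. It is $\mathcal{I}$-model and has positive mass, hence is $\mathcal{I}$-good.
\item $\psi_i$ and $\varphi_i$ have the same multiplier ideals, so a quasi-equisingular approximation $(\varphi_i^j)$ of $\varphi_i$ has the same asymptotic ideal properties relative to $\psi_i$.
\item By the independence statement in Cao's definition, $\langle T_1,\ldots,T_n\rangle_C$ depends only on the $\mathcal{I}$-classes. The approximation built by Demailly from the ideals $\mathcal{I}(m\varphi_i)$ is the same for $\varphi_i$ and for $\psi_i$, so we may replace $T_i$ by $\theta_i+\ddc\psi_i$ and take $(\psi_i^j)$ to be a quasi-equisingular approximation of $\psi_i$.
\item \cref{thm:DX} then gives $\theta_i+\epsilon_j\omega+\ddc\psi_i^j\implies\theta_i+\ddc\psi_i$: the $d_S$-convergence holds together with convergence of the classes.
\item To conclude that the mixed integrals converge to $\int_X\prod_i(\theta_i+\ddc\psi_i)=\vol(T_1,\ldots,T_n)$, we use the fact that mixed non-pluripolar masses are continuous under $d_S$-convergence in a fixed big class. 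This continuity follows from \eqref{eq:ds_biineq} together with polarization/multilinearity, applied after adding $\omega$ to fix the classes; see \cite[Section~6.2]{Xiabook}.
\end{itemize}
This upper-bound step in fact gives equality outright, but the monotone lower bound above serves as a consistency check.
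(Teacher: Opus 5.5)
Your proof is correct, but it is organized differently from the paper's.

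The paper first polarizes. By \cref{prop:multilinear}, both sides of \eqref{eq:CaoeqDX} evaluated on $(\sum_i\lambda_iT_i,\dots,\sum_i\lambda_iT_i)$ are polynomials in the $\lambda_i\in\mathbb{Q}_{>0}$, so it suffices to treat $T_1=\dots=T_n=T$. After perturbing $T$ to a Kähler current (essentially your reduction step), the diagonal identity $\lim_j\int_X(\theta+\ddc\varphi^j)^n=\int_X(\theta+\ddc P_\theta[\varphi]_{\mathcal I})^n$ is quoted from \cite[Corollary~3.4]{DX21}. That citation already contains the passage from $\varphi$ to its $\mathcal I$-envelope.

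You instead keep the mixed integrals throughout. You replace each $\varphi_i$ by $\psi_i=P_{\theta_i}[\varphi_i]_{\mathcal I}$. You then combine \cref{thm:DX} with the continuity of mixed non-pluripolar masses along $\implies$, which is exactly \cref{prop:strongcontnpm}. That proposition's proof does not use the present theorem, so there is no circularity.

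What your route buys is independence from polarization, and hence from the additivity of Cao's product. It also makes the mechanism transparent: the theorem becomes $\mathcal I$-goodness of envelopes plus continuity along $\implies$.

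The price is that you need invariance of $\langle\cdot\rangle_C$ under $\mathcal I$-equivalence. This is not contained in Cao's independence statement, which concerns different approximations of one fixed potential. So that step rests entirely on your Demailly argument, which you should phrase as an equisingularity statement rather than an equality:
\begin{itemize}
\item The approximants of $\varphi_i$ and $\psi_i$ are different functions, since Bergman kernels depend on the weight.
\item At each stage, however, both are equisingular to $\frac{1}{2m}\log\sum_l|g_l|^2$, where the $g_l$ are local generators of $\mathcal I(m\varphi_i)=\mathcal I(m\psi_i)$.
\item The mixed non-pluripolar mass of potentials with analytic singularities in fixed classes depends only on their singularity types (e.g.\ by \cref{thm:mono2}).
\item Hence the $j$-th integrals agree once a common sequence $\epsilon_j$ is chosen.
\end{itemize}

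Your lower-bound paragraph is correct but superfluous, as you note yourself.
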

In particular, we no longer need the notation $ \langle T_1,\ldots,T_n\rangle_C$.
\begin{proof}
    \textbf{Step~1}. We reduce to the case where $T_1=\dots=T_n$. 

    Suppose this special case has been proved. Let $\lambda_1,\dots,\lambda_n\in \mathbb{Q}_{>0}$ be some numbers. Then
    \[
    \left\langle \sum_{i=1}^n \lambda_i T_i,\dots, \sum_{i=1}^n \lambda_i T_i \right\rangle_C=\vol \left(\sum_{i=1}^n \lambda_i T_i \right).
    \]
    It follows from \cref{prop:multilinear} that both sides are polynomials in the $\lambda_i$'s. Comparing the coefficients of $\lambda_1\cdots\lambda_n$, we conclude \eqref{eq:CaoeqDX}.

    From now on, we assume that $T_1=\dots=T_n=T$. Write $T=\theta_{\varphi}$.
    
    \textbf{Step~2}. We reduce to the case where $T$ is a K\"ahler current. For this purpose, it suffices to show that
    \[
    \lim_{\epsilon\to 0+}\langle T_1+\epsilon\omega,\ldots,T_n+\epsilon\omega\rangle_C=\langle T_1,\ldots,T_n\rangle_C,
    \]
    which is obvious by definition.

    \textbf{Step~3}. Let $(\varphi^j)_j$ be a quasi-equisingular approximation of $\varphi$ in $\PSH(X,\theta)$. We need to show that
    \[
    \lim_{j\to\infty} \int_X (\theta+\ddc\varphi^j)^n=\int_X (\theta+\ddc P_{\theta}[\varphi]_{\mathcal{I}})^n.
    \]
    This follows from \cite[Corollary~3.4]{DX21}, see also \cite[Corollary~7.1.2]{Xiabook}.
    
\end{proof}

\subsection{Properties of mixed volumes}

\begin{proposition}\label{prop:mono_vol}
    Let $S_1,\ldots,S_n$ be closed positive $(1,1)$-currents on $X$. Assume that for each $i=1,\ldots,n$,
    \begin{enumerate}
        \item $T_i\preceq_{\mathcal{I}} S_i$;
        \item $\{T_i\}=\{S_i\}$.
    \end{enumerate}
    Then
    \begin{equation}\label{eq:mono}
    \vol(T_1,\ldots,T_n)\leq \vol(S_1,\ldots,S_n).
    \end{equation}
\end{proposition}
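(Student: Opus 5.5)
We reduce first to the case where every current has positive volume, and then apply \cref{thm:mono2} to the $\mathcal{I}$-envelopes.

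\textbf{Reduction.} By the definition \eqref{eq:volmixedgeneral}, both sides of \eqref{eq:mono} are limits as $\epsilon\to 0+$ of the corresponding mixed volumes of $T_i+\epsilon\omega$ and $S_i+\epsilon\omega$. The hypotheses are preserved under adding $\epsilon\omega$:
\begin{itemize}
    \item the classes still agree;
    \item adding the smooth form $\epsilon\omega$ changes neither the potentials nor the Lelong numbers along divisors over $X$, so $T_i+\epsilon\omega\preceq_{\mathcal{I}} S_i+\epsilon\omega$.
\end{itemize}
Since $T_i+\epsilon\omega$ and $S_i+\epsilon\omega$ are Kähler currents, their volumes are positive. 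So it suffices to prove \eqref{eq:mono} for each $\epsilon>0$ and pass to the limit. We may therefore assume $\vol T_i>0$ and $\vol S_i>0$ for all $i$.

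\textbf{Main step.} Because $\{T_i\}=\{S_i\}$, we can write $T_i=\theta_i+\ddc\varphi_i$ and $S_i=\theta_i+\ddc\psi_i$ with the same smooth form $\theta_i$. The hypothesis $\varphi_i\preceq_{\mathcal{I}}\psi_i$ is equivalent to
\[
P_{\theta_i}[\varphi_i]_{\mathcal{I}}\leq P_{\theta_i}[\psi_i]_{\mathcal{I}},
\]
as recalled after \cref{def:Iequiv}.

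Both envelopes lie in $\PSH(X,\theta_i)_{>0}$, since their masses are the volumes, which are positive. An inequality $u\leq v$ between such potentials gives $P_{\theta_i}[u]\leq P_{\theta_i}[v]$, because $P_{\theta_i}$ is monotone: $(u+C)\land 0\leq (v+C)\land 0$. Hence $P_{\theta_i}[\varphi_i]_{\mathcal{I}}\preceq_P P_{\theta_i}[\psi_i]_{\mathcal{I}}$.

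Applying \cref{thm:mono2} to these $\mathcal{I}$-envelopes gives
\[
\int_X \bigwedge_i \left(\theta_i+\ddc P_{\theta_i}[\varphi_i]_{\mathcal{I}}\right)\leq \int_X \bigwedge_i \left(\theta_i+\ddc P_{\theta_i}[\psi_i]_{\mathcal{I}}\right).
\]
By \eqref{eq:volmixed}, this is exactly \eqref{eq:mono}.

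\textbf{Expected obstacle.} The only delicate point is the reduction step: we need that $\preceq_{\mathcal{I}}$ is insensitive to adding $\epsilon\omega$, and that the limit in \eqref{eq:volmixedgeneral} exists. Both are immediate. The invariance holds because the relation is defined through multiplier ideals of the potentials, which are unchanged. The limit exists by the definition together with the monotonicity just established, which makes the mixed volumes monotone in $\epsilon$.
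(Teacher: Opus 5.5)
Your proposal is correct and follows the paper's proof: perturb by $\epsilon\omega$ to reduce to currents of positive volume, then conclude from \cref{thm:mono2}, where you usefully make explicit that the pointwise-ordered $\mathcal{I}$-envelopes are in particular $P$-ordered. One small point on your closing remark: the existence of the limit in \eqref{eq:volmixedgeneral} is part of the definition and need not be justified here; if you did want monotonicity in $\epsilon$, it requires comparing currents in different classes (for instance via multilinearity), which the same-class statement you just proved does not cover directly.
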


\begin{proof}
    Let $\omega$ be a Kähler form on $X$. It suffices to show that for each $\epsilon>0$, we have
    \[
    \vol(T_1+\epsilon\omega,\ldots,T_n+\epsilon\omega)\leq \vol(S_1+\epsilon\omega,\ldots,S_n+\epsilon\omega).
    \]
    In particular, we reduce to the case where $\vol T_i>0$, $\vol S_i>0$ for each $i=1,\ldots,n$.

    In this case, \eqref{eq:mono} is a consequence of \cref{thm:mono2}.
\end{proof}

\begin{proposition}\label{prop:logvol}
    We have
    \[
    \vol(T_1,\ldots,T_n)\geq \prod_{i=1}^n(\vol T_i)^{1/n}.
    \]
\end{proposition}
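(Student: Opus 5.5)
We reduce to the case where all the $T_i$ have positive volume, and then invoke the log-concavity (Brunn--Minkowski / Khovanskii--Teissier type) inequality for non-pluripolar products of currents with prescribed singularities.

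\textbf{Step 1 (reduction).} If some $\vol T_i=0$, the right-hand side vanishes and there is nothing to prove, since the left-hand side is nonnegative. In general, by \eqref{eq:volmixedgeneral},
\[
\vol(T_1,\ldots,T_n)=\lim_{\epsilon\to 0+}\vol(T_1+\epsilon\omega,\ldots,T_n+\epsilon\omega).
\]
We also want $\vol(T_i+\epsilon\omega)\to \vol T_i$ as $\epsilon\to 0+$. This is the diagonal case of the same definition, combined with the proposition just proved showing $\vol(T,\ldots,T)=\vol T$. Hence it suffices to prove the inequality for $T_i+\epsilon\omega$, so we may assume $\vol T_i>0$ for all $i$.

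\textbf{Step 2 (model potentials).} Replace $\varphi_i$ by $\psi_i\coloneqq P_{\theta_i}[\varphi_i]_{\mathcal{I}}$. This potential is $\mathcal{I}$-model with positive mass, hence $\mathcal{I}$-good, and by definition
\[
\vol T_i=\int_X \theta_{i,\psi_i}^n,\qquad \vol(T_1,\ldots,T_n)=\int_X \theta_{1,\psi_1}\wedge\cdots\wedge\theta_{n,\psi_n}.
\]
The claim therefore becomes the log-concavity inequality for non-pluripolar mixed masses:
\[
\int_X \theta_{1,\psi_1}\wedge\cdots\wedge\theta_{n,\psi_n}\geq \prod_i\left(\int_X\theta_{i,\psi_i}^n\right)^{1/n}.
\]

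\textbf{Step 3 (the inequality).} This is the main step. I would cite the known result of Darvas--Di Nezza--Lu on log-concavity of non-pluripolar products, which holds for arbitrary potentials with positive mass; see also \cite{Xiabook}.

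Alternatively, I would argue via approximation using \cref{thm:CaoequalDX}. Take quasi-equisingular approximations $\psi_i^j$, pass to a modification resolving them simultaneously, and write $\pi^*\theta_{i,\psi_i^j}=\beta_{i,j}+[D_{i,j}]$ with $\beta_{i,j}$ having bounded potentials in the class $\pi^*\{\theta_i+\epsilon_j\omega\}-\{D_{i,j}\}$. The masses of bounded-potential currents equal the intersection numbers of nef classes $\beta_{i,j}$. For nef classes on a compact Kähler manifold, the Khovanskii--Teissier inequality holds (Demailly--Dinh--Nguyên / Gromov). Passing to the limit in $j$ via Cao's definition and \cref{thm:CaoequalDX} then yields the inequality, since $\int (\theta+\epsilon_j\omega+\ddc\psi_i^j)^n\to\vol T_i$ by Step 3 of the proof of \cref{thm:CaoequalDX}.

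The main obstacle is confirming that the resolved bounded parts are genuinely nef classes and that the limits commute. I expect this to be routine given the cited results.
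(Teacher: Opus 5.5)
Your proposal is correct and follows the paper's proof. After discarding the case where some $\vol T_i=0$, the definition \eqref{eq:volmixed} reduces the claim to the Darvas--Di Nezza--Lu log-concavity inequality for the $\mathcal{I}$-model potentials $P_{\theta_i}[\varphi_i]_{\mathcal{I}}$, exactly as the paper does. The $\epsilon$-perturbation in your Step~1 is redundant once all volumes are positive, and the alternative Khovanskii--Teissier approximation route is not needed.
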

\begin{proof}
    We may assume that $\vol T_i>0$ for each $i=1,\ldots,n$ since there is nothing to prove otherwise. In this case, we need to show that
    \[
    \int_X \left(\theta_1+\ddc P_{\theta_1}[\varphi_1]_{\mathcal{I}}\right)\wedge \cdots \wedge \left(\theta_n+\ddc P_{\theta_1}[\varphi_n]_{\mathcal{I}}\right)\geq \prod_{i=1}^n\left(\int_X \left(\theta_i+\ddc P_{\theta_i}[\varphi_i]_{\mathcal{I}} \right)^n \right)^{1/n}. 
    \]
    This is a special case of the main theorem of \cite{DDNL19log}.
\end{proof}

\begin{proposition}\label{prop:biminv}
    Let $\pi\colon Y\rightarrow X$ be a proper bimeromorphic morphism from a Kähler manifold $Y$ to $X$, then
    \[
    \vol(\pi^*T_1,\ldots,\pi^*T_n)=\vol(T_1,\ldots,T_n).
    \]
\end{proposition}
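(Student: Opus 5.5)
We first reduce to the case where every $T_i$ has positive volume, and then compare the $\mathcal{I}$-envelopes on $X$ and on $Y$ and integrate.

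\textbf{Reduction.} Fix a Kähler form $\omega$ on $X$ and a Kähler form $\omega_Y$ on $Y$. For $\epsilon>0$, the class of $\pi^*\omega$ is nef and big, so $\pi^*(T_i+\epsilon\omega)$ has positive volume, and
\[
\vol(\pi^*T_1+\epsilon\pi^*\omega,\ldots)=\vol(T_1+\epsilon\omega,\ldots)
\]
would be the positive-volume case. However, \eqref{eq:volmixedgeneral} on $Y$ is defined by adding $\epsilon\omega_Y$, not $\epsilon\pi^*\omega$. I would bridge this gap using the Cao description of \cref{thm:CaoequalDX}. Pulling back a quasi-equisingular approximation $(\varphi_i^j)_j$ along $\pi$ gives analytic singularities and a decreasing sequence. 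Condition (3) transfers through the formula $\pi_*(\omega_{Y/X}\otimes\mathcal{I}(\lambda\pi^*\varphi))=\mathcal{I}(\lambda\varphi)$, which appears in the proof of \cref{lma:Ipartorderinv}. Condition (4) holds with the semipositive form $\pi^*\omega$, which is dominated by $C\omega_Y$. Since the integrals of smooth-plus-analytic forms satisfy $\int_Y\pi^*(\cdot)=\int_X(\cdot)$ (non-pluripolar products put no mass on the exceptional set), the Cao volumes agree. The delicate part is that $\pi^*\varphi_i^j$ is not literally a quasi-equisingular approximation in the sense of the definition, because of the $\omega_Y$ versus $\pi^*\omega$ issue and because condition (3) on $Y$ needs checking. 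So I prefer the following direct route.

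\textbf{Direct route (volumes positive).} Assume $\vol T_i>0$. Put $\psi_i=P_{\theta_i}[\varphi_i]_{\mathcal{I}}$. I claim
\[
\pi^*\psi_i=P_{\pi^*\theta_i}[\pi^*\varphi_i]_{\mathcal{I}}.
\]
\begin{itemize}
\item Every $\pi^*\theta_i$-psh function on $Y$ is of the form $\pi^*\eta$ for some $\eta\in\PSH(X,\theta_i)$. This holds because $\pi$ has connected fibres, so such a function is constant on fibres and descends, with extension across the analytic set by normality.
\item By \cref{lma:Ipartorderinv}, $\pi^*\eta\preceq_{\mathcal{I}}\pi^*\varphi_i$ if and only if $\eta\preceq_{\mathcal{I}}\varphi_i$.
\end{itemize}
Together these show that the two envelopes are defined by the same family, which proves the claim. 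Then
\[
\int_Y\bigwedge_i(\pi^*\theta_i+\ddc\pi^*\psi_i)=\int_X\bigwedge_i(\theta_i+\ddc\psi_i),
\]
because the non-pluripolar product does not charge the exceptional analytic set and $\pi$ is biholomorphic off it. This proves the statement when the volumes are positive, provided that the definition on $Y$ may be computed with the representatives $\pi^*\theta_i$. It may, since the definition is independent of the choice of smooth representative.

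\textbf{General case.} It remains to show that the limit defining the mixed volume on $Y$ can be taken using the perturbation $\epsilon\pi^*\omega$ in place of $\epsilon\omega_Y$. Choose $C$ with $\pi^*\omega\le C\omega_Y$ and, since $\{\pi^*\omega\}$ is big, choose a Kähler current $R\in\{\pi^*\omega\}$ with $R\ge c\,\omega_Y$. For the upper bound, $\vol(\ldots,\pi^*T_i+\epsilon\pi^*\omega,\ldots)\le \vol(\ldots,\pi^*T_i+C\epsilon\omega_Y,\ldots)$ by \cref{prop:mono_vol} together with multilinearity (\cref{prop:multilinear}), noting that the mixed terms involving $\epsilon$ tend to zero. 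For the lower bound, use \cref{prop:mono_vol} in the reverse direction: in the class $\{\pi^*T_i+\epsilon\pi^*\omega\}$, the current $\pi^*T_i+\epsilon R$ dominates $\pi^*T_i+\epsilon c\,\omega_Y$ plus a positive part in the $\mathcal{I}$-sense. Letting $\epsilon\to0$ on both sides gives equality of the limits.

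The main obstacle is this comparison of perturbations. If it becomes messy, I would fall back on the Cao formula, using \cref{thm:CaoequalDX}, where all perturbation issues become continuity statements about integrals of analytic-singularity currents.
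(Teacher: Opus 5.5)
Your proof is correct and follows the same route as the paper: reduce to positive volumes, show $\pi^*P_{\theta_i}[\varphi_i]_{\mathcal{I}}=P_{\pi^*\theta_i}[\pi^*\varphi_i]_{\mathcal{I}}$, and conclude by bimeromorphic invariance of the non-pluripolar product. The differences are twofold. You prove the envelope identity yourself, by descent of $\pi^*\theta_i$-psh functions plus the lemma on $\mathcal{I}$-partial orders under pull-back, where the paper cites it from the author's book; and you make explicit the $\epsilon\pi^*\omega$ versus $\epsilon\omega_Y$ comparison that the paper dismisses as an easy reduction (there the lower bound follows directly from multilinearity, so the Kähler current $R$ is not needed).
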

\begin{proof}
    As in the proof of \cref{prop:mono_vol}, we may easily reduce to the case where $\vol T_i>0$ for each $i=1,\ldots,n$. By \cite[Proposition~3.2.5]{Xiabook}, we know that if we write $T_i=\theta_i+\ddc\varphi_i$, then 
    \[
    \pi^*P_{\theta_i}[\varphi_i]_{\mathcal{I}}=P_{\pi^*\theta_i}[\pi^*\varphi_i]_{\mathcal{I}}. 
    \]
    In particular,
    \[
    \vol \pi^*T_i=\vol T_i>0.
    \]
    Our assertion follows from the obvious bimeromorphic invariance of the non-pluripolar product.
\end{proof}

\begin{lemma}\label{lma:perturbvol}
    Let $\omega$ be a Kähler form on $X$. Then there is a constant $C>0$ depending only on $X,\omega,\{\theta_1\},\ldots,\{\theta_n\}$ such that
    \[
    0\leq \vol(T_1+\epsilon\omega,\ldots, T_n+\epsilon\omega)-\vol(T_1,\ldots,T_n)\leq C\epsilon
    \]
    for any $\epsilon\in [0,1]$.
\end{lemma}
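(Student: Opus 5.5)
The plan is to prove the lower bound by the monotonicity of \cref{prop:mono_vol}, and the upper bound by dominating the left-hand side with an explicit current that has minimal singularities, then expanding by multilinearity.

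\textbf{Lower bound.} Fix $\epsilon\in[0,1]$. For each $i$, compare $T_i+\epsilon\omega$ with $T_i+\epsilon'\omega$ for $\epsilon'\le\epsilon$. By the definition \eqref{eq:volmixedgeneral} it suffices to show that $\epsilon\mapsto \vol(T_1+\epsilon\omega,\ldots,T_n+\epsilon\omega)$ is increasing on $(0,1]$. For $0<\epsilon'<\epsilon$, all currents are Kähler currents with positive volume. Write $\psi_i=P_{\theta_i+\epsilon'\omega}[\varphi_i]_{\mathcal{I}}$. Then $\psi_i$ is also $(\theta_i+\epsilon\omega)$-psh, and $P_{\theta_i+\epsilon\omega}[\varphi_i]_{\mathcal{I}}\ge \psi_i$. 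The integral over $\theta_i+\epsilon\omega+\ddc P_{\theta_i+\epsilon\omega}[\varphi_i]_{\mathcal I}$ therefore dominates, by \cref{thm:mono2} (which only needs $\preceq_P$, implied by $\le$), the integral with $\psi_i$ and forms $\theta_i+\epsilon\omega$. Expand the latter by multilinearity of the non-pluripolar product in the smooth part: it equals $\int\bigwedge(\theta_i+\epsilon'\omega+\ddc\psi_i)$ plus terms involving $(\epsilon-\epsilon')\omega$ wedge positive currents. Those extra terms are $\ge0$. This gives monotonicity, and letting $\epsilon'\to0$ gives the left inequality.

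\textbf{Upper bound.} Choose once and for all a Kähler form $\omega'$ and a constant $A$ with $\theta_i+A\omega$ Kähler for all $i$; both depend only on $\{\theta_i\}$ and $\omega$. With $\epsilon'\to0$ as above, write
\[
\theta_i+\epsilon\omega+\ddc P_{\theta_i+\epsilon\omega}[\varphi_i]_{\mathcal{I}} .
\]
Since each potential is $\le 0$, the monotonicity \cref{thm:mono2} bounds the mixed integral by the one in which the $i$-th factor has minimal singularities in its class. The real task, however, is to control the increment, not the total.

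For the increment I would use the bound
\[
P_{\theta_i+\epsilon\omega}[\varphi_i]_{\mathcal I}\le P_{\theta_i+\epsilon'\omega}[\varphi_i]_{\mathcal I}+\text{(correction)},
\]
but that is awkward. A cleaner route is concavity: the function $f(t)=\vol(T_1+t\omega,\ldots,T_n+t\omega)$ should be a polynomial-like increasing function. Set $\psi_i^t=P_{\theta_i+t\omega}[\varphi_i]_{\mathcal I}$. For $0<s<t$, \cref{prop:Igoodclosed} and the same argument as in \cref{prop:multilinear} give
\[
(\theta_i+s\omega+\ddc\psi_i^s)+(t-s)\omega\preceq_P \theta_i+t\omega+\ddc\psi^t_i ,
\]
and conversely $\psi_i^t\preceq_{\mathcal I}\psi_i^s$. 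Indeed $\nu(\psi^t_i,E)=\nu(\varphi_i,E)$ truncated, with Lelong numbers decreasing in $t$, so $\psi_i^t$ is less singular. The key claim to establish is then
\[
f(t)\le \vol\bigl(T_1+s\omega+(t-s)\omega+\text{min.\ sing.\ part},\ldots\bigr),
\]
and by multilinearity (\cref{prop:multilinear}) we obtain $f(t)-f(s)\le \sum_{k\ge1}\binom nk (t-s)^k\cdot \sup(\text{mixed volumes of classes } \{\theta_i\}+\omega \text{ and } \omega)$. Each such mixed volume is bounded by the cohomological intersection of nef classes $\{\theta_i+A\omega\}$ and $\{\omega\}$, which is a constant depending only on $X,\omega,\{\theta_i\}$. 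This yields $f(t)-f(s)\le C(t-s)$ for $t\le1$.

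\textbf{Main obstacle.} The hardest step is making rigorous that the increment from $s$ to $t$ only adds terms with at least one factor of $(t-s)\omega$. The difficulty is that $\psi_i^t$ is genuinely less singular than $\psi_i^s$, since Lelong numbers along exceptional divisors over $X$ can drop as $t$ grows, so $f$ is not simply a polynomial. I would try a different approach first: bound each factor from above via \cref{thm:mono2} by a current $\theta_i+t\omega+\ddc V$ with $V$ minimal-singularity potential. I would then avoid direct comparison and instead use the derivative-type estimate
\[
f(t)-f(s)=\sum_j \vol(\ldots,\underbrace{T_j+t\omega}_{}-\text{vs }T_j+s\omega,\ldots).
\]
This telescopes one factor at a time. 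Each telescoped term is controlled by comparing, after pulling back via a modification on which quasi-equisingular approximants (\cref{thm:CaoequalDX}, Cao's definition) have log singularities, honest intersection numbers of nef classes. There the difference is an honest $(t-s)\omega\cdot(\text{nef})^{n-1}$ up to the divisorial part, which only decreases it. Using Cao's definition with a common quasi-equisingular approximation for all $t$, i.e. the approximants $\varphi_i^j$ used both for $s$ and $t$, makes this explicit: the integrand is literally polynomial in $t$ with coefficients bounded by nef intersection numbers, and the bound passes to the limit in $j$.
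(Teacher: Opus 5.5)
Your final argument does prove the lemma, but it takes a longer route than the paper. The lower bound is fine: you compare envelopes $P_{\theta_i+\epsilon'\omega}[\varphi_i]_{\mathcal I}\le P_{\theta_i+\epsilon\omega}[\varphi_i]_{\mathcal I}$, apply \cref{thm:mono2}, and expand the non-pluripolar product. The upper bound via Cao's definition also works. A quasi-equisingular approximation $(\varphi_i^j)_j$ of $\varphi_i$ in $\PSH(X,\theta_i)$ is also one in $\PSH(X,\theta_i+t\omega)$. So each approximating integral is a polynomial in $t$ whose non-constant coefficients are nonnegative and bounded uniformly in $j$ by intersection numbers of $\{\omega\}$ and $\{\theta_i+A\omega\}$. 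Then \cref{thm:CaoequalDX} transfers the bound to $\vol$.

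The paper instead applies the multi-$\mathbb{R}_{\geq 0}$-linearity of \cref{prop:multilinear} directly to the Darvas--Xia volume. It writes $\vol(T_1+\epsilon\omega,\ldots,T_n+\epsilon\omega)$ as $\vol(T_1,\ldots,T_n)$ plus terms $\epsilon^{|S|}$ times mixed volumes with $\omega$ in the slots of $S\neq\emptyset$. These terms are nonnegative and bounded by a cohomological constant, which gives both inequalities at once. Your Cao computation is the same expansion done on the approximants, at the extra cost of invoking \cref{thm:CaoequalDX}.

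The ``main obstacle'' you describe does not exist, and the claims behind it are false. For every $t>0$, $\psi_i^t=P_{\theta_i+t\omega}[\varphi_i]_{\mathcal I}\sim_{\mathcal I}\varphi_i$. So $\nu(\psi_i^t,E)=\nu(\varphi_i,E)$ for every prime divisor $E$ over $X$, independently of $t$. Your line ``$\psi_i^t\preceq_{\mathcal I}\psi_i^s$, so $\psi_i^t$ is less singular'' is also internally inconsistent.

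Moreover, $\psi_i^s$ is $\mathcal{I}$-good, and $\mathcal{I}$-goodness does not depend on the reference form. Hence $P_{\theta_i+t\omega}[\psi_i^s]=\psi_i^t$ for $0<s<t$, which is exactly the remark following \eqref{eq:volmixedgeneral}. Thus $\psi_i^s\sim_P\psi_i^t$, and \cref{thm:mono2} gives
\[
f(t)=\int_X\bigwedge_{i=1}^n\bigl(\theta_i+s\omega+\ddc\psi_i^s+(t-s)\omega\bigr).
\]
This is a polynomial in $t$, as \cref{prop:multilinear} also shows. So the inequality in your lower-bound step is actually an equality. Expanding it already yields $0\le f(t)-f(s)\le C(t-s)$ for $0<s<t\le 1$, and letting $s\to 0+$ proves the lemma. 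No concavity detour and no appeal to Cao's definition are needed.
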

\begin{proof}
    By linearity, we can write
    \[
    \vol(T_1+\epsilon\omega,\ldots, T_n+\epsilon\omega)-\vol(T_1,\ldots,T_n)
    \]
    as a linear combination of the mixed volumes between the $T_i$'s and $\omega$ with coefficients $\epsilon^j$ for some $j\geq 1$. The mixed volumes are clearly bounded by a constant.
\end{proof}

\begin{proposition}\label{prop:strongcontnpm}
    Let $(T_i^j)_{j\in J}$ be nets of closed positive $(1,1)$-currents on $X$ for each $i=1,\ldots,n$. Assume that for each $i=1,\ldots,n$, we have
    \[
    T_i^j\implies T_i.
    \]
    Then
    \begin{equation}\label{eq:strongcontnpm}
    \lim_{j\in J}\int_X T_1^j\wedge \cdots \wedge T_n^j=\int_X T_1\wedge \cdots \wedge T_n,
    \end{equation}
    and
    \begin{equation}\label{eq:strongcontvol}
    \lim_{j\in J}\vol \left( T_1^j,\ldots, T_n^j\right)=\vol \left( T_1, \ldots, T_n\right).
    \end{equation}
\end{proposition}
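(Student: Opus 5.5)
The plan is to reduce both statements to the single-class case by perturbing with a Kähler form, and then to use the known continuity of non-pluripolar masses under $d_S$-convergence in a fixed big class.

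\textbf{Step 1: make the classes fixed and Kähler.} Since $\{T_i^j\}\to\{T_i\}$ in the finite-dimensional space $\mathrm{H}^{1,1}(X,\mathbb{R})$, and the Kähler cone is open, we can fix a Kähler form $\omega$ and a small $\delta>0$. For $j$ large, $\{T_i\}+\delta\{\omega\}-\{T_i^j\}$ is a Kähler class, so we may pick Kähler forms $\omega_i^j$ in that class with $\omega_i^j\to\delta\omega$ smoothly. Then
\[
S_i^j\coloneqq T_i^j+\omega_i^j,\qquad S_i\coloneqq T_i+\delta\omega
\]
all lie in the fixed big class $\{T_i\}+\delta\{\omega\}$. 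By the definition of $d_S$-convergence for currents in varying classes, together with its independence of choices, we get $S_i^j\xrightarrow{d_S}S_i$.

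\textbf{Step 2: the fixed-class case.} For currents in a fixed big class with $S_i^j\xrightarrow{d_S}S_i$, the mixed non-pluripolar masses converge:
\[
\int_X S_1^j\wedge\cdots\wedge S_n^j\to\int_X S_1\wedge\cdots\wedge S_n.
\]
This is the $d_S$-continuity of mixed masses from \cite{DDNLmetric}, see also \cite[Chapter~6]{Xiabook}. One subtlety is that those results usually assume positive mass. The perturbation handles this, since $S_i\geq\delta\omega$ gives mass at least $\delta^n\int_X\omega^n>0$. The same applies to the volumes, because $d_S$-convergence is preserved by $P[\bullet]_{\mathcal{I}}$ (\cite[Section~6.2]{Xiabook}, or via $\vol=\langle\cdot\rangle_C$ and \cref{thm:CaoequalDX}). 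This yields \eqref{eq:strongcontvol} for the $S$'s.

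\textbf{Step 3: remove the perturbation.} Expand by multilinearity, namely \cref{prop:multilinear} for volumes and multilinearity of the non-pluripolar product for \eqref{eq:strongcontnpm}:
\[
\int_X S_1^j\wedge\cdots\wedge S_n^j=\int_X T_1^j\wedge\cdots\wedge T_n^j+R^j,
\]
where $R^j$ collects the mixed terms involving at least one $\omega_i^j$. Each mixed term is bounded by a cohomological constant times $\sup_i\|\omega_i^j\|$, and hence is $O(\delta)$ uniformly in $j$. The same estimate holds for the limit, as in \cref{lma:perturbvol}. Thus
\[
\limsup_j\left|\int_X T_1^j\wedge\cdots\wedge T_n^j-\int_X T_1\wedge\cdots\wedge T_n\right|\leq C\delta.
\]
Letting $\delta\to0$ concludes both \eqref{eq:strongcontnpm} and \eqref{eq:strongcontvol}.

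\textbf{Main obstacle.} The delicate point is Step 3 for non-pluripolar products. The mixed terms involve the non-pluripolar products of the $T_i^j$ against smooth forms, and these must be bounded uniformly. That follows from the fact that the non-pluripolar mass is at most the cohomological intersection (\cite{BEGZ10}), applied on the now-fixed classes.

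The expansion itself also needs justification. The forms $\omega_i^j$ are smooth, so the non-pluripolar product is multilinear in them; for the volume, \cref{prop:multilinear} covers it. A cleaner alternative is to choose $\omega_i^j=\delta\omega+\eta_i^j$ with $\eta_i^j$ smooth and $\to0$. Then one compares with $T_i^j+\delta\omega$ directly, bounding the differences by $O(\|\eta_i^j\|)$, which tends to $0$.
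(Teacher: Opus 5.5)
Your proposal is correct and follows essentially the same route as the paper: you perturb by Kähler forms $\omega_i^j$ so that all $T_i^j+\omega_i^j$ lie in the fixed class $\{T_i\}+\delta\{\omega\}$, apply the fixed-class $d_S$-continuity of mixed non-pluripolar masses (and, for volumes, its counterpart for $\mathcal{I}$-envelopes), and absorb the perturbation by an error bound $C\delta$ that is uniform in $j$ before letting $\delta\to 0$. Your remarks on positivity of the masses and on bounding the mixed terms by cohomological intersections only make explicit what the paper asserts in one line.
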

Recall that $\implies$ is defined in \cref{def:stronconve}.
\begin{proof}
    Let $\omega$ be a Kähler form on $X$. For each $\epsilon>0$, we can find $j_0\in J$ so that for $j\geq j_0$, the following classes are Kähler:
    \[
    \{T_i\}+2^{-1}\epsilon\{\omega\}-\{T_i^j\},\quad i=1,\ldots,n.
    \]
    Take a Kähler form $\omega_i^j$ in the class $ \{T_i\}+\epsilon\{\omega\}-\{T_i^j\}$. Then observe that for $i=1,\ldots,n$,
    \[
    T_i^j+\omega_i^j\xrightarrow{d_S} T_i+\epsilon\omega.
    \]
    Since these currents are now in the same cohomology class, it follows from \cite[Theorem~4.2]{Xia21} (see also \cite[Theorem~6.2.1]{Xiabook}) that
    \begin{equation}\label{eq:mixedmassconv}
    \lim_{j\in J}\int_X (T_1^j+\omega_1^j)\wedge \cdots\wedge (T_n^j+\omega_n^j)=\int_X (T_1+\epsilon\omega)\wedge \cdots \wedge (T_n+\epsilon\omega).
    \end{equation}
    Note that we can find a constant $C>0$ independent of $j\geq j_0$ so that for any $j\geq j_0$, we have
    \begin{equation}\label{eq:errorTij}
    \begin{split}
    \int_X (T_1^j+\omega_1^j)\wedge \cdots\wedge (T_n^j+\omega_n^j)-\int_X T_1^j \wedge \cdots\wedge T_n^j\leq C\epsilon,\\ \int_X (T_1+\epsilon\omega)\wedge \cdots \wedge (T_n+\epsilon\omega)-\int_X T_1\wedge \cdots \wedge T_n\leq C\epsilon.
    \end{split}
    \end{equation}
    Hence \eqref{eq:strongcontnpm} follows.

    As for \eqref{eq:strongcontvol}, it suffices to replace \eqref{eq:errorTij} by \cref{lma:perturbvol}, and
    \eqref{eq:mixedmassconv} by
    \[
    \lim_{j\in J}\vol \left(T_1^j+\omega_1^j,\ldots,T_n^j+\omega_n^j\right)=\vol \left(T_1+\epsilon\omega,\ldots,T_n+\epsilon\omega\right),
    \]
    which follows from \cite[Theorem~4.2, Theorem~4.6]{Xia21} (see also
    \cite[Theorem~6.2.1, Theorem~6.2.3]{Xiabook}). 
\end{proof}

Next we establish a semicontinuity property of the mixed volumes.
\begin{theorem}\label{thm:usc_mix_vol}
   Let $(\varphi_i^j)_{j\in J}$ ($i=1,\ldots,n$) be nets in $\PSH(X,\theta_i)$. Assume that for each prime divisor $E$ over $X$, we have 
   \[
   \lim_{j\in J} \nu(\varphi_i^j,E)=\nu(\varphi_i,E),\quad i=1,\ldots,n.
   \]
   Then
   \[
   \varlimsup_{j\in J} \vol \left(\theta_1+\ddc \varphi_1^j, \dots ,\theta_n+\ddc \varphi_n^j \right) \leq  \vol\left(\theta_1+\ddc \varphi_1, \dots ,\theta_n+\ddc \varphi_n\right).
   \]
\end{theorem}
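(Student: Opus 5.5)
First reduce to the case where each $\theta_i$ is Kähler-type and all volumes are positive. By \cref{lma:perturbvol} the error from passing to $\theta_i+\epsilon\omega$ is at most $C\epsilon$, uniformly in $j$. Lelong numbers along divisors over $X$ do not change when $\theta_i$ is replaced by $\theta_i+\epsilon\omega$. So it suffices to prove the inequality for $\theta_i+\epsilon\omega$ and then let $\epsilon\to 0$. We may therefore assume each $\{\theta_i\}$ is big and, after adding $\epsilon\omega$, that every $\vol(\theta_i+\ddc\varphi_i^j)$ and $\vol(\theta_i+\ddc\varphi_i)$ is positive. Next, replace each potential by its $\mathcal{I}$-envelope: set $\psi_i^j=P_{\theta_i}[\varphi_i^j]_{\mathcal{I}}$ and $\psi_i=P_{\theta_i}[\varphi_i]_{\mathcal{I}}$. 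Mixed volumes are then honest non-pluripolar mixed masses of these $\mathcal{I}$-model potentials, and the Lelong numbers along every divisor $E$ over $X$ are unchanged.

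The key construction is to sandwich $\psi_i^j$ between $\psi_i^j$ and an $\mathcal{I}$-good potential converging to $\psi_i$. Put $\eta_i^j\coloneqq \sups_{k\geq j}\psi_i^k$; since $J$ is a net, take the supremum over $k\geq j$ in $J$. This is $\mathcal{I}$-good by \cref{prop:Igoodclosed} and is decreasing in $j$. Monotonicity (\cref{thm:mono2} / \cref{prop:mono_vol}) gives
\[
\vol(\theta_1+\ddc\varphi_1^j,\ldots,\theta_n+\ddc\varphi_n^j)\leq \int_X \theta_{1,\eta_1^j}\wedge\cdots\wedge\theta_{n,\eta_n^j}.
\]
For Lelong numbers, $\nu(\eta_i^j,E)=\inf_{k\geq j}\nu(\psi_i^k,E)$. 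The inequality $\leq$ is clear. The inequality $\geq$ needs care: for a sup of a family, the generic Lelong number along a divisor is the inf. I would verify this via multiplier ideals and valuations, using that $\mathcal{I}$-goodness makes the sup $\mathcal{I}$-model up to $P$-equivalence. Hence $\lim_j\nu(\eta_i^j,E)=\varliminf_k\nu(\varphi_i^k,E)=\nu(\varphi_i,E)$.

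Let $\eta_i\coloneqq\lim_j\eta_i^j$, a decreasing limit. Then $\nu(\eta_i,E)\geq\nu(\eta_i^j,E)$ for every $j$, so $\nu(\eta_i,E)\geq\nu(\varphi_i,E)$ and $\eta_i\preceq_{\mathcal{I}}\varphi_i$. It remains to show that the masses of the $\eta_i^j$ converge to the mixed mass of something whose $\mathcal{I}$-envelope is $\preceq_{\mathcal{I}}\psi_i$. A decreasing net of $\mathcal{I}$-good potentials with limit $\eta_i$ satisfies $\eta_i^j\to\eta_i$ in $d_S$ when the limit has positive mass. Thus \cref{prop:strongcontnpm} gives convergence of the mixed masses to $\int_X\theta_{1,\eta_1}\wedge\cdots\wedge\theta_{n,\eta_n}$. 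This is at most the mixed volume of $(\theta_i+\ddc\eta_i)_i$, which by \cref{prop:mono_vol} is at most $\vol(\theta_1+\ddc\varphi_1,\ldots)$. If instead the limit mass degenerates, I would use the $\epsilon\omega$ perturbation again to keep positivity.

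The main obstacle is the $d_S$-convergence of the decreasing sequence $\eta_i^j$ without knowing a priori that $\eta_i$ is $\mathcal{I}$-good. I would handle it via the upper semicontinuity of non-pluripolar masses along decreasing sequences, for which \cite[Proposition~3.1.9]{Xiabook}, used earlier for the decreasing-limit argument, is the natural tool. Note that only the inequality $\varlimsup\leq$ is needed, which is exactly what decreasing limits give even without full $d_S$-convergence.
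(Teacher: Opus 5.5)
Your reductions (the $\epsilon\omega$-perturbation, passing to $\mathcal{I}$-envelopes) and the domination by the $\mathcal{I}$-good potentials $\eta_i^j=\sups_{k\ge j}\psi_i^k$ match the paper, but the limiting step fails. You take the pointwise decreasing limit $\eta_i=\lim_j\eta_i^j$. You then claim that $\eta_i^j\to\eta_i$ in $d_S$, or at least that $\varlimsup_j\int_X\theta_{1,\eta_1^j}\wedge\cdots\wedge\theta_{n,\eta_n^j}\le\int_X\theta_{1,\eta_1}\wedge\cdots\wedge\theta_{n,\eta_n}$. Both claims are false.

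Since $\eta_i\le\eta_i^j$, \cref{thm:mono2} gives the \emph{reverse} inequality for free, and it can be strict. For example, $\max(\varphi,-j)\searrow\varphi$, with $\theta$ Kähler and $\varphi$ having a small logarithmic pole at a point, is a decreasing sequence of bounded (hence $\mathcal{I}$-good) potentials. Their masses all equal $\int_X\theta^n$, while the limit has strictly smaller, still positive, mass. Continuity of masses along decreasing sequences holds for model potentials, not for merely $\mathcal{I}$-good ones, so no reference can supply your last step.

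This failure occurs in the theorem's own setting. On $\mathbb{P}^1$ with $\int\omega=1$, let $\psi^k$ be the model potential with Lelong number $c\in(0,1)$ at $x_k$, where the $x_k$ are distinct and converge to $x$. The hypothesis holds with $\varphi=0$, and each $\eta^j$ is bounded, so has mass $1$. But $\lim_j\eta^j$ has Lelong number at least $c$ at $x$, hence mass at most $1-c$. So the pointwise limit can be strictly more singular than $\varphi_i$, and the masses of the $\eta^j$ need not converge to its mass.

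The missing idea, which is what the paper does, is to pass to model envelopes before taking the limit. Since $\eta_i^j$ is $\mathcal{I}$-good, $P_{\theta_i}[\eta_i^j]$ is model, has the same mixed masses as $\eta_i^j$ by \cref{thm:mono2}, and still decreases in $j$. Put $\psi_i'=\inf_j P_{\theta_i}[\eta_i^j]$. A decreasing sequence of model potentials with masses bounded below by a positive constant (which your perturbation provides) does converge in $d_S$; this is the input \cite[Proposition~4.8]{DDNLmetric} used by the paper. So \cref{prop:strongcontnpm} gives convergence of the mixed masses to $\int_X\theta_{1,\psi_1'}\wedge\cdots\wedge\theta_{n,\psi_n'}$.

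Your Lelong-number argument then applies verbatim to $\psi_i'$: $\nu(\psi_i',E)\ge\nu(\eta_i^j,E)$ for all $j$, hence $\psi_i'\preceq_{\mathcal{I}}\varphi_i$, and your final comparison via \cref{prop:mono_vol} concludes. The paper proves more, namely that $\psi_i'$ is $\mathcal{I}$-model with exactly the Lelong numbers of $\varphi_i$, so it is $P$-equivalent to $P_{\theta_i}[\varphi_i]_{\mathcal{I}}$; the one-sided bound suffices.

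A smaller point: $\nu(\sups_k u_k,E)\ge\inf_k\nu(u_k,E)$ does not follow from $\mathcal{I}$-goodness as you suggest. It is the elementary fact the paper quotes as \cite[Corollary~1.4.1]{Xiabook}. It comes from the bound $u(z)\le\sup_{B}u+\nu(u,y)\log(|z-y|/r)$ for $u$ psh on $B=B(y,r)$, which is uniform over a family bounded above, applied on a modification where $E$ is a divisor.
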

\begin{proof}
    \textbf{Step~1}. We first assume that $\vol (\theta_i+\ddc \varphi_i^j)>0$ and $\vol (\theta_i+\ddc \varphi_i)>0$ for all $i=1,\ldots,n$ and $j\in J$.

    Without loss of generality, we may assume that the $\varphi_i^j$'s and the $\varphi_i$'s are $\mathcal{I}$-model for all $i=1,\ldots,n$ and $j\in J$. Our assertion becomes
    \begin{equation}\label{eq:volineq_1}
    \varlimsup_{j\in J} \int_X \left(\theta_1+\ddc \varphi_1^j\right)\wedge \dots \wedge \left(\theta_n+\ddc \varphi_n^j \right) \leq  \int_X \left(\theta_1+\ddc \varphi_1\right) \wedge \dots \wedge \left(\theta_n+\ddc \varphi_n\right).
    \end{equation}
    For each $j\in J$, define
    \[
    \psi_i^j\coloneqq \sups_{\!\!\! k\geq j} \varphi_i^k,\quad i=1,\ldots,n.
    \]
    Observe that $\psi_i^j$ is $\mathcal{I}$-good thanks to \cref{prop:Igoodclosed}.
    It follows from \cite[Corollary~1.4.1]{Xiabook} and our assumption that
    \[
    \lim_{j\in J}\nu\left(\psi_i^j,E\right)=\nu\left(\varphi_i,E\right),\quad i=1,\ldots,n.
    \]
    For each $i=1,\ldots,n$, we define
    \[
    \psi_i=\inf_{j\in J}P_{\theta_i}[\psi_i^j].
    \]
    Due to \cite[Lemma~2.21]{DX22} (see also \cite[Proposition~3.2.12]{Xiabook}), $\psi_i$ is $\mathcal{I}$-model.
    Thanks to \cite[Proposition~3.1.10]{Xiabook}, we know
    \[
    \nu(\psi_i,E)=\nu(\varphi_i,E)
    \]
    for any $i=1,\ldots,n$ and any prime divisor $E$ over $X$. In other words, $\psi_i\sim_{\mathcal{I}} \varphi_i$ for $i=1,\ldots,n$. But both $\varphi_i$ and $\psi_i$ are $\mathcal{I}$-good, therefore, 
    \[
    \psi_i\sim_{P} \varphi_i,\quad i=1,\ldots,n.
    \]
    By \cref{thm:mono2}, we have
    \[
    \int_X (\theta_1+\ddc \psi_1)\wedge \cdots\wedge (\theta_n+\ddc \psi_n)=\int_X (\theta_1+\ddc \varphi_1)\wedge \cdots\wedge (\theta_n+\ddc \varphi_n).
    \]
    Next by \cref{thm:mono2} again, 
    \[
    \varlimsup_{j\in J} \int_X \left(\theta_1+\ddc \varphi_1^j\right)\wedge\dots\wedge  \left(\theta_n+\ddc \varphi_n^j \right)\leq \varlimsup_{j\in J} \int_X \left(\theta_1+\ddc \psi_1^j\right)\wedge \dots \wedge \left(\theta_n+\ddc \psi_n^j \right).
    \]
    On the other hand, due to \cite[Proposition~4.8]{DDNLmetric}, for each $i=1,\ldots,n$, we have
    \[
    \psi_i^j\xrightarrow{d_S}\psi_i.
    \]
    We conclude from \cref{prop:strongcontnpm} that
    \[
    \varlimsup_{j\in J} \int_X \left(\theta_1+\ddc \psi_1^j\right)\wedge \dots \wedge \left(\theta_n+\ddc \psi_n^j \right)=\int_X (\theta_1+\ddc \psi_1)\wedge \cdots\wedge (\theta_n+\ddc \psi_n).
    \]
    Putting these equations together, \eqref{eq:volineq_1} follows.

    \textbf{Step~2}. Next we handle the general case. 

    Fix a Kähler form $\omega$ on $X$. For any $\epsilon\in (0,1]$, from Step~1, we know that
    \[
   \varlimsup_{j\in J} \vol \left(\theta_1+\epsilon\omega+\ddc \varphi_1^j, \dots ,\theta_n+\epsilon\omega+\ddc \varphi_n^j \right) \leq  \vol\left(\theta_1+\epsilon\omega+\ddc \varphi_1, \dots ,\theta_n+\epsilon\omega+\ddc \varphi_n\right).
   \]
   Using \cref{lma:perturbvol}, we have
   \[
   \begin{aligned}
     &\varlimsup_{j\in J} \vol \left(\theta_1+\ddc \varphi_1^j, \dots ,\theta_n+\ddc \varphi_n^j \right)\\
     \leq & \varlimsup_{j\in J} \vol \left(\theta_1+\epsilon\omega+\ddc \varphi_1^j, \dots ,\theta_n+\epsilon\omega+\ddc \varphi_n^j \right)\\
     \leq &\vol\left(\theta_1+\epsilon\omega+\ddc \varphi_1, \dots ,\theta_n+\epsilon\omega+\ddc \varphi_n\right)\\
     \leq &\vol\left(\theta_1+\ddc \varphi_1, \dots ,\theta_n+\ddc \varphi_n\right)+C\epsilon.
    \end{aligned}
   \]
   But since $\epsilon$ is arbitrary, our assertion follows.
\end{proof}

\begin{lemma}\label{lma:Siudec_pullpush}
    Let $\pi\colon Y\rightarrow X$ be a proper bimeromorphic morphism from a Kähler manifold $Y$.
    Then for any non-divisorial closed positive $(1,1)$-current $T$ on $Y$, we have
    \[
    \pi^* \pi_* T=T+\sum_{i=1}^N c_i [E_i]
    \]
    for finitely many $\pi$-exceptional divisors $E_i$ and $c_i>0$.

    In particular, if $S$ is a closed positive $(1,1)$-current on $X$, we can find $E_i$ and $c_i$ as above so that
    \[
    \pi^*\langle S \rangle = \langle \pi^*S \rangle +\sum_{i=1}^N c_i [E_i].
    \]
\end{lemma}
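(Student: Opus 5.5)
We compare $\pi^*\pi_*T$ with $T$ via Siu's decomposition on $Y$. First, $\pi_*T$ is a closed positive $(1,1)$-current on $X$, so $\pi^*\pi_*T$ is a well-defined closed positive $(1,1)$-current on $Y$. Let $Z\subseteq X$ be the image of the exceptional locus $\operatorname{Exc}(\pi)$; it has codimension at least $2$, and $\pi$ is an isomorphism $Y\setminus\pi^{-1}(Z)\to X\setminus Z$. On $Y\setminus \pi^{-1}(Z)$ we clearly have $\pi^*\pi_*T=T$.

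Next we look at the difference $R\coloneqq\pi^*\pi_*T-T$. Choose smooth representatives: if $T=\theta+\ddc\varphi$ on $Y$, write $\pi_*T=\theta'+\ddc\varphi'$ on $X$. Then $\pi^*\varphi'-\varphi$ is pluriharmonic off $\pi^{-1}(Z)$ modulo a smooth form, and $R$ is a closed real $(1,1)$-current supported on $\pi^{-1}(Z)$. Because $\pi^{-1}(Z)\setminus \operatorname{Exc}(\pi)$ is empty up to codimension considerations, we may say more precisely that $R$ is supported on the union of the $\pi$-exceptional prime divisors $E_1,\dots,E_N$ together with analytic subsets of codimension $\geq 2$ in $Y$. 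By the support theorem for closed normal currents of bidimension $(n-1,n-1)$ (Federer/Siu), $R=\sum_{i=1}^N c_i[E_i]$ with $c_i\in\mathbb{R}$. Here normality of $R$ holds since it is a difference of positive closed currents. The codimension $\geq 2$ parts contribute nothing.

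The main point is to identify the $c_i$ and show $c_i>0$. Taking generic Lelong numbers along $E_i$ and using that $T$ is non-divisorial, we get
\[
c_i=\nu(\pi^*\pi_*T,E_i)-\nu(T,E_i)=\nu(\pi^*\pi_*T,E_i).
\]
This number is $\geq 0$ automatically. To get strict positivity, note that $E_i$ maps into $Z$, and $\pi_*T$ has some potential $\varphi'$ that is quasi-psh near $\pi(E_i)$. Then $\nu(\pi^*\varphi',E_i)$ is the Lelong number along the divisorial valuation $\operatorname{ord}_{E_i}$. This is $\geq$ the Lelong number of $\varphi'$ at a general point of $\pi(E_i)$, but it can be $0$. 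So we only get $c_i\geq 0$; the honest statement is that we discard the $E_i$ with $c_i=0$ and keep those with $c_i>0$. This is consistent with the statement, since the list of $E_i$ is allowed to be any finite subfamily. Finiteness is automatic because $\operatorname{Exc}(\pi)$ has finitely many components.

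For the second statement, apply Siu's decomposition $S=\Reg S+\sum_k a_k[D_k]$ and write the non-pluripolar part as $\langle S\rangle$. Since non-pluripolar products put no mass on pluripolar sets, $\langle\pi^*S\rangle$ and $\pi^*\langle S\rangle$ agree off $\operatorname{Exc}(\pi)$, and $\langle\pi^*S\rangle$ is non-pluripolar, hence non-divisorial, with $\pi_*\langle\pi^*S\rangle=\langle S\rangle$ (pushforward by an isomorphism off a pluripolar set). Applying the first part to $T=\langle\pi^*S\rangle$ gives the formula. The step I expect to need the most care is the support-theorem step: justifying that $R$ is a normal current supported on an analytic set so that only exceptional divisors survive.
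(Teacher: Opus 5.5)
Your argument is correct and is essentially the paper's: both place $\pi^*\pi_*T-T$ on the exceptional locus, apply a support theorem, and use non-divisoriality of $T$ to control the coefficients; your treatment of the $\langle S\rangle$ statement via $\pi_*\langle\pi^*S\rangle=\langle S\rangle$ is also fine. The only difference is how you get the sign. The paper gets positivity for free: $\mathds{1}_E T=0$, so $T=\mathds{1}_{Y\setminus E}\pi^*\pi_*T$, and the difference is the positive closed current $\mathds{1}_E\pi^*\pi_*T$. You instead first obtain real coefficients from the support theorem for closed order-zero currents and then read off $c_i=\nu(\pi^*\pi_*T,E_i)\geq 0$. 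For possibly negative $c_i$, that identification is justified by moving the negative terms to the other side, or by applying the linear operation $\mathds{1}_{E_i}$ together with Siu's theorem.
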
 

\begin{proof}
    Let $E$ be the exceptional locus of $\pi$. Then 
    \[
    T=\mathds{1}_{Y\setminus E}\pi^* \pi_* T.
    \]
    Therefore, 
    \[
    \pi^* \pi_* T-T=\mathds{1}_E\pi^* \pi_* T,
    \]
    which has the stated form, due to the support theorems, see \cite[Section~8]{Dembook2}.
\end{proof}

It turns out that the mixed volume depends only on the regular parts of the currents. 
\begin{theorem}\label{thm:vol_reg_parts}
    We have
    \[
    \vol \left(T_1,\ldots,T_n\right)=\vol\left(\Reg T_1,\ldots,\Reg T_n\right).
    \]
\end{theorem}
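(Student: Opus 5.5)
By multilinearity (\cref{prop:multilinear}) and Siu's decomposition, it is enough to show that a divisorial summand contributes nothing. Reduce first to the case $T_i=\Reg T_i+c[D]$ with a single prime divisor $D$ and $c>0$, and then handle countable sums by a limit argument. Write $T_i=\Reg T_i+\sum_k c_{i,k}[E_{i,k}]$. Expanding gives
\[
\vol(T_1,\ldots,T_n)=\vol(\Reg T_1,\ldots,\Reg T_n)+(\text{terms with at least one divisorial entry}).
\]
This expansion uses only finitely many summands. To pass to the full countable sum, I would truncate: $T_i^{(N)}=\Reg T_i+\sum_{k\le N}c_{i,k}[E_{i,k}]$. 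The remaining tails $R_i^{(N)}=\sum_{k>N}c_{i,k}[E_{i,k}]$ have cohomology classes tending to $0$, so their contribution is bounded by mixed volumes with a class that tends to zero, as in \cref{lma:perturbvol}. Hence it suffices to prove
\[
\vol([D],S_2,\ldots,S_n)=0
\]
for every prime divisor $D$ and all closed positive currents $S_2,\ldots,S_n$.

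\textbf{Key vanishing.} By the definition \eqref{eq:volmixedgeneral} together with \cref{lma:perturbvol}, I would prove
\[
\lim_{\epsilon\to 0}\vol([D]+\epsilon\omega,S_2+\epsilon\omega,\ldots)=0.
\]
Set $\theta=\theta_D+\epsilon\omega$, where $\theta_D$ is a smooth representative of $\{D\}$, and write $[D]+\epsilon\omega=\theta+\ddc\log|s|^2_h$. The $\mathcal{I}$-model envelope $P_\theta[\log|s|^2]_{\mathcal{I}}$ must have Lelong number at least $1$ along $D$. Any $\theta$-psh function $\eta$ with $\nu(\eta,D)\ge 1$ satisfies $\theta+\ddc\eta\ge [D]$, so $\eta-\log|s|^2$ is $\epsilon\omega$-psh. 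It follows that
\[
\theta+\ddc P_\theta[\log|s|^2]_{\mathcal{I}}=[D]+\epsilon\omega+\ddc u,\qquad u\in\PSH(X,\epsilon\omega).
\]
The non-pluripolar product does not charge $D$, so this current contributes to the mixed product only through its non-pluripolar part $\epsilon\omega+\ddc u$. Hence
\[
\vol([D]+\epsilon\omega,\ldots)\le \int_X(\epsilon\omega+\ddc u)\wedge\cdots\le \epsilon\,\{\omega\}\cdot\{S_2+\epsilon\omega\}\cdots,
\]
where the last inequality is the standard cohomological bound for non-pluripolar masses. This is $O(\epsilon)$, and therefore tends to $0$.

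\textbf{Main obstacle.} The hardest step is passing from finitely many to countably many divisors. The truncation argument requires the monotonicity of \cref{prop:mono_vol}, which assumes the currents lie in equal classes. Since the tail classes differ, I would instead handle this step with \cref{thm:usc_mix_vol} combined with the lower bound coming from additivity. Additivity gives
\[
\vol(T_1,\ldots)\ge\vol(T_1^{(N)},\ldots)=\vol(\Reg T_1,\ldots),
\]
because mixed volumes are nonnegative. For the reverse inequality, I would write $T_i=T_i^{(N)}+R_i^{(N)}$, expand by additivity, and bound each term containing an $R$-entry by the mixed cohomological intersection, which is $O(\|\{R_i^{(N)}\}\|)\to 0$.
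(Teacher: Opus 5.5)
Your proof is correct, and it takes a genuinely different route from the paper. The paper first proves the pure statement $\vol T=\vol \Reg T$ for $\vol T>0$. For finitely many divisors it passes to a quasi-equisingular approximation and reduces to analytic singularities. For countably many it proves $\sum_{i\le N}c_i[E_i]\implies\sum_i c_i[E_i]$ and invokes the continuity of \cref{prop:strongcontnpm}. Only then does it polarize to obtain the mixed statement.

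You work directly with mixed volumes instead. Multilinearity reduces everything to $\vol([D],S_2,\ldots,S_n)=0$. You prove this by hand: the $\mathcal{I}$-model envelope of $\log|s|_h^2$ has generic Lelong number $1$ along $D$, so by Siu it dominates $[D]$, and the non-pluripolar product does not see $[D]$. The countable tail is then killed by an $O(\|\{R_i^{(N)}\}\|)$ estimate rather than by $d_S$-convergence. This avoids quasi-equisingular approximations, the $d_S$ machinery and polarization. The paper's route, in exchange, reuses continuity results it needs elsewhere anyway.

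One step needs correcting. The cohomological intersection number is not an upper bound for non-pluripolar masses when the classes are not nef. Take $\mathrm{Bl}_p\mathbb{P}^3$ with exceptional divisor $E$ and a Kähler form $\omega$ in $\pi^*H-tE$, $0<t<1$. Then $\{\omega\}\cdot\{E\}^2=-t<0$, so your bound $\epsilon\{\omega\}\cdot\{S_2+\epsilon\omega\}\cdots$ with $S_2=S_3=[E]$ is negative for small $\epsilon$.

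The fix is to dominate by a Kähler class. Choose $C$ such that every $C\{\omega\}-\{S_j+\epsilon\omega\}$ is Kähler, and add Kähler forms in these classes; this is allowed by multilinearity and positivity. Then compare with smooth representatives by monotonicity (\cref{thm:mono2}, or \cref{prop:mono_vol} at the level of mixed volumes). This gives the bound $\epsilon C^{n-1}\int_X\omega^n$.

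The same device handles the tail. For a fixed $C'$, the class $C'\|\{R\}\|\{\omega\}-\{R\}$ is Kähler for every $R$. So each term in the expansion containing some $R_i^{(N)}$ is $O(\|\{R_i^{(N)}\}\|)$, uniformly in $N$, because the classes $\{T_j^{(N)}\}$ stay bounded. In particular, your worry that \cref{prop:mono_vol} requires equal classes is harmless. Also, \cref{thm:usc_mix_vol} is not needed: the single finite expansion gives both inequalities at once.
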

Recall that $\Reg$ is defined in \eqref{eq:Siudec}.
\begin{remark}
    In general, it is not true that the mixed volume depends only on the non-pluripolar parts of the currents. This even fails for the pure volume, see \cite[Example~6.10]{BBJ21} for an example.
\end{remark}

\begin{proof}
    \textbf{Step~1}. We first prove the assertion when $T_1=\cdots=T_n=T$ and $\vol T>0$. We want to show that
    \[
    \vol T=\vol \Reg T.
    \]

    We decompose $T$ as in \eqref{eq:Siudec}. 
    
    We first handle the case where the collection of the $E_i$'s is finite.
    In this case, it suffices to prove the following: If $S$ is a closed positive $(1,1)$-current on $X$, and $E$ is a prime divisor on $X$, then $\vol (S+[E])=\vol S$. For this purpose, we may assume that $S$ is a Kähler current. Take a quasi-equisingular approximation $(S_j)_j$ of $S$, note that $(S_j+[E])_j$ is a quasi-equisingular approximation of $S+[E]$ by \cite[Theorem~6.2.2, Corollary~7.1.2]{Xiabook}. Hence, we may finally assume that $S$ has analytic singularities. In this case the assertion is obvious.

    So we may assume that the index $i$ runs over all positive integers. From the previous argument, we know that for any $N\geq 0$,
    \[
    \vol T=\vol \left(T-\sum_{i=1}^N c_i[E_i]\right).
    \]
    Thanks to \cref{prop:strongcontnpm} and \cite[Theorem~6.2.2]{Xiabook}, it suffices to show that
    \begin{equation}\label{eq:finitediv_convinf}
    \sum_{i=1}^N c_i[E_i]\implies \sum_{i=1}^{\infty} c_i[E_i]
    \end{equation}
    as $N\to\infty$.

    Fix a Kähler form $\omega$ on $X$. 
    We can find $N_0>0$ so that for any $N\geq N_0$, the class of
    \[
    \omega+ \sum_{i=N+1}^{\infty}c_i[E_i]
    \]
    is Kähler. Take a Kähler form $\omega_N$ in this class.
    Then the currents 
    \[
    \sum_{i=1}^N c_i [E_i]+\omega_N,\quad \sum_{i=1}^{\infty} c_i [E_i]+\omega
    \]
    all lie in the same cohomology class.
    So our problem is reduced to
    \[
    \sum_{i=1}^N c_i [E_i]+\omega_N\xrightarrow{d_S}\sum_{i=1}^{\infty} c_i [E_i]+\omega.
    \]
    In fact, it suffices to show the convergence of the non-pluripolar masses, due to \cite[Corollary~6.2.5]{Xiabook}. In other words, we need to show that
    \[
    \lim_{N\to\infty}\int_X \omega_N^n=\int_X \omega^n,
    \]
    which follows from the convergence $\{\omega_N\}\to \{\omega\}$.

    \textbf{Step~2}. We handle the general case. Fix a Kähler form $\omega$ on $X$, by Step~1, for any $d_1,\ldots,d_n>0$ and $\epsilon>0$, we have
    \[
    \vol\left( \sum_{i=1}^n d_i (T_i+\epsilon\omega) \right)=\vol\left( \sum_{i=1}^n d_i (\Reg T_i+\epsilon\omega) \right).
    \]
    Since both sides are polynomials in $d_1,\ldots,d_n$, we conclude that 
    \[
    \vol (T_1+\epsilon\omega,\ldots,T_n+\epsilon\omega)=\vol (\Reg T_1+\epsilon\omega,\ldots,\Reg T_n+\epsilon\omega).
    \]
    Letting $\epsilon\to 0+$, we conclude our assertion.
\end{proof}

\begin{corollary}\label{prop:vol_inv_push}
    Let $\pi\colon X\rightarrow Z$ be a proper bimeromorphic morphism from $X$ to a Kähler manifold $Z$. Then
    \begin{equation}\label{eq:volpushinc}
    \vol(T_1,\ldots,T_n)=\vol(\pi_*T_1,\ldots,\pi_*T_n).
    \end{equation}
\end{corollary}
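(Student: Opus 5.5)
The plan is to reduce the corollary to \cref{prop:biminv} and \cref{thm:vol_reg_parts}, by comparing $T_i$ with $\pi^*\pi_*T_i$. First observe that both sides of \eqref{eq:volpushinc} only see regular parts. By \cref{thm:vol_reg_parts}, the left-hand side equals $\vol(\Reg T_1,\ldots,\Reg T_n)$. For the right-hand side, write $T_i=\Reg T_i+\sum_k c_{ik}[E_{ik}]$. Then
\[
\pi_*T_i=\pi_*\Reg T_i+\sum_k c_{ik}\pi_*[E_{ik}],
\]
where $\pi_*[E_{ik}]$ is either $0$ (if $E_{ik}$ is $\pi$-exceptional) or the current of integration along a prime divisor. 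By \cref{lma:ndvpush}, $\pi_*\Reg T_i$ is non-divisorial. Hence $\Reg(\pi_*T_i)=\pi_*\Reg T_i$, and \cref{thm:vol_reg_parts} gives $\vol(\pi_*T_1,\ldots,\pi_*T_n)=\vol(\pi_*\Reg T_1,\ldots,\pi_*\Reg T_n)$. We may therefore assume every $T_i$ is non-divisorial.

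Next, apply \cref{prop:biminv} to $\pi$ and the currents $\pi_*T_i$ on $Z$:
\[
\vol(\pi^*\pi_*T_1,\ldots,\pi^*\pi_*T_n)=\vol(\pi_*T_1,\ldots,\pi_*T_n).
\]
By \cref{lma:Siudec_pullpush}, $\pi^*\pi_*T_i=T_i+\sum_{l}d_{il}[F_{il}]$ with finitely many $\pi$-exceptional prime divisors $F_{il}$ and $d_{il}>0$. Since $T_i$ is non-divisorial, this is exactly the Siu decomposition of $\pi^*\pi_*T_i$, so $\Reg(\pi^*\pi_*T_i)=T_i$. A final application of \cref{thm:vol_reg_parts} gives $\vol(\pi^*\pi_*T_1,\ldots,\pi^*\pi_*T_n)=\vol(T_1,\ldots,T_n)$, which proves \eqref{eq:volpushinc}.

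The main point to check is the identification of the regular parts: that pushing forward the Siu decomposition gives the Siu decomposition of $\pi_*T_i$. This needs the generic Lelong number of $\pi_*\Reg T_i$ to vanish along every prime divisor of $Z$, which is the content of \cref{lma:ndvpush}. It also needs pushforwards of integration currents along non-exceptional divisors to be integration currents along their images. Countable sums pose no problem, since pushforward is continuous on positive currents of bounded mass.

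One should also make sure that \cref{prop:biminv} applies with no positivity assumption on the volumes. Its statement is unconditional, so this is fine.
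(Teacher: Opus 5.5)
Your proposal is correct and follows essentially the same route as the paper: reduce to non-divisorial $T_i$ via \cref{thm:vol_reg_parts}, apply \cref{prop:biminv} to compare with $\pi^*\pi_*T_i$, then use \cref{lma:Siudec_pullpush} and \cref{thm:vol_reg_parts} again. Your appeal to \cref{lma:ndvpush} to get $\Reg(\pi_*T_i)=\pi_*\Reg T_i$ just spells out the reduction step that the paper states in one line.
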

\begin{proof}

    Observe that we may assume that $T_i=\Reg T_i$ for all $i=1,\ldots,n$. In fact, clearly the pushforward of the divisorial part of $T_i$ is divisorial as well, hence by \cref{thm:vol_reg_parts}, they do not contribute to the volumes.
    
    Now by \cref{prop:biminv}, it remains to show that
    \[
    \vol(T_1,\ldots,T_n)=\vol(\pi^*\pi_*T_1,\ldots,\pi^*\pi_*T_n).
    \]
    By \cref{lma:Siudec_pullpush}, the difference $\pi^*\pi_*T_i-T_i$ is divisorial, hence our desired equality follows from \cref{thm:vol_reg_parts}.
\end{proof}

A particular corollary of \cref{prop:vol_inv_push} will be useful later.
\begin{corollary}\label{cor:Igoodpush}
     Let $\pi\colon X\rightarrow Z$ be a proper bimeromorphic morphism from $X$ to a Kähler manifold $Z$. Assume that $T$ is an $\mathcal{I}$-good closed positive $(1,1)$-current on $X$, then so is $\pi_*T$.
\end{corollary}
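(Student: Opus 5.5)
The strategy is to use the characterization of $\mathcal{I}$-goodness by volumes recalled before the definition of mixed volumes. A current $S$ with $\int_X \langle S\rangle^n>0$ is $\mathcal{I}$-good if and only if $\vol S=\int_X \langle S^n\rangle$, where the right-hand side is the non-pluripolar mass. Since $T$ is $\mathcal{I}$-good, we have $T=\theta_\varphi$ with $\varphi\in\PSH(X,\theta)_{>0}$, so $\int_X T^n>0$ and $\vol T=\int_X T^n$. The plan is to show that the same equality holds for $\pi_*T$ on $Z$, with positive mass.

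\textbf{Step 1: compare the volumes.} Apply \cref{prop:vol_inv_push} with $T_1=\dots=T_n=T$. Together with the compatibility $\vol(T,\ldots,T)=\vol T$, this gives $\vol \pi_*T=\vol T$.

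\textbf{Step 2: compare the non-pluripolar masses.} Outside the exceptional locus, $\pi$ is a biholomorphism between Zariski open sets. Non-pluripolar products put no mass on pluripolar (in particular analytic) sets. Hence
\[
\int_Z \langle (\pi_*T)^n\rangle=\int_{X\setminus \mathrm{Exc}(\pi)} \langle T^n\rangle=\int_X\langle T^n\rangle .
\]
This uses the locality of the non-pluripolar product in the plurifine topology, which gives the bimeromorphic invariance already used in \cref{prop:biminv}.

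\textbf{Step 3: conclude.} Steps 1 and 2 give
\[
\vol\pi_*T=\vol T=\int_X T^n=\int_Z (\pi_*T)^n>0 .
\]
Write $\pi_*T=\theta'+\ddc\psi$ with $\theta'$ a smooth form on $Z$. Then $\psi\in\PSH(Z,\theta')_{>0}$, and the equality case of the inequality $\vol\geq\int(\cdot)^n$ yields that $\psi$ is $\mathcal{I}$-good.

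The only delicate point is making sure the hypotheses of the equality characterization hold. That characterization is stated for a fixed reference form $\theta'$ with positive mass, and positivity has just been verified. There is a subtlety in Step 1: \cref{prop:vol_inv_push} is stated for mixed volumes defined via the $\epsilon$-limit, while the pure volume is $\int (\theta'+\ddc P_{\theta'}[\psi]_{\mathcal{I}})^n$. The compatibility proposition $\vol(T,\dots,T)=\vol T$ bridges these, so I expect no genuine obstacle beyond bookkeeping.
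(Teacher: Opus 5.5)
Your argument is the paper's proof: volume invariance from \cref{prop:vol_inv_push}, invariance of the non-pluripolar mass under $\pi_*$, and the equality case of $\vol\geq\int(\cdot)^n$. There is one small slip at the start that needs fixing.

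$\mathcal{I}$-goodness of $T=\theta'+\ddc\varphi$ only gives $\varphi\in\PSH(X,\theta)_{>0}$ for \emph{some} form $\theta$, and that $\theta$ need not be cohomologous to $T$. So $\int_X T^n>0$ does not follow. For instance, $T=0$ is $\mathcal{I}$-good, and so is any smooth semipositive form with vanishing top power. When the mass is zero, the equality criterion you invoke in Step~3 is unavailable. This is why the paper opens with ``we may assume $\int_X T^n>0$''.

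The reduction is easy. Replace $T$ by $T+\pi^*\omega_Z$ for a Kähler form $\omega_Z$ on $Z$.
\begin{itemize}
\item It has the same potential $\varphi$, so it is still $\mathcal{I}$-good.
\item By multilinearity of the non-pluripolar product, its mass is at least $\int_X\pi^*\omega_Z^n=\int_Z\omega_Z^n>0$.
\item Its push-forward $\pi_*T+\omega_Z$ has the same potential as $\pi_*T$, so the two are $\mathcal{I}$-good simultaneously.
\end{itemize}
With this reduction in place, your Steps~1--3 go through as written.
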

\begin{proof}
    We may assume that $\int_X T^n>0$. Then by \cref{prop:vol_inv_push}, 
    \[
    \vol \pi_*T=\vol T>0
    \]
    as well.
    Since $T$ is $\mathcal{I}$-good, we have
    \[
    \vol T=\int_X T^n.
    \]
    But $\int_X T^n=\int_Z (\pi_*T)^n$, so
    \[
    \vol \pi_*T=\int_Z (\pi_*T)^n>0.
    \]
    It follows that $\pi_*T$ is $\mathcal{I}$-good.
\end{proof}

\begin{lemma}\label{lma:pushIpo}
    Let $\pi\colon X\rightarrow Z$ be a proper bimeromorphic morphism from $X$ to a Kähler manifold $Z$. Consider non-divisorial closed positive $(1,1)$ currents $T,S$ on $X$ in the same cohomology class. Assume that $T\preceq_{\mathcal{I}} S$, then $\pi_*T\preceq_{\mathcal{I}} \pi_*S$.
\end{lemma}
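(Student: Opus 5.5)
The plan is to pull back to $X$ and compare Lelong numbers along divisors over $X$. By \cref{lma:Ipartorderinv}, applied to $\pi\colon X\rightarrow Z$ with the potentials of $\pi_*T$ and $\pi_*S$ (relative to a common smooth form, since both lie in the same class $\pi_*\{T\}$), it suffices to prove
\[
\pi^*\pi_*T\preceq_{\mathcal{I}} \pi^*\pi_*S
\]
on $X$.

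\textbf{Step 1: write the pull-backs explicitly.} Since $T$ and $S$ are non-divisorial, \cref{lma:Siudec_pullpush} gives
\[
\pi^*\pi_*T=T+\sum_{i=1}^N a_i[E_i],\qquad \pi^*\pi_*S=S+\sum_{i=1}^N b_i[E_i].
\]
Here $E_1,\ldots,E_N$ are the $\pi$-exceptional prime divisors (after enlarging both finite lists to a common one), and $a_i,b_i\geq 0$.

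\textbf{Step 2: show $a_i=b_i$ for all $i$.} Take cohomology classes. Because $\{T\}=\{S\}$, we get $\pi_*\{T\}=\pi_*\{S\}$, hence $\pi^*\pi_*\{T\}=\pi^*\pi_*\{S\}$. Subtracting the two identities from Step 1 gives
\[
\sum_i (a_i-b_i)\{E_i\}=0\quad\text{in } \mathrm{H}^{1,1}(X,\mathbb{R}).
\]
So it remains to know that the classes of the $\pi$-exceptional prime divisors are linearly independent. This is the one non-formal input, and I expect it to be the main obstacle.

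\textbf{Step 2a: linear independence of exceptional classes.} I would deduce it from the negativity lemma. Write $D=\sum_i (a_i-b_i)E_i=D_+-D_-$ with $D_\pm$ effective and exceptional, with no common components. Then $D_+\equiv D_-$. Both $D_+$ and $-D_-$ are $\pi$-exceptional and $\pi$-nef up to sign, so the negativity lemma forces both to vanish. Concretely: if $D_+\neq 0$, choose a curve $C$ covering a component of $D_+$ and contracted by $\pi$ with $D_+\cdot C<0$; since $C$ is not contained in $\operatorname{Supp}D_-$ we get $D_-\cdot C\geq 0$, a contradiction.

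A fallback route, if one wants to avoid curve arguments in the transcendental setting: dominate $\pi$ by a composition of blow-ups with smooth centers and apply \cref{prop:blowup_cone} inductively. The decomposition \eqref{eq:H11blowup}, iterated, shows that the exceptional classes of the tower are independent. The strict transforms of the $E_i$ differ from combinations of these classes by pulled-back terms, which gives the independence of the $\{E_i\}$.

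\textbf{Step 3: conclude.} With $a_i=b_i$, for every prime divisor $F$ over $X$ we have
\[
\nu(\pi^*\pi_*T,F)=\nu(T,F)+\sum_i a_i\,\mathrm{ord}_F(E_i)\geq \nu(S,F)+\sum_i a_i\,\mathrm{ord}_F(E_i)=\nu(\pi^*\pi_*S,F).
\]
The inequality $\nu(T,F)\geq\nu(S,F)$ is the hypothesis $T\preceq_{\mathcal{I}} S$, read through the valuative characterization of the $\mathcal{I}$-partial order. By the same characterization, the displayed inequality for all $F$ means $\pi^*\pi_*T\preceq_{\mathcal{I}}\pi^*\pi_*S$. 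The reduction above then gives $\pi_*T\preceq_{\mathcal{I}}\pi_*S$.
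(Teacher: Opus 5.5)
Your proposal is correct and is essentially the paper's argument: \cref{lma:Siudec_pullpush} writes $\pi^*\pi_*T-T$ and $\pi^*\pi_*S-S$ as exceptional divisors, $\{T\}=\{S\}$ together with the independence of the exceptional classes forces equal coefficients, and \cref{lma:Ipartorderinv} then concludes. The only difference is where Hironaka's Chow lemma enters: the paper first reduces to $\pi$ being a modification and reads the independence off from iterating \cref{prop:blowup_cone}, whereas you keep $\pi$ general and prove the independence separately; your fallback through a dominating tower of blow-ups is the rigorous way to do this, since your negativity-lemma sketch simply assumes the existence of a contracted curve with $D_+\cdot C<0$, which is the whole content of that lemma.
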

\begin{proof}
    We may assume that $\pi$ is a modification thanks to Hironaka's Chow lemma \cite[Corollary~2]{Hir75} and \cref{lma:Ipartorderinv}.

    By \cref{lma:Siudec_pullpush},
    \[
    \pi^*\pi_*T=T+\sum_{i=1}^N c_i[E_i],
    \]
    where $c_i>0$ and the $E_i$'s are $\pi$-exceptional divisors. It follows that 
    \[
    T+\sum_{i=1}^N c_i[E_i]\preceq_{\mathcal{I}}S+\sum_{i=1}^N c_i[E_i].
    \]
    Replacing $T$ and $S$ by $T+\sum_{i=1}^N c_i[E_i]$ and $S+\sum_{i=1}^N c_i[E_i]$ respectively, we may assume that $T=\pi^*\pi_*T$. In particular, $S$ and $\pi^*\pi_*S$ lie in the same cohomology class, and hence $S=\pi^*\pi_*S$ (\emph{c.f.} the proof of \cref{thm:bdiv_current_corr} below). Our assertion then follows from \cref{lma:Ipartorderinv}.
\end{proof}

\section{Transcendental b-divisors}

Let $X$ be a connected compact Kähler manifold of dimension $n$.

\subsection{The definitions}
The b-divisors defined in this section are sometimes known as b-divisor classes. We always omit the word \emph{classes} to save space.
\begin{definition}
    A \emph{(Weil) b-divisor} $\mathbb{D}$ over $X$ is an assignment $(\mathbb{D}_{\pi})_{\pi\colon Y\rightarrow X}$, where $\pi\colon Y\rightarrow X$ runs over all modifications of $X$ such that
    \begin{enumerate}
        \item $\mathbb{D}_{\pi}\in \mathrm{H}^{1,1}(Y,\mathbb{R})$;
        \item  The classes are compatible under push-forwards: If $\pi'\colon Z\rightarrow X$ and $\pi\colon Y\rightarrow X$ are both in $\Modif(X)$ and $\pi'$ dominates $\pi$ through $g\colon Z\rightarrow Y$ (namely, $g$ makes the diagram \eqref{eq:domi} commutative), then $g_*\mathbb{D}_{\pi'}=\mathbb{D}_{\pi}$.
    \end{enumerate}
    We also write $\mathbb{D}_Y=\mathbb{D}_{\pi}$ if there is no risk of confusion.

    Given two Weil b-divisors $\mathbb{D}$ and $\mathbb{D}'$ over $X$, we say $\mathbb{D}\leq \mathbb{D}'$ if for each $\pi\in \Modif(X)$, we have $\mathbb{D}_{\pi}\leq \mathbb{D}'_{\pi}$. Recall that by definition, this means the class $\mathbb{D}'_{\pi}-\mathbb{D}_{\pi}$ is pseudoeffective.
\end{definition}
The class $\mathbb{D}_X$ is called the \emph{root} of $\mathbb{D}$.
The set of Weil b-divisors over $X$ has the obvious structure of real vector spaces.

\begin{definition}\label{def:volb}
    The \emph{volume} of a Weil b-divisor $\mathbb{D}$ over $X$ is 
    \[
        \vol \mathbb{D}\coloneqq \lim_{\pi\colon Y\rightarrow X} \vol \mathbb{D}_Y.
    \]
    The right-hand side is a decreasing net due to \cref{prop:cones_modif}, hence the limit always exists.

    We say $\mathbb{D}$ is \emph{big} if $\vol \mathbb{D}>0$.
\end{definition}

\begin{lemma}\label{lma:volbdivnet}
    Let $(\mathbb{D}_i)_{i\in I}$ be a net of b-divisors converging to $\mathbb{D}$. Then
    \begin{equation}\label{eq:vollimsup}
    \varlimsup_{i\in I}\vol \mathbb{D}_i\leq \vol \mathbb{D}.
    \end{equation}
    If the net is decreasing, then
    \[
    \lim_{i\in I}\vol \mathbb{D}_i=\vol \mathbb{D}.
    \]
\end{lemma}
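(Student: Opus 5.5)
The proof rests on two facts about the volume of a class: it is upper semicontinuous on $\mathrm{H}^{1,1}(Y,\mathbb{R})$, and it is monotone along the directed set $\Modif(X)$. Here convergence of a net of b-divisors $\mathbb{D}_i\to\mathbb{D}$ means that for every modification $\pi\colon Y\to X$ we have $(\mathbb{D}_i)_Y\to\mathbb{D}_Y$ in the finite-dimensional space $\mathrm{H}^{1,1}(Y,\mathbb{R})$.

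\emph{Semicontinuity.} Fix a modification $\pi\colon Y\to X$. By \cref{def:volb} and the fact that the net $\pi\mapsto\vol\mathbb{D}_Y$ is decreasing, $\vol (\mathbb{D}_i)\le \vol (\mathbb{D}_i)_Y$ for every $i$. Hence
\[
\varlimsup_{i\in I}\vol \mathbb{D}_i\leq \varlimsup_{i\in I}\vol (\mathbb{D}_i)_Y\leq \vol \mathbb{D}_Y .
\]
The second inequality is the upper semicontinuity of $\vol$ on $\mathrm{H}^{1,1}(Y,\mathbb{R})$. The function $\vol$ is in fact continuous: on the big cone this is Boucksom's result, and at a non-big limit point it follows since $\vol$ is nonnegative and tends to $0$ at the boundary of the big cone. (If needed, one can argue this via the $P$-envelope description and \cref{prop:strongcontnpm}.) Taking the infimum over $Y$, the definition of $\vol\mathbb{D}$ as a decreasing limit gives \eqref{eq:vollimsup}.

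\emph{Decreasing nets.} Now assume $\mathbb{D}_i\ge \mathbb{D}$ and the net is decreasing. For each $Y$, the class $(\mathbb{D}_i)_Y-\mathbb{D}_Y$ is pseudoeffective, so monotonicity of the volume of classes gives $\vol(\mathbb{D}_i)_Y\ge\vol\mathbb{D}_Y$.

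The main obstacle is passing from this to $\vol\mathbb{D}_i\ge\vol\mathbb{D}$, since $\vol\mathbb{D}_i$ is itself an infimum over $Y$. The interchange of infima settles it:
\[
\vol\mathbb{D}_i=\inf_Y\vol(\mathbb{D}_i)_Y\ge\inf_Y\vol\mathbb{D}_Y=\vol\mathbb{D}.
\]
So the limit along the net is at least $\vol\mathbb{D}$, and combined with \eqref{eq:vollimsup} we get equality. The only step needing care is that the limit $\mathbb{D}$ of a decreasing net lies below each $\mathbb{D}_i$. This holds because the pseudoeffective cone is closed: for $j\ge i$ the classes $(\mathbb{D}_i)_Y-(\mathbb{D}_j)_Y$ are pseudoeffective, and they converge to $(\mathbb{D}_i)_Y-\mathbb{D}_Y$, which is therefore pseudoeffective. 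Also, by monotonicity the net $i\mapsto\vol\mathbb{D}_i$ is decreasing, so its limit exists.
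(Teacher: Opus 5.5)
Your proof is correct and follows the paper's argument: bound $\vol\mathbb{D}_i$ by $\vol\mathbb{D}_{i,Y}$, pass to the limit using continuity of the volume on $\mathrm{H}^{1,1}(Y,\mathbb{R})$, and take the infimum over $Y$. For decreasing nets the paper writes both sides as $\inf_i\inf_{Y}\vol\mathbb{D}_{i,Y}$. This is a repackaging of your argument: $\mathbb{D}\le\mathbb{D}_i$ (by closedness of the pseudoeffective cone) plus monotonicity of the volume gives $\vol\mathbb{D}_i\ge\vol\mathbb{D}$.
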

Here we say $(\mathbb{D}_i)_{i\in I}$ converges to $\mathbb{D}$ if for any modification $\pi\colon Y\rightarrow X$, we have $\mathbb{D}_{i,Y}\to \mathbb{D}_Y$ with respect to the Euclidean topology.

In general, we cannot expect equality in \eqref{eq:vollimsup}, as shown by \cite[Example~3.3]{DF20}. 
\begin{proof}
    Let $\pi\colon Y\rightarrow X$ be a modification. Then
    \[
    \vol \mathbb{D}_Y=\lim_{i\in I} \vol \mathbb{D}_{i,Y}\geq \varlimsup_{i\in I} \vol \mathbb{D}_i.
    \]
    The inequality \eqref{eq:vollimsup} follows.
    As for the decreasing case, it suffices to observe that both sides of \eqref{eq:vollimsup} can be written as
    \[
    \inf_i \inf_{\pi\colon Y\rightarrow X} \vol \mathbb{D}_{i,Y}.
    \]
\end{proof}

\begin{definition}
    A \emph{Cartier b-divisor} $\mathbb{D}$ over $X$ is a Weil b-divisor $\mathbb{D}$ over $X$ such that there exists a modification $\pi\colon Y\rightarrow X$ and a class $\alpha_Y\in \mathrm{H}^{1,1}(Y,\mathbb{R})$ so that for each $\pi'\colon Z\rightarrow X$ dominating $\pi$, the class $\mathbb{D}_Z$ is the pull-back of $\alpha_Y$. Any such $(\pi,\alpha_Y)$ is called a \emph{realization} of $\mathbb{D}$.
\end{definition}
By abuse of language, we also say $(Y,\alpha_Y)$ is a realization of $\mathbb{D}$.
The realization is not unique in general.

\begin{definition}\label{def:nefCar}
    A Cartier b-divisor $\mathbb{D}$ over $X$ is \emph{nef} if there exists a realization $(\pi\colon Y\rightarrow X, \alpha_Y)$ of $\mathbb{D}$ such that $\alpha_Y$ is nef.
\end{definition}

\begin{definition}\label{def:nefb}
    A Weil b-divisor $\mathbb{D}$ over $X$ is \emph{nef} if there is a net of nef Cartier b-divisors $(\mathbb{D}_i)_i$ over $X$ 
    converging to $\mathbb{D}$.
\end{definition}
In other words, for each modification $\pi\colon Y\rightarrow X$, we have $\mathbb{D}_{i,Y}\to \mathbb{D}_Y$.

Note that thanks to \cref{prop:cones_modif}, each $\mathbb{D}_Y$ is necessarily modified nef, but it is not nef in general. 

\emph{A priori}, for a Cartier b-divisor, nefness could mean two different things, either  defined by \cref{def:nefCar} or by \cref{def:nefb}. We will show in \cref{cor:nefbCequiv} that they are actually equivalent. Before that, by a nef Cartier b-divisor, we always mean in the sense of \cref{def:nefCar}.

Our definition \cref{def:nefb} amounts defining the set of Weil b-divisors as the closure of the set of Cartier b-divisors in $\varprojlim_{\pi}\mathrm{H}^{1,1}(Y,\mathbb{R})$ with respect to the projective limit topology. In particular, the limit of a converging net of nef b-divisors is still nef.

\subsection{The b-divisors of currents}\label{subsec:b-div_curr}

Let $T$ be a closed positive $(1,1)$-current on $X$.

Given any modification $\pi\colon Y\rightarrow X$, we define
\begin{equation}
\mathbb{D}(T)_Y\coloneqq \left\{ \Reg \pi^*T \right\}\in \mathrm{H}^{1,1}(Y,\mathbb{R}).
\end{equation}
The b-divisor $\mathbb{D}(T)$ was firstly explicitly introduced in \cite{XiaPPT} in 2020. The paper received very little attention and the same object was re-introduced in \cite{BBGHdJ21} and \cite{Tru24} later on.

We observe that if $T'$ is another closed positive $(1,1)$-current on $X$ and $\lambda\geq 0$, then
\[
\mathbb{D}(T+T')=\mathbb{D}(T)+\mathbb{D}(T'),\quad \mathbb{D}(\lambda T)=\lambda \mathbb{D}(T).
\]
We shall use these identities implicitly in the sequel.

Note that when $T$ has analytic singularities, $\mathbb{D}(T)$ is Cartier.

\begin{lemma}\label{lma:DT}
    Let $T$ be a closed positive $(1,1)$-current on $X$. Then $\mathbb{D}(T)$ is nef. Moreover,
    \begin{equation}\label{eq:volDT}
    \vol T=\vol \mathbb{D}(T).
    \end{equation}
\end{lemma}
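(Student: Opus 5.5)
Two things need to be checked: first that $\mathbb{D}(T)$ is a well-defined Weil b-divisor, and then that it is nef with the stated volume. For compatibility under push-forwards: if $g\colon Z\rightarrow Y$ realizes the domination of $\pi$ by $\pi'$, then $g_*\pi'^*T=\pi^*T$. Moreover, $g_*\Reg \pi'^*T$ is non-divisorial by \cref{lma:ndvpush}. The divisorial part of $\pi'^*T$ pushes forward to a divisorial current, because $g_*$ of a $g$-exceptional divisor vanishes and $g_*$ of any other prime divisor is a prime divisor. Uniqueness of Siu's decomposition then gives $g_*\Reg\pi'^*T=\Reg\pi^*T$, so the classes are compatible.

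For nefness I will approximate by Cartier b-divisors coming from analytic singularities. Let $(T_j)_j$ be a quasi-equisingular approximation of $T$, with $T_j\in\{T\}+\epsilon_j\{\omega\}$. Each $T_j$ has analytic singularities, so there is a modification $\pi_j\colon Y_j\rightarrow X$ with $\pi_j^*T_j=[F_j]+R_j$, where $F_j$ is an effective $\mathbb{R}$-divisor and $R_j$ has bounded potentials. Then $\{R_j\}$ is nef, and $\mathbb{D}(T_j)$ is the nef Cartier b-divisor realized by $(Y_j,\{R_j\})$. It remains to show $\mathbb{D}(T_j)_Y\to \mathbb{D}(T)_Y$ for each fixed modification $\pi\colon Y\rightarrow X$. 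We have
\[
\{\Reg \pi^*T_j\}=\pi^*\{T\}+\epsilon_j\pi^*\{\omega\}-\sum_E \nu(\pi^*T_j,E)\{E\},
\]
and the corresponding formula without $j$ holds for $T$. The divisorial part along non-exceptional prime divisors $E$ of $Y$ may be infinite, so the convergence of classes is not automatic. I split the divisorial part into the finitely many $\pi$-exceptional primes and the rest. For a $\pi$-exceptional $E$, the quasi-equisingular property (3) gives $\nu(T_j,E)\to\nu(T,E)$. For the non-exceptional part, I push forward to $X$. Since $\pi^*\pi_*$ differs from the identity only by exceptional divisors, convergence on $Y$ reduces to showing $\{\Reg T_j\}\to\{\Reg T\}$ on $X$, together with the finitely many exceptional coefficients.

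The main obstacle is the convergence $\{\Reg T_j\}\to\{\Reg T\}$ on $X$ when $T$ has infinitely many divisorial components. Since $T_j$ decreases to $T$, we have $\nu(T_j,E)\le\nu(T,E)$ and $\nu(T_j,E)\to \nu(T,E)$ for each prime $E$. I would pass to the difference $\sum_E(\nu(T,E)-\nu(T_j,E))\{E\}$. This is a pseudoeffective class, and pairing it with $\omega^{n-1}$ gives $\sum_E(\nu(T,E)-\nu(T_j,E))\int_E\omega^{n-1}$. The terms converge to zero individually, and the sum is dominated by the convergent series $\sum_E\nu(T,E)\int_E\omega^{n-1}\le\int_X T\wedge\omega^{n-1}$. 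Dominated convergence then shows the mass tends to zero, and since pseudoeffective classes with mass tending to zero tend to zero, the convergence follows.

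For the volume, \cref{lma:volbdivnet} only gives $\vol\mathbb{D}(T)\ge\varlimsup\vol\mathbb{D}(T_j)$, so I argue directly. By \cref{thm:vol_reg_parts} and \cref{prop:biminv}, $\vol T=\vol\pi^*T=\vol\Reg\pi^*T\le\vol\{\Reg\pi^*T\}=\vol\mathbb{D}(T)_Y$ for every $\pi$, hence $\vol T\le \vol \mathbb{D}(T)$. Conversely, $\mathbb{D}(T_j)$ is Cartier with realization $(Y_j,\{R_j\})$ and $R_j$ has bounded potentials, so
\[
\vol\mathbb{D}(T_j)=\vol\{R_j\}=\int R_j^n=\vol T_j+O(\epsilon_j)\to\vol T,
\]
where the last step uses \cref{thm:CaoequalDX} (Cao's definition). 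Finally, $\mathbb{D}(T)\le \mathbb{D}(T_j)+\epsilon_j\mathbb{D}(\omega)$ componentwise, since $\nu(T_j,E)\le\nu(T,E)$, and $\mathbb{D}(\omega)$ is the Cartier b-divisor $\{\omega\}$. Monotonicity and continuity of the volume on classes then give $\vol\mathbb{D}(T)\le\lim_j\vol \mathbb{D}(T_j)=\vol T$.
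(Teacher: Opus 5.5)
Your proof is correct in substance and takes a somewhat different route from the paper, but one step of the convergence argument needs repair.

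\textbf{The step to fix.} When you reduce convergence on $Y$ to convergence on $X$, the identity supplied by \cref{lma:Siudec_pullpush} is
\[
\{\Reg \pi^*T_j\}=\pi^*\{\Reg T_j\}-\sum_{E\ \pi\textup{-exceptional}}\nu(\pi^*\Reg T_j,E)\,\{E\}.
\]
So the exceptional coefficients you need are $\nu(\pi^*\Reg T_j,E)$, not the $\nu(\pi^*T_j,E)$ you controlled by quasi-equisingularity. They differ by $\nu\bigl(\pi^*(T_j-\Reg T_j),E\bigr)=\sum_D\nu(T_j,D)\,\mathrm{ord}_E(\pi^*D)$. Infinitely many prime divisors $D$ of $X$ may contain the centre of $E$, so this is the same infinite-sum issue you flagged on $X$, now sitting in the exceptional part.

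Dominated convergence handles it: each term is bounded by $\nu(T,D)\,\mathrm{ord}_E(\pi^*D)$, and these sum to at most $\nu(\pi^*T,E)$. It is cleaner, though, to drop the reduction and run your mass argument directly on $Y$ with a Kähler form $\omega_Y$. Every prime divisor $F$ on $Y$ is a prime divisor over $X$, so $\nu(\pi^*T_j,F)\le\nu(\pi^*T,F)$ with termwise convergence. Moreover $\sum_F\nu(\pi^*T,F)\int_F\omega_Y^{n-1}\le\int_Y\pi^*T\wedge\omega_Y^{n-1}$. Hence $\mathbb{D}(T_j)_Y-\mathbb{D}(T)_Y-\epsilon_j\pi^*\{\omega\}$ is pseudoeffective with $\omega_Y^{n-1}$-mass tending to $0$.

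\textbf{The volume.} The same computation shows that $\mathbb{D}(T_j)_Y-\mathbb{D}(T_{j+1})_Y$ is pseudoeffective, because both $\epsilon_j$ and the potentials decrease. So $(\mathbb{D}(T_j))_j$ is a decreasing sequence, and the decreasing case of \cref{lma:volbdivnet} gives $\vol\mathbb{D}(T_j)\to\vol\mathbb{D}(T)$ directly; your remark that this lemma only gives one inequality is therefore too pessimistic.

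In particular $\mathbb{D}(T)\le\mathbb{D}(T_j)$ holds without adding $\epsilon_j\mathbb{D}(\omega)$. This spares you a continuity-of-volume statement that would have to be uniform over the varying $Y_j$. Also, $\int_{Y_j}R_j^n=\int_X\langle T_j^n\rangle$ exactly, and its limit is Cao's volume, which equals $\vol T$ by \cref{thm:CaoequalDX}; the $O(\epsilon_j)$ is superfluous. Your lower bound $\vol T\le\vol\mathbb{D}(T)_Y$ via \cref{prop:biminv} and \cref{thm:vol_reg_parts} is correct and approximation-free, but it becomes redundant once the decreasing-net lemma is used.

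\textbf{Comparison.} The paper first perturbs $T$ to a Kähler current, using $\mathbb{D}(T+\epsilon\omega)=\mathbb{D}(T)+\epsilon\mathbb{D}(\omega)$ and a linear volume bound, so the quasi-equisingular approximants can be taken in $\{T\}$ itself. It then cites \cite[Theorem~9.6]{Xia24} for $\mathbb{D}(T_j)\to\mathbb{D}(T)$, applies the decreasing case of \cref{lma:volbdivnet}, and notes that the analytic-singularities case is immediate.

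Your version avoids both the perturbation and the external citation: it proves the convergence of b-divisors directly by dominated convergence of generic Lelong numbers. It also checks that $\mathbb{D}(T)$ is a well-defined Weil b-divisor, which the paper takes for granted. The paper's argument is shorter; yours is more self-contained.
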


\begin{proof}
    Let $\omega$ be a Kähler form on $X$. Then $\mathbb{D}(\omega)$ is the Cartier b-divisor realized by $(X,\{\omega\})$.
    We could always approximate $\mathbb{D}(T)$ by $\mathbb{D}(T+\epsilon\omega)=\mathbb{D}(T)+\epsilon \mathbb{D}(\omega)$. 
    Moreover, we can find a constant $C>0$ so that
    \begin{equation}\label{eq:volDTp}
    0\leq \vol \left( \mathbb{D}(T)+ \epsilon\mathbb{D}(\omega) \right)-\vol \mathbb{D}(T)\leq C\epsilon.
    \end{equation}
    Hence we may assume that $T$ is a Kähler current.    

    Next, we take a closed smooth real $(1,1)$-form $\theta$ cohomologous to $T$ and write $T=\theta_{\varphi}$ for some $\varphi\in \PSH(X,\theta)$. Let $(\varphi_j)_j$ be a quasi-equisingular approximation of $\varphi$ in $\PSH(X,\theta)$. Then it is easy to see that $\mathbb{D}(\theta+\ddc \varphi_j)\to \mathbb{D}(\theta+\ddc \varphi)$.
    See \cite[Theorem~9.6]{Xia24} Step~2 for the details. As a consequence,
    \[
    \vol \mathbb{D}(\theta+\ddc \varphi_j)\to \vol \mathbb{D}(\theta+\ddc \varphi),
    \]
    thanks to \cref{lma:volbdivnet}.

    So we may assume that $T$ has analytic singularities. Let $\pi\colon Y\rightarrow X$ be a modification so that
    \[
    \pi^*T=[D]+R,
    \]
    where $D$ is an effective $\mathbb{Q}$-divisor on $Y$ and $R$ is a closed positive $(1,1)$-current with locally bounded potentials. Then $\mathbb{D}(T)$ is the nef Cartier b-divisor realized by $(\pi,\{R\})$. Note that \eqref{eq:volDT} is obvious in this case.
\end{proof}

\begin{remark}
There is a different possibility: Replace $\Reg$ by the non-pluripolar part. Given $T$ as above, we define
\[
\mathbb{D}'(T)_{\pi} \coloneqq  [\langle \pi^* T \rangle ].
\]
But thanks to \cref{lma:Siudec_pullpush}, we have
\[
\mathbb{D}'(T)=\mathbb{D}(\langle T \rangle),
\]    
so there is no new information.
\end{remark}

Conversely, we want to realize nef b-divisors as $\mathbb{D}(T)$. We first prove a continuity result.
\begin{proposition}\label{prop:dscontbdiv}
Let $(T_i)_{i\in I}$ be a net of closed positive $(1,1)$-currents on $X$ and $T$ be a closed positive $(1,1)$-current on $X$. Assume that $T_i\implies T$, then 
\[
    \mathbb{D}(T_i)\to \mathbb{D}(T).
\]
\end{proposition}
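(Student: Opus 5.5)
The plan is to fix a modification $\pi\colon Y\rightarrow X$ and prove $\{\Reg \pi^*T_i\}\to\{\Reg \pi^*T\}$ in $\mathrm{H}^{1,1}(Y,\mathbb{R})$. In outline: reduce to $Y=X$, get an upper bound for limit points from Lelong-number convergence, and get the matching lower bound from a mass identity.

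\textbf{Reduction to $Y=X$.} First check that $\pi^*T_i\implies\pi^*T$ on $Y$. The classes converge because $\pi^*$ is continuous on cohomology. For the $d_S$-part, add Kähler forms as in the proof of \cref{prop:strongcontnpm} so that the currents lie in a common big class. Then use \eqref{eq:ds_biineq}: its middle term is built from mixed masses, and these are invariant under pull-back (the proof of \cref{prop:biminv}; $\pi^*$ commutes with $\lor$). This gives $d_S$-convergence upstairs. Since $\mathbb{D}(T)_Y=\{\Reg\pi^*T\}$, it then suffices to prove: if $T_i\implies T$ on a compact Kähler manifold, then $\{\Reg T_i\}\to\{\Reg T\}$.

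\textbf{Upper bound on limit points.} For each prime divisor $E$, use the $d_S$-continuity of generic Lelong numbers: $d_S$-convergence controls the $P$-envelopes and hence the $\mathcal{I}$-envelopes, so $\nu(T_i,E)\to\nu(T,E)$; if needed, pass to $\mathcal{I}$-good representatives via \cref{thm:DX}. The classes $\{\Reg T_i\}$ are bounded because the masses $\{T_i\}\cdot\{\omega\}^{n-1}$ are bounded, so take any limit point $\beta$. For each finite set $F$ of prime divisors on $X$, the class
\[
\{T_i\}-\sum_{E\in F}\nu(T_i,E)\{E\}-\{\Reg T_i\}
\]
is pseudoeffective, being the class of a divisorial current. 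Passing to the limit and using that the pseudoeffective cone is closed, $\{T\}-\sum_{E\in F}\nu(T,E)\{E\}-\beta$ is pseudoeffective. Letting $F$ exhaust the countably many Siu components of $T$, we get $\beta\le\{\Reg T\}$.

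\textbf{Lower bound, via masses.} Set $\gamma=\{\Reg T\}-\beta$, which is pseudoeffective. A pseudoeffective class with $\gamma\cdot\{\omega\}^{n-1}=0$ vanishes, so it suffices to show
\[
\lim_i\int_X \Reg T_i\wedge\omega^{n-1}=\int_X\Reg T\wedge\omega^{n-1}.
\]
Equivalently, the divisorial masses $\sum_E\nu(T_i,E)\,(E\cdot\omega^{n-1})$ converge to $\sum_E\nu(T,E)\,(E\cdot\omega^{n-1})$. Fatou's lemma and the convergence of Lelong numbers give the $\liminf\ge$ inequality. The reverse inequality is that no divisorial mass appears in the limit.

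\textbf{Main obstacle.} The hard step is this last inequality, i.e.\ showing that no divisorial mass is created in the limit. I would first try to express $\int_X \Reg T\wedge\omega^{n-1}$ as a mixed volume $\vol(T,\omega,\ldots,\omega)$, which by \cref{thm:vol_reg_parts} sees only $\Reg T$. Then \eqref{eq:strongcontvol} in \cref{prop:strongcontnpm} would give convergence directly.

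The difficulty is that the mixed volume computes the mass of the $\mathcal{I}$-envelope of the non-pluripolar part, not the mass of $\Reg T$. So I would compare them through $\mathcal{I}$-good approximants from \cref{thm:DX}, at the level of first-order intersections with $\omega^{n-1}$. If that comparison fails, I would fall back on a diagonal argument: approximate each $T_i$ and $T$ by currents with analytic singularities, for which $\{\Reg\}$ is computed on a single log resolution.
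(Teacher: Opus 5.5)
Your route differs from the paper's. The paper adds Kähler forms $\omega_i\in\{\omega\}+\{T\}-\{T_i\}$ so that $T_i+\omega_i$ and $T+\omega$ lie in one class, uses $\mathbb{D}(\omega_i)\to\mathbb{D}(\omega)$, and cites \cite[Theorem~9.8]{Xia24} for the same-class case. Your reduction to $Y=X$ is fine. Note that $\pi^*\omega_i$ is only semipositive, so you must add a Kähler form on $Y$ and use the recalled equivalence $T_i\xrightarrow{d_S}T\iff T_i+\omega\xrightarrow{d_S}T+\omega$. Your bound $\beta\le\{\Reg T\}$ is also fine, granting the $d_S$-continuity of generic Lelong numbers in positive mass; cite that fact rather than deriving it from ``control of envelopes''.

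The gap is at the decisive step: you never prove
\[
\liminf_i\int_X\Reg T_i\wedge\omega^{n-1}\ \ge\ \int_X\Reg T\wedge\omega^{n-1}.
\]
This does not follow from convergence of Lelong numbers divisor by divisor. That convergence does not stop divisorial mass of the $T_i$, carried by divisors that move with $i$, from dissolving into $\Reg T$. This inequality is the real content of the statement, and your proposal leaves it as a list of things to try. Your diagonal fallback does not help either: it needs control of the analytic approximations uniformly in $i$, which is the same problem again.

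The missing idea is that your first suggestion works, and your objection to it is unfounded: paired with $\omega^{n-1}$, the mixed volume sees exactly $\Reg T$. Write $T=\theta+\ddc\varphi$ and use \cref{thm:CaoequalDX} to compute $\vol(T,\omega,\ldots,\omega)$ in Cao's sense. Take a quasi-equisingular approximation $(\varphi^j)_j$ of $\varphi$ and the constant approximation $0$ of $\omega$. Then the mixed volume equals $\lim_j(1+\epsilon_j)^{n-1}\int_X\langle\theta+\epsilon_j\omega+\ddc\varphi^j\rangle\wedge\omega^{n-1}$.

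Since $\varphi^j$ has analytic singularities, the support theorem gives $\langle\theta+\epsilon_j\omega+\ddc\varphi^j\rangle=\Reg(\theta+\epsilon_j\omega+\ddc\varphi^j)$. Its class is $\{\theta\}+\epsilon_j\{\omega\}-\sum_E\nu(\varphi^j,E)\{E\}$, a finite sum. The $\varphi^j$ decrease to $\varphi$, so each $\nu(\varphi^j,E)$ increases, and property (3) of quasi-equisingular approximations forces it to converge to $\nu(\varphi,E)$. Monotone convergence then gives, for every closed positive $(1,1)$-current $T$,
\[
\vol(T,\omega,\ldots,\omega)=\{T\}\cdot\{\omega\}^{n-1}-\sum_E\nu(T,E)\,\{E\}\cdot\{\omega\}^{n-1}=\int_X\Reg T\wedge\omega^{n-1}.
\]
Now apply \eqref{eq:strongcontvol} to $T_i\implies T$ together with the constant net $\omega$. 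This gives $\int_X\Reg T_i\wedge\omega^{n-1}\to\int_X\Reg T\wedge\omega^{n-1}$, and your pseudoeffectivity argument finishes the proof; the Fatou step becomes redundant.

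Completed this way, your argument uses only tools established in the paper: Lelong-number continuity, \cref{thm:CaoequalDX} and \cref{prop:strongcontnpm}. It also handles varying classes directly. The paper's proof is shorter, but only because the core same-class case is delegated to \cite{Xia24}.
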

Recall that $\implies$ is defined in \cref{def:stronconve}.
\begin{proof}
    When the cohomology classes $\{T_i\}$ and $\{T\}$ are all the same, the proof is the same as that in the algebraic case, which we omit. See \cite[Theorem~9.8]{Xia24}.

    In general, fix a Kähler form $\omega$ on $X$. Then we can find $i_0\in I$ so that for $i\geq i_0$, the class
    \[
    \omega+\{T\}-\{T_i\}
    \]
    is Kähler, and we can find a Kähler form $\omega_i$ in this class. It follows that
    \[
    T_i+\omega_i,\quad T+\omega
    \]
    are all in the same cohomology class, and hence
    \[
    \mathbb{D}(T_i+\omega_i)\to \mathbb{D}(T+\omega).
    \]
    But clearly
    \[
    \mathbb{D}(\omega_i)\to \mathbb{D}(\omega),
    \]
    so our assertion follows.
\end{proof}

\begin{theorem}\label{thm:bigandnefreal}
    Each big and nef b-divisor $\mathbb{D}$ over $X$ can be realized as $\mathbb{D}(T)$ for some $T\in \mathbb{D}_X$. Furthermore, we may always assume that $T$ is $\mathcal{I}$-good.
\end{theorem}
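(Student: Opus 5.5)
The plan is to build $T$ as an $\mathcal{I}$-model envelope whose Lelong numbers along divisors over $X$ are read off from $\mathbb{D}$. We then check $\mathbb{D}(T)=\mathbb{D}$ by comparing these Lelong numbers.

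\textbf{Step 1: coefficients.} Fix a modification $\pi\colon Y\rightarrow X$. Iterating \cref{prop:blowup_cone} gives $\mathrm{H}^{1,1}(Y,\mathbb{R})=\pi^*\mathrm{H}^{1,1}(X,\mathbb{R})\oplus\bigoplus_E\mathbb{R}\{E\}$, where $E$ runs over the $\pi$-exceptional prime divisors. We write
\[
\pi^*\mathbb{D}_X-\mathbb{D}_Y=\sum_E c_E\{E\},
\]
and check that $c_E$ depends only on $E$ as a divisor over $X$, using push-forward compatibility under domination. Next, for any closed positive current $S$ in $\mathbb{D}_X$ that is non-divisorial on $X$, we have $\Reg\pi^*S=\pi^*S-\sum_E\nu(S,E)[E]$. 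Hence
\[
\mathbb{D}(S)=\mathbb{D}\iff \nu(S,E)=c_E \textup{ for every prime divisor } E \textup{ over } X.
\]
This reduces the theorem to constructing such an $S$ that is also $\mathcal{I}$-good. Nefness of $\mathbb{D}$ should give $c_E\geq 0$, by a negativity-lemma argument applied first to nef Cartier b-divisors and then passing to limits.

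\textbf{Step 2: the envelope.} Fix $\theta\in\mathbb{D}_X$ and a Kähler form $\omega$. Define
\[
\varphi\coloneqq\sups\{\psi\in\PSH(X,\theta):\psi\leq 0,\ \nu(\psi,E)\geq c_E\textup{ for all }E\textup{ over }X\},
\]
and the analogous envelopes $\varphi_\epsilon$ in $\PSH(X,\theta+\epsilon\omega)$. By construction $\nu(\varphi,E)\geq c_E$, and such envelopes are $\mathcal{I}$-model.

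\textbf{Step 3: the reverse inequality and nonemptiness (main obstacle).} We must show the defining family is nonempty and that $\nu(\varphi,E)\leq c_E$. Take a net of nef Cartier b-divisors $\mathbb{D}_i\to\mathbb{D}$, realized by $(Y_i,\alpha_i)$ with $\alpha_i$ nef. For a fixed finite set of divisors $E$ over $X$ and $\epsilon>0$, pick $i$ so that $\mathbb{D}_{i,X}\leq\mathbb{D}_X+\epsilon\{\omega\}$ and the coefficients $c_E(\mathbb{D}_i)$ are $\epsilon$-close to $c_E$. On $Y_i$, take a current with minimal singularities in the big class $\alpha_i+\epsilon\{\pi_i^*\omega\}$, push it forward, and add a Kähler form so that it lies in $\mathbb{D}_X+2\epsilon\{\omega\}$. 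The negativity lemma, together with the Lelong numbers of minimal-singularity currents in nearly nef classes, should give Lelong numbers along $E$ that are $\geq$ and close to the coefficients of $\mathbb{D}_i$.

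This yields candidates in the $\theta+C\epsilon\omega$ envelope, but not in the $\theta$ envelope. So we compare $\varphi$ with $\inf_\epsilon\varphi_\epsilon$, which is $\mathcal{I}$-model by \cite[Proposition~3.2.12]{Xiabook}. We then compute Lelong numbers via \cite[Proposition~3.1.10]{Xiabook}, exactly as in the proof of \cref{thm:usc_mix_vol}. The delicate point is controlling \emph{all} divisors $E$ simultaneously with a single approximant, and making sure the $\epsilon$-perturbation does not spoil the lower bound $\nu\geq c_E$ in the limit. I expect this to be the hardest step.

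\textbf{Step 4: bigness and $\mathcal{I}$-goodness.} Set $T=\theta_\varphi$. It has no divisorial Lelong numbers on $X$, since $\mathbb{D}_X$ is modified nef and the divisors on $X$ impose no constraints. By Step 1, $\mathbb{D}(T)=\mathbb{D}$. By \cref{lma:DT},
\[
\int_X\theta_\varphi^n\;\text{-type mass}:\quad \vol T=\vol\mathbb{D}(T)=\vol\mathbb{D}>0,
\]
so $\varphi$ is $\mathcal{I}$-model with positive volume. For an $\mathcal{I}$-model potential the volume equals its non-pluripolar mass, so $\int_X\theta_\varphi^n=\vol\mathbb{D}>0$. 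An $\mathcal{I}$-model potential with positive non-pluripolar mass is $\mathcal{I}$-good, so $T$ is $\mathcal{I}$-good. Bigness of $\mathbb{D}$ is exactly what guarantees this positive mass and hence the $\mathcal{I}$-goodness; it also keeps the envelopes in Step 3 inside $\PSH(X,\theta)_{>0}$.
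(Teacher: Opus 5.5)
There is a genuine gap, and it sits in Step~3, which is where the whole content of the theorem lies. Your Step~1 is sound: the characterization $\mathbb{D}(S)=\mathbb{D}\iff\nu(S,E)=c_E$ for all $E$ is what the paper extracts in the proof of \cref{thm:bdiv_current_corr}. What you need is a single current in the class $\mathbb{D}_X$ itself whose Lelong numbers equal $c_E$ along \emph{every} divisor $E$ over $X$. Without it, the family defining $\varphi$ may be empty, and even if it is nonempty you get no upper bound $\nu(\varphi,E)\le c_E$.

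The route you sketch cannot supply this current. A nef Cartier approximant $\mathbb{D}_i$, after pushing forward a minimal-singularity current from $Y_i$, gives a current whose Lelong numbers are exactly $c_E(\mathbb{D}_i)$ for all $E$. There is no inequality $c_E(\mathbb{D}_i)\ge c_E$ uniform in $E$, so this current lies in none of your envelopes $\varphi_\epsilon$; it only lies in envelopes with finitely many relaxed constraints. Passing to the limit over finite constraint sets and $\epsilon\to 0$ then needs a uniform positive lower bound on non-pluripolar masses. Otherwise the decreasing limit can be $\equiv-\infty$, and Lelong numbers can jump up under decreasing limits (think of $\max(\log|z|^2,-j)\searrow\log|z|^2$), which destroys $\nu\le c_E$. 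Your candidates have masses roughly $\vol\mathbb{D}_i$, and along a net $\mathbb{D}_i\to\mathbb{D}$ one only has $\varlimsup_i\vol\mathbb{D}_i\le\vol\mathbb{D}$ (\cref{lma:volbdivnet}), possibly strictly. So bigness of $\mathbb{D}$ never enters your argument effectively, which is a warning sign since the paper leaves the nef, non-big case open. Step~4's claim that $T$ is non-divisorial also depends on Step~3, though it is automatic once some $T\in\mathbb{D}_X$ satisfies $\mathbb{D}(T)=\mathbb{D}$: then $T-\Reg T$ is an effective divisorial current with zero class, hence zero.

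The missing idea is to use $\mathbb{D}$ itself on every model, not Cartier approximants.
\begin{itemize}
\item For each $\pi\colon Y\to X$, the class $\mathbb{D}_Y$ is modified nef and big, with $\vol\mathbb{D}_Y\ge\vol\mathbb{D}>0$. So a current $T_Y$ with minimal singularities in $\mathbb{D}_Y$ is non-divisorial.
\item By \cref{lma:Siudec_pullpush}, the current $\pi_*T_Y\in\mathbb{D}_X$ has $\nu(\pi_*T_Y,E)=c_E$ for every $E$ on $Y$. By minimality of the $T_{Y'}$ on higher models, $\nu(\pi_*T_Y,E)\le c_E$ for every other $E$. Its mass is $\int_Y T_Y^n=\vol\mathbb{D}_Y\ge\vol\mathbb{D}$.
\item Minimality of $T_Z$, together with \cref{lma:pushIpo} and \cref{cor:Igoodpush}, makes the net $(\pi_*T_Y)_Y$ decreasing in $P$-singularity type.
\item The uniform mass bound and the completeness theorem of \cite{DDNLmetric} give a $d_S$-limit $T\in\mathbb{D}_X$. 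Then \cref{prop:dscontbdiv} yields $\mathbb{D}(T)=\mathbb{D}$, and replacing $T$ by $\theta+\ddc P_\theta[\varphi]_{\mathcal{I}}$ gives $\mathcal{I}$-goodness.
\end{itemize}
In your language: $\pi_*T_Y$ belongs to the envelope $\varphi_Y$ cut out only by constraints from divisors on $Y$, and $\varphi=\inf_Y\varphi_Y$. The bound $\vol\mathbb{D}>0$ is exactly what makes this infimum nontrivial and lets Lelong numbers pass to the limit, as in the proof of \cref{thm:usc_mix_vol}. With that input, your Steps~2 and~4 go through.
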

Note that $T$ is not unique. The current $T$ is necessarily non-divisorial. 

\begin{proof}
    Fix a big and nef b-divisor $\mathbb{D}$ over $X$. 

    For each $\pi\colon Y\rightarrow X$, we take a current with minimal singularities $T_Y$ in $\mathbb{D}_Y$. 
    We claim that $\mathbb{D}(\pi_*T_Y)$ coincides with $\mathbb{D}$ up to the level of $Y$: For any modification $\pi'\colon Z\rightarrow X$ dominated by $\pi$ through a morphism $g\colon Y\rightarrow Z$, we have
    \[
    \mathbb{D}_Z=\mathbb{D}(\pi_*T_Y)_Z.
    \]
    The notations are summarized in the following commutative diagram:
    \begin{equation}\label{eq:domi2}
    \begin{tikzcd}
Y \arrow[rr,"g"] \arrow[rd, "\pi"'] &   & Z \arrow[ld, "\pi'"] \\
                                 & X. &                    
\end{tikzcd}
    \end{equation}
    After unfolding the definitions, this means
    \[
        \Reg(\pi'^*\pi_*T_Y)\in \mathbb{D}_Z.
    \]
    Note that
    \[
        \Reg(\pi'^*\pi_*T_Y)=\Reg(\pi'^* \pi'_* g_*T_Y).
    \]
    Due to \cref{prop:cones_modif}, we know that $\mathbb{D}_Y$ is modified nef and big. In particular, $T_Y$ is non-divisorial, hence so is $g_*T_Y$ by \cref{lma:ndvpush}. It follows from \cref{lma:Siudec_pullpush} that
    \[
    \Reg(\pi'^* \pi'_* g_*T_Y)=\Reg(g_*T_Y)=g_*T_Y\in \mathbb{D}_Z.
    \]
    Note that
    \begin{equation}\label{eq:vol_unif_bound}
    \vol T_Y\geq \vol \mathbb{D}>0.
    \end{equation}
    
    Next we claim that the $P$-singularity types of the net $(\pi_*T_Y)_Y$ is decreasing.

    To see this, let us fix a diagram as \eqref{eq:domi2}. We need to show that 
    \[
    \pi_*T_Y\preceq_P \pi'_*T_Z.
    \]
    Since $T_Z$ has minimal singularities, it is clear that $g_*T_Y\preceq_{\mathcal{I}} T_Z$. In particular, \cref{lma:pushIpo} guarantees that $\pi_*T_Y\preceq_{\mathcal{I}} \pi'_*T_Z$. But thanks to \cref{cor:Igoodpush}, both $\pi_*T_Y$ and $\pi'_*T_Z$ are $\mathcal{I}$-good, so there is no difference between the $P$-partial order and the $\mathcal{I}$-partial order in this case. Our assertion follows.

    Next we claim that the net $(\pi_*T_Y)_Y$ has a $d_S$-limit.
    
    To see, let us take a smooth closed real $(1,1)$-form $\theta$ in $\alpha$ and write $\pi_*T_Y$ as $\theta+\ddc \varphi_Y$ for some $\theta$-psh function $\varphi_Y$. It suffices to show that the net $(\varphi_Y)_Y$ has a $d_S$-limit. But as we recalled earlier, the $d_S$-pseudometric gives $0$-distance to $P$-equivalent potentials, so it suffices to show that the decreasing net $(P_{\theta}[\varphi_Y])_Y$ has a $d_S$-limit. This follows from \eqref{eq:vol_unif_bound} and \cite[Corollary~6.2.6]{Xiabook} (which is just a reformulation of the completeness theorem of Darvas--Di Nezza--Lu \cite[Theorem~1.1]{DDNLmetric}).

    Take a closed positive $(1,1)$-current $T\in \mathbb{D}_X$ such that 
    \[
    \pi_*T_Y\xrightarrow{d_S}T.
    \]
    It follows from \cref{prop:dscontbdiv} that
    \[
    \mathbb{D}(\pi_*T_Y)\to \mathbb{D}(T).
    \]
    Therefore, we conclude that
    \[
    \mathbb{D}(T)=\mathbb{D}.
    \]
    Thanks to \cref{lma:DT}, $\vol T>0$. Write $T=\theta+\ddc \varphi$ for some $\varphi\in \PSH(X,\theta)$, then 
    \[
    T'\coloneqq \theta+\ddc P_{\theta}[\varphi]_{\mathcal{I}}
    \]
    is $\mathcal{I}$-good, non-divisorial and $\mathbb{D}(T')=\mathbb{D}(T)$.
\end{proof}

Let $\alpha$ be a modified nef class on $X$. We write $\mathcal{G}(\alpha)$ for the set of closed positive $(1,1)$-currents $T$ on $X$ with $T=\Reg T\in \alpha$ and $\vol T>0$.
\begin{theorem}\label{thm:bdiv_current_corr}
    There is a natural bijection from $\mathcal{G}(\alpha)/\sim_{\mathcal{I}}$ to the set of big and nef b-divisors $\mathbb{D}$ over $X$ with $\mathbb{D}_X=\alpha$.
\end{theorem}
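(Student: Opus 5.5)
The map is $T\mapsto\mathbb{D}(T)$, and I will check in order that it lands in the right set, descends to $\mathcal{I}$-classes, is injective on them, and is surjective.

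\textbf{Well-definedness and target.} For $T\in\mathcal{G}(\alpha)$, \cref{lma:DT} shows that $\mathbb{D}(T)$ is nef and that $\vol\mathbb{D}(T)=\vol T>0$, so $\mathbb{D}(T)$ is big. Since $T=\Reg T$, its root is $\mathbb{D}(T)_X=\{T\}=\alpha$.

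\textbf{Compatibility with $\sim_{\mathcal{I}}$.} Suppose $T\sim_{\mathcal{I}}S$. For any modification $\pi\colon Y\to X$, the pull-backs $\pi^*T$ and $\pi^*S$ have the same generic Lelong numbers along every prime divisor of $Y$. Their divisorial parts in Siu's decomposition therefore coincide, and since $\pi^*T$ and $\pi^*S$ are cohomologous, $\{\Reg\pi^*T\}=\{\Reg\pi^*S\}$. Hence $\mathbb{D}(T)=\mathbb{D}(S)$.

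\textbf{Surjectivity.} This is \cref{thm:bigandnefreal}: it gives $T\in\alpha$ that is $\mathcal{I}$-good and non-divisorial with $\mathbb{D}(T)=\mathbb{D}$. By \cref{lma:DT}, $\vol T=\vol\mathbb{D}>0$, so $T\in\mathcal{G}(\alpha)$.

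\textbf{Injectivity (the main step).} Let $T,S\in\mathcal{G}(\alpha)$ with $\mathbb{D}(T)=\mathbb{D}(S)$; we must show $T\sim_{\mathcal{I}}S$. Replacing $T,S$ by their $\mathcal{I}$-model envelopes $T'=\theta+\ddc P_\theta[\varphi]_{\mathcal{I}}$ does not change the $\mathcal{I}$-class. I expect it also does not change $\mathbb{D}$, since Lelong numbers along all divisors over $X$ are preserved and the envelope stays in $\alpha$, and it keeps the currents in $\mathcal{G}(\alpha)$. So we may assume $T$ and $S$ are $\mathcal{I}$-good.

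Consider $R\coloneqq \theta+\ddc\max(\varphi,\psi)$, where $T=\theta_\varphi$ and $S=\theta_\psi$. By \cref{prop:Igoodclosed}, $R$ is $\mathcal{I}$-good, and $T\preceq_{\mathcal{I}}R$. The aim is to show $\vol R=\vol T$. Granting this, $P_\theta[\varphi]\le P_\theta[\max(\varphi,\psi)]$ are two model potentials with equal positive mass, so they coincide by the DDNL domination principle. Then $T\sim_P R$, hence $S\preceq_{\mathcal{I}} T$, and by symmetry $T\sim_{\mathcal{I}}S$.

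To get $\vol R=\vol T$, I would use quasi-equisingular approximations $\varphi^j,\psi^j$. The potential $\max(\varphi^j,\psi^j)$ has analytic singularities, and on a common resolution $Y_j$ all three become log singularities along divisors $D_j(\varphi)$, $D_j(\psi)$ and $\min(D_j(\varphi),D_j(\psi))$. Their regular classes are $\beta_T^j=\pi^*\alpha-D_j(\varphi)$ and $\beta_S^j$, which are nef up to $\epsilon_j$, and the class for the max is $\pi^*\alpha-\min(D_j(\varphi),D_j(\psi))$.

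Passing to the limit, for each fixed $Y$ one has $\mathbb{D}(T)_Y=\mathbb{D}(S)_Y$. This says the divisorial parts of $\pi^*T$ and $\pi^*S$ are cohomologous, i.e.\ $\sum_E\nu(T,E)E$ and $\sum_E\nu(S,E)E$ are numerically equivalent. On a modification these are $\pi$-exceptional divisors, possibly with infinitely many components. Negativity of the intersection form on exceptional divisors (the negativity lemma for a projective modification) then forces the two divisors to be equal, i.e.\ $\nu(T,E)=\nu(S,E)$ for every exceptional $E\subset Y$. On $X$ itself both Lelong numbers vanish since $T,S$ are non-divisorial. As every divisor over $X$ appears on some modification, this yields $T\sim_{\mathcal{I}}S$ directly, and the volume detour above becomes unnecessary.

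The obstacle is precisely this negativity step. The difference of the two divisorial parts is an $\mathbb{R}$-linear combination of possibly infinitely many exceptional primes whose class is zero. I would reduce to finitely many components by noting that only the finitely many $\pi$-exceptional primes of the fixed $Y$ occur, since divisors with positive Lelong number on $Y$ that are not exceptional would push down to divisorial components on $X$. Then I would apply the negativity lemma to a finite combination $\sum a_iE_i\equiv 0$ with $E_i$ exceptional, which gives $a_i=0$. This needs the exceptional classes of a modification to be linearly independent in $\mathrm{H}^{1,1}(Y,\mathbb{R})$, which follows from \cref{prop:blowup_cone} by induction on the blow-ups.
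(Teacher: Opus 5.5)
Your proof is correct, and once the volume/maximum detour is dropped (as you yourself conclude), it is essentially the paper's argument: $\mathbb{D}(T)$ is big and nef by \cref{lma:DT}, and surjectivity is \cref{thm:bigandnefreal}. Injectivity holds because the divisorial part of $\pi^*T$ is a finite combination of $\pi$-exceptional primes, whose classes are linearly independent modulo $\pi^*\mathrm{H}^{1,1}(X,\mathbb{R})$ by iterating \cref{prop:blowup_cone}, so $\{\Reg\pi^*T\}$ determines every $\nu(T,E)$; the paper simply chooses, for each $E$, a tower of blow-ups with connected centers ending in $E$ and reads off the coefficient directly, without mentioning a negativity lemma.
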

\begin{proof}

    Given $T\in \mathcal{G}(\alpha)$, we associate the b-divisor $\mathbb{D}(T)$. It is big and nef due to \cref{lma:DT}. This map clearly descends to $\mathcal{G}(\alpha)/\sim_{\mathcal{I}}$. 

    This map is surjective by \cref{thm:bigandnefreal}. Now we show that it is injective. Let $T,T'\in \mathcal{G}(\alpha)$. Assume that $\mathbb{D}(T)=\mathbb{D}(T')$, we want to show that $T\sim_{\mathcal{I}} T'$.

    Let $E$ be a prime divisor over $X$, it suffices to show that
    \begin{equation}\label{eq:nuTE}
    \nu(T,E)=\nu(T',E).
    \end{equation}
    We may assume that $E$ is not a prime divisor on $X$, as otherwise both sides vanish.
    
    Choose a sequence of blow-ups with smooth \emph{connected} centers 
    \[
    Y\coloneqq X_k\rightarrow X_{k-1}\rightarrow \cdots \rightarrow X_0\coloneqq X
    \]
    so that $E$ is a prime divisor on $Y$, exceptional with respect to $X_k\rightarrow X_{k-1}$. Denote the composition by $\pi\colon Y\rightarrow X$.
    Thanks to \cref{prop:blowup_cone},
    \[
    \mathrm{H}^{1,1}(X_k,\mathbb{R})=\mathrm{H}^{1,1}(X_{k-1},\mathbb{R})\oplus \mathbb{R}\{E_k\},
    \]
    where $E_k=E$ is the exceptional divisor of $X_k\rightarrow X_{k-1}$.  
    
    By induction, 
    \[
    \mathrm{H}^{1,1}(Y,\mathbb{R})=\mathrm{H}^{1,1}(X,\mathbb{R})\oplus \bigoplus_{i=1}^{k} \mathbb{R} \{E_i\},
    \]
    where $E_i$ is the exceptional divisor of $X_i\rightarrow X_{i-1}$. Now by \cref{lma:Siudec_pullpush},
    \begin{equation}\label{eq:RegpistarT}
    \Reg \pi^*T=\pi^*T-\sum_{i=1}^k \nu(T,E_i)[E_i].
    \end{equation}
    In particular, the cohomology class of $\Reg \pi^*T$ determines $\nu(T,E)$. Hence, \eqref{eq:nuTE} follows.
\end{proof}

\begin{corollary}
    The set of nef b-divisors over $X$ can be naturally identified with 
    \[
    \varprojlim_{\omega} \left( \mathcal{G}(\alpha+\omega)/\sim_{\mathcal{I}} \right),
    \]
    where $\omega$ runs over the directed set of Kähler forms on $X$ (with respect to the partial order of reverse domination), and given two Kähler forms $\omega\leq \omega'$ the transition map 
    \[
    \mathcal{G}(\alpha+\omega)/\sim_{\mathcal{I}}\rightarrow \mathcal{G}(\alpha+\omega')/\sim_{\mathcal{I}}
    \]
    is induced by the map $\mathcal{G}(\alpha+\omega)\rightarrow \mathcal{G}(\alpha+\omega')$ sending $T$ to $T+\omega'-\omega$.
\end{corollary}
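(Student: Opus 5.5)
We read the statement as concerning nef b-divisors $\mathbb{D}$ with fixed root $\mathbb{D}_X=\alpha$, where $\alpha$ is modified nef; this is forced by the right-hand side. The plan is to build mutually inverse maps, using \cref{thm:bdiv_current_corr} at each level $\alpha+\{\omega\}$. Throughout, $\omega\leq\omega'$ means that $\omega'-\omega$ is a (semi)positive smooth form. This is needed for the transition map to land in positive currents.

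\textbf{Step 1: the map to the projective limit.} Let $\mathbb{D}$ be a nef b-divisor with $\mathbb{D}_X=\alpha$ and let $\omega$ be a Kähler form. Then $\mathbb{D}+\mathbb{D}(\omega)$ is again nef: sums of nef Cartier b-divisors are nef Cartier, and sums of convergent nets converge. Its root is $\alpha+\{\omega\}$, which is modified nef.

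It is also big. Each $\mathbb{D}_Y$ is modified nef, hence pseudoeffective, so $\mathbb{D}_Y+\pi^*\{\omega\}\geq \pi^*\{\omega\}$. By monotonicity of the volume of classes under $\leq$ and \cref{prop:cones_modif}, we get $\vol(\mathbb{D}_Y+\pi^*\{\omega\})\geq \vol\{\omega\}>0$, uniformly in $Y$.

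So \cref{thm:bdiv_current_corr} yields a unique class $[T_\omega]\in\mathcal{G}(\alpha+\omega)/\sim_{\mathcal{I}}$ with $\mathbb{D}(T_\omega)=\mathbb{D}+\mathbb{D}(\omega)$.

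\textbf{Step 2: the transition maps, and compatibility.} For $\omega\leq\omega'$, the map $T\mapsto T+\omega'-\omega$ is well defined on the quotients, for three reasons:
\begin{itemize}
\item adding a smooth semipositive form keeps the current non-divisorial;
\item the volume does not decrease, by \cref{prop:mono_vol}-type monotonicity or simply by comparing with $T$;
\item Lelong numbers over $X$ are unchanged, so $\mathcal{I}$-equivalence is preserved.
\end{itemize}
The family $([T_\omega])_\omega$ is compatible. Indeed, $\mathbb{D}(T_\omega+\omega'-\omega)=\mathbb{D}(T_\omega)+\mathbb{D}(\omega')-\mathbb{D}(\omega)=\mathbb{D}+\mathbb{D}(\omega')$. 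Here we use additivity of $\mathbb{D}$, together with the fact that $\mathbb{D}$ of a smooth form is the Cartier b-divisor of its class. The injectivity part of \cref{thm:bdiv_current_corr} then gives $[T_\omega+\omega'-\omega]=[T_{\omega'}]$.

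\textbf{Step 3: the inverse map.} Start from a compatible family $([T_\omega])_\omega$ and set $\mathbb{D}\coloneqq\mathbb{D}(T_\omega)-\mathbb{D}(\omega)$.

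This is independent of $\omega$. For two Kähler forms, pick a common comparable one, for instance $\omega''=\min$-type scaling $\epsilon\omega_0\leq\omega,\omega'$, and apply the computation of Step~2 to each pair. The root is $\{T_\omega\}-\{\omega\}=\alpha$.

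For nefness, take the cofinal sequence $\omega_k=\omega_0/k$. Then $\mathbb{D}(T_{\omega_k})=\mathbb{D}+\mathbb{D}(\omega_k)\to\mathbb{D}$, because $\mathbb{D}(\omega_k)_Y=\pi^*\{\omega_k\}\to 0$. The currents $T_{\omega_k}$ give nef b-divisors by \cref{lma:DT}, and a limit of nef b-divisors is nef.

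The two constructions are mutually inverse. One composite is the identity by the definition of $\mathbb{D}$. The other is the identity by injectivity in \cref{thm:bdiv_current_corr}.

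\textbf{Main obstacle.} The only delicate points are in Steps~2 and~3. One is the precise meaning of the directed order on Kähler forms, namely that the transition maps preserve positivity and that any two forms admit a common comparable one. The other is the existence of a cofinal sequence tending to $0$, which is what makes the nefness of the inverse image work. I would handle both by fixing $\omega_0$ and reducing everything to the cofinal family $\epsilon\omega_0$ with $\epsilon\in(0,1]$.
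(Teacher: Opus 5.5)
Your proof is correct and is essentially the argument the paper leaves implicit, since the corollary is stated without proof as a consequence of \cref{thm:bdiv_current_corr}: you apply that bijection at each level $\alpha+\{\omega\}$, obtain compatibility from additivity of $\mathbb{D}$ together with injectivity (the same perturbation-and-compatibility device used in the proof of \cref{cor:nef_Cartierseq}), and recover nefness of $\mathbb{D}(T_\omega)-\mathbb{D}(\omega)$ as a limit along $\omega_0/k$. One small citation fix: the inequality $\vol(T+\omega'-\omega)\geq \vol T>0$ follows from the multilinearity in \cref{prop:multilinear}, not from \cref{prop:mono_vol}, which requires the currents to lie in equal cohomology classes.
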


\begin{corollary}\label{cor:nefbCequiv}
    Let $\mathbb{D}$ be a Cartier b-divisor over $X$. Then $\mathbb{D}$ is nef in the sense of \cref{def:nefCar} if and only if it is nef in the sense of \cref{def:nefb}.
\end{corollary}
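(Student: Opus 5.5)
The direction "nef in the sense of \cref{def:nefCar} implies nef in the sense of \cref{def:nefb}" is trivial: take the constant net. For the converse, I would proceed as follows. Let $\mathbb{D}$ be a Cartier b-divisor with realization $(\pi\colon Y\rightarrow X,\alpha_Y)$, and assume $\mathbb{D}$ is a limit of nef Cartier b-divisors. The goal is to show that $\alpha_Y$ is nef. Since any nef Cartier b-divisor restricted to the modifications of $Y$ is again nef Cartier, $\mathbb{D}$ can be regarded as a nef b-divisor over $Y$ whose root is $\alpha_Y$ and whose value on every modification $Z\rightarrow Y$ is the pull-back of $\alpha_Y$. So we may assume $Y=X$ and $\mathbb{D}_Z=\pi_Z^*\alpha$ for every modification $\pi_Z\colon Z\rightarrow X$, and the task is to prove that $\alpha$ is nef.

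First reduce to the big case. Fix a Kähler form $\omega$ and replace $\mathbb{D}$ by $\mathbb{D}+\epsilon\mathbb{D}(\omega)$. This b-divisor is still Cartier, realized by $(X,\alpha+\epsilon\{\omega\})$, and it is still nef, since adding the nef Cartier b-divisor $\epsilon\mathbb{D}(\omega)$ to an approximating net preserves nefness. It is also big: $\mathbb{D}_Z\geq\epsilon\pi_Z^*\{\omega\}$, so $\vol\mathbb{D}_Z\geq\epsilon^n\vol\{\omega\}$ for all $Z$. Since nefness of $\alpha+\epsilon\{\omega\}$ for every $\epsilon>0$ gives nefness of $\alpha$, it suffices to treat $\mathbb{D}$ big.

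Now apply \cref{thm:bigandnefreal} to obtain a current $T\in\alpha$ with $\mathbb{D}(T)=\mathbb{D}$. For every modification $\pi_Z\colon Z\rightarrow X$ this means
\[
\{\Reg\pi_Z^*T\}=\pi_Z^*\alpha=\{\pi_Z^*T\}.
\]
Hence the divisorial part $\pi_Z^*T-\Reg\pi_Z^*T$ is a positive current with zero cohomology class, and therefore vanishes. (Alternatively, use \eqref{eq:RegpistarT} together with the direct sum decomposition of \cref{prop:blowup_cone}: the coefficients $\nu(T,E_i)$ are read off from the class.) Since $T$ is non-divisorial on $X$ itself, this shows $\nu(T,E)=0$ for every prime divisor $E$ over $X$. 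In particular, $T\sim_{\mathcal{I}}$ a current with zero Lelong numbers, for instance a current with minimal singularities in a Kähler class shifted appropriately. What we obtain is a positive current in $\alpha$ all of whose Lelong numbers vanish everywhere: by Siu's theorem the upper level sets are analytic, and a positive Lelong number at a point $x$ would give a positive generic Lelong number along the exceptional divisor of the blow-up at $x$.

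It remains to conclude that $\alpha$ is nef, and this is the key step. By Demailly's regularization theorem, a closed positive $(1,1)$-current with vanishing Lelong numbers everywhere can be approximated by smooth forms $\theta+\ddc\varphi_j\geq-\epsilon_j\omega$ with $\epsilon_j\to0$. This gives exactly nefness of $\alpha=\{T\}$. I expect the only delicate points to be justifying that vanishing generic Lelong numbers along all divisors over $X$ forces pointwise vanishing (the blow-up at a point argument above), and checking that the reduction to $Y=X$ is legitimate, namely that nefness over $X$ restricts to nefness over $Y$ through cofinality of modifications dominating $\pi$.
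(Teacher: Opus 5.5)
Your proof is correct and follows the paper's own argument. You reduce to a big b-divisor realized on $X$ itself, use \cref{thm:bigandnefreal} to write $\mathbb{D}=\mathbb{D}(T)$, deduce that every $\pi^*T$ is non-divisorial so that all Lelong numbers of $T$ vanish, and conclude nefness of $\alpha=\{T\}$ by Demailly regularization. Your zero-class positivity argument for why the divisorial part of $\pi_Z^*T$ vanishes is a clean substitute for the paper's appeal to \eqref{eq:RegpistarT}, and the aside about $T$ being $\mathcal{I}$-equivalent to a current in a shifted Kähler class is muddled and unnecessary, so it can simply be dropped.
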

This result is the transcendental version of  \cite[Theorem~2.8]{DF20}.
\begin{proof}
    We only handle the non-trivial implication. Assume that  $\mathbb{D}$ is nef in the sense of \cref{def:nefb}. We want to show that $\mathbb{D}$ is nef in the sense of \cref{def:nefCar}. We may clearly assume that $\mathbb{D}$ is big. Take a non-divisorial closed positive $(1,1)$-current $T$ on $X$ such that $\mathbb{D}=\mathbb{D}(T)$. 

    Without loss of generality, we may also assume that $\mathbb{D}$ is realized by $(X,\alpha)$ for some cohomology class $\alpha\in \mathrm{H}^{1,1}(X,\mathbb{R})$. Now $\mathbb{D}=\mathbb{D}(T)$ means that for each modification $\pi\colon Y\rightarrow X$, the current $\pi^*T$ is non-divisorial. In particular, $T$ has vanishing generic Lelong number along each prime divisor over $X$, see \eqref{eq:RegpistarT}. That means, $T$ has vanishing Lelong number everywhere. It follows that $\alpha=\{T\}$ is nef. 
\end{proof}

\begin{corollary}\label{cor:Dorder_Iorder}
    Let $T$ and $T'$ be non-divisorial closed positive $(1,1)$-currents on $X$. Suppose that $\{T\}=\{T'\}$, then the following are equivalent:
    \begin{enumerate}
        \item $\mathbb{D}(T)\leq \mathbb{D}(T')$;
        \item $T\preceq_{\mathcal{I}}T'$.
    \end{enumerate}
\end{corollary}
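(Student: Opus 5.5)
The plan is to reduce both directions to the formula \eqref{eq:RegpistarT} from the proof of \cref{thm:bdiv_current_corr}. That formula expresses $\{\Reg\pi^*T\}$ in terms of the generic Lelong numbers of $T$ along the exceptional divisors. We then compare these Lelong numbers using the characterisation of $\preceq_{\mathcal{I}}$ by divisorial valuations recalled after \cref{def:Iequiv}: $T\preceq_{\mathcal{I}}T'$ if and only if $\nu(T,E)\geq\nu(T',E)$ for every prime divisor $E$ over $X$.

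\textbf{(2)$\Rightarrow$(1).} Fix a modification $\pi\colon Y\rightarrow X$. By \cref{lma:Siudec_pullpush} we can write
\[
\Reg\pi^*T=\pi^*T-\sum_i \nu(T,E_i)[E_i],
\]
where the $E_i$ are the finitely many $\pi$-exceptional prime divisors, and similarly for $T'$. Here we use that $T$ and $T'$ are non-divisorial, so no non-exceptional divisor contributes. Since $\{\pi^*T\}=\{\pi^*T'\}$, we get
\[
\mathbb{D}(T')_Y-\mathbb{D}(T)_Y=\sum_i\bigl(\nu(T,E_i)-\nu(T',E_i)\bigr)\{E_i\}.
\]
If $T\preceq_{\mathcal{I}}T'$, every coefficient is $\geq 0$, so this class is effective, hence pseudoeffective. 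This gives $\mathbb{D}(T)\leq\mathbb{D}(T')$.

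\textbf{(1)$\Rightarrow$(2).} Fix a prime divisor $E$ over $X$; we must show $\nu(T,E)\geq\nu(T',E)$. If $E$ lies on $X$, both sides vanish. Otherwise, choose a tower of blow-ups $\pi\colon Y=X_k\rightarrow\cdots\rightarrow X_0=X$ with smooth connected centers such that $E=E_k$ is the exceptional divisor of the last blow-up. By the identity above, $\mathbb{D}(T')_Y-\mathbb{D}(T)_Y=\sum_i a_i\{E_i\}$ with $a_i=\nu(T,E_i)-\nu(T',E_i)$, and this class is pseudoeffective by (1). Push it forward to $X_{k-1}$ via $p\colon X_k\rightarrow X_{k-1}$: pushforward preserves pseudoeffectivity and kills $\{E_k\}$. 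Compare this with the same identity computed directly on $X_{k-1}$. The pullback $p^*$ of the $X_{k-1}$-class, minus the $Y$-class, equals $a_k\{E_k\}$ plus the difference between pulling back $[E_i]$ and taking the strict transforms. This is messy, so we avoid it as follows.

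Write $\beta=\mathbb{D}(T')_Y-\mathbb{D}(T)_Y$ and $\beta_{k-1}=\mathbb{D}(T')_{X_{k-1}}-\mathbb{D}(T)_{X_{k-1}}$. Then $\beta=p^*\beta_{k-1}+c\{E_k\}$ for some real $c$, and by \eqref{eq:H11blowup} this $c$ is an affine function of the Lelong numbers; concretely,
\[
c=\nu(p^*\Reg\pi_{k-1}^*T,E_k)-\nu(p^*\Reg\pi_{k-1}^*T',E_k).
\]
The needed input is: if $p^*\gamma+c\{E_k\}$ is pseudoeffective and $c<0$, derive a contradiction with the Lelong number data. I expect this to be the main obstacle, because pseudoeffectivity of $p^*\gamma-|c|\{E_k\}$ is certainly possible in general (for instance when $\gamma$ is big).

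\textbf{Fallback for the obstacle.} Instead of the cohomological route, I would argue via currents. Perturb by $\epsilon\omega$ so that both currents have positive volume, and replace them by their $\mathcal{I}$-model versions without changing $\mathbb{D}$ (this uses \cref{thm:bdiv_current_corr}). Consider $S=T\lor T'$, formed as the maximum of potentials. It is non-divisorial, satisfies $S\succeq_{\mathcal{I}} T$ and $S\succeq_{\mathcal{I}} T'$, and has $\nu(S,E)=\min(\nu(T,E),\nu(T',E))$, so that $\mathbb{D}(S)\geq\mathbb{D}(T')$ by the direction already proved. The hypothesis $\mathbb{D}(T)\leq\mathbb{D}(T')$ together with the volume equality \eqref{eq:volDT} and monotonicity (\cref{prop:mono_vol}) should then force $\vol S=\vol T'$. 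Since $S$ and $T'$ are $\mathcal{I}$-good with $T'\preceq_{\mathcal{I}} S$, equal volumes give $S\sim_P T'$ by the equality case of \cref{thm:mono2}, hence $S\sim_{\mathcal{I}}T'$, i.e.\ $\min(\nu(T,E),\nu(T',E))=\nu(T',E)$, which is (2). The step $\vol S=\vol T'$, derived from the b-divisor inequality via \cref{lma:volbdivnet}, is where I expect the real work.
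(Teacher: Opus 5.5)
Your direction (2)$\Rightarrow$(1) is correct and matches the paper: by \eqref{eq:RegpistarT}, $\mathbb{D}(T')_Y-\mathbb{D}(T)_Y=\sum_i a_i\{E_i\}$ with $a_i=\nu(T,E_i)-\nu(T',E_i)$. Nonnegative coefficients give an effective, hence pseudoeffective, class.

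The gap is in (1)$\Rightarrow$(2). You never show that pseudoeffectivity of $\beta=\sum_i a_i\{E_i\}$ forces $a_k\geq 0$, and your diagnosis of the obstacle is misdirected. You treat $\beta_{k-1}$ as an arbitrary class that might be big. But $\beta$ is a combination of $\pi$-exceptional prime divisors, so $\pi_*\beta=0$; likewise $\beta_{k-1}$ is exceptional over $X$ and cannot be big.

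The missing input is the standard fact that a pseudoeffective class which is a real combination of $\pi$-exceptional prime divisors has all coefficients $\geq 0$. Here is the argument.
\begin{enumerate}
\item Pick a closed positive $(1,1)$-current $R\in\beta$. Then $\pi_*R$ is a closed positive current in the class $\pi_*\beta=0$, so $\int_X\pi_*R\wedge\omega^{n-1}=0$ and hence $\pi_*R=0$.
\item Since $\pi$ is an isomorphism off $\bigcup_i E_i$, the current $R$ is supported on $\bigcup_i E_i$. The support theorem then gives $R=\sum_i b_i[E_i]$ with $b_i\geq 0$.
\item Iterating \cref{prop:blowup_cone} (strict and total transforms of the $E_i$ differ by a unitriangular change of basis), the classes $\{E_i\}$ are linearly independent modulo $\pi^*\mathrm{H}^{1,1}(X,\mathbb{R})$. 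Hence $a_i=b_i\geq 0$ for all $i$, and in particular $\nu(T,E)\geq\nu(T',E)$.
\end{enumerate}
No induction along the tower is needed. This lemma is exactly what the paper's one-line appeal to \eqref{eq:RegpistarT} silently uses in this direction.

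Your fallback does not close the gap either. From $\mathbb{D}(T)\leq\mathbb{D}(T')\leq\mathbb{D}(S)$ and monotonicity you only get $\vol T\leq\vol T'\leq\vol S$, whereas you need $\vol S\leq\vol T'$. Moreover, $\mathbb{D}(S)_Y=\mathbb{D}(T')_Y+\sum_i\max(-a_i,0)\{E_i\}$. So showing that this extra effective exceptional piece adds no volume amounts to knowing $\min(\nu(T,E_i),\nu(T',E_i))=\nu(T',E_i)$, which is the conclusion itself; the argument is circular as stated. In addition, \cref{thm:mono2} has no equality clause. Deducing $S\sim_P T'$ from equal masses would need the separate Darvas--Di Nezza--Lu result that model potentials $\varphi\leq\psi$ with equal positive mass coincide.
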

\begin{proof}
    This follows from \eqref{eq:RegpistarT}.
\end{proof}

In particular, we obtain the transcendental analogue of \cite[Theorem~A]{DF20}.
\begin{corollary}\label{cor:nef_Cartierseq}
    Let $\mathbb{D}$ be a nef b-divisor over $X$. Then there is a decreasing sequence of nef and big Cartier b-divisors $\mathbb{D}_i$ over $X$ with limit $\mathbb{D}$.
\end{corollary}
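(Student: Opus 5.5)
First reduce to the big case. Fix a Kähler form $\omega$ on $X$ and replace $\mathbb{D}$ by $\mathbb{D}+2^{-i}\mathbb{D}(\omega)$. This is nef, and it is big: by the monotonicity of volumes under $\leq$ together with \cref{lma:volbdivnet}, its volume is at least $2^{-in}\vol\mathbb{D}(\omega)>0$. Suppose we can produce, for a nef and big $\mathbb{D}$, a decreasing sequence of nef and big Cartier b-divisors $\mathbb{D}'_j\to\mathbb{D}$. Then the sequence $\mathbb{D}_i$ for the original $\mathbb{D}$ is obtained by a diagonal choice. For each $i$ we take $\mathbb{D}'_{i,j}$ approximating $\mathbb{D}+2^{-i}\mathbb{D}(\omega)$ from above, and set $\mathbb{D}_i:=\mathbb{D}'_{i,j_i}+2^{-i}\mathbb{D}(\omega)$ for suitable $j_i$. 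It is simpler to arrange monotonicity by construction, as described next.

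In the big case, we apply \cref{thm:bigandnefreal} to get an $\mathcal{I}$-good current $T=\theta+\ddc\varphi\in\mathbb{D}_X$ with $\mathbb{D}(T)=\mathbb{D}$. We then take a quasi-equisingular approximation $(\varphi^j)_j$ with $\varphi^j\in\PSH(X,\theta+\epsilon_j\omega)$, where $\epsilon_j\searrow 0$. Set
\[
T_j:=\theta+\epsilon_j\omega+\ddc\varphi^j .
\]
Each $T_j$ has analytic singularities, so $\mathbb{D}(T_j)$ is a nef Cartier b-divisor, by the proof of \cref{lma:DT}. Moreover $\vol\mathbb{D}(T_j)=\vol T_j\geq\vol T>0$ by \cref{lma:DT} and \cref{prop:mono_vol}, since $\varphi\leq\varphi^j$. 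Convergence $\mathbb{D}(T_j)\to\mathbb{D}(T)$ follows from \cref{thm:DX} ($T_j\implies T$ because $T$ is $\mathcal{I}$-good) combined with \cref{prop:dscontbdiv}.

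The remaining issue is monotonicity: we need $\mathbb{D}(T_{j+1})\leq\mathbb{D}(T_j)$. Since $\varphi^{j+1}\leq\varphi^j$, we have
\[
\theta+\epsilon_{j}\omega+\ddc\varphi^{j+1}\preceq_{\mathcal{I}} T_j ,
\]
with both currents lying in the same class. On any modification $Y$, formula \eqref{eq:RegpistarT} (Siu decomposition of the pullback) gives
\[
\{\Reg\pi^*S\}=\pi^*\{S\}-\sum_E\nu(S,E)\{E\}.
\]
Hence a larger Lelong number along every divisor gives a smaller regular class, and the difference is an effective divisor class. We therefore get
\[
\mathbb{D}(T_{j+1})+(\epsilon_j-\epsilon_{j+1})\mathbb{D}(\omega)\leq\mathbb{D}(T_j),
\]
and in particular $\mathbb{D}(T_{j+1})\leq\mathbb{D}(T_j)$, because $\mathbb{D}(\omega)_Y$ is nef, hence pseudoeffective. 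In the non-big case we instead use $T+2^{-i}\omega$ with the same kind of approximation. Alternatively, we add $\epsilon_j\mathbb{D}(\omega)$ with $\epsilon_j$ decreasing, to a sequence that is already decreasing; this preserves both monotonicity and nef bigness. Here, to realize $\mathbb{D}$ as $\mathbb{D}(T)$ in the non-big case, we use the corollary identifying nef b-divisors with the projective limit over $\omega$.

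I expect the main obstacle to be this non-big case. There $\mathbb{D}$ need not equal $\mathbb{D}(T)$ for a single current, so one has to choose compatible $\mathcal{I}$-good representatives $T^{(i)}$ of $\mathbb{D}+2^{-i}\mathbb{D}(\omega)$, with $T^{(i+1)}+(2^{-i}-2^{-i-1})\omega\sim_{\mathcal{I}}T^{(i)}$, which is possible by \cref{thm:bdiv_current_corr}. One then diagonalizes, taking $\mathbb{D}_i=\mathbb{D}(T^{(i)}_{j_i})$ with the $j_i$ chosen so that the inequality above still forces decrease. The decrease across the diagonal uses the extra slack $(2^{-i}-2^{-i-1})\mathbb{D}(\omega)$ to absorb the change of index $j$.
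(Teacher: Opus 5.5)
Your big case is sound. For one $\mathcal{I}$-good representative $T$ of $\mathbb{D}$, the approximants $T_j$ are non-divisorial with analytic singularities, and $\mathbb{D}(T_j)\to\mathbb{D}$ by \cref{thm:DX} and \cref{prop:dscontbdiv}. Your inequality $\mathbb{D}(T_{j+1})+(\epsilon_j-\epsilon_{j+1})\mathbb{D}(\omega)\le\mathbb{D}(T_j)$ then gives monotonicity. (For $\vol T_j\ge\vol T$ you also need \cref{lma:perturbvol}, since \cref{prop:mono_vol} only compares currents in the same class.)

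The gap is the general nef case, which is the real content of the statement. There you only assert that a diagonal sequence $\mathbb{D}(T^{(i)}_{j_i})$ decreases and converges.
\begin{itemize}
\item \textbf{Monotonicity.} Your inequality compares two terms of \emph{one} approximating sequence. The sequences $(T^{(i)}_j)_j$ and $(T^{(i+1)}_j)_j$ approximate different currents and are chosen independently. On a model $\pi\colon Y\to X$, the difference $\mathbb{D}(T^{(i)}_{j_i})_Y-\mathbb{D}(T^{(i+1)}_{j_{i+1}})_Y$ equals $c\,\pi^*\{\omega\}$ (with $c>0$ for $j_{i+1}$ large) plus $\sum_E\bigl(\nu(T^{(i+1)}_{j_{i+1}},E)-\nu(T^{(i)}_{j_i},E)\bigr)\{E\}$. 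Some of these coefficients may be negative. Saying the slack ``absorbs'' them is not an argument, because pseudoeffectivity must hold on every model at once.
\item \textbf{Convergence.} Knowing $\nu(T^{(i)}_j,E)\to\nu(T^{(i)},E)$ for each fixed $i$ and $E$ does not give $\nu(T^{(i)}_{j_i},E)\to\nu(T^{(i)},E)$ simultaneously for all the uncountably many divisors $E$ over $X$.
\item Your fallback of adding $\epsilon_j\mathbb{D}(\omega)$ to ``a sequence that is already decreasing'' is circular here.
\end{itemize}

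The missing idea, which is the paper's, is to \emph{coordinate} the approximations. The $T^{(i)}$ are $\mathcal{I}$-equivalent after the $\omega$-shifts, so their potentials have the same multiplier ideals. The Bergman kernel construction therefore yields quasi-equisingular approximations $(T^{(i)}_j)_j$ whose singularity type, for fixed $j$, does not depend on $i$. Taking the $T^{(i)}$ to be Kähler currents, one can also keep $T^{(i)}_j$ in the class of $T^{(i)}$ for $j\ge j_0(i)$. Then for any strictly increasing $j_i\ge j_0(i)$:
\begin{itemize}
\item the numbers $\nu(T^{(i)}_{j_i},E)=\nu(T^{(1)}_{j_i},E)$ are nondecreasing in $i$ and tend to $\nu(T^{(1)},E)$ for every $E$;
\item the root classes decrease by Kähler classes.
\end{itemize}
So \eqref{eq:RegpistarT} gives monotonicity and convergence at once.

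Your diagonal could alternatively be repaired with two further ingredients, neither of which is in your proposal. First, check the comparison only on a model realizing the Cartier b-divisor $\mathbb{D}(T^{(i)}_{j_i})$. This suffices because $\mathbb{D}(S)_Z\le g^*\mathbb{D}(S)_Y$ for $g\colon Z\to Y$, by Siu's decomposition. On that model only finitely many exceptional divisors matter, and the big class $2^{-i-1}\pi^*\{\omega\}$ absorbs small negative coefficients. Second, choose $j_i$ using a uniform estimate, such as $\nu(T^{(i)},E)-\nu(T^{(i)}_j,E)\le A_X(E)/m_j$ for Demailly's level-$m_j$ approximants, where $A_X(E)$ is the log discrepancy.
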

\begin{proof}
    Take a Kähler form $\omega$ on $X$. By \cref{thm:bigandnefreal}, for each $i>0$, we can find a non-divisorial Kähler current $T_i\in \mathbb{D}_X+i^{-1}\{\omega\}$ such that
    \[
    \mathbb{D}(T_i)=\mathbb{D}+i^{-1}\mathbb{D}(\omega).
    \]
    We observe that 
    \[
    T_{i+1}\sim_{\mathcal{I}} T_{i}.
    \]
    This follows from applying \cref{cor:Dorder_Iorder} to $T_i$ and $T_{i+1}+(i^{-1}-(i+1)^{-1})\omega$.
    Let $(T_i^j)_j$ be quasi-equisingular approximations of $T_i$ such that
    \begin{enumerate}
        \item 
        $T_i^j$ is a Kähler current in $\mathbb{D}_X+i^{-1}\{\omega\}$ for $j\geq j_0(i)$, and
        \item the singularity types of $(T_i^j)_i$ is constant.
    \end{enumerate}
    Note that (2) is possible by the using the Bergman kernel construction of the quasi-equisingular approximations. 
    
    It suffices to take $\mathbb{D}_i=\mathbb{D}(T_i^{j_i})$, where $j_i$ is a strictly increasing sequence of positive integers with $j_i\geq j_0(i)$.
\end{proof}

\section{The intersection theory}\label{sec:int_b_div}
Let $X$ be a connected compact Kähler manifold of dimension $n$. We will define the intersection numbers of nef b-divisors and show that they satisfy the same properties as their algebraic analogues, \emph{c.f.} \cite[Theorem~3.2]{DF20}. 

\begin{definition}\label{def:DF_ext}
    Let $\mathbb{D}_1,\ldots,\mathbb{D}_n$ be big and nef b-divisors over $X$. Then we define their intersection as
    \[
    \left( \mathbb{D}_1,\ldots,\mathbb{D}_n \right)\coloneqq \vol (T_1,\ldots,T_n),
    \]
    where $T_1,\ldots,T_n$ are closed positive $(1,1)$-currents in $\mathbb{D}_{1,X},\ldots,\mathbb{D}_{n,X}$ respectively such that $\mathbb{D}(T_i)=\mathbb{D}_{i}$.

    In general, if the $\mathbb{D}_i$'s are only nef, we define
    \[
    \left( \mathbb{D}_1,\ldots,\mathbb{D}_n \right)\coloneqq \lim_{\epsilon \to 0+}\left( \mathbb{D}_1+\epsilon \mathbb{D}(\omega),\ldots,\mathbb{D}_n+\epsilon \mathbb{D}(\omega) \right),
    \]
    where $\omega$ is a Kähler form on $X$.
\end{definition}
The definition makes sense thanks to \cref{thm:bigandnefreal}. It does not depend on the choices of $T_1,\ldots,T_n$ since they are uniquely defined up to $\mathcal{I}$-equivalence, as proved in \cref{thm:bdiv_current_corr}.

When $\mathbb{D}_1,\ldots,\mathbb{D}_n$ are big and nef, the two definitions coincide as follows from \cref{lma:perturbDF} below.

We first note that even when the $T_i$'s have vanishing volumes, the two intersection products still agree.
\begin{proposition}
    Let $T_1,\ldots,T_n$ be a closed positive $(1,1)$-currents on $X$. Then
    \[
    \left( \mathbb{D}(T_1),\ldots,\mathbb{D}(T_n) \right)= \vol (T_1,\ldots,T_n).
    \]
\end{proposition}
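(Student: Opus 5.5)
The strategy is to pass through the $\epsilon$-perturbation on both sides and reduce to the big case, where the statement is essentially the definition together with \cref{thm:vol_reg_parts}. Fix a Kähler form $\omega$ on $X$. By \cref{def:DF_ext},
\[
\left(\mathbb{D}(T_1),\ldots,\mathbb{D}(T_n)\right)=\lim_{\epsilon\to 0+}\left(\mathbb{D}(T_1)+\epsilon\mathbb{D}(\omega),\ldots,\mathbb{D}(T_n)+\epsilon\mathbb{D}(\omega)\right),
\]
and by additivity of $\mathbb{D}$ we have $\mathbb{D}(T_i)+\epsilon\mathbb{D}(\omega)=\mathbb{D}(T_i+\epsilon\omega)$. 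On the volume side, \eqref{eq:volmixedgeneral} gives $\vol(T_1,\ldots,T_n)=\lim_{\epsilon\to0+}\vol(T_1+\epsilon\omega,\ldots,T_n+\epsilon\omega)$. So it suffices to prove the identity for the Kähler currents $S_i\coloneqq T_i+\epsilon\omega$.

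For these, \cref{lma:DT} shows that $\mathbb{D}(S_i)$ is nef and $\vol\mathbb{D}(S_i)=\vol S_i>0$, so it is big and nef. The big-case definition then computes $(\mathbb{D}(S_1),\ldots,\mathbb{D}(S_n))$ as $\vol(S_1',\ldots,S_n')$ for any currents $S_i'\in\mathbb{D}(S_i)_X$ with $\mathbb{D}(S_i')=\mathbb{D}(S_i)$. The natural candidate is $S_i'=\Reg S_i$. Since $\Reg\pi^*\Reg S_i=\Reg\pi^*S_i$ (the divisorial part of $S_i$ pulls back to a divisorial current, and Siu's decomposition is additive on the regular and divisorial pieces), we have $\mathbb{D}(\Reg S_i)=\mathbb{D}(S_i)$. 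Moreover, $\{\Reg S_i\}=\mathbb{D}(S_i)_X$ by definition, and $\vol\Reg S_i=\vol S_i>0$ by \cref{thm:vol_reg_parts}. Thus $\Reg S_i$ lies in $\mathcal{G}(\mathbb{D}(S_i)_X)$, and by the injectivity in \cref{thm:bdiv_current_corr} the value does not depend on this choice. Applying \cref{thm:vol_reg_parts} once more gives
\[
\left(\mathbb{D}(S_1),\ldots,\mathbb{D}(S_n)\right)=\vol(\Reg S_1,\ldots,\Reg S_n)=\vol(S_1,\ldots,S_n).
\]
Letting $\epsilon\to0+$ on both sides yields the claim.

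The main point needing care is the well-definedness of the big-case intersection, where the chosen representatives must be non-divisorial. This is why I pass to $\Reg S_i$ rather than using $S_i$ directly, and then remove $\Reg$ with \cref{thm:vol_reg_parts}. Checking $\mathbb{D}(\Reg S)=\mathbb{D}(S)$ needs a short justification: pulling back $\sum c_j[E_j]$ gives an effective divisorial current, even with countably many components, by the support theorem as in \cref{lma:Siudec_pullpush}. Hence $\pi^*S=\pi^*\Reg S+(\text{divisorial})$, and the regular parts coincide because generic Lelong numbers along prime divisors are additive.
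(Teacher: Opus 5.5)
Your proposal is correct, and it is exactly the unpacking of definitions the paper means when it calls the statement a trivial consequence of the definitions. You perturb by $\epsilon\omega$ on both sides, represent $\mathbb{D}(T_i+\epsilon\omega)$ by $\Reg(T_i+\epsilon\omega)$ (the representative that actually lies in the root class, as \cref{def:DF_ext} requires), and then remove $\Reg$ via \cref{thm:vol_reg_parts}, with your check that $\mathbb{D}(\Reg S)=\mathbb{D}(S)$ supplying the one point the paper leaves implicit.
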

This is a trivial consequence of the definitions. 

\begin{proposition}\label{prop:DFlinear}
    The product in \cref{def:DF_ext} is symmetric and multi-$\mathbb{R}_{\geq 0}$-linear. 
\end{proposition}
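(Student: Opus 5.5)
The plan is to transfer symmetry and multilinearity from the mixed volumes of currents (\cref{prop:multilinear}) to b-divisors through the correspondence of \cref{thm:bigandnefreal} and \cref{thm:bdiv_current_corr}. Additivity of $\mathbb{D}$ on currents, $\mathbb{D}(T+T')=\mathbb{D}(T)+\mathbb{D}(T')$ and $\mathbb{D}(\lambda T)=\lambda\mathbb{D}(T)$, makes this transfer essentially formal. Symmetry is immediate in both the big and the general case, since $\vol(T_1,\ldots,T_n)$ is symmetric and the $\epsilon$-perturbation in \cref{def:DF_ext} treats all entries identically.

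\textbf{Big and nef case.} Let $\mathbb{D}_1,\mathbb{D}_1',\mathbb{D}_2,\ldots,\mathbb{D}_n$ be big and nef, and let $\lambda>0$. By \cref{thm:bigandnefreal}, choose currents $T_1,T_1',T_2,\ldots,T_n$ with $\mathbb{D}(T_1)=\mathbb{D}_1$, $\mathbb{D}(T_1')=\mathbb{D}_1'$ and $\mathbb{D}(T_i)=\mathbb{D}_i$. Then $\mathbb{D}(T_1+T_1')=\mathbb{D}_1+\mathbb{D}_1'$ and $\mathbb{D}(\lambda T_1)=\lambda\mathbb{D}_1$.

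These sums and multiples are again big. By \cref{lma:DT}, $\vol\mathbb{D}(T_1+T_1')=\vol(T_1+T_1')$. Combining \cref{prop:logvol} with \cref{prop:multilinear} gives
\[
\vol(T_1+T_1')=\vol(T_1+T_1',\ldots,T_1+T_1')\geq \vol(T_1)>0,
\]
where the inequality comes from expanding by multilinearity and dropping the nonnegative terms. So $T_1+T_1'$ and $\lambda T_1$ are admissible representatives in \cref{def:DF_ext}.

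The value of the intersection does not depend on the chosen representatives. Although \cref{thm:bdiv_current_corr} is stated for non-divisorial currents, one can always replace a current by its regular part, using \cref{thm:vol_reg_parts} and the identity $\mathbb{D}(T)=\mathbb{D}(\Reg T)$ (which holds because $\Reg\pi^*\Reg T=\Reg\pi^*T$). Therefore
\[
(\mathbb{D}_1+\mathbb{D}_1',\mathbb{D}_2,\ldots)=\vol(T_1+T_1',T_2,\ldots)=\vol(T_1,T_2,\ldots)+\vol(T_1',T_2,\ldots)
\]
by \cref{prop:multilinear}. Homogeneity in $\lambda$ follows in the same way.

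\textbf{General nef case.} For nef $\mathbb{D}_i$, set $\mathbb{D}_i^\epsilon=\mathbb{D}_i+\epsilon\mathbb{D}(\omega)$. These are big and nef, since $\vol\mathbb{D}_i^\epsilon\geq\epsilon^n\vol\omega$ by the same log-concavity and multilinearity argument applied to representing currents. For additivity in the first slot, expand
\[
(\mathbb{D}_1+\mathbb{D}_1')+\epsilon\mathbb{D}(\omega)=\mathbb{D}_1^{\epsilon}+\mathbb{D}_1'^{\epsilon}-\epsilon\mathbb{D}(\omega).
\]
This subtraction is the one delicate point, and I expect it to be the main obstacle. I would avoid it by proving two facts first.

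First, the limit in \cref{def:DF_ext} can be taken with independent perturbations $\epsilon_i\mathbb{D}(\omega)$ in each slot. This holds because the big-case product is monotone and multilinear, so it is a polynomial in the $\epsilon_i$ with nonnegative coefficients; this is the content of \cref{lma:perturbDF}.

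Second, using this, compute
\[
(\mathbb{D}_1+\mathbb{D}_1'+2\epsilon\mathbb{D}(\omega),\mathbb{D}_2^\epsilon,\ldots)=(\mathbb{D}_1^\epsilon,\mathbb{D}_2^\epsilon,\ldots)+(\mathbb{D}_1'^\epsilon,\mathbb{D}_2^\epsilon,\ldots)
\]
by the big case. Letting $\epsilon\to0+$ then yields additivity. Homogeneity is handled similarly, perturbing by $\lambda\epsilon$ in the first slot.
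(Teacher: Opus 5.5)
Your proposal is correct and follows the paper's route: the paper's proof is only the remark that multilinearity is inherited from \cref{prop:multilinear} through $T\mapsto\mathbb{D}(T)$, and your treatment of the $\epsilon$-perturbation for non-big entries spells out what the paper's word ``immediately'' leaves implicit. Two small repairs are needed: first, the bigness of $\mathbb{D}_i+\epsilon\mathbb{D}(\omega)$ should be checked on classes (each $\mathbb{D}_{i,Y}$ is pseudoeffective, so $\vol(\mathbb{D}_{i,Y}+\epsilon\pi^*\{\omega\})\geq\epsilon^n\vol\{\omega\}$), since no representing current is available for a non-big $\mathbb{D}_i$; second, you should not cite \cref{lma:perturbDF}, which the paper deduces from this very proposition --- your own big-case expansion (after splitting off $\delta\mathbb{D}(\omega)$ in each slot) or a monotone squeeze in $\epsilon$ suffices.
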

\begin{proof}
    The multi-linearity follows immediately from \cref{prop:multilinear}. The symmetry is immediate.
\end{proof}

\begin{proposition}\label{prop:DFmono}
    The product in \cref{def:DF_ext} is monotonically increasing in each variable.
\end{proposition}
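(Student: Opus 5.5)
The plan is to compare one variable at a time via symmetry (\cref{prop:DFlinear}), reduce to nef Cartier b-divisors on a common model, and settle that case with a psef-times-nef inequality; the step I am least sure of is the order-compatible Cartier approximation in the middle. Concretely, we must show that $\mathbb{D}_1\leq\mathbb{D}_1'$ with all b-divisors nef implies $(\mathbb{D}_1,\mathbb{D}_2,\ldots,\mathbb{D}_n)\leq(\mathbb{D}_1',\mathbb{D}_2,\ldots,\mathbb{D}_n)$. Since $\mathbb{D}_1+\epsilon\mathbb{D}(\omega)\leq \mathbb{D}_1'+\epsilon\mathbb{D}(\omega)$, \cref{def:DF_ext} lets us assume all the b-divisors are big and nef. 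I would not compare realizing currents directly. The roots $\mathbb{D}_{1,X}$ and $\mathbb{D}'_{1,X}$ may differ, and the inequality $\mathbb{D}_1\leq \mathbb{D}_1'$ only gives pseudoeffectivity of differences of classes, not a pointwise inequality of Lelong numbers. So \cref{prop:mono_vol} does not apply as it stands.

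\textbf{Cartier case.} Suppose every b-divisor involved is nef Cartier, all realized on one modification $\pi\colon Y\to X$ by nef classes $a_1,a_1',a_2,\ldots,a_n$. Choose realizing currents with analytic singularities. By \cref{prop:biminv}, \cref{thm:vol_reg_parts} and the proof of \cref{lma:DT}, the product is then $\int_Y a_1\cdots a_n$. The hypothesis $\mathbb{D}_1\leq\mathbb{D}_1'$ at level $Y$ says $a_1'-a_1$ is pseudoeffective. Hence
\[
(a_1'-a_1)\cdot a_2\cdots a_n\geq 0,
\]
because a positive current paired with nef classes has nonnegative mass (approximate the $a_i$ by Kähler classes). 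This is the desired inequality.

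\textbf{Approximation step.} By \cref{cor:nef_Cartierseq}, take decreasing sequences of nef and big Cartier b-divisors $\mathbb{D}_i^k\searrow \mathbb{D}_i$ and $\mathbb{D}_1'^k\searrow\mathbb{D}_1'$. We need two things.
\begin{enumerate}
\item Continuity of the product along decreasing sequences. In the proof of \cref{cor:nef_Cartierseq}, the $\mathbb{D}_i^k$ arise as $\mathbb{D}(T_i^{j_k})$ from quasi-equisingular approximations. These converge in the sense of $\implies$ to currents realizing $\mathbb{D}_i$ (by \cref{thm:DX}, since the realizing currents can be taken $\mathcal{I}$-good by \cref{thm:bigandnefreal}). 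So \cref{prop:strongcontnpm} gives convergence of the mixed volumes.
\item An order-compatible choice $\mathbb{D}_1^k\leq \mathbb{D}_1'^k$ for each $k$.
\end{enumerate}

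\textbf{The obstacle.} I expect item (2) to be the main difficulty. The natural first attempt is to replace $\mathbb{D}_1^k$ by a Cartier b-divisor dominated by both $\mathbb{D}_1^k$ and $\mathbb{D}_1'^k$, but a ``minimum'' of nef Cartier b-divisors need not be nef. I would instead try the following. Fix $k$ and compare $\mathbb{D}_1\leq \mathbb{D}_1'\leq\mathbb{D}_1'^k$. Apply the Cartier argument to $\mathbb{D}_1^m$ and $\mathbb{D}_1'^k+\delta\mathbb{D}(\omega)$ for $m\gg k$; the $\delta\mathbb{D}(\omega)$ term gives room so that the limit $\mathbb{D}_1\leq\mathbb{D}_1'^k$ can be promoted to $\mathbb{D}_1^m\leq \mathbb{D}_1'^k+\delta\mathbb{D}(\omega)$ on the finitely many relevant models. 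This promotion would use convergence of the classes on a fixed model, checking the inequality only on a model where both sides are realized, and using that the pull-back of a pseudoeffective class stays pseudoeffective. If it works, letting $m\to\infty$, then $k\to\infty$, then $\delta\to 0$ gives the result, with \cref{lma:perturbDF}-type estimates controlling the $\delta$ term. The delicate point is that pseudoeffectivity of a difference on one model must be derived from a limit over higher models, which needs the perturbation to be uniform.
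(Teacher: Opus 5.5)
Your scheme can be completed, but as written it has a gap exactly at the step you flag, and the uniformity over higher models you worry about is not what resolves it.

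\textbf{The gap.} In the Cartier case you need $\mathbb{D}_1^m\leq \mathbb{D}_1^{\prime k}+\delta\mathbb{D}(\omega)$ on a model $Y$ where both are realized. Such a $Y$ dominates the model $Y_m$ realizing $\mathbb{D}_1^m$, and $Y_m$ moves with $m$. Model-wise convergence $\mathbb{D}_1^m\to\mathbb{D}_1$ only gives the inequality on the fixed model $Y_k$ realizing $\mathbb{D}_1^{\prime k}$. There $\mathbb{D}^m_{1,Y_k}\to\mathbb{D}_{1,Y_k}\leq (\mathbb{D}_1^{\prime k})_{Y_k}$, and $\delta\pi_k^*\{\omega\}$ is big, hence interior to the pseudoeffective cone. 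So $\mathbb{D}^m_{1,Y_k}\leq (\mathbb{D}_1^{\prime k})_{Y_k}+\delta\pi_k^*\{\omega\}$ for $m\gg 0$. Pulling back along $\rho\colon Y\to Y_k$ then yields only $\rho^*\mathbb{D}^m_{1,Y_k}\leq (\mathbb{D}_1^{\prime k}+\delta\mathbb{D}(\omega))_Y$. What is missing is the inequality $\mathbb{D}^m_{1,Y}\leq \rho^*\mathbb{D}^m_{1,Y_k}=\rho^*\rho_*\mathbb{D}^m_{1,Y}$.

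\textbf{The missing idea.} This is the negativity property of nef classes: if $a$ is nef on $Y$, then $\rho^*\rho_*a-a$ is pseudoeffective. It follows from \cref{lma:Siudec_pullpush}. For a Kähler form $\eta$ in $a+\epsilon\{\Omega\}$, the current $\rho^*\rho_*\eta-\eta$ is an effective combination of $\rho$-exceptional divisors; now let $\epsilon\to 0$ and use that the pseudoeffective cone is closed. With this, the comparison only has to be checked on $Y_k$, and your limits $m\to\infty$, $k\to\infty$, $\delta\to 0$ go through.

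\textbf{A smaller repair.} Do not use the diagonal sequence from the proof of \cref{cor:nef_Cartierseq}: there the currents $T_i$ change with the index, so \cref{thm:DX} does not apply directly. Since you are in the big case, take quasi-equisingular approximations of fixed $\mathcal{I}$-good realizing currents instead. Then \cref{thm:DX}, \cref{prop:dscontbdiv} and \cref{prop:strongcontnpm} apply. Also avoid \cref{prop:DFdecnet}, which the paper derives from the statement you are proving.

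\textbf{Comparison with the paper.} The paper avoids the order-compatibility problem by never approximating the variable being compared. It keeps $\mathcal{I}$-good Kähler currents $T_1$ and $T'$, and approximates only $T_2,\ldots,T_n$ by quasi-equisingular approximations, using \cref{prop:strongcontnpm}. It then resolves these to log singularities on a model $Y$ and discards their divisorial parts by \cref{thm:vol_reg_parts}. Next it perturbs them into Kähler classes and replaces them by Kähler forms via \cref{prop:mono_vol}; this is legitimate in those slots because the classes are fixed and the potentials bounded. At that point the first slot only sees the class $\mathbb{D}_{1,Y}$ paired against Kähler classes, and $\mathbb{D}_{1,Y}\leq \mathbb{D}'_Y$ concludes. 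Your route proves a comparison at the Cartier level and pays for it with the negativity lemma and limits in every slot. The paper's route needs neither.
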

\begin{proof}
    Let $\mathbb{D}_1,\ldots,\mathbb{D}_n$ and $\mathbb{D}'$ be nef b-divisors over $X$ so that $\mathbb{D}_1\leq \mathbb{D}'$. We want to show that
    \[
    \left(\mathbb{D}_1,\ldots,\mathbb{D}_n  \right)\leq \left(\mathbb{D}',\mathbb{D}_2,\ldots,\mathbb{D}_n  \right).
    \]
    We can easily reduce to the case where $\mathbb{D}_1,\ldots,\mathbb{D}_n$ and $\mathbb{D}'$ are all big. In this case, take $\mathcal{I}$-good non-divisorial closed positive  $(1,1)$-currents $T_1,\ldots,T_n$ and $T'$ so that $\mathbb{D}(T_i)=\mathbb{D}_i$ for all $i=1,\ldots,n$ and $\mathbb{D}(T')=\mathbb{D}'$.
    Furthermore, we may assume that the $T_i$'s and $T'$ are Kähler currents by the perturbation argument.
    
    Let $(T_{i}^j)_j$ be a quasi-equisingular approximation of $T_i$ for $i=2,\ldots,n$. It follows from \cref{prop:strongcontnpm} that
    \[
    \int_X T_1\wedge \cdots \wedge T_n=\lim_{j\to\infty} \int_X T_1\wedge T_2^j\wedge\cdots \wedge T_n^j.
    \]
    It suffices to show that for all $j\geq 1$,
    \[
    \int_X T_1\wedge T_2^j\wedge\cdots \wedge T_n^j\leq \int_X T'\wedge T_2^j\wedge\cdots \wedge T_n^j.
    \]
    Therefore, we have reduced to the case where $T_2,\ldots,T_n$ have analytic singularities. After a resolution, we may assume that they have log singularities along $\mathbb{Q}$-divisors. By \cref{thm:vol_reg_parts}, we can further reduce to the case where $T_2,\ldots,T_n$ have bounded local potentials. Perturbing $T_2,\ldots,T_n$ by a Kähler form, we may further assume that $\{T_2\},\ldots, \{T_n\}$ are Kähler classes. By \cref{prop:mono_vol}, we finally reduce to the case where $T_2,\ldots,T_n$ are Kähler forms. In this case, our assertion is obvious.
\end{proof}

\begin{lemma}\label{lma:perturbDF}
    Let $\omega$ be a Kähler form on $X$. Fix a compact set $K\subseteq \mathrm{H}^{1,1}(X,\mathbb{R})$.
    Let $\mathbb{D}_1,\ldots,\mathbb{D}_n$ be nef b-divisors over $X$ such that $\mathbb{D}_{i,X}\in K$ for each $i=1,\ldots,n$. Then there is a constant $C$ depending only on $X,K,\{\omega\}$ such that for any $\epsilon\in [0,1]$, we have
    \[
    0\leq \left( \mathbb{D}_1+\epsilon \mathbb{D}(\omega),\ldots,\mathbb{D}_n+\epsilon \mathbb{D}(\omega) \right)-\left( \mathbb{D}_1,\ldots,\mathbb{D}_n \right)\leq C\epsilon.
    \]
\end{lemma}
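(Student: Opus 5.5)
The strategy is to reduce to \cref{lma:perturbvol}, after first handling the case where all $\mathbb{D}_i$ are big. In that case we pick, by \cref{thm:bigandnefreal}, closed positive $(1,1)$-currents $T_i\in\mathbb{D}_{i,X}$ with $\mathbb{D}(T_i)=\mathbb{D}_i$. Then $T_i+\epsilon\omega$ is a closed positive current in $\mathbb{D}_{i,X}+\epsilon\{\omega\}$ with
\[
\mathbb{D}(T_i+\epsilon\omega)=\mathbb{D}_i+\epsilon\mathbb{D}(\omega),
\]
and this b-divisor is still big. \cref{def:DF_ext} then rewrites the difference to be estimated as
\[
\vol(T_1+\epsilon\omega,\ldots,T_n+\epsilon\omega)-\vol(T_1,\ldots,T_n).
\]
\cref{lma:perturbvol} bounds this by $0$ from below and by $C\epsilon$ from above.

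The one point to check is the uniformity of $C$. I would re-run the argument of \cref{lma:perturbvol}: by multilinearity (\cref{prop:multilinear}) the difference expands as
\[
\sum_{k\ge1}\epsilon^k\sum\vol(T_{i_1},\ldots,T_{i_{n-k}},\omega,\ldots,\omega).
\]
Each mixed volume here is bounded by the corresponding mixed volume with the $T_i$ replaced by $T_i+\omega$. Using \cref{prop:mono_vol} in the same cohomology class (every current is $\preceq_{\mathcal{I}}$ a smooth one, after adding a Kähler form), these are in turn bounded by the intersection numbers of the classes $\mathbb{D}_{i,X}+A\{\omega\}$. Here $A$ is chosen depending only on $K$ so that $\beta+A\{\omega\}$ is Kähler for all $\beta\in K$; such an $A$ exists by compactness of $K$ and openness of the Kähler cone. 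Those intersection numbers are continuous in the classes, hence bounded on $K$. This gives $C$ depending only on $X,K,\{\omega\}$, valid for $\epsilon\in[0,1]$.

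For general nef $\mathbb{D}_i$, apply the big case to $\mathbb{D}_i+\delta\mathbb{D}(\omega)$ for $\delta>0$, whose roots lie in the compact set $K+[0,1]\{\omega\}$. This gives
\[
0\le(\mathbb{D}_\bullet+(\delta+\epsilon)\mathbb{D}(\omega))-(\mathbb{D}_\bullet+\delta\mathbb{D}(\omega))\le C\epsilon.
\]
Letting $\delta\to0+$ and using the limit definition in \cref{def:DF_ext} yields the claim. The only thing still to confirm is that this limit exists. It does, because monotonicity (\cref{prop:DFmono}) makes the quantity decreasing as $\delta\downarrow0$ and it is bounded below by $0$. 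The step I expect to need the most care is the uniformity of $C$ over $K$, specifically the domination of the mixed volumes by cohomological intersection numbers.
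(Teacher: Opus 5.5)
Your proof is correct and follows the paper's approach: the paper's entire proof is that the estimate is a simple consequence of multilinearity (\cref{prop:DFlinear}), i.e.\ expand in $\epsilon$ and bound the nonnegative mixed terms involving $\mathbb{D}(\omega)$. Your big case via \cref{lma:perturbvol}, the $\delta\to 0+$ passage, and the uniformity of $C$ over $K$ (bounding mixed volumes by intersection numbers of the Kähler classes $\mathbb{D}_{i,X}+A\{\omega\}$ via \cref{prop:mono_vol}) just spell out what the paper leaves implicit; the one small fix is that you should add $A\omega$, not $\omega$, to the $T_i$ before comparing with smooth Kähler representatives, since $\mathbb{D}_{i,X}+\{\omega\}$ need not be Kähler.
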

\begin{proof}
    This is a simple consequence of the linearity \cref{prop:DFlinear}.
\end{proof}

We first make a consistency check.
\begin{proposition}
    Suppose that $\mathbb{D}$ is a nef b-divisor over $X$, then
    \[
    \left( \mathbb{D},\ldots,\mathbb{D} \right)=\vol \mathbb{D}.
    \]
\end{proposition}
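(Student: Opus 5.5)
The plan is to reduce to the big case by perturbation, and then compare both sides through the current realizing $\mathbb{D}$.

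\textbf{Big case.} Suppose first that $\mathbb{D}$ is big. By \cref{thm:bigandnefreal}, there is a closed positive $(1,1)$-current $T\in\mathbb{D}_X$ with $\mathbb{D}(T)=\mathbb{D}$. By \cref{def:DF_ext}, $(\mathbb{D},\ldots,\mathbb{D})=\vol(T,\ldots,T)$. The compatibility proposition in \cref{sec:mv} gives $\vol(T,\ldots,T)=\vol T$. Finally, \cref{lma:DT} gives $\vol T=\vol\mathbb{D}(T)=\vol\mathbb{D}$. So the big case is just a chain of earlier results.

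\textbf{General case.} Let $\mathbb{D}$ be an arbitrary nef b-divisor and fix a Kähler form $\omega$. For each $\epsilon>0$, the b-divisor $\mathbb{D}+\epsilon\mathbb{D}(\omega)$ is nef. I expect it to be big: for every modification $Y$, its root $\mathbb{D}_Y+\epsilon\pi^*\{\omega\}$ is the sum of a modified nef, hence pseudoeffective, class and $\epsilon$ times the pullback of a Kähler class. Its volume should therefore be at least $\epsilon^n\vol\{\omega\}>0$, uniformly in $Y$.

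The cleaner way to see this is to realize $\mathbb{D}$ itself. Approximate $\mathbb{D}+\epsilon\mathbb{D}(\omega)$ by nef Cartier b-divisors $\mathbb{D}_i+\epsilon\mathbb{D}(\omega)$. Their volumes are at least $\epsilon^n\int_X\omega^n$, by the Cartier computation: the volume of a nef class plus $\epsilon$ times a pulled-back Kähler class is at least $\epsilon^n\int\omega^n$ by expanding the intersection numbers. I would then like to pass to the limit, but \cref{lma:volbdivnet} only gives upper semicontinuity of volume under convergence. Instead I would argue directly that for each $Y$, $\vol(\mathbb{D}_Y+\epsilon\pi^*\{\omega\})\ge\vol(\epsilon\pi^*\{\omega\})$, because adding a pseudoeffective class does not decrease the volume. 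Hence $\vol(\mathbb{D}+\epsilon\mathbb{D}(\omega))\ge\epsilon^n\int_X\omega^n>0$. By the big case,
\[
\left(\mathbb{D}+\epsilon\mathbb{D}(\omega),\ldots,\mathbb{D}+\epsilon\mathbb{D}(\omega)\right)=\vol\left(\mathbb{D}+\epsilon\mathbb{D}(\omega)\right).
\]

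\textbf{Limit $\epsilon\to0+$.} The left side tends to $(\mathbb{D},\ldots,\mathbb{D})$ by definition, or by \cref{lma:perturbDF}. For the right side, the family $\mathbb{D}+\epsilon\mathbb{D}(\omega)$ decreases to $\mathbb{D}$ as $\epsilon\downarrow0$: their differences are multiples of pulled-back Kähler classes, hence pseudoeffective on every $Y$. The decreasing case of \cref{lma:volbdivnet} then gives convergence of volumes to $\vol\mathbb{D}$. Equating the two limits proves the statement.

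\textbf{Main obstacle.} The only delicate point is bigness of the perturbation, that is, the uniform lower bound on $\vol(\mathbb{D}_Y+\epsilon\pi^*\{\omega\})$ over all $Y$. This relies on monotonicity of the volume of cohomology classes with respect to adding pseudoeffective classes, together with $\vol\pi^*\{\omega\}=\int_X\omega^n$ from \cref{prop:cones_modif}. Both are standard, so I expect the argument to go through.
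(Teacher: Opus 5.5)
Your proposal is correct and follows essentially the paper's route: reduce to the big case by perturbing with $\epsilon\mathbb{D}(\omega)$, then chain \cref{thm:bigandnefreal}, \cref{def:DF_ext}, the identity $\vol(T,\ldots,T)=\vol T$, and \cref{lma:DT}. The differences are minor: you pass to the limit $\epsilon\to0+$ in the volumes via the decreasing case of \cref{lma:volbdivnet} rather than the estimate \eqref{eq:volDTp} cited in the paper, and you check explicitly (via monotonicity of the volume of classes) that the perturbed b-divisor is big, a step the paper leaves implicit.
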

\begin{proof}
    Using \cref{lma:perturbDF} and \eqref{eq:volDTp}, we may easily reduce to the case where $\mathbb{D}$ is nef and big. In this case, take a non-divisorial closed positive $(1,1)$-current $T$ in $\mathbb{D}_X$ such that $\mathbb{D}(T)=\mathbb{D}$. Then we need to show that
    \[
    \vol \mathbb{D}=\vol T,
    \]
    which is proved in \cref{lma:DT}.
\end{proof}

\begin{proposition}
    Let $\mathbb{D}_1,\ldots,\mathbb{D}_n$ be nef b-divisors over $X$. Then
    \[
    \left(\mathbb{D}_1,\ldots,\mathbb{D}_n \right)\geq \prod_{i=1}^n \left( \vol \mathbb{D}_i\right)^{1/n}.
    \]
\end{proposition}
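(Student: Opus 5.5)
The plan is to reduce to the big case by perturbation and then read the inequality off \cref{prop:logvol}, using the dictionary between big nef b-divisors and currents from \cref{thm:bigandnefreal} and \cref{lma:DT}.

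\textbf{Reduction to the big case.} Fix a Kähler form $\omega$ on $X$. For $\epsilon\in(0,1]$, each $\mathbb{D}_i+\epsilon\mathbb{D}(\omega)$ is nef. It is also big: by \cref{lma:DT} and the realization of $\mathbb{D}_i$ as a limit, its volume is at least $\epsilon^n\int_X\omega^n>0$. A cleaner way to see this is to note that $\mathbb{D}_i+\epsilon\mathbb{D}(\omega)\geq \epsilon\mathbb{D}(\omega)$, since each $\mathbb{D}_{i,Y}$ is modified nef and hence pseudoeffective, and the volume of b-divisors is monotone.

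By \cref{lma:perturbDF}, the left-hand side converges to $(\mathbb{D}_1,\ldots,\mathbb{D}_n)$ as $\epsilon\to0+$. For the right-hand side I need
\[
\liminf_{\epsilon\to 0+}\vol(\mathbb{D}_i+\epsilon\mathbb{D}(\omega))\geq \vol\mathbb{D}_i .
\]
This holds by monotonicity: on each model $Y$, the class $\mathbb{D}_{i,Y}+\epsilon\pi^*\{\omega\}$ dominates $\mathbb{D}_{i,Y}$, so its volume is at least $\vol\mathbb{D}_{i,Y}$. Taking the infimum over $Y$ gives $\vol(\mathbb{D}_i+\epsilon\mathbb{D}(\omega))\geq\vol\mathbb{D}_i$. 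Hence it suffices to prove the inequality with every $\mathbb{D}_i$ nef and big.

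\textbf{The big case.} By \cref{thm:bigandnefreal}, choose closed positive $(1,1)$-currents $T_i\in\mathbb{D}_{i,X}$ with $\mathbb{D}(T_i)=\mathbb{D}_i$. Then \cref{def:DF_ext} gives
\[
(\mathbb{D}_1,\ldots,\mathbb{D}_n)=\vol(T_1,\ldots,T_n),
\]
and \cref{lma:DT} gives $\vol T_i=\vol\mathbb{D}(T_i)=\vol\mathbb{D}_i$. The claim now follows directly from \cref{prop:logvol}.

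\textbf{Main obstacle.} The only delicate point is the limiting step: the volume of a b-divisor is only upper semicontinuous (\cref{lma:volbdivnet}), so one might worry. However, only the lower bound $\vol(\mathbb{D}_i+\epsilon\mathbb{D}(\omega))\geq\vol\mathbb{D}_i$ is needed, and this is immediate from monotonicity of the volume on each model together with the pseudoeffectivity of $\pi^*\{\omega\}$.
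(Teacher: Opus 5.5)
Your proof is correct and follows the paper's argument. In the big case you realize each $\mathbb{D}_i$ as $\mathbb{D}(T_i)$, use $\vol T_i=\vol\mathbb{D}_i$ from \cref{lma:DT}, and conclude by \cref{prop:logvol}; your perturbation reduction is valid but unnecessary, since the paper simply notes that if some $\vol\mathbb{D}_i=0$ the right-hand side vanishes while the left-hand side, a limit of non-pluripolar mixed masses, is nonnegative.
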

\begin{proof}
    We may assume that $\vol \mathbb{D}_i>0$ for each $i=1,\ldots,n$ since there is nothing to prove otherwise. In this case, our assertion follows from \cref{prop:logvol}.
\end{proof}

\begin{proposition}\label{prop:DFusc}
    The product in \cref{def:DF_ext} is upper semicontinuous in the following sense. Suppose that $(\mathbb{D}_i^j)_{j\in J}$ are nets of nef b-divisors over $X$ with limits $\mathbb{D}_i$ for each $i=1,\ldots,n$. Then
    \[
    \varlimsup_{j\in J}\left(\mathbb{D}_1^j,\ldots,\mathbb{D}_n^j \right)\leq  \left( \mathbb{D}_1,\ldots,\mathbb{D}_n \right).
    \]
\end{proposition}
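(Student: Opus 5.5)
We reduce to the big case, represent each nef b-divisor by a current via \cref{thm:bdiv_current_corr}, and then invoke \cref{thm:usc_mix_vol}. The key point is that convergence of b-divisors on every modification is exactly convergence of all generic Lelong numbers over $X$.

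\textbf{Reduction to the big case.} Fix a Kähler form $\omega$ on $X$. Since $\mathbb{D}_{i,X}^j\to \mathbb{D}_{i,X}$, the roots lie in a compact set $K$ for $j$ large. By \cref{lma:perturbDF},
\[
\left(\mathbb{D}_1^j,\ldots,\mathbb{D}_n^j\right)\leq \left(\mathbb{D}_1^j+\epsilon\mathbb{D}(\omega),\ldots,\mathbb{D}_n^j+\epsilon\mathbb{D}(\omega)\right),
\]
and the same lemma bounds the right-hand side at the limit by $\left(\mathbb{D}_1,\ldots,\mathbb{D}_n\right)+C\epsilon$. Moreover $\mathbb{D}_i^j+\epsilon\mathbb{D}(\omega)\to\mathbb{D}_i+\epsilon\mathbb{D}(\omega)$. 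Each b-divisor of this form is big, with volume at least $\epsilon^n\vol\{\omega\}$, since $\mathbb{D}+\epsilon\mathbb{D}(\omega)\geq \epsilon\mathbb{D}(\omega)$ for nef $\mathbb{D}$, whose incarnations are pseudoeffective. Letting $\epsilon\to0$ at the end, it therefore suffices to treat the case where all $\mathbb{D}_i^j$ and $\mathbb{D}_i$ are big and nef.

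\textbf{Big case.} By \cref{thm:bigandnefreal} choose non-divisorial currents $T_i^j\in\mathbb{D}_{i,X}^j$ and $T_i\in \mathbb{D}_{i,X}$ with $\mathbb{D}(T_i^j)=\mathbb{D}_i^j$ and $\mathbb{D}(T_i)=\mathbb{D}_i$. The cohomology classes vary with $j$, which \cref{thm:usc_mix_vol} does not allow. To fix this, pick smooth forms $\theta_i^j\in\{T_i^j\}$ and $\theta_i\in\{T_i\}$ with $\theta_i^j\to\theta_i$ smoothly, and add a further $\epsilon\omega$. For $j$ large, $\theta_i+\epsilon\omega-\theta_i^j$ is a Kähler form $\omega_i^j$, and $\omega_i^j\to\epsilon\omega$. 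Then $S_i^j:=T_i^j+\omega_i^j$ and $S_i:=T_i+\epsilon\omega$ all lie in the fixed class of $\theta_i+\epsilon\omega$. We then have $\mathbb{D}(S_i^j)=\mathbb{D}_i^j+\mathbb{D}(\omega_i^j)\to\mathbb{D}(S_i)$. By multilinearity (\cref{prop:DFlinear}) together with the uniform bounds of \cref{lma:perturbDF}, the difference between $\left(\mathbb{D}(S_1^j),\ldots\right)$ and $\left(\mathbb{D}_1^j+\epsilon\mathbb{D}(\omega),\ldots\right)$ tends to $0$. So it suffices to prove
\[
\varlimsup_j\vol(S_1^j,\ldots,S_n^j)\leq\vol(S_1,\ldots,S_n).
\]

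\textbf{Main step: Lelong numbers.} To apply \cref{thm:usc_mix_vol} we must check $\nu(S_i^j,E)\to\nu(S_i,E)$ for every prime divisor $E$ over $X$. Choose a chain of blow-ups with connected smooth centers $\pi\colon Y\to X$ on which $E=E_k$ appears, as in the proof of \cref{thm:bdiv_current_corr}. By \eqref{eq:RegpistarT},
\[
\mathbb{D}(S)_Y=\pi^*\{S\}-\sum_{l}\nu(S,E_l)\{E_l\}
\]
for non-divisorial $S$. The classes $\{E_l\}$ form part of a basis of $\mathrm{H}^{1,1}(Y,\mathbb{R})$ by \cref{prop:blowup_cone}. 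Hence convergence $\mathbb{D}(S_i^j)_Y\to\mathbb{D}(S_i)_Y$, together with convergence of the root classes (here constant), forces the coefficients to converge, giving $\nu(S_i^j,E)\to\nu(S_i,E)$. Since the $S_i^j$ are in a fixed class, write them as $\theta_i+\epsilon\omega+\ddc\varphi_i^j$ and apply \cref{thm:usc_mix_vol}. The only delicate point is the bookkeeping in the perturbation step, i.e. absorbing the varying classes into Kähler forms with uniformly controlled error.
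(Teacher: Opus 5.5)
Your proposal is correct and is essentially the paper's proof. The paper likewise perturbs by $\epsilon\mathbb{D}(\omega)$ to reach the big case, realizes the b-divisors by non-divisorial currents, extracts $\nu(T_i^j,E)\to\nu(T_i,E)$ from the classes on a blow-up tower via \eqref{eq:RegpistarT}, and concludes with \cref{thm:usc_mix_vol}.

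Your absorption of the varying classes into Kähler forms $\omega_i^j$ makes explicit a point the paper's Step~1 passes over, since \cref{thm:usc_mix_vol} is stated for fixed $\theta_i$. For that comparison you only need the one-sided bound $(\mathbb{D}_1^j,\ldots,\mathbb{D}_n^j)\leq(\mathbb{D}(S_1^j),\ldots,\mathbb{D}(S_n^j))$, which is immediate from \cref{prop:DFlinear} and nonnegativity because $\mathbb{D}(\omega_i^j)$ is nef. Your two-sided claim is also true, but it needs a sandwich $(1-\delta)\epsilon\omega\leq\omega_i^j\leq(1+\delta)\epsilon\omega$ together with \cref{prop:DFmono}, not multilinearity alone.
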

\begin{proof}
    \textbf{Step~1}. We first assume that the $\mathbb{D}_i^j$'s and the $\mathbb{D}_i$'s are all big. 

    Take $\mathcal{I}$-good non-divisorial closed positive $(1,1)$-currents $T_i^j$ and $T_i$ so that $\mathbb{D}(T_i^j)=\mathbb{D}_i^j$ and $\mathbb{D}(T_i)=\mathbb{D}_i$. Note that by our assumption and the proof of \cref{thm:bdiv_current_corr}, for any prime divisor $E$ over $X$, we have
    \[
    \lim_{j\in J} \nu(T_i^j,E)=\nu(T_i,E).
    \]
    So our assertion follows from \cref{thm:usc_mix_vol}.

    \textbf{Step~2}. Next we handle the general case. 
    
    Take a Kähler form $\omega$ on $X$. Then by \cref{lma:perturbDF}, for any $\epsilon\in (0,1]$, we have
    \[
    \begin{aligned}
          \varlimsup_{j\in J}\left(\mathbb{D}_1^j,\ldots,\mathbb{D}_n^j \right)
        \leq & \varlimsup_{j\in J}\left(\mathbb{D}_1^j+\epsilon \mathbb{D}(\omega),\ldots,\mathbb{D}_n^j+\epsilon \mathbb{D}(\omega) \right)\\
        \leq & \left( \mathbb{D}_1+\epsilon \mathbb{D}(\omega),\ldots,\mathbb{D}_n+\epsilon \mathbb{D}(\omega) \right)\\
        \leq & \left( \mathbb{D}_1,\ldots,\mathbb{D}_n \right)+C\epsilon.
    \end{aligned}
    \]
    Since $\epsilon$ is arbitrary, our assertion follows.
\end{proof}

\begin{proposition}\label{prop:DFdecnet}
    The product in \cref{def:DF_ext} is continuous along decreasing nets in each variable. In other words, if $(\mathbb{D}_i^j)_{j\in J}$ ($i=1,\ldots,n$) are decreasing nets of nef b-divisors over $X$ with limits $\mathbb{D}_i$. Then
    \[
    \lim_{j\in J} \left(\mathbb{D}_1^j,\ldots,\mathbb{D}_n^j\right)=\left(\mathbb{D}_1,\ldots,\mathbb{D}_n\right).
    \]
\end{proposition}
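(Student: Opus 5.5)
The plan is to get the inequality ``$\leq$'' from upper semicontinuity and the inequality ``$\geq$'' from monotonicity. A decreasing net of nef b-divisors $\mathbb{D}_i^j$ with limit $\mathbb{D}_i$ satisfies $\mathbb{D}_i\leq \mathbb{D}_i^j$ for every $j$. To see this, fix a modification $Y$. The classes $\mathbb{D}^{j'}_{i,Y}$, $j'\geq j$, lie below $\mathbb{D}^j_{i,Y}$ and converge to $\mathbb{D}_{i,Y}$. The pseudoeffective cone is closed, so $\mathbb{D}^j_{i,Y}-\mathbb{D}_{i,Y}$ is pseudoeffective.

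Applying \cref{prop:DFmono} in each variable successively then gives
\[
\left(\mathbb{D}_1,\ldots,\mathbb{D}_n\right)\leq \left(\mathbb{D}_1^j,\ldots,\mathbb{D}_n^j\right)\quad\text{for all } j\in J.
\]
Hence $\varliminf_{j}(\mathbb{D}_1^j,\ldots,\mathbb{D}_n^j)\geq (\mathbb{D}_1,\ldots,\mathbb{D}_n)$. Note that the limit of nef b-divisors is nef, since the nef b-divisors form a closed set, so the right-hand side is defined.

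For the reverse inequality, \cref{prop:DFusc} gives directly
\[
\varlimsup_{j}\left(\mathbb{D}_1^j,\ldots,\mathbb{D}_n^j\right)\leq \left(\mathbb{D}_1,\ldots,\mathbb{D}_n\right),
\]
because a decreasing net with limit $\mathbb{D}_i$ in particular converges to $\mathbb{D}_i$. Combining the two bounds, the limit exists and equals $(\mathbb{D}_1,\ldots,\mathbb{D}_n)$.

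The only step needing care is the claim $\mathbb{D}_i\leq \mathbb{D}_i^j$, since \cref{prop:DFmono} is stated with respect to the order $\leq$ on b-divisors. This rests on closedness of the pseudoeffective cone in $\mathrm{H}^{1,1}(Y,\mathbb{R})$ together with the definition of convergence of b-divisors, which is levelwise on every modification. I expect no further obstacle, as both needed ingredients, monotonicity and upper semicontinuity, have already been established for arbitrary nef b-divisors.
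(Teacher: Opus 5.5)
Your proposal is correct and follows the paper's own argument: the lower bound comes from monotonicity (\cref{prop:DFmono}) applied in each variable, using $\mathbb{D}_i\leq\mathbb{D}_i^j$, and the upper bound comes from upper semicontinuity (\cref{prop:DFusc}). The paper's proof is a one-line citation of these two results. You also spell out the comparison $\mathbb{D}_i\leq \mathbb{D}_i^j$, via levelwise convergence and closedness of the pseudoeffective cone, which the paper leaves implicit.
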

\begin{proof}
This is a straightforward consequence of \cref{prop:DFmono} and \cref{prop:DFusc}.
\end{proof}

\begin{remark}
As shown in \cite{XiaPPT, Xia24}, this intersection theory coincides with the Dang--Favre theory if $X$ is projective and $\mathbb{D}_1,\ldots,\mathbb{D}_n$ are algebraic.

To be more precise, these papers handled the case where the cohomology classes $\mathbb{D}_{1,X},\ldots,\mathbb{D}_{n,X}$ lie in the Néron--Severi group $\NS^1(X)$. By scaling, the same holds if they lie in the $\mathbb{Q}$-span of $\NS^1(X)$. Finally, by \cref{prop:DFdecnet}, the same holds in general.
\end{remark}

\section{Smooth pull-backs of b-divisors}\label{sec:smpull}
Let $X$ be a connected compact Kähler manifold of dimension $n$. Consider a smooth morphism $f\colon Y\rightarrow X$ of relative dimension $m$ from another connected compact Kähler manifold $Y$. Given a nef b-divisor $\mathbb{D}$ over $X$, we shall define a functorial pull-back $f^*\mathbb{D}$ over $Y$. 

This section is purely of auxiliary purpose. Hence we do not pursue the most general statements. In fact, it is possible to define a pull-back even when $f$ is not smooth, using the non-Archimedean theory of \cite{BJ22}.
In the next section, we will need a special case of the construction in this section, where $Y$ is a projective bundle on $X$.

We first assume that $\mathbb{D}$ is big and nef. Thanks to \cref{thm:bdiv_current_corr}, we can find a non-divisorial closed positive $(1,1)$-current $T$ in $\mathbb{D}_X$ such that $\mathbb{D}(T)=\mathbb{D}$. Moreover, $T$ is unique up to $\mathcal{I}$-equivalence. 

We can therefore define
\[
f^*\mathbb{D}\coloneqq \mathbb{D}(f^*T).
\]
Note that thanks to \cite[Proposition~1.4.5]{Xiabook}, the $\mathcal{I}$-equivalence class of $f^*T$ is independent of the choices of $T$. Hence $f^*\mathbb{D}$ is a well-defined nef b-divisor over $Y$, independent of the choice of $T$.

Observe that $f^*T$ is non-divisorial since this is the case if $f$ is either a projection or étale. In particular, $(f^*\mathbb{D})_Y=f^*\mathbb{D}_X$.

In general, if $\mathbb{D}$ is not necessarily nef, we take a Kähler form $\omega$ on $X$ and define
\[
f^*\mathbb{D}\coloneqq \lim_{\epsilon\to 0+}  f^*\left(\mathbb{D}+\epsilon\mathbb{D}(\omega) \right).
\]
Note that $f^*\left(\mathbb{D}+\epsilon\mathbb{D}(\omega) \right)$ is increasing with respect to $\epsilon>0$, so the limit makes sense. It is clear that this definition is independent of the choice of $\omega$. Observe that 
\begin{equation}\label{eq:fstarroot}
(f^*\mathbb{D})_Y=f^*\mathbb{D}_X.
\end{equation}

\begin{proposition}\label{prop:DfstarT}
    The pull-back $f^*$ defined above is $\mathbb{R}_{\geq 0}$-linear. Moreover, for any closed positive $(1,1)$-current $T$ on $X$, we have
    \begin{equation}\label{eq:DfstarT}
        \mathbb{D}(f^*T)=f^*\mathbb{D}(T).
    \end{equation}
\end{proposition}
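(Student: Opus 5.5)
The plan is to prove \eqref{eq:DfstarT} first, and then deduce linearity from it together with the additivity of $\mathbb{D}(\bullet)$ on currents.

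\textbf{Step 1: the case where $\vol T>0$.} Let $T$ be a closed positive $(1,1)$-current on $X$ with $\vol T>0$. Then $\mathbb{D}(T)$ is big and nef by \cref{lma:DT}, and its root is $\{\Reg T\}$. By \cref{thm:bdiv_current_corr}, the current used to define $f^*\mathbb{D}(T)$ may be taken to be $\Reg T$. So we must show
\[
\mathbb{D}(f^*T)=\mathbb{D}(f^*\Reg T).
\]
Write $T=\Reg T+\sum_i c_i[E_i]$. Then $f^*T=f^*\Reg T+\sum_i c_i[f^{-1}E_i]$. Since $f$ is smooth, each $f^{-1}E_i$ is a reduced divisor, namely a finite sum of prime divisors, and $f^*\Reg T$ is non-divisorial. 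Now take any modification $\rho\colon Y'\to Y$. Siu's decomposition is additive, and the pull-back of an effective divisor is divisorial, so
\[
\Reg\rho^*f^*T=\Reg\rho^*f^*\Reg T.
\]
This holds because $\Reg(S+[D])=\Reg S$ whenever $D$ is an effective (possibly countable) sum of divisors with convergent mass; one justifies this by the uniqueness of Siu's decomposition. This gives \eqref{eq:DfstarT} whenever $\vol T>0$. Well-definedness up to $\mathcal{I}$-equivalence is already built into the definition via \cite[Proposition~1.4.5]{Xiabook}.

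\textbf{Step 2: the general case.} For arbitrary $T$, apply Step 1 to $T+\epsilon\omega$, which has positive volume:
\[
f^*(\mathbb{D}(T)+\epsilon\mathbb{D}(\omega))=\mathbb{D}(f^*T)+\epsilon\,\mathbb{D}(f^*\omega).
\]
Here $\mathbb{D}(f^*\omega)$ is the Cartier b-divisor realized by $(Y,f^*\{\omega\})$. Letting $\epsilon\to0+$ gives the convergence in every model, so the limit is $\mathbb{D}(f^*T)$.

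\textbf{Step 3: linearity.} Take nef $\mathbb{D},\mathbb{D}'$ and $\lambda\ge0$. First treat the big case: choose non-divisorial $T,T'$ with $\mathbb{D}(T)=\mathbb{D}$ and $\mathbb{D}(T')=\mathbb{D}'$. Then $\mathbb{D}(T+T')=\mathbb{D}+\mathbb{D}'$, and $T+T'$ is non-divisorial with positive volume. The definition together with the additivity of $\mathbb{D}$ and of $f^*$ on currents gives
\[
f^*(\mathbb{D}+\mathbb{D}')=\mathbb{D}(f^*T)+\mathbb{D}(f^*T').
\]
Homogeneity follows in the same way. The general nef case follows by applying this to $\mathbb{D}+\epsilon\mathbb{D}(\omega)$ and $\mathbb{D}'+\epsilon\mathbb{D}(\omega)$, and then letting $\epsilon\to0$. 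The limits are taken modelwise, and we use that $f^*(2\epsilon\mathbb{D}(\omega))\to0$.

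The main obstacle is the claim in Step 1 that $\Reg$ kills the pulled-back divisorial part after an arbitrary modification $\rho$ when the sum is countable. The key point is that a positive current of the form $\sum c_i[D_i]$ remains purely divisorial under $\rho^*$: its pull-back is $\sum c_i[\rho^*D_i]$, and the sum converges in mass. Uniqueness of Siu's decomposition then gives the equality.
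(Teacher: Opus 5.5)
Your proof is correct, but it handles the divisorial part of $T$ by a different route from the paper.

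\textbf{The paper's route.} The paper uses additivity of both sides in $T$ to split $T=\Reg T+\sum_i c_i[E_i]$. The regular part is handled by the $\epsilon\omega$-limit in the definition of $f^*$, which is essentially your Step~2. For the divisorial part, the paper first truncates to finitely many components. This uses the $d_S$-machinery: \eqref{eq:finitediv_convinf} together with \cref{prop:dscontbdiv}. It then reduces to a single prime divisor $E$, where $f^*[E]=[f^{-1}E]$ makes both sides vanish.

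\textbf{Your route.} You take $\Reg T$ itself as the defining representative of $\mathbb{D}(T)$ when $\vol T>0$. You then show directly that the divisorial part is invisible to $\Reg\circ\rho^*\circ f^*$. This rests on two facts: Siu's decomposition is additive, and pull-backs of countable effective divisor sums remain divisorial. This is more elementary, since no $d_S$-convergence is needed. You also actually verify the $\mathbb{R}_{\geq 0}$-linearity, which the paper only calls obvious.

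The price is the countable-sum lemma, which you should state precisely rather than appeal to ``convergence in mass''. Set $S_N=\sum_{i\le N}c_i[D_i]$. Then $\rho^*S-\rho^*S_N=\rho^*(S-S_N)$ is a positive current whose class $\rho^*\{S-S_N\}$ tends to $0$. Hence $\rho^*S=\sum_i c_i\rho^*[D_i]$. Moreover, if $S=\sum_D a_D[D]$, then $\Reg S=\sum_D (a_D-\nu(S,D))[D]$. This is positive, yet it is also a nonpositive combination of currents of integration, because $\nu(S,D)\geq a_D$. So $\Reg S=0$. This is the precise content of your appeal to uniqueness of Siu's decomposition.

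One small point in Step~3. Do not write $f^*(\mathbb{D}+\mathbb{D}'+2\epsilon\mathbb{D}(\omega))$ as $f^*(\mathbb{D}+\mathbb{D}')+f^*(2\epsilon\mathbb{D}(\omega))$. That already presupposes additivity for the possibly non-big $\mathbb{D}+\mathbb{D}'$. Instead, note that $\lim_{\epsilon\to 0+}f^*(\mathbb{D}+\mathbb{D}'+2\epsilon\mathbb{D}(\omega))=f^*(\mathbb{D}+\mathbb{D}')$ holds by definition after reparametrizing $\epsilon$.
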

\begin{proof}
    The $\mathbb{R}_{\geq 0}$-linearity is obvious. 
    
    We prove \eqref{eq:DfstarT}. 
    Fix a Kähler form $\omega$ on $X$.
    It suffices to handle two cases separately: $T$ is either non-divisorial or divisorial. In the first case, by definition, 
    \[
    f^*\mathbb{D}(T)=\lim_{\epsilon\to 0+} \mathbb{D}\left(f^*(T+\epsilon \omega)\right)=\mathbb{D}(f^*T).
    \]
    Next we assume that $T$ is divisorial, say $T=\sum_i c_i [E_i]$.
    In this case, by \eqref{eq:finitediv_convinf} and \cref{prop:dscontbdiv}, we may assume that $T$ has finitely many components. By linearity, we reduce to the case where $T=[E]$ for some prime divisor $E$ on $X$. In this case, we have $f^*T=[f^{-1}E]$. Hence both sides of \eqref{eq:DfstarT} vanish.
\end{proof}

The pull-back is functorial as expected.
\begin{proposition}
    Let $g\colon Z\rightarrow Y$ be another smooth morphism from a connected compact Kähler manifold $Z$. Then for any nef b-divisor $\mathbb{D}$ over $X$, we have
    \begin{equation}\label{eq:pull_back_func}
        (f\circ g)^*\mathbb{D}=g^*f^*\mathbb{D}.
    \end{equation}
\end{proposition}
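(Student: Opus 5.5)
The plan is to reduce first to the big and nef case, and then verify the identity on currents. For a general nef $\mathbb{D}$, both sides are defined as limits as $\epsilon\to 0+$ of perturbations. On the left,
\[
(f\circ g)^*\mathbb{D}=\lim_{\epsilon\to 0+}(f\circ g)^*\left(\mathbb{D}+\epsilon\mathbb{D}(\omega)\right).
\]
On the right we get $g^*\lim_{\epsilon} f^*(\mathbb{D}+\epsilon\mathbb{D}(\omega))$, and here it is not immediate that $g^*$ commutes with this decreasing limit.

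To deal with this, note that for $\omega$ a Kähler form on $X$, the current $f^*\omega$ is smooth and semipositive on $Y$ but not Kähler. So I would rather write
\[
f^*(\mathbb{D}+\epsilon\mathbb{D}(\omega))=f^*\mathbb{D}+\epsilon\,\mathbb{D}(f^*\omega),
\]
using the $\mathbb{R}_{\geq 0}$-linearity and \cref{prop:DfstarT}. I first check that $f^*$ is additive on nef b-divisors in this generality. For big and nef inputs this follows from $\mathbb{D}(T+T')=\mathbb{D}(T)+\mathbb{D}(T')$. In general it follows by passing to the limit, since the perturbation by $\mathbb{D}(\omega)$ is itself of the form $\mathbb{D}(\omega)$.

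Granting additivity, the right-hand side of \eqref{eq:pull_back_func} becomes
\[
g^*\big(f^*\mathbb{D}+\epsilon\,\mathbb{D}(f^*\omega)\big)=g^*f^*\mathbb{D}+\epsilon\,\mathbb{D}(g^*f^*\omega),
\]
again by \cref{prop:DfstarT}. Since this holds for every $\epsilon$, the limit issue disappears: it suffices to prove $(f\circ g)^*(\mathbb{D}+\epsilon\mathbb{D}(\omega))=g^*f^*(\mathbb{D}+\epsilon\mathbb{D}(\omega))$ for every $\epsilon>0$ and then let $\epsilon\to 0$, where $\mathbb{D}(g^*f^*\omega)\epsilon\to 0$.

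This reduces everything to the big and nef case. By \cref{thm:bdiv_current_corr} (in fact \cref{thm:bigandnefreal}), choose a non-divisorial closed positive $(1,1)$-current $T\in\mathbb{D}_X$ with $\mathbb{D}(T)=\mathbb{D}$. By definition, $(f\circ g)^*\mathbb{D}=\mathbb{D}((f\circ g)^*T)=\mathbb{D}(g^*f^*T)$, since pull-backs of currents by holomorphic maps are functorial at the level of local potentials. On the other hand, $f^*\mathbb{D}=\mathbb{D}(f^*T)$. Applying \cref{prop:DfstarT} to the closed positive current $f^*T$ on $Y$ and the smooth morphism $g$ gives $g^*\mathbb{D}(f^*T)=\mathbb{D}(g^*f^*T)$. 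This proposition holds for arbitrary closed positive currents, so we do not need $f^*\mathbb{D}$ to be big. The two sides then agree.

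The main obstacle is the careful justification of additivity and of the limiting step for non-big $\mathbb{D}$. The key point making it work is that \cref{prop:DfstarT} holds for all closed positive currents, including those with zero volume, such as $f^*\omega$. This lets us avoid ever taking limits inside $g^*$: everything is expressed as $\mathbb{D}$ of an explicit current, and convergence is just $\epsilon\,\mathbb{D}(g^*f^*\omega)\to 0$.
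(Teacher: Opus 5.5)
Your proof is correct and follows the paper's argument: reduce to the big case, choose a non-divisorial $T$ with $\mathbb{D}(T)=\mathbb{D}$, and apply \cref{prop:DfstarT} for $g$ to the current $f^*T$, so that both sides equal $\mathbb{D}(g^*f^*T)$. The paper only says ``we may assume $\mathbb{D}$ is big''; your justification of that step is sound and fills a step the paper leaves implicit, namely writing $f^*(\mathbb{D}+\epsilon\mathbb{D}(\omega))=f^*\mathbb{D}+\epsilon\,\mathbb{D}(f^*\omega)$ via linearity and \cref{prop:DfstarT} so that $g^*$ never has to commute with a limit.
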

\begin{proof}
    We may assume that $\mathbb{D}$ is big. Then there is a non-divisorial closed positive $(1,1)$-current $T\in \mathbb{D}_X$ so that $\mathbb{D}(T)=\mathbb{D}_X$. 
    
    Thanks to \cref{prop:DfstarT}, both sides of \eqref{eq:pull_back_func} are equal to $\mathbb{D}(g^*f^*T)$.
\end{proof}

\begin{proposition}\label{prop:smoothpull}
    Let $\pi\colon X'\rightarrow X$ be a modification. Consider the Cartesian diagram,
    \begin{equation}\label{eq:Cart_modif}
    \begin{tikzcd}
Y' \arrow[d, "\pi_Y"'] \arrow[r, "f'"] \arrow[rd, "\square", phantom] & X' \arrow[d, "\pi"] \\
Y \arrow[r, "f"]                                                      & X.                  
\end{tikzcd}
    \end{equation}
    Then for any nef b-divisor $\mathbb{D}$ over $X$, we have
    \begin{equation}\label{eq:pullbacksmD}
    (f^*\mathbb{D})_{Y'}=f'^*\mathbb{D}_{X'}.
    \end{equation}
\end{proposition}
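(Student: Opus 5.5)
First reduce to the big case. The left side of \eqref{eq:pullbacksmD} is defined as a limit as $\epsilon\to 0+$ of $(f^*(\mathbb{D}+\epsilon\mathbb{D}(\omega)))_{Y'}$. The right side $f'^*\mathbb{D}_{X'}$ is continuous and linear in $\mathbb{D}_{X'}$, and $(\mathbb{D}+\epsilon\mathbb{D}(\omega))_{X'}=\mathbb{D}_{X'}+\epsilon\pi^*\{\omega\}$. It therefore suffices to treat $\mathbb{D}$ big and nef. Note also that $Y'$ is a modification of $Y$: blowing up $Y$ along $f^{-1}$ of each smooth center gives a smooth center, since $f$ is smooth. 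So $(f^*\mathbb{D})_{Y'}$ makes sense, and $f'$ is smooth.

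In the big case, take by \cref{thm:bdiv_current_corr} a non-divisorial $T\in\mathbb{D}_X$ with $\mathbb{D}(T)=\mathbb{D}$. By definition,
\[
(f^*\mathbb{D})_{Y'}=\{\Reg \pi_Y^* f^*T\}=\{\Reg f'^*\pi^*T\},
\]
using $\pi_Y^*f^*=f'^*\pi^*$ on currents. The key step is to show that
\[
\Reg f'^*S=f'^*\Reg S
\]
for every closed positive $(1,1)$-current $S$ on $X'$, applied with $S=\pi^*T$. Write $S=\Reg S+\sum_i c_i[E_i]$. Then $f'^*S=f'^*\Reg S+\sum_i c_i[f'^{-1}E_i]$. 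Since $f'$ is smooth, $f'^{-1}E_i$ is a reduced divisor whose components are prime divisors, and distinct $E_i$ have no common components in their preimages. It remains to see that $f'^*\Reg S$ is non-divisorial. This was already noted before \cref{prop:DfstarT}: locally $f'$ is a projection, and generic Lelong numbers are preserved under pull-back by projections. Indeed, a prime divisor $D'$ on $Y'$ with $\nu(f'^*\Reg S,D')>0$ would give a positive Lelong number at generic points of $D'$. By local product structure, $\nu(f'^*\Reg S,y)=\nu(\Reg S,f'(y))$, so $\Reg S$ would have positive Lelong numbers along $f'(D')$. Here $f'(D')$ has codimension at most $1$, being the image of a divisor under a smooth map. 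By Siu's theorem, the Lelong upper-level sets are analytic, so if $f'(D')$ is not all of $X'$ it is a divisor $D$ (up to closure), and then $\nu(\Reg S,D)>0$, a contradiction. If $f'(D')=X'$, then $\Reg S$ has positive Lelong number everywhere, which is impossible since Lelong upper-level sets are proper analytic subsets.

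Taking cohomology classes, $\{\Reg f'^*\pi^*T\}=f'^*\{\Reg\pi^*T\}=f'^*\mathbb{D}_{X'}$, which gives the claim. The countable sum $\sum_i c_i[E_i]$ pulls back termwise because pull-back along a submersion is continuous on currents. The main obstacle is the non-divisoriality of smooth pull-backs, settled as above via Siu's semicontinuity and local product structure.
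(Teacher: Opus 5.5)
Your proof is correct and follows the paper's own argument. You reduce to the big case, choose a non-divisorial $T$ with $\mathbb{D}(T)=\mathbb{D}$, and use that pull-back along the smooth map $f'$ preserves both non-divisorial and divisorial currents, which gives $f'^*\Reg \pi^*T=\Reg \pi_Y^*f^*T$; passing to cohomology classes then finishes. The paper only asserts the preservation of non-divisoriality, and your argument via local product structure and Lelong numbers fills in that detail.
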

Thanks to the smoothness of $f$, $\pi_Y$ is also a modification, so the left-hand side of \eqref{eq:pullbacksmD} makes sense.
\begin{proof}
    We may assume that $\mathbb{D}$ is big. Take a non-divisorial closed positive $(1,1)$-current $T$ in $\mathbb{D}_X$ so that $\mathbb{D}=\mathbb{D}(T)$. 
     Since $f'^*$ preserves non-divisorial currents and divisorial currents, we have
    \[
    f'^*\Reg \pi^*T=\Reg \pi_Y^*(f^*T).
    \]
    Therefore,
    \[
     \{\Reg \pi_Y^*(f^*T)\}=f'^*\{\Reg \pi^*T\}.
    \]
    Our assertion follows.
\end{proof}

\begin{proposition}\label{prop:order_pull}
    Let $\mathbb{D},\mathbb{D}'$ be nef b-divisors over $X$ with $\mathbb{D}_X=\mathbb{D}'_X$. 
    Then the following are equivalent:
    \begin{enumerate}
        \item $\mathbb{D}\leq \mathbb{D}'$;
        \item $f^*\mathbb{D}\leq f^*\mathbb{D}'$.
    \end{enumerate}
\end{proposition}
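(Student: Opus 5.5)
Since both sides of (1) and (2) are compatible with adding $\epsilon\mathbb{D}(\omega)$, I first reduce to the big case. By the definition of $f^*$ as a monotone limit, $f^*(\mathbb{D}+\epsilon\mathbb{D}(\omega))=f^*\mathbb{D}+\epsilon f^*\mathbb{D}(\omega)$, and $f^*\mathbb{D}(\omega)=\mathbb{D}(f^*\omega)$ by \cref{prop:DfstarT}. So (1) for $\mathbb{D},\mathbb{D}'$ is equivalent to (1) for the perturbed pair for every $\epsilon>0$, and likewise for (2): the $\epsilon$-terms cancel, and pseudoeffectivity is a closed condition, so one can let $\epsilon\to 0$. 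Hence we may assume $\mathbb{D},\mathbb{D}'$ are big and nef with common root $\alpha$, which is modified nef.

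In the big case, \cref{thm:bdiv_current_corr} provides non-divisorial currents $T,T'\in\alpha$ with $\mathbb{D}(T)=\mathbb{D}$ and $\mathbb{D}(T')=\mathbb{D}'$. By \cref{cor:Dorder_Iorder}, (1) is equivalent to $T\preceq_{\mathcal{I}}T'$. On $Y$, the currents $f^*T$ and $f^*T'$ are non-divisorial, as noted before \cref{prop:DfstarT}, and they lie in the same class $f^*\alpha$. By \cref{prop:DfstarT}, $f^*\mathbb{D}=\mathbb{D}(f^*T)$ and $f^*\mathbb{D}'=\mathbb{D}(f^*T')$. So \cref{cor:Dorder_Iorder} on $Y$ shows that (2) is equivalent to $f^*T\preceq_{\mathcal{I}}f^*T'$. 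The proposition therefore reduces to the claim
\[
T\preceq_{\mathcal{I}}T' \iff f^*T\preceq_{\mathcal{I}} f^*T'.
\]
One caveat: \cref{cor:Dorder_Iorder} is stated without a volume hypothesis, but its proof via \eqref{eq:RegpistarT} only uses non-divisoriality and equality of classes, so it applies on $Y$.

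For the forward direction I would argue with multiplier ideals. Since $f$ is smooth, hence locally a product projection, $\mathcal{I}(\lambda f^*\varphi)=f^{-1}\mathcal{I}(\lambda\varphi)\cdot\mathcal{O}_Y$; this is the standard behaviour of multiplier ideals under smooth morphisms (cf.\ \cite[Proposition~1.4.5]{Xiabook}). So inclusions of multiplier ideals on $X$ pull back to inclusions on $Y$.

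For the converse I would use faithful flatness. Let $U=f(Y)$, which is open (since $f$ is smooth) and closed, hence equal to $X$ because $X$ is connected. If $f^{-1}\mathcal{I}(\lambda\varphi)\cdot\mathcal{O}_Y\subseteq f^{-1}\mathcal{I}(\lambda\psi)\cdot\mathcal{O}_Y$, then faithful flatness of $\mathcal{O}_{X,x}\to\mathcal{O}_{Y,y}$ gives $\mathcal{I}(\lambda\varphi)_x\subseteq\mathcal{I}(\lambda\psi)_x$. Alternatively, using divisorial valuations: a prime divisor $E$ over $X$ has a pull-back component $E'$ over $Y$ lying in the base-changed modification, and $\nu(f^*\varphi,E')=\nu(\varphi,E)$ because $\pi_Y$ is locally a product. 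The main obstacle is making this local-product or flatness compatibility precise, but it is standard.
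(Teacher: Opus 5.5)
Your proposal is correct and follows essentially the paper's proof: reduce to the big case, realize $\mathbb{D},\mathbb{D}'$ by non-divisorial currents $T,T'$ in the common root, translate both orders into $\mathcal{I}$-orders via \cref{cor:Dorder_Iorder} on $X$ and on $Y$, and use the compatibility of multiplier ideals and Lelong numbers with smooth pull-back \cite[Proposition~1.4.5]{Xiabook}. For (2)$\implies$(1) the paper uses exactly your second alternative, comparing $\nu(f^*T,f'^{-1}E)$ and $\nu(f^*T',f'^{-1}E)$ on the base-changed modification; your faithful-flatness argument, using that $f$ is surjective, is an equally valid purely local variant.
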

\begin{proof}
     We may assume that $\mathbb{D}$ and $\mathbb{D}'$ are both big. Take non-divisorial closed positive $(1,1)$-currents $T$ and $T'$ in $\mathbb{D}_X$ such that $\mathbb{D}=\mathbb{D}(T)$ and $\mathbb{D}'=\mathbb{D}(T')$. 

(1) $\implies$ (2). Assume (1).
   It follows from \cref{cor:Dorder_Iorder} that $T\preceq_{\mathcal{I}} T'$. By \cite[Proposition~1.4.5]{Xiabook}, we have $f^*T\preceq_{\mathcal{I}} f^*T'$, hence by \eqref{eq:fstarroot} and \cref{cor:Dorder_Iorder} again, we find $f^*\mathbb{D}\leq f^*\mathbb{D}'$.

(2) $\implies$ (1). Assume (2). Fix a prime divisor $E$ over $X$.
It suffices to show that 
\begin{equation}\label{eq:nu_ineq}
\nu(T,E)\geq \nu(T',E). 
\end{equation}
Take a modification $\pi\colon X'\rightarrow X$ so that $E$ is a prime divisor on $X'$. Form the Cartesian diagram \eqref{eq:Cart_modif}. Then by \cref{cor:Dorder_Iorder},
\[
\nu(f^*T,f'^{-1}E)\geq \nu(f^*T',f'^{-1}E),
\]
which, thanks to \cite[Proposition~1.4.5]{Xiabook}, is just \eqref{eq:nu_ineq}.
\end{proof}

\begin{proposition}\label{prop:dscont_smoothpull}
    Let $\theta$ be a smooth closed real $(1,1)$-form on $X$ representing a big cohomology class. Let $(\varphi_i)_{i\in I}$ be a net in $\PSH(X,\theta)$ and $\varphi\in \PSH(X,\theta)$. Assume that $\varphi_i\xrightarrow{d_S}\varphi$, then 
    \[
    f^*\varphi_i\xrightarrow{d_S} f^*\varphi.
    \]
\end{proposition}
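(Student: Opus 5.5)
The plan is to translate $d_S$-convergence into the two-sided volume inequality \eqref{eq:ds_biineq}, and then compare mixed masses on $Y$ with those on $X$ by a fibre-integration formula. The key point is that $f^*\theta$ is usually not big on $Y$, since it vanishes on the fibres. So I first fix a Kähler form $\omega_Y$ on $Y$ and work with $\theta_Y\coloneqq f^*\theta+\omega_Y$, which represents a big class. By the remark after \eqref{eq:ds_biineq} (convergence is unchanged after adding a Kähler form), it suffices to show $f^*\varphi_i\xrightarrow{d_S} f^*\varphi$ inside $\PSH(Y,\theta_Y)$.

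By \eqref{eq:ds_biineq} on $Y$, this reduces to showing that, for each $j=0,\ldots,n+m$, the integrals
\[
\int_Y (\theta_Y+\ddc (f^*\varphi_i\lor f^*\varphi))^j\wedge \theta_{Y,V_{\theta_Y}}^{n+m-j},\quad \int_Y (\theta_Y+\ddc f^*\varphi_i)^j\wedge \theta_{Y,V_{\theta_Y}}^{n+m-j}
\]
converge to the corresponding quantities for $f^*\varphi$. Note that $f^*\varphi_i\lor f^*\varphi=f^*(\varphi_i\lor\varphi)$. Since $\theta_Y+\ddc V_{\theta_Y}$ has minimal singularities, I can replace it by the smooth form $\theta_Y+\ddc f^*V_\theta$ plus $\omega_Y$, or more simply invoke \cref{thm:mono2}: $f^*V_\theta$ is model-equivalent to a current with minimal singularities in this class (this is a claim to check, since $V_{\theta_Y}$ and $f^*V_\theta$ differ by a bounded function). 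Then expand binomially in $\omega_Y$. Each resulting term has the form
\[
\int_Y f^*\bigl(\theta_{\psi}^{a}\wedge \theta_{V_\theta}^{b}\bigr)\wedge \omega_Y^{c},\quad a+b\le n,\ a+b+c=n+m,
\]
where $\psi$ is $\varphi_i$, $\varphi$, or $\varphi_i\lor\varphi$. Non-pluripolar products commute with pull-back under the smooth (hence flat, locally product) map $f$. By the projection formula this equals $\int_X \theta_\psi^a\wedge\theta_{V_\theta}^b\wedge f_*(\omega_Y^c)$, and $f_*(\omega_Y^c)$ is a smooth closed $(n-a-b,n-a-b)$-form, which is nonzero only if $c\ge m$.

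The main obstacle is that $f_*\omega_Y^c$ is a smooth closed form but in general not a product of Kähler forms, so \cite[Theorem~6.2.1]{Xiabook}-type continuity does not apply directly. I would handle this by writing $f_*\omega_Y^c$ as a difference of positive combinations of products of Kähler forms up to exact terms. Exact terms contribute nothing, because closed non-pluripolar products are closed currents whose mass depends only on cohomology. Alternatively, I would dominate $\pm f_*\omega_Y^c\le C\omega^{n-a-b}$ and use the convergence of the mixed masses $\int_X\theta_{\psi_i}^a\wedge\theta_{V_\theta}^b\wedge\omega^{n-a-b}$, which follows from $d_S$-convergence by \cite[Theorem~6.2.1]{Xiabook} together with \cite[Corollary~6.2.8]{Xiabook}. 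Finally, $\varphi_i\lor\varphi\xrightarrow{d_S}\varphi$ holds whenever $\varphi_i\xrightarrow{d_S}\varphi$, which is a standard property of $d_S$. With these, every term converges, and the conclusion follows from \eqref{eq:ds_biineq}.
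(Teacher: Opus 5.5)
\textbf{There is a genuine gap.} The reduction breaks where you replace the reference current $\theta_Y+\ddc V_{\theta_Y}$ by $f^*(\theta+\ddc V_\theta)+\omega_Y$. In general $V_{\theta_Y}$ and $f^*V_\theta$ do not differ by a bounded function, and they need not even be $P$-equivalent. When $\{\theta\}$ is big but not nef, $V_\theta$ has positive generic Lelong numbers along the divisors in the negative part of $\{\theta\}$, and $f^*V_\theta$ inherits them. Meanwhile $f^*\{\theta\}+\{\omega_Y\}$ may well be nef.

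Concretely, take $X=\mathrm{Bl}_p\mathbb{P}^2$, $\{\theta\}=H+E$, $Y=X\times\mathbb{P}^1$, and $\omega_Y$ a product Kähler form in $\mathrm{pr}_1^*(2H-E)+\mathrm{pr}_2^*\{\mathrm{pt}\}$. Then $\{\theta_Y\}=\mathrm{pr}_1^*(3H)+\mathrm{pr}_2^*\{\mathrm{pt}\}$ is semipositive, so $V_{\theta_Y}$ is bounded with mass $27$. By contrast, $\theta_Y+\ddc f^*V_\theta$ has non-pluripolar mass $3(3H-E)^2=24$.

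So the mixed masses in \eqref{eq:ds_biineq} on $Y$ involve a reference potential that is not pulled back from $X$. Your binomial expansion and fibre integration therefore cannot be applied to the terms with $0<j<n+m$.

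The missing idea is a way to test $d_S$-convergence on $Y$ without any reference potential. The paper proceeds as follows:
\begin{enumerate}
\item Add a Kähler form so that the masses are bounded below.
\item Reduce to monotone sequences via \cite[Proposition~4.2]{DDNL18mono} and \cite[Corollary~6.2.11]{Xiabook}.
\item The increasing case is \cite[Corollary~6.2.3]{Xiabook}.
\item For a decreasing sequence, \cite[Corollary~6.2.5]{Xiabook} reduces $d_S$-convergence to convergence of the \emph{total} masses $\int_Y(f^*\theta+\Omega+\ddc f^*\varphi_i)^{n+m}$. These involve only pulled-back potentials and expand in $\Omega$ exactly as you intended.
\end{enumerate}

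\textbf{A second weak point} is the convergence of $\int_X\langle\theta_{\psi_i}^a\rangle\wedge f_*\Omega^{c}$.

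Your first option is not available in general. Products of $(1,1)$-classes need not span $\mathrm{H}^{k,k}(X,\mathbb{R})$; a cubic fourfold has $h^{1,1}=1$ but $h^{2,2}=21$. Nothing you say puts $\{f_*\Omega^c\}$ in their span, and for a projective bundle this class involves the Segre classes of the bundle.

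Your second option yields only boundedness, not convergence. The bound $\pm\beta\le C\omega^k$ gives $\lvert\int S\wedge\beta\rvert\le C\int S\wedge\omega^k$ for \emph{positive} $S$, but $\langle\theta_{\psi_i}^a\rangle-\langle\theta_{\psi}^a\rangle$ is not positive.

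For an ordered pair $\psi_i\succeq\psi$, as in the decreasing case, you could get $0\le\int(\langle\theta_{\psi_i}^a\rangle-\langle\theta_\psi^a\rangle)\wedge\beta\le C\int(\langle\theta_{\psi_i}^a\rangle-\langle\theta_\psi^a\rangle)\wedge\omega^k$. However, that needs a version of \cref{thm:mono2} tested against the closed positive forms $\beta$ and $C\omega^k-\beta$. These are not products of $(1,1)$-forms, so that version would need its own proof. The paper instead invokes \cite[Theorem~1.9]{Xia24} at this point.
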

\begin{proof}
    Since $\PSH(X,\theta)$ is a pseudometric space, we may assume that $(\varphi_i)_i$ is a sequence. Replacing $\theta$ by $\theta+\omega$ for some Kähler form $\omega$ on $X$, we may assume that the non-pluripolar masses of the $\varphi_i$'s are bounded from below by a positive constant. Then it follows from \cite[Proposition~4.2]{DDNL18mono} and \cite[Corollary~6.2.11]{Xiabook} that we may assume without loss of generality that $(\varphi_i)_i$ is either increasing or decreasing. 
    
    The increasing case follows from \cite[Corollary~6.2.3]{Xiabook}.
    We assume that $(\varphi_i)_i$ is a decreasing sequence. Fix a Kähler form $\Omega$ on $Y$. By \cite[Corollary~6.2.5]{Xiabook}, it remains to argue that
    \[
    \lim_{i\to\infty}\int_Y \left( f^*\theta+\Omega+\ddc f^*\varphi_i \right)^{n+m}=\int_Y \left( f^*\theta+\Omega+\ddc f^*\varphi \right)^{n+m}.
    \]
    After a binomial expansion, it suffices to show that for any $a=0,\ldots,n$, we have
    \[
    \lim_{i\to\infty}\int_Y \left( f^*\theta+\ddc f^*\varphi_i \right)^{a}\wedge \Omega^{n+m-a}=\int_Y \left( f^*\theta+\ddc f^*\varphi \right)^{a}\wedge \Omega^{n+m-a},
    \]
    or equivalently,
    \[
    \lim_{i\to\infty}\int_X \left( \theta+\ddc \varphi_i \right)^{a}\wedge f_*\Omega^{n+m-a}=\int_X \left( \theta+\ddc \varphi \right)^{a}\wedge f_*\Omega^{n+m-a}.
    \]
    Since $f$ is smooth, the form $f_*\Omega^{n+m-a}$ is smooth as well. Our assertion then follows from \cite[Theorem~1.9]{Xia24}.
\end{proof}

\begin{corollary}\label{cor:cont_smoothpull}
    Let $(T_i)_{i\in I}$ be a net of closed positive $(1,1)$-currents on $X$ and $T$ be a closed positive $(1,1)$-current on $X$. Assume that $T_i\implies T$, then $f^*T_i\implies f^*T$.
\end{corollary}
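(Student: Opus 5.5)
The statement requires two conditions: $f^*T_i\xrightarrow{d_S} f^*T$ and convergence of the cohomology classes $\{f^*T_i\}\to\{f^*T\}$. The second is immediate. $f^*\colon \mathrm{H}^{1,1}(X,\mathbb{R})\to \mathrm{H}^{1,1}(Y,\mathbb{R})$ is linear, hence continuous, and $\{f^*T_i\}=f^*\{T_i\}$. So the work is in the $d_S$-convergence, which I will reduce to \cref{prop:dscontbdiv}'s companion \cref{prop:dscont_smoothpull} by first moving everything into a single cohomology class. This is the same reduction trick used in the proofs of \cref{prop:strongcontnpm} and \cref{prop:dscontbdiv}.

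Fix a Kähler form $\omega$ on $X$. Since $\{T_i\}\to\{T\}$, there is $i_0$ such that for $i\geq i_0$ the class $\{\omega\}+\{T\}-\{T_i\}$ is Kähler. For each such $i$, pick a Kähler form $\omega_i$ in it. Then the currents $T_i+\omega_i$ and $T+\omega$ all lie in the big class $\{T+\omega\}$. By the definition of $d_S$-convergence for currents in varying classes (its independence of the auxiliary Kähler forms), $T_i\xrightarrow{d_S}T$ gives $T_i+\omega_i\xrightarrow{d_S}T+\omega$. We may also choose the $\omega_i$ so that $\omega_i\to\omega$ smoothly. Concretely, fix smooth representatives $\theta_i$ of $\{T_i\}$ with $\theta_i\to\theta$ in $C^\infty$, where $\theta$ represents $\{T\}$, and set $\omega_i=\omega+\theta-\theta_i$. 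This is Kähler for $i$ large.

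Now fix a smooth form $\theta'$ in $\{T+\omega\}$, and write $T_i+\omega_i=\theta'+\ddc\varphi_i$ and $T+\omega=\theta'+\ddc\varphi$ with $\varphi_i,\varphi\in\PSH(X,\theta')$. The $d_S$-convergence above says $\varphi_i\xrightarrow{d_S}\varphi$, so \cref{prop:dscont_smoothpull} gives $f^*\varphi_i\xrightarrow{d_S}f^*\varphi$ in $\PSH(Y,f^*\theta')$. That is,
\[
f^*T_i+f^*\omega_i\xrightarrow{d_S} f^*T+f^*\omega .
\]

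It remains to remove the semipositive forms $f^*\omega_i$ and $f^*\omega$, which are not Kähler on $Y$. This is the step I expect to need care. I will add a fixed Kähler form $\Omega$ on $Y$ to both sides. The invariance statement (\cite[Section~4]{Xia21}, \cite[Corollary~6.2.8]{Xiabook}) shows that adding $\Omega$ to currents in a fixed class preserves $d_S$-convergence. Hence $f^*T_i+(f^*\omega_i+\Omega)\xrightarrow{d_S}f^*T+(f^*\omega+\Omega)$. Here $f^*\omega_i+\Omega$ and $f^*\omega+\Omega$ are Kähler forms on $Y$, and the sums lie in a common class. By the definition of $d_S$-convergence for currents in different classes, this is exactly $f^*T_i\xrightarrow{d_S}f^*T$.

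One subtlety is that the definition requires the Kähler forms to be chosen so that all the sums are cohomologous; this holds by construction. Together with the convergence of classes, this gives $f^*T_i\implies f^*T$.
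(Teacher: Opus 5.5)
Your proof is correct and is the reduction the paper leaves implicit when it calls this an immediate consequence of \cref{prop:dscont_smoothpull}: shift $T_i$ and $T$ into the common big class $\{T+\omega\}$ by Kähler forms, pull back, re-absorb with a Kähler form on $Y$, and get class convergence from the linearity of $f^*$ on cohomology. One small precision: for $m>0$ the class $f^*\{T+\omega\}$ is never big (its restriction to a fibre is zero), so neither $d_S$ on $\PSH(Y,f^*\theta')$ nor the fixed-class invariance statement applies literally there; the conclusion of \cref{prop:dscont_smoothpull} must be read in the general sense, i.e.\ after adding a Kähler form $\Omega$ on $Y$, which is exactly what its proof verifies, and your last step then holds directly by definition.
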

\begin{proof}
    This is an immediate consequence of \cref{prop:dscont_smoothpull}.
\end{proof}

\section{The trace operator of b-divisors}\label{sec:resbdiv}
Let $X$ be a connected compact Kähler manifold of dimension $n$ and $Z$ be a smooth irreducible analytic set of dimension $m$ in $X$. Let $\mathbb{D}$ be a nef b-divisor over $X$. 

We will study the problem of restricting nef b-divisors over $X$ to $Z$ in this section. This problem has been studied in the analytic setting in \cite{DX24}. We shall follow the slightly different approach as studied in \cite[Chapter~8]{Xiabook}, which is better behaved in the zero mass case.

\subsection{The analytic theory}
Let $T$ be a closed positive $(1,1)$-current on $X$ representing a cohomology class $\alpha$. Assume that $\nu(T,Z)=0$.

Consider a quasi-equisingular approximation $(T_j)_j$ of $T$, where the currents $T_j$ are not necessarily in $\alpha$. Then $\nu(T_j,Z)=0$ and hence $T_j|_Z$ makes sense. We then define $\Tr_Z T$ as any closed positive $(1,1)$-current on $Z$ such that $T_j|_Z\xrightarrow{d_S} \Tr_Z T$.  

One can show that $\Tr_Z T$ is always well-defined modulo $P$-equivalence and is independent of the choice of the sequence $(T_j)_j$. 

If furthermore, $T$ is a Kähler current, then $\Tr_Z T$ can be represented by Kähler current in $\alpha|_Z$.

The details can be found in \cite[Chapter~8]{Xiabook}.

\subsection{The codimension \texorpdfstring{$1$}{1} case}

We assume that $Z$ is a divisor so that $m=n-1$. 

For the moment, let us assume that $\mathbb{D}$ is a Cartier nef b-divisor. Let $(\pi\colon Y\rightarrow X,\alpha)$ be a realization of $\mathbb{D}$. 

Let $Z_Y$ denote the strict transform of $Z$ and $p_Y\colon Z_Y\rightarrow Z$ denotes the restriction of $\pi$. The notations are summarized in the commutative diagram:
\begin{equation}\label{eq:realization}
\begin{tikzcd}
Z_Y \arrow[r, hook] \arrow[d, "p_Y"'] & Y \arrow[d, "\pi"] \\
Z \arrow[r, hook]                     & X.                 
\end{tikzcd}
\end{equation}

After replacing $\pi$ by a further modification, we may assume that $Z_Y$ is smooth. This follows from the embedded resolution \cite{BM97, Wlo09}.
In this case, we define the \emph{trace} $\Tr_Z \mathbb{D}$ of $\mathbb{D}$ on $Z$ as the nef Cartier b-divisor over $Z$ realized by $(p_Y,\alpha|_{Z_Y})$. 
Note that we are slightly abusing our language since $p_Y$ is not a modification in general. To be more precise, here we mean that for any modification $Z'\rightarrow Z$ dominating $Z_Y$, $\Tr_Z \mathbb{D}$ is defined as the nef Cartier b-divisor over $Z$ realized by $(Z'\rightarrow Z,\beta)$, where $\beta$ is the pull-back of $\alpha|_{Z_Y}$.

\begin{lemma}
    Assume that $\mathbb{D}$ is a Cartier nef b-divisor, then $\Tr_Z \mathbb{D}$ defined above is independent of the choice of $\pi$.
\end{lemma}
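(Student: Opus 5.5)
Since any two admissible realizations can be dominated by a common one, it suffices to compare a realization $(\pi\colon Y\rightarrow X,\alpha)$ with $Z_Y$ smooth against a further one $(\pi'\colon Y'\rightarrow X,\alpha')$ with $Z_{Y'}$ smooth, where $\pi'=\pi\circ h$ for a modification $h\colon Y'\rightarrow Y$ and $\alpha'=h^*\alpha$. Given two arbitrary admissible realizations $(\pi_1,\alpha_1)$ and $(\pi_2,\alpha_2)$, I would choose a modification $Y'$ dominating both. By embedded resolution I may further blow up so that $Z_{Y'}$ is smooth. Since both realizations define the same Cartier b-divisor $\mathbb{D}$, the classes $h_1^*\alpha_1$ and $h_2^*\alpha_2$ on $Y'$ coincide. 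So the lemma reduces to the domination case.

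In the domination case, $h$ maps $Z_{Y'}$ onto $Z_Y$: the strict transform of $Z_Y$ under $h$ is $Z_{Y'}$, because $h$ is an isomorphism over a dense open subset of $Z_Y$. Write $q\colon Z_{Y'}\rightarrow Z_Y$ for the restriction of $h$. Then $p_{Y'}=p_Y\circ q$, and $q$ is a proper bimeromorphic morphism between smooth compact manifolds. Restriction commutes with pull-back, so
\[
\alpha'|_{Z_{Y'}}=(h^*\alpha)|_{Z_{Y'}}=q^*(\alpha|_{Z_Y}).
\]
Now let $Z''\rightarrow Z$ be any modification dominating $Z_{Y'}$, through a morphism $r\colon Z''\rightarrow Z_{Y'}$. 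It then also dominates $Z_Y$, through $q\circ r$. The value of the first candidate b-divisor on $Z''$ is $(q\circ r)^*(\alpha|_{Z_Y})$ and that of the second is $r^*(\alpha'|_{Z_{Y'}})$; by the display these are equal. For a general modification $Z'\rightarrow Z$, choose $Z''$ dominating both $Z'$ and $Z_{Y'}$ and push forward to $Z'$. This works because Weil b-divisors are determined by their values on any cofinal family of modifications, by push-forward compatibility. Hence the two Cartier b-divisors agree, and the nefness of the restricted classes is preserved.

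The main obstacle I anticipate is the bookkeeping around $p_Y$ not being a modification in the paper's restricted sense, i.e.\ not a composition of smooth blow-ups. Modifications $Z'\rightarrow Z$ that dominate $Z_Y$ must be shown to exist and to be cofinal. I would argue this with Hironaka's Chow lemma \cite{Hir75}: any proper bimeromorphic map onto $Z$ is dominated by a composition of blow-ups with smooth centers. This also justifies the cofinality used in the push-forward step.
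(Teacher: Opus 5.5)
Your proof is correct and follows the paper's argument. After reducing to the case where one realization dominates the other, your identity $(h^*\alpha)|_{Z_{Y'}}=q^*(\alpha|_{Z_Y})$ is exactly the commutativity of the upper square that the paper invokes. Your additional bookkeeping fills in details the paper leaves implicit: the reduction to the dominated case via a common refinement made smooth along $Z$ by embedded resolution, and the existence and cofinality (via Hironaka's Chow lemma) of modifications of $Z$ dominating $Z_Y$.
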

\begin{proof}
    Given a different realization $(\pi'\colon Y'\rightarrow X,\alpha')$ of $\mathbb{D}$, we want to show that it defines the same $\Tr_Z\mathbb{D}$. We may assume that $\pi'$ dominates $\pi$ so that we have a commutative diagram:
    \[
    \begin{tikzcd}
Z_{Y'} \arrow[r, hook] \arrow[d, "\tau"] \arrow[dd, "p_{Y'}"', bend right] & Y' \arrow[dd, "\pi'", bend left] \arrow[d, "\sigma"] \\
Z_Y \arrow[r, hook] \arrow[d, "p_Y"]                                       & Y \arrow[d, "\pi"']                                  \\
Z \arrow[r, hook]                                                          & X.                                                  
\end{tikzcd}
    \]
    The notations $\tau,\sigma,p_{Y'}$ have the obvious meanings. We may assume that $Z_Y$ and $Z_{Y'}$ are both smooth. 

    Our assertion becomes the following:
    \[
    \tau^*\left( \alpha|_{Z_Y} \right)= \left( \sigma^*\alpha \right)|_{Z_{Y'}},
    \]
    which is obvious since the upper square in the diagram commutes.
\end{proof}

\begin{proposition}\label{prop:trace_mono}
    Let $\mathbb{D}'$ be another nef Cartier b-divisor over $X$. Assume that $\mathbb{D'}\leq \mathbb{D}$ and $\mathbb{D}'_X=\mathbb{D}_X$, then $\Tr_Z\mathbb{D}'\leq \Tr_Z \mathbb{D}$.
\end{proposition}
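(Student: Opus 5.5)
The plan is to compare the two traces on a single common model and to show that the difference of the realizing classes is an effective exceptional divisor, whose restriction to the strict transform of $Z$ remains effective.

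\textbf{Step 1: a common model.} Let $(\pi_1\colon Y_1\rightarrow X,\alpha_1)$ and $(\pi_2\colon Y_2\rightarrow X,\alpha_2)$ be nef realizations of $\mathbb{D}$ and $\mathbb{D}'$. Since $\Modif(X)$ is directed, I take a modification $\pi\colon Y\rightarrow X$ dominating both. By embedded resolution (after a further modification) I may also assume that the strict transform $Z_Y$ of $Z$ is smooth. Pulling back, $\mathbb{D}$ and $\mathbb{D}'$ are realized on $Y$ by nef classes $\alpha$ and $\alpha'$, with $\mathbb{D}_Y=\alpha$ and $\mathbb{D}'_Y=\alpha'$. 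By the preceding lemma, $\Tr_Z\mathbb{D}$ is realized by $(p_Y,\alpha|_{Z_Y})$ and $\Tr_Z\mathbb{D}'$ by $(p_Y,\alpha'|_{Z_Y})$.

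\textbf{Step 2: structure of $\gamma\coloneqq\alpha-\alpha'$.} The hypothesis $\mathbb{D}'\leq\mathbb{D}$ gives that $\gamma$ is pseudoeffective on $Y$. Because the roots agree, $\pi_*\gamma=\mathbb{D}_X-\mathbb{D}'_X=0$. I pick a closed positive $(1,1)$-current $S\in\gamma$. Then $\pi_*S$ is a closed positive current in the zero class on the compact Kähler manifold $X$, and pairing with $\omega^{n-1}$ forces $\pi_*S=0$.

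It follows that $S$ vanishes on $Y\setminus\operatorname{Exc}(\pi)$, since $\pi$ is an isomorphism there. So $S$ is supported on the exceptional locus, a proper analytic subset. By the support theorem (as used in \cref{lma:Siudec_pullpush}), $S=\sum_i c_i[E_i]$ with $c_i\geq 0$ and finitely many $\pi$-exceptional prime divisors $E_i$. Hence $\gamma=\{\sum_i c_iE_i\}$ is the class of an effective $\pi$-exceptional $\mathbb{R}$-divisor. I expect this to be the main point; the rest is formal.

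\textbf{Step 3: restriction.} Since $Z_Y$ is the strict transform of $Z$, it is not $\pi$-exceptional, so $Z_Y\neq E_i$ for every $i$. Therefore $E_i|_{Z_Y}$ is an effective divisor on $Z_Y$, and
\[
\alpha|_{Z_Y}-\alpha'|_{Z_Y}=\Big\{\sum_i c_i E_i|_{Z_Y}\Big\}
\]
is represented by a positive current. For any modification $Z'\rightarrow Z$ dominating $Z_Y$ via $q\colon Z'\rightarrow Z_Y$, the pull-back $q^*$ of an effective divisor is effective. Thus $(\Tr_Z\mathbb{D})_{Z'}-(\Tr_Z\mathbb{D}')_{Z'}$ is pseudoeffective.

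\textbf{Step 4: all models of $Z$.} For an arbitrary modification $Z''\rightarrow Z$, I choose $Z'$ dominating both $Z''$ and $Z_Y$, say via $h\colon Z'\rightarrow Z''$. The values of the b-divisors on $Z''$ are the $h_*$-pushforwards of their values on $Z'$, and push-forward preserves pseudoeffectivity (push forward a positive current). This gives $\Tr_Z\mathbb{D}'\leq\Tr_Z\mathbb{D}$.
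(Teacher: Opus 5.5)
Your proof is correct and follows the same route as the paper: realize $\mathbb{D}$ and $\mathbb{D}'$ on a common model $Y$ with smooth $Z_Y$, note that $\alpha-\alpha'$ is an effective $\pi$-exceptional $\mathbb{R}$-divisor not containing $Z_Y$, and restrict to $Z_Y$. The paper simply asserts that $\alpha-\alpha'$ is effective. Your Step 2 supplies the missing justification: $\alpha-\alpha'$ is pseudoeffective with zero push-forward, so any positive current in it lives on the exceptional locus and is a combination of exceptional divisors by the support theorem.
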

\begin{proof}
    We take realizations $(\pi\colon Y\rightarrow X,\alpha')$ and $(\pi,\alpha)$ of $\mathbb{D}'$ and $\mathbb{D}$ on the same modification. Then by assumption $\alpha\geq \alpha'$, and $\alpha-\alpha'$ is represented by an effective $\mathbb{R}$-divisor not containing $Z_Y$ in its support. It follows that $\alpha|_{Z_Y}\geq \alpha'|_{Z_Y}$. Therefore, our assertion follows. 
\end{proof}

\begin{lemma}\label{lma:TrZDTi1}
    Let $T$ be a closed positive $(1,1)$-current with analytic singularities
    Then 
    \begin{equation}\label{eq:TrZDTi1}
    \Tr_Z \mathbb{D}(T)=\mathbb{D}\left(\Tr_Z \left(T-\nu(T,Z)[Z]\right) \right).
    \end{equation}
\end{lemma}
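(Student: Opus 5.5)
Since $T$ has analytic singularities, $\mathbb{D}(T)$ is a nef Cartier b-divisor, so both sides of \eqref{eq:TrZDTi1} are defined. The plan is to compute them on a single log resolution where everything is explicit. Choose a modification $\pi\colon Y\rightarrow X$ with two properties: $\pi^*T=[D]+R$ with $D$ an effective $\mathbb{Q}$-divisor and $R$ a closed positive $(1,1)$-current with locally bounded potentials; and the strict transform $Z_Y$ is smooth. This is possible by \cite[Page~104]{MM07} followed by embedded resolution. Then $(\pi,\{R\})$ realizes $\mathbb{D}(T)$, as in the proof of \cref{lma:DT}. By definition, $\Tr_Z\mathbb{D}(T)$ is the Cartier b-divisor realized by $(p_Y,\{R\}|_{Z_Y})=(p_Y,\{R|_{Z_Y}\})$. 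Here $R|_{Z_Y}$ makes sense because $R$ has bounded potentials.

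For the right-hand side, put $T'=T-\nu(T,Z)[Z]$. This current still has analytic singularities and satisfies $\nu(T',Z)=0$. Moreover $\pi^*T'=[D']+R$, where $D'=D-\nu(T,Z)Z_Y$ does not contain $Z_Y$ in its support; the part of $\pi^*[Z]$ supported on exceptional divisors is absorbed into $D'$, and $D'$ remains effective. Since $T'$ has analytic singularities, the constant sequence is a quasi-equisingular approximation of it (up to harmless $\epsilon_j\omega$ perturbations), so $\Tr_Z T'$ is $P$-equivalent to $T'|_Z$. We then need to show
\[
\mathbb{D}(T'|_Z)=\text{Cartier b-divisor realized by }(p_Y,\{R|_{Z_Y}\}).
\]

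Pulling back along $p_Y$ gives
\[
p_Y^*(T'|_Z)=(\pi^*T')|_{Z_Y}=[D'|_{Z_Y}]+R|_{Z_Y}.
\]
Here $D'|_{Z_Y}$ is an effective $\mathbb{Q}$-divisor on $Z_Y$ and $R|_{Z_Y}$ has bounded potentials. After a further modification we can arrange that $D'$ and $Z_Y$ meet with simple normal crossings, so $D'|_{Z_Y}$ is a genuine divisor. Then $p_Y^*(T'|_Z)$ has log singularities along $D'|_{Z_Y}$, and therefore $\mathbb{D}(T'|_Z)$ is realized by $\{R|_{Z_Y}\}$ on $Z_Y$. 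There is one catch: $p_Y\colon Z_Y\rightarrow Z$ is a proper bimeromorphic morphism but not necessarily a composition of blow-ups. We handle this by passing to a modification $Z'\rightarrow Z$ dominating $Z_Y$ (Hironaka's Chow lemma) and pulling everything back there; the identity $\mathbb{D}(S)_{Z'}=\{\Reg S'\}$ for $S'$ the pullback is compatible with further pullbacks, since bounded-potential parts stay non-divisorial. Finally, $P$-equivalent currents have the same $\mathcal{I}$-class and hence the same $\mathbb{D}$, so replacing $\Tr_Z T'$ by $T'|_Z$ is harmless.

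The main obstacle is the middle step: justifying that $\Tr_Z T'$, defined via a $d_S$-limit of restrictions of a quasi-equisingular approximation, agrees up to $P$-equivalence with the naive restriction $T'|_Z$ when $T'$ has analytic singularities. The class issue, that the approximants lie in $\alpha+\epsilon_j\{\omega\}$, is handled by $\implies$-convergence together with \cref{prop:dscontbdiv}. I would prove this by noting that $T'+\epsilon_j\omega$ is itself a quasi-equisingular approximation, that $(T'+\epsilon_j\omega)|_Z\implies T'|_Z$ trivially, and by invoking the independence of the approximation from \cite[Chapter~8]{Xiabook}.
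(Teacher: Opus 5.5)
Your proof is correct and follows the paper's route. Like the paper, you take one log resolution with $\pi^*T=[D]+R$ and smooth strict transform $Z_Y$, after which both sides are the Cartier b-divisor realized by $(Z_Y,\{R\}|_{Z_Y})$; the paper simply asserts this ``by definition'', while you unpack the right-hand side. There are two small points. Your $D'$ should read $D-\nu(T,Z)\pi^*Z$, which is effective and has no $Z_Y$-component. And your ``main obstacle'' is immediate, because the constant sequence with $\epsilon_j=0$ is already a quasi-equisingular approximation of $T'$, so $T'|_Z$ is a representative of $\Tr_Z T'$ in the class $\{T'\}|_Z$.
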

\begin{proof}
    Let $\pi\colon Y\rightarrow X$ be a modification so that 
\[
\pi^*T=[D]+R,
\]
where $D$ is an effective $\mathbb{Q}$-divisor and $R$ is a closed positive $(1,1)$-current with locally bounded potential. 
We may assume that the strict transform $Z_Y$ of $Z$ is smooth. Then by definition, both sides of \eqref{eq:TrZDTi1} are Cartier nef b-divisors realized by $(Z_Y,\{R\}|_{Z_Y})$.
\end{proof}

In general, when $\mathbb{D}$ is nef and big but not necessarily Cartier, take an $\mathcal{I}$-good non-divisorial Kähler current $T\in \mathbb{D}_X$ so that $\mathbb{D}=\mathbb{D}(T)$. Consider a quasi-equisingular approximation $(T_i)_i$ of $T$ in the same cohomology class as $T$, we define
\[
\Tr_Z \mathbb{D}=\lim_{i\to\infty} \Tr_Z \mathbb{D}(T_i).
\]
Thanks to \cref{prop:trace_mono}, the right-hand side is a decreasing sequence and hence the limit exists.
\begin{lemma}
    The definition of $\Tr_Z \mathbb{D}$ is independent of the choices we made, and
    \begin{equation}\label{eq:trZD}
    \Tr_Z \mathbb{D}=\mathbb{D}(\Tr_Z T).
    \end{equation}
\end{lemma}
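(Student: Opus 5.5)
The plan is to prove \eqref{eq:trZD} first, for a fixed choice of $T$ and of the quasi-equisingular approximation $(T_i)_i$. Independence then follows at once. Indeed, $\Tr_Z T$ is well defined up to $P$-equivalence, and $T$ is unique up to $\mathcal{I}$-equivalence by \cref{thm:bdiv_current_corr}. Since $T$ is $\mathcal{I}$-good, two such choices $T,T'$ are $P$-equivalent. Hence their traces, both computed as $d_S$-limits of restrictions of quasi-equisingular approximations, are $P$-equivalent, so in particular $\mathcal{I}$-equivalent. I expect this last point to follow from the construction in \cite[Chapter~8]{Xiabook}, and then $\mathbb{D}(\Tr_Z T)=\mathbb{D}(\Tr_Z T')$.

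For \eqref{eq:trZD}, I start from the approximants. Since $T$ is non-divisorial, $\nu(T,Z)=0$, and since $T_i\geq T$ in singularity, also $\nu(T_i,Z)=0$. So \cref{lma:TrZDTi1} gives
\[
\Tr_Z\mathbb{D}(T_i)=\mathbb{D}(\Tr_Z T_i).
\]
Because $T_i$ has analytic singularities, $\Tr_Z T_i$ is simply $T_i|_Z$, possibly after removing its divisorial part, which does not affect $\mathbb{D}$. It then suffices to show
\[
\mathbb{D}(T_i|_Z)\to \mathbb{D}(\Tr_Z T).
\]
By the definition of the analytic trace, $T_i|_Z\xrightarrow{d_S}\Tr_Z T$. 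All these currents lie in the fixed class $\alpha|_Z$, where $\alpha=\mathbb{D}_X$, so the convergence is in fact $T_i|_Z\implies\Tr_Z T$ in the sense of \cref{def:stronconve}. \cref{prop:dscontbdiv} applied on $Z$ then gives the desired convergence of b-divisors. Since the left-hand side decreases to $\Tr_Z\mathbb{D}$ by definition, the two limits agree.

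The step I expect to be the main obstacle is the independence of the approximating sequence together with the identification of $\Tr_Z T_i$ with $T_i|_Z$. The quasi-equisingular approximation is required to lie in the class of $T$, whereas the general definition of $\Tr_Z$ allows perturbed classes. Here I would use that $T$ is a Kähler current, so one can take approximants in $\alpha$ itself, and appeal to \cite[Chapter~8]{Xiabook} for the independence of $\Tr_Z T$ from the approximation.

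Once \eqref{eq:trZD} holds for every admissible choice, its right-hand side depends only on the $\mathcal{I}$-class of $T$, that is, only on $\mathbb{D}$. This gives the claimed independence of the definition.
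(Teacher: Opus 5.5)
Your proposal is correct and follows essentially the same route as the paper. You apply \cref{lma:TrZDTi1} to the approximants $T_i$ (which satisfy $\nu(T_i,Z)=0$), use that $T_i|_Z\to\Tr_Z T$ in $d_S$ within the fixed class $\alpha|_Z$, and conclude with \cref{prop:dscontbdiv} on $Z$; the paper phrases the middle step as $d_S$-continuity of the trace along decreasing sequences, whereas you read it off from the definition of the analytic trace, and your independence argument (via $\mathcal{I}$-goodness, \cref{thm:bdiv_current_corr} and the well-definedness of $\Tr_Z$ from \cite[Chapter~8]{Xiabook}) only makes explicit what the paper leaves implicit when it says it suffices to prove \eqref{eq:trZD}.
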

\begin{proof}
    It suffices to prove \eqref{eq:trZD}, but this follows from \cref{prop:dscontbdiv} and the $d_S$-continuity of the trace operator along decreasing sequences.
\end{proof}

More generally, if $\mathbb{D}$ is just nef, we take a Kähler form $\omega$ on $X$, and let
\[
\Tr_Z \mathbb{D}\coloneqq \lim_{\epsilon\to 0+}\Tr_Z \left( \mathbb{D}+\epsilon \mathbb{D}(\omega) \right). 
\]
This definition is independent of the choice of $\omega$.

\begin{theorem}\label{thm:trac_codim1}
    Let $T$ be a closed positive $(1,1)$-current such that $\Tr_Z \left(T-\nu(T,Z)[Z]\right)$ can be represented by a closed positive $(1,1)$-current in $\{T-\nu(T,Z)[Z]\}|_Z$. Take such a representative.
    Then 
    \[
    \Tr_Z \mathbb{D}(T)=\mathbb{D}\left(\Tr_Z \left(T-\nu(T,Z)[Z]\right) \right).
    \]
\end{theorem}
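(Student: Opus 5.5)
First replace $T$ by $S\coloneqq T-\nu(T,Z)[Z]$. The divisorial part $\nu(T,Z)[Z]$ contributes nothing to $\mathbb{D}(T)$ on the strict transforms. Indeed, $\pi^*[Z]=[Z_Y]+(\text{exceptional divisors})$ is divisorial for every modification $\pi$, so $\mathbb{D}([Z])=0$ and $\mathbb{D}(T)=\mathbb{D}(S)$. The statement therefore reduces to $\Tr_Z\mathbb{D}(S)=\mathbb{D}(\Tr_Z S)$ for a current $S$ with $\nu(S,Z)=0$, under the hypothesis that $\Tr_Z S$ has a representative in $\{S\}|_Z$.

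Next I reduce to the big case by perturbation. Fix a Kähler form $\omega$ on $X$. By definition, $\Tr_Z\mathbb{D}(S)=\lim_{\epsilon\to 0+}\Tr_Z\mathbb{D}(S+\epsilon\omega)$. Since $S+\epsilon\omega$ is a Kähler current, \eqref{eq:trZD} applies once we replace it by a non-divisorial $\mathcal{I}$-good representative with the same b-divisor. We may drop $\Reg$, because $\mathbb{D}(S)=\mathbb{D}(\Reg S)$, and take $P[\cdot]_{\mathcal{I}}$, which does not change $\mathbb{D}$ (\cref{thm:bdiv_current_corr}). I would also check that $\Tr_Z$ of currents is insensitive to these replacements modulo $\mathcal{I}$-equivalence. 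This gives
\[
\Tr_Z\mathbb{D}(S+\epsilon\omega)=\mathbb{D}(\Tr_Z(S+\epsilon\omega))=\mathbb{D}(\Tr_Z S+\epsilon\,\omega|_Z),
\]
where the last equality uses additivity of the analytic trace with respect to adding a smooth form, read off from quasi-equisingular approximations. Linearity of $\mathbb{D}$ then yields $\mathbb{D}(\Tr_Z S)+\epsilon\,\mathbb{D}(\omega|_Z)$, and letting $\epsilon\to0$ gives the claim.

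The main obstacle is the compatibility just used, between the analytic trace of a non-$\mathcal{I}$-good or non-Kähler current and its $\mathcal{I}$-model or perturbed version. It is also exactly where the hypothesis that $\Tr_Z S$ lies in $\{S\}|_Z$ is needed. Without it, $\Tr_Z S$ is only defined modulo $P$-equivalence in a possibly larger class, so $\mathbb{D}(\Tr_Z S)$ would have the wrong root.

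To justify the compatibility, I would use the following argument. Take a quasi-equisingular approximation $S_j$ of $S$. Then $S_j+\epsilon_j\omega$ is a quasi-equisingular approximation of $S+\epsilon\omega$ (in the appropriate sense), and its restrictions $S_j|_Z+\epsilon\omega|_Z$ converge in $d_S$ (together with their classes) to $\Tr_Z S+\epsilon\omega|_Z$. This uses the hypothesis to identify the limit class. Then \cref{prop:dscontbdiv} gives convergence of the corresponding b-divisors. For currents with analytic singularities the identity is \cref{lma:TrZDTi1}, so passing to the limit in $j$ yields the result.
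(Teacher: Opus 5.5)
Your argument is correct and is essentially the paper's proof. You discard $\nu(T,Z)[Z]$, which is invisible to $\mathbb{D}$, perturb by $\epsilon\omega$ to reach the big case, approximate quasi-equisingularly, apply \cref{lma:TrZDTi1} to the approximants and pass to the limit with \cref{prop:dscontbdiv}; the hypothesis on the class of the trace is what makes $S_j|_Z\to\Tr_Z S$ a convergence in the sense of \cref{def:stronconve}. The detour through \eqref{eq:trZD} and $\mathcal{I}$-good replacements is superfluous, since your last paragraph already is the paper's argument, and like the paper (``by definition'') it tacitly uses $\Tr_Z\mathbb{D}(S)=\lim_j\Tr_Z\mathbb{D}(S_j)$ for a quasi-equisingular approximation of $S$ itself rather than of an $\mathcal{I}$-good non-divisorial representative.
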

\begin{proof}
Replacing $T$ by $T+\epsilon\omega$ for some $\epsilon>0$ and some Kähler form $\omega$, we may assume that $\mathbb{D}(T)$ is big.

Replacing $T$ by $T-\nu(T,Z)[Z]$, we may assume that $\nu(T,Z)=0$. Then we need to show that
\[
\Tr_Z \mathbb{D}(T)=\mathbb{D}\left( \Tr_Z T \right).
\]
Here $\Tr_Z T$ is in $\{T\}|_Z$.

Let $(T_j)_j$ be a quasi-equisingular approximation of $T$ in the same cohomology class as $T$. Then $T_j\xrightarrow{d_S} T$. Hence by \cref{prop:dscontbdiv}, we have
\[
\mathbb{D}(T_i)\to \mathbb{D}(T).
\]
By definition and \cref{prop:dscontbdiv},
\[
\Tr_Z \mathbb{D}(T)=\lim_{i\to\infty}\Tr_Z \mathbb{D}(T_i),\quad \mathbb{D}\left( \Tr_Z T \right)=\lim_{i\to\infty} \mathbb{D}(T_i|_Z).
\]
Hence our assertion follows from \cref{lma:TrZDTi1}.

\end{proof}

\subsection{The higher codimension case}
Now assume that $Z$ has codimension at least $2$.

In this case, similar to the analytic theory, we cannot restrict a general nef b-divisor. 

We consider the following commutative diagram:
\begin{equation}\label{eq:BlZX}
\begin{tikzcd}
E \arrow[r, hook] \arrow[d, "q" '] & \Bl_Z X \arrow[d, "p"] \\
Z \arrow[r, hook]          & X   ,                 
\end{tikzcd}
\end{equation}
where $p\colon \Bl_Z X\rightarrow X$ is the blow-up of $X$ along $Z$ and $E$ is the exceptional divisor. Note that $q\colon E\rightarrow Z$ can be naturally identified with the projectivized normal bundle of $Z$ in $X$.

Let $\mathbb{D}$ be a nef b-divisor over $X$ such that
\begin{equation}\label{eq:D_Bl}
    \mathbb{D}_{\Bl_Z X}=p^*\mathbb{D}_X.
\end{equation}
Assume \eqref{eq:D_Bl}, then the trace can be defined. To do so, we shall rely on the analytic theory.

\begin{proposition}
    Let $\mathbb{D}$ be a nef b-divisor over $X$ satisfying \eqref{eq:D_Bl}. Then there is a unique nef b-divisor $\Tr_Z \mathbb{D}$ over $Z$ such that
    \begin{equation}\label{eq:Tr_def}
    q^*\Tr_Z \mathbb{D}=\Tr_E \mathbb{D},
    \end{equation}
    where $\mathbb{D}$ is regarded as a nef b-divisor over $\Bl_Z X$ in the obvious way.
\end{proposition}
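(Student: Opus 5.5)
Uniqueness comes first, because it only needs the machinery of \cref{sec:smpull}. The map $q\colon E\rightarrow Z$ is a smooth morphism (a projective bundle) between compact Kähler manifolds. Suppose two nef b-divisors $\mathbb{D}',\mathbb{D}''$ over $Z$ satisfy $q^*\mathbb{D}'=q^*\mathbb{D}''$. By \eqref{eq:fstarroot} their roots agree: $q^*\mathbb{D}'_Z=q^*\mathbb{D}''_Z$, and $q^*$ is injective on $\mathrm{H}^{1,1}$ for a projective bundle. We then apply \cref{prop:order_pull} in both directions to get $\mathbb{D}'\leq\mathbb{D}''\leq\mathbb{D}'$, hence equality, since the set of b-divisors is a vector space and the order is antisymmetric on each $\mathrm{H}^{1,1}(Y,\mathbb{R})$. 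Strictly, \cref{prop:order_pull} is stated with big hypotheses hidden in its proof. I would handle this by adding $\epsilon\mathbb{D}(\omega_Z)$ to both sides, which is compatible with $q^*$ by linearity, and then letting $\epsilon\to0$.

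For existence, first assume $\mathbb{D}$ is nef and big. By \cref{thm:bigandnefreal}, write $\mathbb{D}=\mathbb{D}(T)$ with $T$ non-divisorial and $\mathcal{I}$-good in $\mathbb{D}_X$, and perturb so that $T$ is a Kähler current. Hypothesis \eqref{eq:D_Bl} says $\{\Reg p^*T\}=p^*\{T\}$. By \cref{lma:Siudec_pullpush} this forces $\nu(p^*T,E)=0$, i.e.\ $\nu(T,Z)=0$ generically along $Z$. Therefore $\Tr_Z T$ is defined by the analytic theory, with a Kähler current representative in $\{T\}|_Z$, and I set $\Tr_Z\mathbb{D}:=\mathbb{D}(\Tr_Z T)$. \Cref{thm:trac_codim1}, applied on $\Bl_Z X$ to $p^*T$ (whose $E$-Lelong number vanishes), gives
\[
\Tr_E\mathbb{D}=\Tr_E\mathbb{D}(p^*T)=\mathbb{D}(\Tr_E p^*T).
\]
It then remains to show $\Tr_E p^*T\sim_{\mathcal{I}}q^*\Tr_Z T$, and by \cref{prop:DfstarT} this gives $q^*\Tr_Z\mathbb{D}=\mathbb{D}(q^*\Tr_ZT)=\Tr_E\mathbb{D}$.

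The compatibility $\Tr_E p^*T\sim q^*\Tr_Z T$ is the main obstacle. I would prove it first for a quasi-equisingular approximation $(T_j)_j$ of $T$. For currents with analytic singularities and $\nu(T_j,Z)=0$ along $Z$, we have $(p^*T_j)|_E=q^*(T_j|_Z)$ on a Zariski open set. Both sides then have analytic singularities of matching type, so the identity holds up to $P$-equivalence. One then passes to the limit: $T_j|_Z\xrightarrow{d_S}\Tr_ZT$, so \cref{cor:cont_smoothpull} gives $q^*(T_j|_Z)\implies q^*\Tr_ZT$. On the other side, $(p^*T_j)_j$ is still a quasi-equisingular approximation (up to the $\epsilon_j\omega$ correction), so $(p^*T_j)|_E\xrightarrow{d_S}\Tr_Ep^*T$ by definition. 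Uniqueness of $d_S$-limits modulo $P$-equivalence concludes. The subtle point I expect is that \eqref{eq:D_Bl} only controls Lelong numbers generically along $Z$, while restriction of $T_j$ to $Z$ needs genericity along $Z$ only, so this should be fine. The one technical check is that pulling back a quasi-equisingular approximation along a blow-up remains quasi-equisingular, which follows from \cref{lma:Ipartorderinv}.

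Finally, for general nef $\mathbb{D}$, I apply the above to $\mathbb{D}+\epsilon\mathbb{D}(\omega)$. This still satisfies \eqref{eq:D_Bl}, because $\mathbb{D}(\omega)$ is realized on $X$. I define $\Tr_Z\mathbb{D}$ as the limit of $\Tr_Z(\mathbb{D}+\epsilon\mathbb{D}(\omega))$, which is decreasing in $\epsilon$ by monotonicity via \cref{prop:order_pull} and uniqueness. Then \eqref{eq:Tr_def} passes to the limit, since $q^*$ and $\Tr_E$ are both defined by the same $\epsilon\to0$ limits and commute with them.
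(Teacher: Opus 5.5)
Your proposal is correct and follows the paper's argument. Uniqueness comes from \cref{prop:order_pull} once the root is fixed through \eqref{eq:H11_projb}; existence comes from reducing to $\mathbb{D}=\mathbb{D}(T)$ with $T$ a non-divisorial Kähler current with $\nu(T,Z)=0$, setting $\Tr_Z\mathbb{D}=\mathbb{D}(\Tr_Z T)$, using \cref{prop:DfstarT}, \cref{thm:trac_codim1}, \cref{cor:cont_smoothpull} and \cref{prop:dscontbdiv} to reduce to analytic singularities where $(p^*T_j)|_E=q^*(T_j|_Z)$ by commutativity of \eqref{eq:BlZX}, and letting $\epsilon\to 0$.

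One caveat: \cref{lma:Ipartorderinv} only compares $\mathcal{I}$-partial orders and does not give condition (3) of a quasi-equisingular approximation for $(p^*T_j)_j$. For that step, cite instead the $d_S$-continuity of the analytic trace along decreasing sequences, applied to $p^*T_j\searrow p^*T$; the paper uses this fact in proving \eqref{eq:trZD}.
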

The pull-back $q^*$ is defined in \cref{sec:smpull}.

We first recall the following decomposition:
\begin{equation}\label{eq:H11_projb}
    \mathrm{H}^{1,1}(E,\mathbb{R})=\mathrm{H}^{1,1}(Z,\mathbb{R})\oplus \mathbb{R}\zeta,
\end{equation}
where $\zeta$ is the tautological class of the projective bundle $q$. See \cite[Proposition~3.3]{RYY19} for example. This decomposition also explains why we need to impose the condition \eqref{eq:D_Bl}.
\begin{proof}
    Thanks to \cref{prop:smoothpull}, \eqref{eq:D_Bl} and \eqref{eq:H11_projb}, the root of $\Tr_Z \mathbb{D}$ is necessarily the first component of $(\Tr_E\mathbb{D})_E$ with respect to the decomposition \eqref{eq:H11_projb}.
    By \cref{prop:order_pull}, the nef b-divisor $\Tr_Z \mathbb{D}$ is unique if it exists.

    Fix a Kähler form $\omega$ on $X$. It suffices to prove the existence of $\Tr_Z (\mathbb{D}+\epsilon \mathbb{D}(\omega))$ for any $\epsilon>0$. In fact, if we have established these existence, then thanks to \cref{prop:order_pull}, we know that $\Tr_Z (\mathbb{D}+\epsilon \mathbb{D}(\omega))$ is increasing with respect to $\epsilon$, hence defining 
    \[
    \Tr_Z \mathbb{D}\coloneqq \lim_{\epsilon \to 0+}\Tr_Z \left(\mathbb{D}+\epsilon \mathbb{D}(\omega)\right)
    \]
    would suffice.

    Therefore, we may assume that there is a non-divisorial Kähler current $T$ in $\mathbb{D}_X$ such that $\mathbb{D}=\mathbb{D}(T)$. Then \eqref{eq:D_Bl} translates into $\nu(T,Z)=0$. In particular, $\Tr_Z T$ is defined and can be represented by a Kähler current in $\{T\}|_Z$. We fix such a representative.
    We claim that in fact
    \begin{equation}\label{eq:TrED}
    \Tr_E\mathbb{D}=q^*\mathbb{D}(\Tr_Z T).
    \end{equation}
    In fact, due to \cref{prop:DfstarT}, we know that
    \[
    q^*\mathbb{D}(\Tr_Z T)=\mathbb{D}(q^*\Tr_Z T).
    \]
    Thanks to \cref{thm:trac_codim1}, \eqref{eq:TrED} translates into
    \begin{equation}\label{eq:TrED2}
    \mathbb{D}(q^*\Tr_Z T)=\mathbb{D}(\Tr_E (p^*T)).
    \end{equation}
    Now \cref{cor:cont_smoothpull} and \cref{prop:dscontbdiv} allow us to reduce to the case where $T$ has analytic singularities, and \eqref{eq:TrED2} finally reduces to
    \[
    \mathbb{D}\left(q^*(T|_Z)\right)=\mathbb{D}\left((p^*T)|_E\right),
    \]
    which follows immediately from the commutativity of \eqref{eq:BlZX}.
\end{proof}

\begin{definition}
    Let $\mathbb{D}$ be a nef b-divisor over $X$ satisfying \eqref{eq:D_Bl}. Then $\Tr_Z\mathbb{D}$ is defined as the unique nef b-divisor over $Z$ such that \eqref{eq:Tr_def} holds.
\end{definition}
One can easily deduce the basic properties of the trace $\Tr_Z\mathbb{D}$ from the analytic theory of trace operators. We omit these transparent translations.

The trace operator of b-divisors has a natural explanation in terms of non-Archimedean metrics, see \cite{Xia23Operations}.

\printbibliography

@misc{Xiabdiv2,
    title={Transcendental b-divisors {II}},
    author={Xia, Mingchen},
    year={2025},
    howpublished={\href{https://mingchenxia.github.io/Papers/TB2.pdf}{link}}
}

@incollection {Xia23Operations,
    AUTHOR = {Xia, Mingchen},
     TITLE = {Operations on transcendental non-{A}rchimedean metrics},
 BOOKTITLE = {Convex and complex: perspectives on positivity in geometry},
    SERIES = {Contemp. Math.},
    VOLUME = {810},
     PAGES = {271--293},
 PUBLISHER = {Amer. Math. Soc., Providence, RI},
      YEAR = {2025},
      ISBN = {978-1-4704-7338-9},
   MRCLASS = {32Q99 (32P05)},
  MRNUMBER = {4853201},
       DOI = {10.1090/conm/810/16216},
       URL = {https://doi.org/10.1090/conm/810/16216},
}

@article {Tru24,
    AUTHOR = {Trusiani, Antonio},
     TITLE = {A relative {Y}au-{T}ian-{D}onaldson conjecture and stability
              thresholds},
   JOURNAL = {Adv. Math.},
  FJOURNAL = {Advances in Mathematics},
    VOLUME = {441},
      YEAR = {2024},
     PAGES = {Paper No. 109537, 95},
      ISSN = {0001-8708,1090-2082},
   MRCLASS = {32Q26 (14J45 32Q20 32U05)},
  MRNUMBER = {4708148},
       DOI = {10.1016/j.aim.2024.109537},
       URL = {https://doi.org/10.1016/j.aim.2024.109537},
}

@article {DX24,
    TITLE = {The trace operator of quasi-plurisubharmonic functions on compact {K}\"ahler manifolds},
    AUTHOR = {Darvas, Tam\'as and Xia, Mingchen},
    year={2024},
    eprint={2403.08259},
    archivePrefix={arXiv},
    primaryClass={math.DG},
    journal={Trans. Amer. Math. Soc. (to appear)}
}

@article {RYY19,
    AUTHOR = {Rao, Sheng and Yang, Song and Yang, Xiangdong},
     TITLE = {Dolbeault cohomologies of blowing up complex manifolds},
   JOURNAL = {J. Math. Pures Appl. (9)},
  FJOURNAL = {Journal de Math\'ematiques Pures et Appliqu\'ees. Neuvi\`eme
              S\'erie},
    VOLUME = {130},
      YEAR = {2019},
     PAGES = {68--92},
      ISSN = {0021-7824,1776-3371},
   MRCLASS = {32S45 (14D07 14E05 18G40 32C36)},
  MRNUMBER = {4001628},
MRREVIEWER = {George-Ionu\c t\ Ioni\c t\u a},
       DOI = {10.1016/j.matpur.2019.01.016},
       URL = {https://doi.org/10.1016/j.matpur.2019.01.016},
}

@article {CT22,
    AUTHOR = {Collins, Tristan C. and Tosatti, Valentino},
     TITLE = {Restricted volumes on {K}\"{a}hler manifolds},
   JOURNAL = {Ann. Fac. Sci. Toulouse Math. (6)},
  FJOURNAL = {Annales de la Facult\'{e} des Sciences de Toulouse. Math\'{e}matiques.
              S\'{e}rie 6},
    VOLUME = {31},
      YEAR = {2022},
    NUMBER = {3},
     PAGES = {907--947},
      ISSN = {0240-2963},
   MRCLASS = {32Q15 (14C17 53C55)},
  MRNUMBER = {4452256},
       DOI = {10.5802/afst.170},
       URL = {https://doi.org/10.5802/afst.170},
}

@article {BT87,
    AUTHOR = {Bedford, Eric and Taylor, B. A.},
     TITLE = {Fine topology, \v{S}ilov boundary, and {$(dd^c)^n$}},
   JOURNAL = {J. Funct. Anal.},
  FJOURNAL = {Journal of Functional Analysis},
    VOLUME = {72},
      YEAR = {1987},
    NUMBER = {2},
     PAGES = {225--251},
      ISSN = {0022-1236},
   MRCLASS = {32F05 (31C15)},
  MRNUMBER = {886812},
MRREVIEWER = {J. Siciak},
       DOI = {10.1016/0022-1236(87)90087-5},
       URL = {https://doi.org/10.1016/0022-1236(87)90087-5},
}

@article {DRWNXZ,
    AUTHOR = {Darvas, Tam\'as and Reboulet, R\'emi and Witt Nystr\"om, David
              and Xia, Mingchen and Zhang, Kewei},
     TITLE = {Transcendental {O}kounkov bodies},
   JOURNAL = {J. Differential Geom.},
  FJOURNAL = {Journal of Differential Geometry},
    VOLUME = {132},
      YEAR = {2026},
    NUMBER = {1},
     PAGES = {135--178},
      ISSN = {0022-040X,1945-743X},
   MRCLASS = {14C20 (32C99 32Q15 53C55)},
  MRNUMBER = {5007919},
       DOI = {10.4310/jdg/1766433802},
       URL = {https://doi.org/10.4310/jdg/1766433802},
       label={DRWN+}
}

@misc{McC21,
  title={Plurisupported Currents on Compact Kähler Manifolds},
  author={McCleerey, Nicholas},
  archivePrefix = {arXiv},
  eprint={2106.12017},
  year={2021},
  primaryClass={math.CV}
}

@article {Xia24,
    AUTHOR = {Xia, Mingchen},
     TITLE = {Non-pluripolar products on vector bundles and {C}hern-{W}eil
              formulae},
   JOURNAL = {Math. Ann.},
  FJOURNAL = {Mathematische Annalen},
    VOLUME = {390},
      YEAR = {2024},
    NUMBER = {3},
     PAGES = {3239--3316},
      ISSN = {0025-5831,1432-1807},
   MRCLASS = {32J25 (14C17 14G40 32U40)},
  MRNUMBER = {4803453},
       DOI = {10.1007/s00208-024-02838-4},
       URL = {https://doi.org/10.1007/s00208-024-02838-4},
}

@article {DX21,
    AUTHOR = {Darvas, Tam\'as and Xia, Mingchen},
     TITLE = {The volume of pseudoeffective line bundles and partial
              equilibrium},
   JOURNAL = {Geom. Topol.},
  FJOURNAL = {Geometry \& Topology},
    VOLUME = {28},
      YEAR = {2024},
    NUMBER = {4},
     PAGES = {1957--1993},
      ISSN = {1465-3060,1364-0380},
   MRCLASS = {32L05 (32W20 53C55)},
  MRNUMBER = {4777706},
MRREVIEWER = {S\l awomir\ Dinew},
       DOI = {10.2140/gt.2024.28.1957},
       URL = {https://doi.org/10.2140/gt.2024.28.1957},
}

@article {DF20,
    AUTHOR = {Dang, Nguyen-Bac and Favre, Charles},
     TITLE = {Intersection theory of nef {$b$}-divisor classes},
   JOURNAL = {Compos. Math.},
  FJOURNAL = {Compositio Mathematica},
    VOLUME = {158},
      YEAR = {2022},
    NUMBER = {7},
     PAGES = {1563--1594},
      ISSN = {0010-437X,1570-5846},
   MRCLASS = {14C20 (14E05)},
  MRNUMBER = {4476829},
MRREVIEWER = {Piotr\ Pokora},
       DOI = {10.1112/s0010437x22007515},
       URL = {https://doi.org/10.1112/s0010437x22007515},
}

@article {Xia21,
    AUTHOR = {Xia, Mingchen},
     TITLE = {Partial {O}kounkov bodies and {D}uistermaat-{H}eckman measures
              of non-{A}rchimedean metrics},
   JOURNAL = {Geom. Topol.},
  FJOURNAL = {Geometry \& Topology},
    VOLUME = {29},
      YEAR = {2025},
    NUMBER = {3},
     PAGES = {1283--1344},
      ISSN = {1465-3060,1364-0380},
   MRCLASS = {14M25 (32L05 32P05)},
  MRNUMBER = {4918108},
       DOI = {10.2140/gt.2025.29.1283},
       URL = {https://doi.org/10.2140/gt.2025.29.1283},
}

@article {DF20a,
    AUTHOR = {Dang, Nguyen-Bac and Favre, Charles},
     TITLE = {Spectral interpretations of dynamical degrees and
              applications},
   JOURNAL = {Ann. of Math. (2)},
  FJOURNAL = {Annals of Mathematics. Second Series},
    VOLUME = {194},
      YEAR = {2021},
    NUMBER = {1},
     PAGES = {299--359},
      ISSN = {0003-486X,1939-8980},
   MRCLASS = {37F80 (14E05 32H50)},
  MRNUMBER = {4276288},
MRREVIEWER = {Mattias\ Jonsson},
       DOI = {10.4007/annals.2021.194.1.5},
       URL = {https://doi.org/10.4007/annals.2021.194.1.5},
}

@article {DXZ23,
    AUTHOR = {Darvas, Tam\'{a}s and Xia, Mingchen and Zhang, Kewei},
     TITLE = {A transcendental approach to non-{A}rchimedean metrics of
              pseudoeffective classes},
   JOURNAL = {Comment. Math. Helv.},
  FJOURNAL = {Commentarii Mathematici Helvetici. A Journal of the Swiss
              Mathematical Society},
    VOLUME = {100},
      YEAR = {2025},
    NUMBER = {2},
     PAGES = {269--322},
      ISSN = {0010-2571},
   MRCLASS = {32U15 (30G06 32P05 32Q20)},
  MRNUMBER = {4888079},
       DOI = {10.4171/cmh/586},
       URL = {https://doi.org/10.4171/cmh/586},
}

@article {BJ22,
    AUTHOR = {Boucksom, S\'{e}bastien and Jonsson, Mattias},
     TITLE = {Global pluripotential theory over a trivially valued field},
   JOURNAL = {Ann. Fac. Sci. Toulouse Math. (6)},
  FJOURNAL = {Annales de la Facult\'{e} des Sciences de Toulouse.
              Math\'{e}matiques. S\'{e}rie 6},
    VOLUME = {31},
      YEAR = {2022},
    NUMBER = {3},
     PAGES = {647--836},
      ISSN = {0240-2963,2258-7519},
   MRCLASS = {32P05 (14G22 14T20 31C05 32U05)},
  MRNUMBER = {4452253},
MRREVIEWER = {Jackson\ S.\ Morrow},
       DOI = {10.5802/afst.170},
       URL = {https://doi.org/10.5802/afst.170},
}

@article {BBGHdJ21,
    AUTHOR = {Botero, Ana Mar\'ia and Burgos Gil, Jos\'e{} Ignacio and
              Holmes, David and de Jong, Robin},
     TITLE = {Chern-{W}eil and {H}ilbert-{S}amuel formulae for singular
              {H}ermitian line bundles},
   JOURNAL = {Doc. Math.},
  FJOURNAL = {Documenta Mathematica},
    VOLUME = {27},
      YEAR = {2022},
     PAGES = {2563--2624},
      ISSN = {1431-0635,1431-0643},
   MRCLASS = {14C17 (32U05 32U25)},
  MRNUMBER = {4574244},
MRREVIEWER = {Martin\ L.\ Sera},
       DOI = {10.4171/dm/x36},
       URL = {https://doi.org/10.4171/dm/x36},
label={BBGHdJ}
}

@misc {Xiabook,
    TITLE = {Singularities in global pluripotential theory},
    AUTHOR = {Xia, Mingchen},
    URL = {https://mingchenxia.github.io/Lectures/SGPT2.pdf}
}

@misc{Dembook2,
  title={Complex analytic and differential geometry},
  author={Demailly, J.-P.},
  howpublished={\url{https://www-fourier.ujf-grenoble.fr/~demailly/manuscripts/agbook.pdf}},
  year={2012}
}

@article{XiaPPT,
      title={Pluripotential-theoretic stability thresholds},
      author={Xia, Mingchen},
      year={2022},
      Journal= {IMRN},
      Fjournal={International Mathematics Research Notices},
      issn = {1073-7928},
      doi = {10.1093/imrn/rnac186},
      url = {https://doi.org/10.1093/imrn/rnac186},
      year = {2022},
}

@article {Hir75,
    AUTHOR = {Hironaka, Heisuke},
     TITLE = {Flattening theorem in complex-analytic geometry},
   JOURNAL = {Amer. J. Math.},
  FJOURNAL = {American Journal of Mathematics},
    VOLUME = {97},
      YEAR = {1975},
     PAGES = {503--547},
      ISSN = {0002-9327},
   MRCLASS = {32C45},
  MRNUMBER = {393556},
MRREVIEWER = {Margherita Galbiati},
       DOI = {10.2307/2373721},
       URL = {https://doi.org/10.2307/2373721},
}

@article {BM97,
    AUTHOR = {Bierstone, Edward and Milman, Pierre D.},
     TITLE = {Canonical desingularization in characteristic zero by blowing
              up the maximum strata of a local invariant},
   JOURNAL = {Invent. Math.},
  FJOURNAL = {Inventiones Mathematicae},
    VOLUME = {128},
      YEAR = {1997},
    NUMBER = {2},
     PAGES = {207--302},
      ISSN = {0020-9910},
   MRCLASS = {14E15 (32S45)},
  MRNUMBER = {1440306},
MRREVIEWER = {Joseph Lipman},
       DOI = {10.1007/s002220050141},
       URL = {https://doi.org/10.1007/s002220050141},
}

@article {Dem85,
    AUTHOR = {Demailly, Jean-Pierre},
     TITLE = {Mesures de {M}onge-{A}mp\`ere et caract\'{e}risation g\'{e}om\'{e}trique des
              vari\'{e}t\'{e}s alg\'{e}briques affines},
   JOURNAL = {M\'{e}m. Soc. Math. France (N.S.)},
  FJOURNAL = {M\'{e}moires de la Soci\'{e}t\'{e} Math\'{e}matique de France. Nouvelle S\'{e}rie},
    NUMBER = {19},
      YEAR = {1985},
     PAGES = {124},
      ISSN = {0037-9484},
   MRCLASS = {32H35 (32C10 32F05)},
  MRNUMBER = {813252},
MRREVIEWER = {G. M. Khenkin},
}

@article {DX22,
    AUTHOR = {Darvas, Tam\'{a}s and Xia, Mingchen},
     TITLE = {The closures of test configurations and algebraic singularity
              types},
   JOURNAL = {Adv. Math.},
  FJOURNAL = {Advances in Mathematics},
    VOLUME = {397},
      YEAR = {2022},
     PAGES = {Paper No. 108198, 56},
      ISSN = {0001-8708},
   MRCLASS = {32W20 (32U05 53C55)},
  MRNUMBER = {4366232},
       DOI = {10.1016/j.aim.2022.108198},
       URL = {https://doi.org/10.1016/j.aim.2022.108198},
}

@article {BDPP13,
    AUTHOR = {Boucksom, S\'{e}bastien and Demailly, Jean-Pierre and P\u{a}un, Mihai
              and Peternell, Thomas},
     TITLE = {The pseudo-effective cone of a compact {K}\"{a}hler manifold and
              varieties of negative {K}odaira dimension},
   JOURNAL = {J. Algebraic Geom.},
  FJOURNAL = {Journal of Algebraic Geometry},
    VOLUME = {22},
      YEAR = {2013},
    NUMBER = {2},
     PAGES = {201--248},
      ISSN = {1056-3911},
   MRCLASS = {14E99 (32J18 32L05 53C26)},
  MRNUMBER = {3019449},
MRREVIEWER = {Thomas Eckl},
       DOI = {10.1090/S1056-3911-2012-00574-8},
       URL = {https://doi.org/10.1090/S1056-3911-2012-00574-8},
}

@article {DDNLmetric,
    AUTHOR = {Darvas, Tam\'{a}s and Di Nezza, Eleonora and Lu, Hoang-Chinh},
     TITLE = {The metric geometry of singularity types},
   JOURNAL = {J. Reine Angew. Math.},
  FJOURNAL = {Journal f\"{u}r die Reine und Angewandte Mathematik. [Crelle's
              Journal]},
    VOLUME = {771},
      YEAR = {2021},
     PAGES = {137--170},
      ISSN = {0075-4102},
   MRCLASS = {32U05 (32W20 53C55)},
  MRNUMBER = {4234098},
MRREVIEWER = {S\l awomir Dinew},
       DOI = {10.1515/crelle-2020-0019},
       URL = {https://doi.org/10.1515/crelle-2020-0019},
       label={DDNL}
}

@article {WN19,
    AUTHOR = {Witt Nystr\"{o}m, David},
     TITLE = {Monotonicity of non-pluripolar {M}onge-{A}mp\`ere masses},
   JOURNAL = {Indiana Univ. Math. J.},
  FJOURNAL = {Indiana University Mathematics Journal},
    VOLUME = {68},
      YEAR = {2019},
    NUMBER = {2},
     PAGES = {579--591},
      ISSN = {0022-2518},
   MRCLASS = {32W20 (32Q15 32U05 35J96)},
  MRNUMBER = {3951074},
MRREVIEWER = {Rafa\l  Czy\.{z}},
       DOI = {10.1512/iumj.2019.68.7630},
       URL = {https://doi.org/10.1512/iumj.2019.68.7630},
       label={WN}
}

@article {BBJ21,
    AUTHOR = {Berman, Robert J. and Boucksom, S\'{e}bastien and Jonsson,
              Mattias},
     TITLE = {A variational approach to the {Y}au-{T}ian-{D}onaldson
              conjecture},
   JOURNAL = {J. Amer. Math. Soc.},
  FJOURNAL = {Journal of the American Mathematical Society},
    VOLUME = {34},
      YEAR = {2021},
    NUMBER = {3},
     PAGES = {605--652},
      ISSN = {0894-0347},
   MRCLASS = {32Q20 (14E30 32P05 32Q26 32U35)},
  MRNUMBER = {4334189},
       DOI = {10.1090/jams/964},
       URL = {https://doi.org/10.1090/jams/964},
}

@article {RWN14,
    AUTHOR = {Ross, Julius and Witt Nystr\"{o}m, David},
     TITLE = {Analytic test configurations and geodesic rays},
   JOURNAL = {J. Symplectic Geom.},
  FJOURNAL = {The Journal of Symplectic Geometry},
    VOLUME = {12},
      YEAR = {2014},
    NUMBER = {1},
     PAGES = {125--169},
      ISSN = {1527-5256},
   MRCLASS = {32L05 (32U05)},
  MRNUMBER = {3194078},
MRREVIEWER = {Bianca Santoro},
       DOI = {10.4310/JSG.2014.v12.n1.a5},
       URL = {https://doi.org/10.4310/JSG.2014.v12.n1.a5},
       label={RWN}
}

@article {DDNL19log,
    AUTHOR = {Darvas, Tam\'{a}s and Di Nezza, Eleonora and Lu, Chinh H.},
     TITLE = {Log-concavity of volume and complex {M}onge-{A}mp\`ere equations
              with prescribed singularity},
   JOURNAL = {Math. Ann.},
  FJOURNAL = {Mathematische Annalen},
    VOLUME = {379},
      YEAR = {2021},
    NUMBER = {1-2},
     PAGES = {95--132},
      ISSN = {0025-5831},
   MRCLASS = {32W20 (32Q15 32U40 52A20 53C55)},
  MRNUMBER = {4211083},
MRREVIEWER = {Rafa\l  Czy\.{z}},
       DOI = {10.1007/s00208-019-01936-y},
       URL = {https://doi.org/10.1007/s00208-019-01936-y},
label={DDNL}
}

@article {Cao14,
    AUTHOR = {Cao, Junyan},
     TITLE = {Numerical dimension and a {K}awamata-{V}iehweg-{N}adel-type
              vanishing theorem on compact {K}\"{a}hler manifolds},
   JOURNAL = {Compos. Math.},
  FJOURNAL = {Compositio Mathematica},
    VOLUME = {150},
      YEAR = {2014},
    NUMBER = {11},
     PAGES = {1869--1902},
      ISSN = {0010-437X},
   MRCLASS = {32L20 (14C20)},
  MRNUMBER = {3279260},
MRREVIEWER = {Tsz On Mario Chan},
       DOI = {10.1112/S0010437X14007398},
       URL = {https://doi.org/10.1112/S0010437X14007398},
}

@article {DPS01,
    AUTHOR = {Demailly, Jean-Pierre and Peternell, Thomas and Schneider,
              Michael},
     TITLE = {Pseudo-effective line bundles on compact {K}\"{a}hler manifolds},
   JOURNAL = {Internat. J. Math.},
  FJOURNAL = {International Journal of Mathematics},
    VOLUME = {12},
      YEAR = {2001},
    NUMBER = {6},
     PAGES = {689--741},
      ISSN = {0129-167X},
   MRCLASS = {32J27 (32Q15 32Q57)},
  MRNUMBER = {1875649},
MRREVIEWER = {Christophe Mourougane},
       DOI = {10.1142/S0129167X01000861},
       URL = {https://doi.org/10.1142/S0129167X01000861},
}

@article {BFJ08,
    AUTHOR = {Boucksom, S\'{e}bastien and Favre, Charles and Jonsson, Mattias},
     TITLE = {Valuations and plurisubharmonic singularities},
   JOURNAL = {Publ. Res. Inst. Math. Sci.},
  FJOURNAL = {Kyoto University. Research Institute for Mathematical
              Sciences. Publications},
    VOLUME = {44},
      YEAR = {2008},
    NUMBER = {2},
     PAGES = {449--494},
      ISSN = {0034-5318},
   MRCLASS = {32U25 (13A18 14B05 32U05)},
  MRNUMBER = {2426355},
MRREVIEWER = {Romain Dujardin},
       DOI = {10.2977/prims/1210167334},
       URL = {https://doi.org/10.2977/prims/1210167334},
}

@article {BEGZ10,
    AUTHOR = {Boucksom, S\'{e}bastien and Eyssidieux, Philippe and Guedj,
              Vincent and Zeriahi, Ahmed},
     TITLE = {Monge-{A}mp\`ere equations in big cohomology classes},
   JOURNAL = {Acta Math.},
  FJOURNAL = {Acta Mathematica},
    VOLUME = {205},
      YEAR = {2010},
    NUMBER = {2},
     PAGES = {199--262},
      ISSN = {0001-5962},
   MRCLASS = {32U40 (32Q20 32U15 32W20)},
  MRNUMBER = {2746347},
MRREVIEWER = {S\l awomir Dinew},
       DOI = {10.1007/s11511-010-0054-7},
       URL = {https://doi.org/10.1007/s11511-010-0054-7},
}

@article {GZ07,
    AUTHOR = {Guedj, Vincent and Zeriahi, Ahmed},
     TITLE = {The weighted {M}onge-{A}mp\`ere energy of quasiplurisubharmonic
              functions},
   JOURNAL = {J. Funct. Anal.},
  FJOURNAL = {Journal of Functional Analysis},
    VOLUME = {250},
      YEAR = {2007},
    NUMBER = {2},
     PAGES = {442--482},
      ISSN = {0022-1236},
   MRCLASS = {32W20 (32Q15 32U05)},
  MRNUMBER = {2352488},
MRREVIEWER = {Norman Levenberg},
       DOI = {10.1016/j.jfa.2007.04.018},
       URL = {https://doi.org/10.1016/j.jfa.2007.04.018},
}

@article {DDNL18mono,
    AUTHOR = {Darvas, Tam\'{a}s and Di Nezza, Eleonora and Lu, Chinh H.},
     TITLE = {Monotonicity of nonpluripolar products and complex
              {M}onge-{A}mp\`ere equations with prescribed singularity},
   JOURNAL = {Anal. PDE},
  FJOURNAL = {Analysis \& PDE},
    VOLUME = {11},
      YEAR = {2018},
    NUMBER = {8},
     PAGES = {2049--2087},
      ISSN = {2157-5045},
   MRCLASS = {32W20 (32Q15 32Q20 32U05 35J96)},
  MRNUMBER = {3812864},
MRREVIEWER = {S\l awomir Ko\l odziej},
       DOI = {10.2140/apde.2018.11.2049},
       URL = {https://doi.org/10.2140/apde.2018.11.2049},
       label={DDNL}
}

@book {MM07,
    AUTHOR = {Ma, Xiaonan and Marinescu, George},
     TITLE = {Holomorphic {M}orse inequalities and {B}ergman kernels},
    SERIES = {Progress in Mathematics},
    VOLUME = {254},
 PUBLISHER = {Birkh\"{a}user Verlag, Basel},
      YEAR = {2007},
     PAGES = {xiv+422},
      ISBN = {978-3-7643-8096-0},
   MRCLASS = {32L20 (32A25 58J20 58J35 58J52 58J60)},
  MRNUMBER = {2339952},
MRREVIEWER = {David Borthwick},
}

@book {Dem12,
    AUTHOR = {Demailly, Jean-Pierre},
     TITLE = {Analytic methods in algebraic geometry},
    SERIES = {Surveys of Modern Mathematics},
    VOLUME = {1},
 PUBLISHER = {International Press, Somerville, MA; Higher Education Press,
              Beijing},
      YEAR = {2012},
     PAGES = {viii+231},
      ISBN = {978-1-57146-234-3},
   MRCLASS = {32-02 (14C30 14F18 32J25 32Q15 32U40)},
  MRNUMBER = {2978333},
MRREVIEWER = {Valentino Tosatti},
}

@book {Ful,
    AUTHOR = {Fulton, William},
     TITLE = {Intersection theory},
    SERIES = {Ergebnisse der Mathematik und ihrer Grenzgebiete. 3. Folge. A
              Series of Modern Surveys in Mathematics [Results in
              Mathematics and Related Areas. 3rd Series. A Series of Modern
              Surveys in Mathematics]},
    VOLUME = {2},
   EDITION = {Second},
 PUBLISHER = {Springer-Verlag, Berlin},
      YEAR = {1998},
     PAGES = {xiv+470},
      ISBN = {3-540-62046-X},
   MRCLASS = {14C17 (14-02)},
  MRNUMBER = {1644323},
       DOI = {10.1007/978-1-4612-1700-8},
       URL = {https://doi.org/10.1007/978-1-4612-1700-8},
}

@phdthesis{Bou02,
  title={C{\^o}nes positifs des vari{\'e}t{\'e}s complexes compactes},
  author={Boucksom, S.},
  year={2002},
  school={Universit{\'e} Joseph-Fourier-Grenoble I}
}

@inproceedings{Wlo09,
  title={Resolution of singularities of analytic spaces},
  author={W{\l}odarczyk, J.},
  booktitle={Proceedings of G{\"o}kova Geometry-Topology Conference 2008, G{\"o}kova Geometry/Topology Conference (GGT)},
  pages={31--63},
  year={2009}
}

Mingchen Xia, \textsc{Chalmers Tekniska Högskola and Institute of Geometry and Physics, USTC}\par\nopagebreak
  \textit{Email address}, \texttt{xiamingchen2008@gmail.com}\par\nopagebreak
  \textit{Homepage}, \url{https://mingchenxia.github.io/}.

\end{document}